\documentclass[a4paper,11pt, english]{article}
\usepackage[english]{babel}
\usepackage{authblk}
\usepackage{amsmath}
\usepackage{amsthm}
\usepackage{eucal}
\usepackage{anysize}
\usepackage{mathtools}
\mathtoolsset{showonlyrefs}
\usepackage{mathrsfs}
\usepackage{amssymb}
\usepackage{xcolor}
\usepackage{xfrac}
\usepackage{esint}
\usepackage{graphicx}
\usepackage{float}
\usepackage{array}
\usepackage{enumitem}
\usepackage{tikz-cd}
\usepackage{centernot}
\usepackage{stmaryrd}
\usepackage{comment}
\usepackage{bbm}
\usepackage{overpic}
\usepackage{mathabx}
\usepackage{hyperref}

\usepackage{graphicx,color}
\usepackage{tikz,pgfplots,pgf}
\usepackage[font=small]{caption}

\UseRawInputEncoding

\numberwithin{equation}{section}
\counterwithin{figure}{section}

\newtheorem{theorem}{Theorem}[section]

\newtheorem{lemma}[theorem]{Lemma}
\newtheorem{proposition}[theorem]{Proposition}

\theoremstyle{definition}
\newtheorem{definition}[theorem]{Definition}

\newtheorem{remark}[theorem]{Remark}

\DeclareMathOperator{\Span}{span}

\newcommand{\field}[1] {\mathbb{#1}}
\newcommand{\N}{\field{N}}
\newcommand{\Z}{\field{Z}}

\newcommand{\R}{\field{R}}

\newcommand{\T}{\field{T}}

\def\a{\alpha}
\def\b{\beta}
\def\e{\varepsilon}

\def\d{\delta}
\def\g{\gamma}
\def\G{\Gamma}
\def\l{\lambda}

\def\L{\Lambda}
\def\o{\omega}
\def\O{\Omega}
\def\p{\partial}
\def\r{\rho}

\def\s{\sigma}
\def\t{\theta}

\def\da{\downarrow}

\newcommand{\mc}{\mathcal}
\newcommand{\mf}{\mathfrak}

\hfuzz40pt
 \renewcommand{\o}{\omega}

\renewcommand{\L}{\Lambda}

\makeatletter

\author[1]{Eduardo Mu\~{n}oz-Hern\'{a}ndez \thanks{\texttt{eduardmu@ucm.es}}}
\author[2]{Juan Carlos Sampedro \thanks{\texttt{juancarlos.sampedro@upm.es}}}
\author[2]{Andrea Tellini \thanks{\texttt{andrea.tellini@upm.es}}}
\affil[1]{\small Universidad Complutense de Madrid\\ Instituto de Matem\'atica Interdisciplinar (IMI)\\Departamento de An\'alisis Matem\'atico y Matem\'atica	 Aplicada\\ Plaza de las Ciencias 3\\ 28040 Madrid, Spain}
\affil[2]{\small Universidad Polit\'ecnica de Madrid\\ E.T.S. de Ingenier\'ia y Dise\~{n}o Industrial\\ Departamento de Matem\'atica Aplicada a la Ingenier\'ia Industrial\\ Ronda de Valencia 3\\ 28012 Madrid, Spain}

\title{\textbf{Bifurcation Theory for a Class of Periodic Superlinear Problems}}

\begin{document}
	
	\maketitle

    \vspace{-0.3cm}
    
   {\centering \emph{Dedicated to Juli\'an L\'opez-G\'omez, our mentor, with deep esteem and gratitude\\ on the occasion of his 65th birthday. }\par}
	\vspace{0.2cm}
    
	\begin{abstract}
		We analyze, mainly using bifurcation methods, an elliptic superlinear problem in one-dimension with periodic boundary conditions. One of the main novelties is that we follow for the first time a bifurcation approach, relying on a Lyapunov--Schmidt reduction and some recent global bifurcation results, that allows us to study the local and global structure of non-trivial solutions at bifurcation points where the linearized operator has a two-dimensional kernel. Indeed, at such points the classical tools in bifurcation theory, like the Crandall--Rabinowitz theorem or some generalizations of it, cannot be applied because the multiplicity of the eigenvalues is not odd, and a new approach is required.
		
		We apply this analysis to specific examples, obtaining new existence and multiplicity results for the considered periodic problems, going beyond the information variational and fixed point methods like Poincar\'e--Birkhoff theorem can provide.
	\end{abstract}

    \textbf{Keywords:} Bifurcation theory, Periodic problems, Nodal periodic solutions, Components of periodic solutions, Eigenvalues with even multiplicity

    \textbf{MSC 2020:} 47J15, 34C23, 70H12, 34C25

	\section{Introduction}
	\label{section:1}
	In this work, we analyze the following paradigmatic periodic boundary value problem
	\begin{equation}
		\label{eq:1.1}
		\left\{
		\begin{array}{l}
			-u''=\l u + a(t) u^{3}, \quad \;\, t\in (0,T),  \\[3pt]
			u(0)=u(T), \quad u'(0)=u'(T),
		\end{array}
		\right.
	\end{equation}
	where $\l\in\R$ will be regarded as a bifurcation parameter, and  $T>0$ is a fixed period. Regarding the weight $a(t)$, we assume
	\begin{equation}
		a\in L^{\infty}([0,T])\qquad\hbox{and}\qquad a\gneq 0.
		\tag{HLoc}
		\label{ass:aloc}
	\end{equation}
	In this setting, a solution of \eqref{eq:1.1} will be a function $u:[0,T]\to\R$ in the Sobolev class $H^2([0,T])$, that solves the differential equation almost everywhere together with the periodic boundary conditions. 
	\par 
	Here and in the rest of this work, when referring to essentially bounded functions, by $f_1\gneq f_2$ we mean $f_1(t)\geq f_2(t)$ for a.e. $t\in[0,T]$ and $f_1\not\equiv f_2$. Since the right-hand side of the ordinary differential equation in \eqref{eq:1.1}, $f(t,u):=\l u+a(t)u^3$, satisfies
	$$\frac{f(t,u)}{u}\to +\infty \qquad \text{as $u\to\pm\infty$, for all $t\in(0,T)$ such that $a(t)\neq 0$,}$$
	the problem is referred to as \emph{superlinear} (at infinity). 
	\par
    For some results, additionally to \eqref{ass:aloc}, in order to obtain a priori bounds of solutions of \eqref{eq:1.1} in compact intervals of the parameter $\l$, we will suppose the existence of $0=t_0<t_1<\cdots<t_N=T$ such that, for all $i\in\{0,\ldots,N-1\}$,
    \begin{equation}
		a\in\mc{C}((t_i,t_{i+1})), \qquad \text{and} \qquad \text{ either } \inf_{t\in(t_i,t_{i+1})} a(t)>0 \text{ or } a(t)=0 \text{ for all $t\in(t_i,t_{i+1})$}.
		\tag{HGlob}
		\label{ass:aglob}
	\end{equation} 
	\par
	The study of periodic problems associated to a second order superlinear ordinary differential equation has received a great deal of attention in the past decades. This study goes back to the seminal paper by Nehari \cite[Sec. 8]{Ne}, who studied the existence of $T$-periodic solutions of 
	\begin{equation}
		\label{eq:class}
		-u''=uF(t,u)
	\end{equation}
	with $F$ continuous in both variables, $T$-periodic in $t$, $F(t,u)>0$ for all $(t,u)\in[0,T]\times\R_{>0}$, and $uF(t,u)$ superlinear in $u$. Namely, Theorem 8.1 of \cite{Ne} proves the existence of $T$-periodic solutions with $2k$ zeros for any $k\geq1$ by the use of variational methods coming from the analysis of the Dirichlet boundary value problem. Some years later, Jacobowitz \cite{Ja}, following a suggestion of Moser (see \cite[Sec. 1]{Ja}), extended the results of Nehari via a pioneering use of the Poincar\'e--Birkhoff theorem in this kind of periodic problems, whose Hamiltonian structure allows its application.   The year after, Hartman \cite{Ha}, in a more general setting and relaxing the sign conditions of \cite{Ja},  obtained the existence of $T$-periodic solutions by proving a suitable a priori bound for solutions with a given number of zeros, applying again the Poincar\'e--Birkhoff theorem. Moreover, as pointed out by Moser (see \cite[Sec. 1]{Ha}), under the same assumptions on $F$ but assuming that it depends also on $u'$, the existence of periodic solutions is no longer true. This stresses the fact that the Hamiltonian structure of \eqref{eq:class} plays a fundamental role in the appearance of periodic solutions. Since then, much work has been done in the analysis of periodic superlinear second order equations and systems, see for instance \cite{Bo, FS, QTW, FoGi,Gi}. We also send the interested readers to \cite[Sec. 1]{JEF2} (and the references therein) for a more complete review about the huge amount of techniques and approaches developed to address periodic problems. 
	\par
	However, the above commented variational and topological methods, apart from existence and multiplicity results, do not give precise information about the structure of the solutions set of the problem, which is instead the case of bifurcation theory analyzed in this paper.
	\par
	In the setting of such a theory, solutions are regarded as pairs $(\l,u)$, where $\l\in\R$ is the bifurcation parameter and $u$ solves \eqref{eq:1.1} in the sense explained above. Since $(\l,0)$ is a solution of \eqref{eq:1.1} for all $\l\in\R$, it is referred to as the \emph{trivial solution}. One of the main goals of bifurcation theory is finding specific values of $\lambda$, say $\l_0$, such that there exist non-trivial solutions in a ball $B_r$ centered at $(\l_0,0)$, for all sufficiently small radii $r$, and to study the structure of such non-trivial solutions. If this occurs, $(\l_0,0)$ is referred to as \emph{bifurcation point} from the set of trivial solutions. Observe that solutions of \eqref{eq:1.1} can be seen as the zeros of the operator
	$$\mf{F}(\l,u):=u''+\l u+a(t)u^3, $$
	considered in suitable Banach spaces, where it is of class $\mc{C}^1$ and Fredholm of index 0. The implicit function theorem guarantees that, for $(\l_0,0)$ to be a bifurcation point, necessarily $\p_u\mf{F}(\l_0,0)$ must be non-invertible, or, equivalently, it must have a non-trivial kernel. In this case, the kernel of $\p_u\mf{F}(\l_0,0)$ consists of solutions of the periodic linear eigenvalue problem 
	\begin{equation*}
		\left\{
		\begin{array}{l}
			-u''=\l_0 u,\quad\text{ in $(0,T)$},\\[3pt]
			u(0)=u(T),\quad u'(0)=u'(T),
		\end{array}
		\right.
	\end{equation*}
	which has non-trivial solutions if, and only if,
	\[
	\l_0=\s_k:=\left(\frac{2\pi k}{T}\right)^2, \qquad \text{for some } k\in\N\cup\{0\}.
	\]
	The main issue in this case is that, except for $k=0$, the eigenvalues $\s_k$ have geometric multiplicity 2, which makes the classical bifurcation theory, going back to Krasnosel'skii \cite{Kr} and Crandall and Rabinowitz \cite{CR, Ra1}, as well as its generalizations, collected e.g. in the book by L\'opez-G\'omez \cite{LG01}, inapplicable because the eigenvalues have non-odd multiplicity.
	
	This is an essential difference when comparing with the corresponding one-dimensional eigenvalue problems under Dirichlet, Neumann, Robin, or mixed boundary conditions, whose eigenvalues have geometric multiplicity 1, and where classical bifurcation theory has extensively been used. 
	
	In order to tackle this problem, we follow two approaches. The first one consists in assuming additional symmetry properties for the weight $a(t)$, taken as an even periodic function, and modify the spaces in which the operator $\mf{F}$ is defined, so that the geometric multiplicity of the eigenvalues reduces to 1, and classical bifurcation results are applicable.	We point out that this kind of strategy, simplified by the special logistic structure of the nonlinearity or some hidden symmetries, was the one followed by L\'opez-G\'omez and collaborators in \cite{LOT, JE, JEF2}, which are the only works we are aware of that apply bifurcation theory to the study of periodic problems.
	
	The second approach, which constitutes one of the main novelties of this work, is based on a Lyapunov--Schmidt reduction that, without any additional assumption on $a(t)$, allows us to transform the study of the local bifurcated curves of non-trivial solutions of \eqref{eq:1.1} in a neighborhood of $(\s_k,0)$, $k\in\N$, to the equivalent problem of existence of solutions to certain algebraic equations.
	
	Moreover, once this local information is obtained, we apply some recent global bifurcation results, valid also for the case of eigenvalues with non-odd multiplicities, to establish the possible global behaviors of these components of nontrivial solutions, thus reaching the maximum information that bifurcation theory can provide.

Namely, the main new phenomena that we obtain, related to existence and multiplicity of solutions of \eqref{eq:1.1} are:
\begin{itemize}
    \item When $a(t)$ is even, we show that for each $\lambda < \sigma_k$, $k\geq 1$, problem \eqref{eq:1.1} admits at least two nontrivial even and two odd solutions with exactly $2k$ zeros in the interval $[0, T)$.

    \item When $a(t)$ is not necessarily even, the existence and multiplicity of solutions depend on the qualitative features of the weight $a(t)$. For instance:
    \begin{itemize}
        \item If $a(t)$ satisfies some structural properties which hold true, for example, in the case of the characteristic function of $[0,\pi/4]$ and odd $k$, we prove that, for $\lambda < \sigma_{k}$ and $\lambda \sim \sigma_{k}$, problem \eqref{eq:1.1} admits at least four nontrivial solutions with $2k$ zeros in $[0,T)$.

        \item For other specific profiles of $a(t)$ (see Section~\ref{section:5}), we prove that for $\lambda < \sigma_1$ and $\lambda \sim \sigma_1$, equation \eqref{eq:1.1} admits at least eight nontrivial solutions with exactly two zeros in $[0,T)$. See Figures~\ref{Fig:5.1} and~\ref{Fig:5.2} for illustrations.
    \end{itemize}
    \item In both cases these solutions are not isolated: they belong to connected components of the set of nontrivial solutions of \eqref{eq:1.1} (see Sections~\ref{section:3} and \ref{section:4} for the precise meaning and construction of these components).
\end{itemize}
\noindent
It is worth emphasizing that variational methods or the Poincar\'e-–Birkhoff approach typically yield multiplicity results for $\l\leq\l_*$  or $k\geq k_*$, with $\l_*<\s_k$ and $k_*\geq 1$ not specified. In contrast, our bifurcation approach not only covers a complete range of parameters with $\l_*=\s_k$ and $k_*=1$, but also guarantees that the solutions are organized into global connected components, rather than appearing as isolated critical/fixed points.

	The paper is structured as follows. In Section \ref{section:2}, we give some preliminaries on the autonomous version of \eqref{eq:1.1}, i.e., when the weight is constant $a(t)\equiv a>0$. In particular, we obtain the corresponding bifurcation diagram of periodic solutions. It is interesting to compare that case, where, for all $k\in\N$, a surface $\mc{S}_k$ of $T$-periodic solutions with $2k$ zeros in $[0,T)$ bifurcates from the trivial curve at $(\s_k,0)$, with the non-autonomous case treated later, where the set of solutions bifurcating from $(\s_k,0)$ consists of branches of curve. Moreover, by means of phase plane analysis, we obtain a priori bounds on the solutions of the general case, that will be later used in Section \ref{section:4}.
	
	Section \ref{section:3} obtains local bifurcation results for non-constant weights $a(t)$, first when the classical theory can be applied. Precisely, at the principal eigenvalue $\s_0$, which is always simple, a branch of positive and one of negative solutions are obtained (see Section \ref{sec:3.1}). At $\s_k$, $k\in\N$, assuming that the weight is even, we prove the existence of two branches of even and two of odd solutions (see Section \ref{sec:3.2}). Then, the general case is treated in Section \ref{sec:3.3}, and a particular, though quite general example is presented in Section \ref{sec:3.4}, giving the existence of 4 local branches of solutions of \eqref{eq:1.1} with $2k$ zeros in $[0,T)$ bifurcating from $(\s_k,0)$, with $k$ odd.
	
	Section \ref{section:4} gives the global results about the components obtained locally at $(\s_k,0)$, both in the classical case, with eigenvalues having odd multiplicity, and in the non-classical case. Moreover, some qualitative properties of the solutions are proved, essentially showing that the number of zeros of the solutions is maintained along each of such components.
	
	Then, in Section \ref{section:5}, together with some final remarks, we present additional examples where the general local theory of Section \ref{section:3} is applied, showing that the number of branches bifurcating from $(\s_k,0)$ might be higher (we give examples of cases with 8 bifurcating branches). 
	
	For the reader's convenience, we have collected in Appendices \ref{A1} and \ref{app:B} classical and recent results in local and global bifurcation theory that are used throughout this work. Finally, Appendix \ref{AC} collects some technical computations that are required in Section \ref{section:3}.
	
	Finally, we remark that the same techniques used in this paper can be adapted to the general case $f(t,u)=\l u+a(t)u^p$ with $p\in\N\setminus\{1\}$. Here, we have decided to take $p=3$ to highlight the main abstract ideas underlying the new bifurcation-theoretic approach proposed for periodic problems.
		
	\section{Preliminary study of the autonomous case}
	\label{section:2}
	This section analyzes problem \eqref{eq:1.1} following a dynamical systems approach. In order to do so, we consider the associated planar system to the equation in \eqref{eq:1.1}, i.e., 
	\begin{equation}
		\label{E37}
		\left\{
		\begin{array}{ll}
			u'=v,\\
			v'=-\l u-a(t)u^3,
		\end{array}
		\right.
	\end{equation}
	and the angular component of a solution $(u,v)$ of \eqref{E37} with $(u(0),v(0))=z_0\neq(0,0)$, that it is denoted by $\theta(t;z_0)$. By the Cauchy--Lipschitz theorem, $(u(t),v(t))\neq(0,0)$ for all $t\in[0,T]$ if $z_0\neq0$ and, hence, we can define the \emph{winding number} around the origin of a nontrivial solution of \eqref{E37} in any interval $[\a,\b]\subseteq[0,T]$ as
	\begin{equation}
		\label{eq:rot}
		\mc{W}([\a,\b];z_0):=-\frac{\theta(\b;z_0)-\theta(\a;z_0)}{2\pi}.
	\end{equation}
	Note that a solution of \eqref{eq:1.1} has an angular displacement $\theta(\b;z_0)-\theta(\a;z_0)\leq0$, which is non-positive because the solution travels clockwise around the origin and, thus, by \eqref{eq:rot}, the winding number will be a non-negative quantity. Moreover, given any $T$-periodic nontrivial solution, $(u,v)$, of \eqref{E37},
	\[
	\mc{W}([0,T];z_0)=k\in\mathbb{N}\cup\{0\}.
	\]
	Equivalently, we will say that any ($T$-periodic) solution of \eqref{eq:1.1} has winding number $k\in\mathbb{N}\cup\{0\}$ around the origin if it possesses exactly $2k$ zeros in $[0,T)$. 
	
	\subsection{The autonomous case}
	
	We first focus on periodic solutions that change sign in the autonomous framework $a(t)\equiv a>0$ for all $t\in [0,T]$. Hence, we consider the periodic autonomous problem
	\begin{equation}
		\label{E31}
		\left\{
		\begin{array}{l}
			-u''=\l u + au^{3}, \quad t\in (0,T),  \\[3pt]
			u(0)=u(T), \quad u'(0)=u'(T).
		\end{array}
		\right.
	\end{equation}
	The planar system associated with the differential equation in \eqref{E31} is 
	\begin{equation}
		\label{E32}
		\left\{
		\begin{array}{ll}
			u'=v,\\
			v'=-\l u-au^3,
		\end{array}
		\right.
	\end{equation}
	and the corresponding energy function (or Hamiltonian), which is conserved along its solutions, is
	\[
	\mathcal{E}(u,v)=\frac{1}{2}v^2+\frac{\l}{2}u^2+\frac{a}{4}u^4.
	\]
	The structure of the phase portrait corresponding to \eqref{E32} depends on the sign of $\l$, as sketched in Figure \ref{Fig:2.1}, where positive energy orbits are depicted in red, zero energy equilibria and orbits appear in black, and negative energy equilibria and orbits are drawn in blue. 
	\begin{figure}[h!]
		\centering
		\includegraphics[scale=0.215]{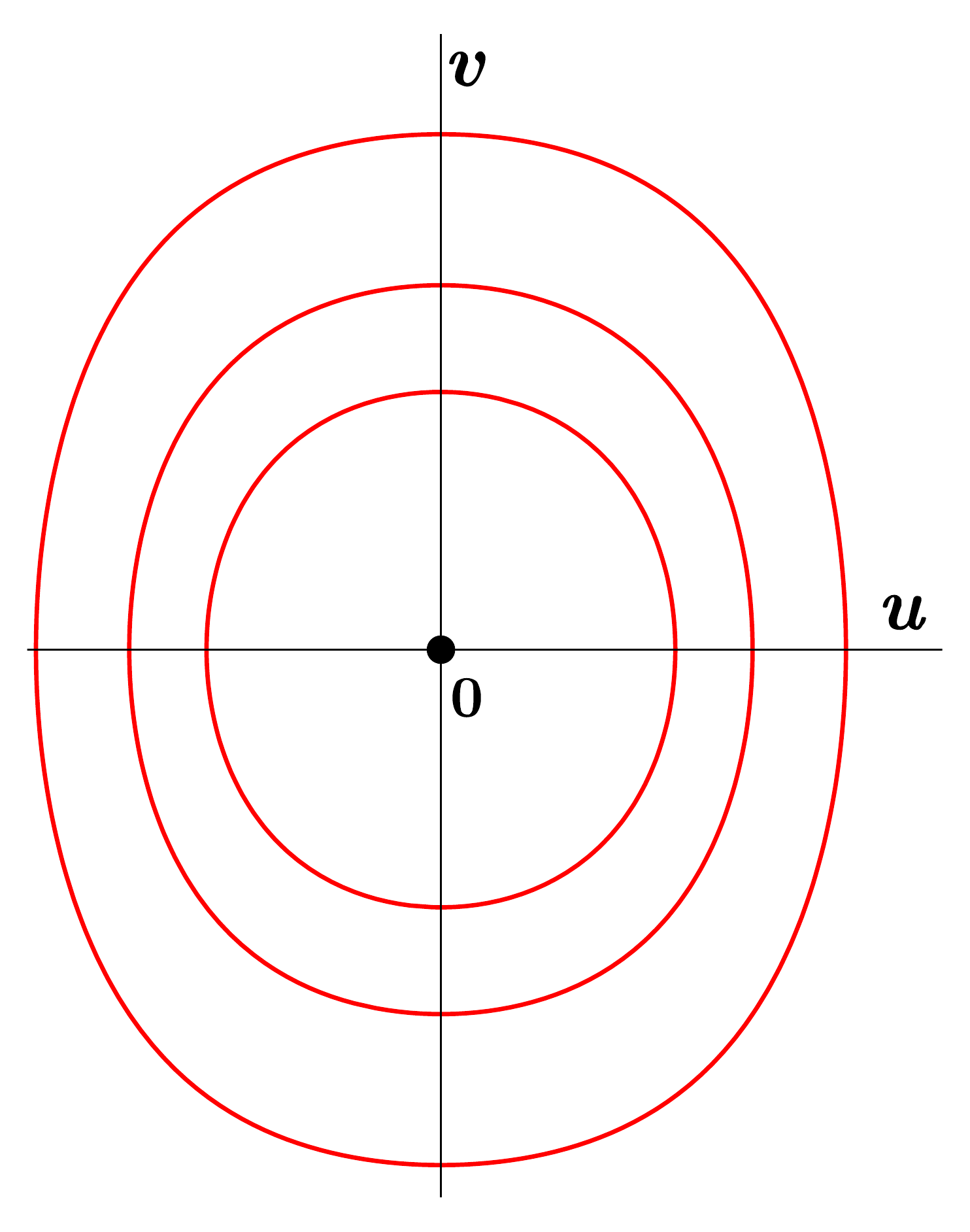}\quad\quad
		\includegraphics[scale=0.248]{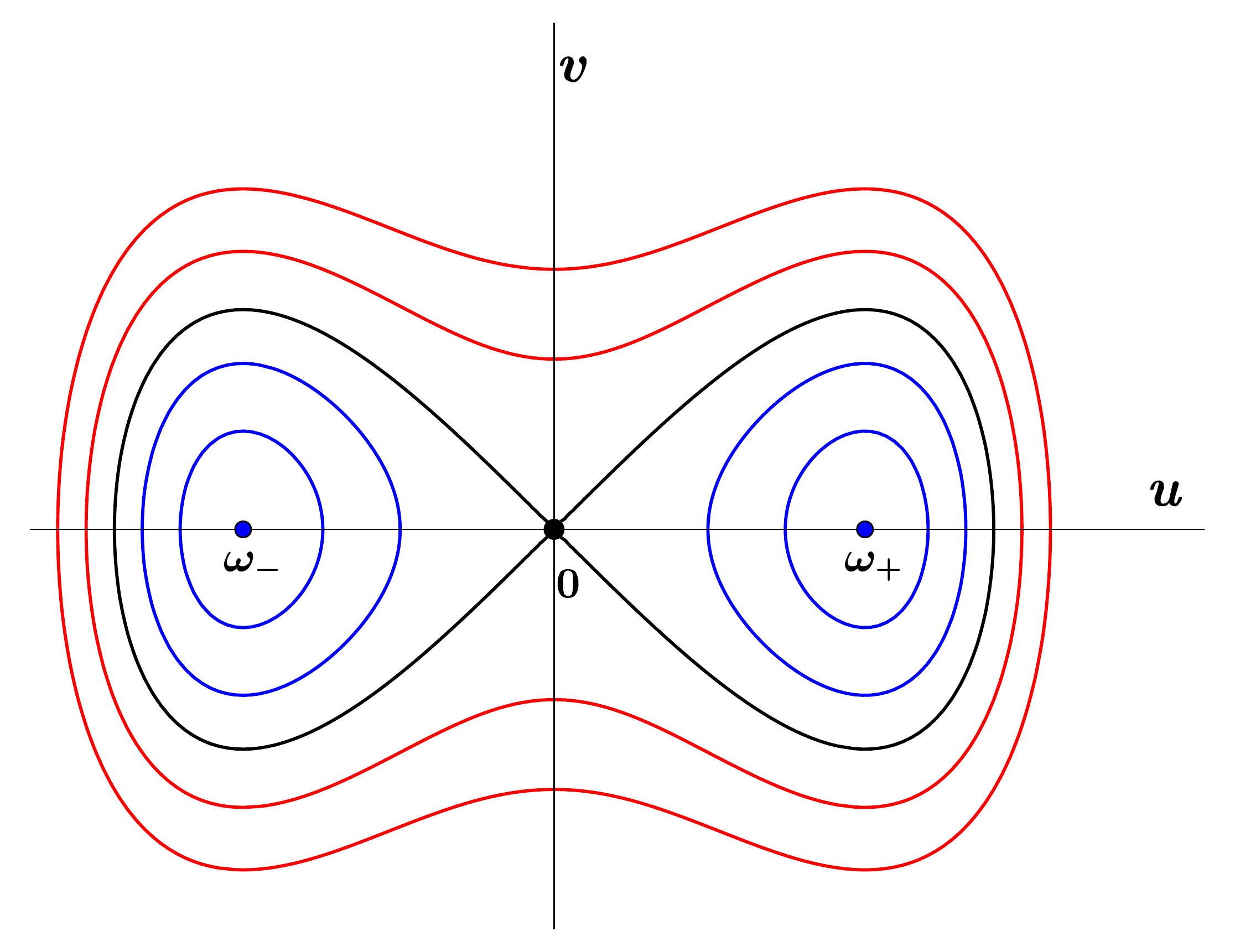}
		\caption{Phase planes of \eqref{E32} for $\l\geq 0$ (left) and $\l<0$ (right).}
		\label{Fig:2.1}
	\end{figure}	
	\noindent Indeed, if $\l\geq0$, the origin is the unique equilibrium of \eqref{E32}, which is a global center surrounded by closed orbits corresponding to periodic solutions of \eqref{E32}. Instead, if $\l<0$, there are two additional equilibria $(\o_\pm,0)$, with 
	\begin{equation}
		\label{eq:equilibria}
		\o_{\pm}=\o_{\pm}(\l):=\pm\left(\frac{-\l}{a}\right)^{\frac{1}{2}},
	\end{equation}
	which are local centers surrounded by closed orbits, corresponding to definite sign (positive and negative) periodic solutions, respectively. In this case, the origin is a saddle point having as stable and unstable manifolds two homoclinic connections (which are zero energy level sets) enclosing the other equilibria and intersecting the $u$-axis at $(\pm u_*,0)$, where 
	\[
	u_*:=\left(\frac{-2\l}{a}\right)^{\frac{1}{2}}.
	\]
	Outside the homoclinic connections all the orbits are closed, and provide us with nodal, i.e. sign-changing, periodic solutions of \eqref{E32}. For notational convenience, we set $u_*=0$ for $\l\geq 0$.
	\par
	Note that the equilibria of \eqref{E32} are constant (periodic) solutions of \eqref{E31}. The non-constant sign-changing periodic solutions of \eqref{E32} correspond to the red orbits surrounding the origin in Figure \ref{Fig:2.1} and are classified according to their winding number. To analyze them, given any $u_+>u_*$, we consider the closed orbit of \eqref{E32} passing through $(u_+,0)$, which satisfies
	\begin{equation}
		\label{E33}
		\mc{E}(u,v)=\frac{1}{2}v^2+\frac{\l}{2}u^2+\frac{a}{4}u^4=\frac{\l}{2}u_+^2+\frac{a}{4}u_+^{4}=:e>0.
	\end{equation}
	Notice that, for any $\l\in\mathbb{R}$, if we consider $u_+$ as a function of $e>0$, i.e., $u_+=u_+(e)$, we have that it is strictly increasing,  
	\[
	\lim_{e\to0^+}u_+(e)=u_*
	\quad\hbox{and}\quad
	\lim_{e\to+\infty}u_+(e)=+\infty.
	\]
	Thus, there exists a one-to-one correspondence between $u_+(e)\in(u_*,+\infty)$ and $e\in(0,+\infty)$. Moreover, the symmetries of \eqref{E32} ensure that the time to cross any quadrant through a periodic orbit is the same. Thus, the  period to wind exactly once on the orbit passing through $(u_+,0)$ can be explicitly written as 
	\begin{equation}
		\label{E34}
		\tau_\l(e)
		:=4\int_0^1\frac{{\mathrm{d}}s}{\sqrt{\l(1-s^2)+\frac{a}{2}u_+^{2}(e)(1-s^{4})}}.
	\end{equation}
	For a fixed $\l$, since $u_+(e)$ is positive and increasing, the period $\tau_\l(e)$ is strictly decreasing with respect to $e>0$, 
	\begin{equation}
		\label{E35}
		\lim_{e\to 0^+}\tau_\l(e)=
		\begin{cases*}
			+\infty & \text{if $\l\leq 0$,}\\[6pt]
			\frac{2\pi}{\sqrt{\l}} & \text{if $\l>0$,}
		\end{cases*}
		\qquad\hbox{and}\qquad
		\lim_{e\to+\infty}\tau_\l(e)=0.
	\end{equation}
	
	The next result provides us with a necessary and sufficient condition for the existence of $T$-periodic solutions of \eqref{E32} with winding number $k\geq 0$ around the origin.
	
	\begin{theorem}
		\label{th:2.2}
		For any $k\geq 1$, problem \eqref{E32} admits a unique orbit $\mathscr{O}_{k,\l}$ of $T$-periodic solutions with winding number $k\geq1$ around $(0,0)$ if, and only if,
		\begin{equation}
			\label{E36}
			\l<\s_{k}:=\left(\frac{2\pi k}{T}\right)^2.
		\end{equation}
		Moreover, if $\l_1<\l_2$, $\mathscr{O}_{k,\l_1}$ encloses $\mathscr{O}_{k,\l_2}$, and $\mathscr{O}_{k,\l}$ shrinks to $(0,0)$ as $\l \to \s_k^-$.
		
    	\par
		Furthermore, recalling \eqref{eq:equilibria}, the two-branch curve $\mc{C}_0:=\left(\l,\o_{\pm}(\l),0\right)$, $\l<0$, is a continuous curve of constant $T$-periodic solutions with zero winding number around $(0,0)$ arising subcritically from $(\l,u,v)=(\s_0,0,0)$, where $\s_0=0$. 
	\end{theorem}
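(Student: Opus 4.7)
The strategy is to translate the problem of finding $T$-periodic solutions of \eqref{E32} with winding number $k\geq 1$ into a scalar equation for the period function $\tau_\l(e)$ given in \eqref{E34}, and then to exploit its monotonicity properties in both $e$ and $\l$.

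First, I would observe that a nontrivial $T$-periodic solution with winding number exactly $k$ is, by the clockwise rotation around $(0,0)$ and the symmetry of the phase portrait, in one-to-one correspondence with a non-constant closed orbit surrounding the origin of minimal period $T/k$. Parametrizing such orbits by their positive energy $e$, this becomes the scalar equation $\tau_\l(e)=T/k$. Since $\tau_\l(\cdot)$ is strictly decreasing in $e$ with the limits recorded in \eqref{E35}, this equation has a unique solution if and only if $T/k<\lim_{e\to 0^+}\tau_\l(e)$, which for $\l\leq 0$ holds automatically and for $\l>0$ reduces exactly to $\l<\s_k$. This simultaneously yields the existence/non-existence dichotomy and the uniqueness of $\mathscr{O}_{k,\l}$.

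For the nesting and shrinking properties, I would rewrite $\tau_\l$ as a function of the amplitude $u_+$ and apply the implicit function theorem to $\Psi(\l,u_+):=\tau_\l(u_+)-T/k$. From \eqref{E34}, the integrand is strictly decreasing in both $\l$ (at fixed $u_+$) and in $u_+$ (at fixed $\l$), so $\p_\l\tau_\l(u_+)<0$ and $\p_{u_+}\tau_\l(u_+)<0$; hence $u_+(k,\l)$ is a smooth, strictly decreasing function of $\l$ on $(-\infty,\s_k)$. Combined with \eqref{E35} this yields $u_+(k,\l)\to 0$ as $\l\to\s_k^-$, proving that $\mathscr{O}_{k,\l}$ shrinks to $(0,0)$. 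For the enclosure when $\l_1<\l_2$, I would write each orbit in the upper half-plane as a graph $v=v_i(u)$ and note that a short algebraic manipulation yields $v_1^2(u)-v_2^2(u)=2(e_1-e_2)+(\l_2-\l_1)u^2$, where $e_i:=e(k,\l_i)$ is the energy of $\mathscr{O}_{k,\l_i}$; once one verifies that $\l\mapsto e(k,\l)$ is non-increasing (an analytic inequality obtained by rearranging the explicit integral representations of $\p_\l\tau$ and $\p_{u_+}\tau$), the two orbits cannot intersect, so the Jordan curve theorem combined with $u_+(k,\l_1)>u_+(k,\l_2)$ forces $\mathscr{O}_{k,\l_1}$ to enclose $\mathscr{O}_{k,\l_2}$. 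The final statement about $\mc{C}_0$ is a direct verification: for $\l<0$ the constants $\o_\pm(\l)=\pm\sqrt{-\l/a}$ are equilibria of \eqref{E32} and hence constant $T$-periodic solutions with zero winding number, and the two-branch map $\l\mapsto(\l,\o_\pm(\l),0)$ is continuous on $\{\l<0\}$ and emanates from $(\s_0,0,0)=(0,0,0)$ as $\l\to 0^-$, confirming the subcritical character of the bifurcation.

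The main obstacle is the enclosure step, because orbits corresponding to distinct parameters $\l$ are level sets of different Hamiltonians and could a priori cross transversally. The crucial technical input is the monotonicity of the energy $\l\mapsto e(k,\l)$, which does not follow at once from the monotonicity of the amplitude $u_+(k,\l)$ and requires a careful sign analysis of the integrals obtained by differentiating \eqref{E34} under the integral sign with respect to $\l$ and $u_+$.
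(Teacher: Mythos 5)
Your proposal follows the paper's strategy: reduce to the scalar equation $\tau_\l(e)=T/k$ and use the monotonicity of $\tau_\l$ and the limits in \eqref{E35}. Existence, uniqueness, the shrinking as $\l\to\s_k^-$, and the identity $v_1^2(u)-v_2^2(u)=2(e_1-e_2)+(\l_2-\l_1)u^2$ for the non-crossing of the orbits all match the paper's arguments; the latter is the same algebra as the paper's contradiction at a hypothetical common point $(\bar u,\bar v)$, and for the shrinking the paper simply observes that $e_{k,\l}=\tau_\l^{-1}(T/k)\to 0$ because the range of $\tau_\l$ collapses to $(0,T/k)$, avoiding your implicit-function-theorem detour.

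The one place you genuinely diverge is in obtaining $e_{k,\l_1}>e_{k,\l_2}$. The paper does this by a two-variable monotonicity comparison with no differentiation at all: it asserts that $\tau_\l(e)$ is decreasing in $\l$ at fixed $e$, so from $\tau_{\l_1}(e_1)=\tau_{\l_2}(e_2)=T/k$ and $\l_1<\l_2$ it deduces $\tau_{\l_1}(e_2)>\tau_{\l_2}(e_2)=\tau_{\l_1}(e_1)$ and hence $e_1>e_2$ because $\tau_{\l_1}(\cdot)$ is strictly decreasing. This shortcut is cleaner than the sign analysis of the differentiated period integral that you propose, and you should know it. Nevertheless, your instinct to flag this step as the main obstacle is sound: the paper's claim that ``\eqref{E34} shows that $\tau_\l(e)$ is decreasing with respect to $\l$'' is not as immediate as it is made to look, because the amplitude $u_+(e)$ in \eqref{E34} is itself determined by \eqref{E33} and therefore depends on $\l$ as well --- increasing $\l$ at fixed $e$ decreases $u_+(e)$, which pushes the integrand the opposite way. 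So the monotonicity of $\l\mapsto e_{k,\l}$ is exactly as delicate as you feared; for $\l\geq 0$ the sign is clear (which already covers the behavior near $\s_k$), but for $\l\ll 0$ the relevant integrand changes sign on $(0,1)$ and neither the paper's one-line assertion nor your unexecuted integral estimate actually settles the matter.
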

	
	\begin{proof}
		Any periodic solution of \eqref{E32} has winding number $k\geq 1$ around $(0,0)$ if, and only if, it corresponds to a closed orbit passing through $(u_+(e),0)$  with $\tau_\l(e)=T/k$. From \eqref{E35}, it follows that 
		\[
		\tau_\l((0,+\infty))=
		\begin{cases*}
			(0,+\infty) & \text{if $\l\leq 0$,}\\[6pt]
			\left(0,\tfrac{2\pi}{\sqrt{\l}}\right) & \text{if $\l> 0$.}
		\end{cases*}
		\]
		Hence, there exists $e_{k,\l}$ such that $\tau_\l(e_{k,\l})=T/k$ if, and only if, $\l\leq 0$, or $\l>0$ and $\tfrac{T}{k}<\tfrac{2\pi}{\sqrt{\l}}$. Moreover this $e_{k,\l}$ is unique since for $\l<\s_k$ fixed, the period $\tau_\l(e)$ is monotone decreasing with respect to $e>0$. Then, the orbit $\mathscr{O}_{k,\l}$ is given by
		\[
		\mathscr{O}_{k,\l}:=\left\{(u,v)\in\R^2\colon \mc{E}(u,v)=e_{k,\l}\right\}.
		\]
		\par
		On the other hand, for any fixed $e>0$, \eqref{E34} shows that $\tau_\l(e)$ is decreasing with respect to $\l$. Then, given $\l_1<\l_2<\s_k$ and $e_1,e_2>0$, 
		\[
		\tau_{\l_1}(e_1)=\tau_{\l_2}(e_2)\quad\hbox{implies}\quad e_1>e_2.
		\]
		As a consequence, $e_{k,\l_1}>e_{k,\l_2}$.	Hence, the monotonicity of $u_+(e)$ gives $u_+(e_{k,\l_1})>u_+(e_{k,\l_2})$. To conclude that $\mathscr{O}_{k,\l_1}$ encloses $\mathscr{O}_{k,\l_2}$, assume by contradiction that there exists $(\bar u,\bar v)\in\mathscr{O}_{k,\l_1}\cap \mathscr{O}_{k,\l_2}$. Thus,
		\[
		\frac{\bar v^2}{2}+\l_1 \frac{\bar u^2}{2}+a \frac{\bar u^4}{4}=e_{k,\l_1}>e_{k,\l_2}=\frac{\bar v^2}{2}+\l_2 \frac{\bar u^2}{2}+a \frac{\bar u^4}{4},
		\]
		which leads to $\l_1>\l_2$, a contradiction.
		
		Finally, since $\lim_{\l\to\s_k^-}\tfrac{2\pi}{\sqrt{\l}}=\tfrac{T}{k}$, the energy $e_{k,\l}$ goes to 0 as $\l\to\s_k^-$, showing that $\mathscr{O}_{k,\l}$ shrinks to the origin as $\l\to\s_k^-$.
	\end{proof}
	\noindent 
	
	With the notation of Theorem \ref{th:2.2}, we define the set of $T$-periodic solutions of \eqref{E32} with winding number $k\geq 1$ around the origin, 
			\[
			\mc{S}_{k}:=\{(\s_k,0,0)\}\cup\bigcup_{\l<\s_k} \left\{\l\right\}\times\mathscr{O}_{k,\l}.
			\]
	The explicit expression of the level sets of the energy $\mc{E}$, the definition of $\mathscr{O}_{k,\l}$ and the continuous dependence on the parameters show that $\mc{S}_k$ is a surface in the $(\l,u,v)$ space. 
	Figure \ref{Fig:2.2} represents $\mc{S}_1$ and $\mc{S}_2$, together with the curve of constant solutions $\mc{C}_0$ (in blue) and the line of trivial solutions $\{(\l,0,0)\colon\l\in\R\}$ (in black).
	\par
	Moreover, when considering \eqref{eq:1.1} with $a(t)\equiv0$ in $[0,T]$, the resulting linear problem is 
	\begin{equation}
    \label{eq:lin}
		\left\{
		\begin{array}{l}
			-u''=\l u, \quad \quad t\in (0,T),  \\[3pt]
			u(0)=u(T), \quad u'(0)=u'(T).
		\end{array}
		\right.
	\end{equation}
	A direct analysis of this problem (see also Lemma \ref{L3.2}) shows that it has $T$-periodic solutions with winding number $k\geq0$ if, and only if, $\l=\s_k$, and that the set of such solutions is a one-dimensional vector space if $k=0$ and a two-dimensional vector space if $k\geq1$. Hence, the bifurcation diagram $(\l,u,v)$ of solutions of \eqref{eq:lin} exhibits a line vertical bifurcation at $(\s_0,0,0)$ and a plane vertical bifurcation at $(\s_k,0,0)$ for $k\geq1$. This is profoundly different in the case of the superlinear problem with $a(t)\equiv a>0$, where the bifurcation diagrams consists of the curve $\mc{C}_0$ and of the surfaces $\mc{S}_k$, $k\geq 1$, as shown in Figure \ref{Fig:2.2}. 
	
	\begin{figure}[ht]
		\centering
		\begin{overpic}[scale=0.6]{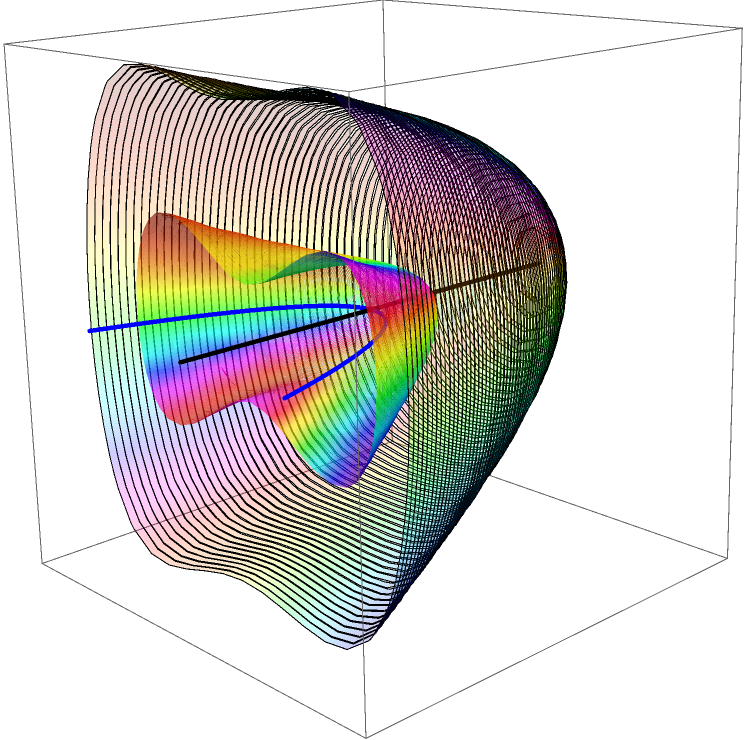}
			\put (79,10) {\small{$\l$}}
			\put (22,9) {\small{$u$}}
			\put (-1.5,62) {\small{$v$}}
			\put (23,58) {\small{$\mc{C}_0$}}
			\put (18,72) {\small{$\mc{S}_1$}}
			\put (5.5,80) {\small{$\mc{S}_2$}}
		\end{overpic}
		\caption{The constant solutions curve $\mc{C}_0$ and the surfaces $\mc{S}_1$ and $\mc{S}_2$ of problem \eqref{E32}, determined in Theorem \ref{th:2.2}.} \label{Fig:2.2}
	\end{figure} 
	
	\begin{remark}
		Although it goes beyond the scope of this paper, the existence of a surface of positive and negative $T$-periodic solutions of \eqref{E32} with winding number $k\geq1$ around $(\o_{\pm},0)$ (and zero winding number around $(0,0)$) emerging at $(\l,u,v)=(-\s_k,\o_{\pm}(-\s_k),0)$ can be proved with a similar argument to the one of the proof of Theorem \ref{th:2.2}.
	\end{remark}
	
	\subsection{A priori bounds}
	As a byproduct of the previous analysis, for solutions of the general case \eqref{eq:1.1} with a fixed winding number around the origin, we obtain: (i) an upper bound on $\l$ for the existence of solutions when $a(t)$ satisfies \eqref{ass:aloc}, and (ii) a priori bounds for $u$, provided $\l$ lies in an (arbitrary) compact interval of $\R$ and the weight $a(t)$ satisfies also \eqref{ass:aglob}. 
	
	\begin{proposition}
		\label{pr:2.4}
		(i) Assume that $a(t)$ satisfies \eqref{ass:aloc} and that $(\l,u)$ is a nontrivial solution of \eqref{eq:1.1} with winding number $k\geq0$ around the origin. Then, 
		\begin{equation}
			\label{eq:boundlambda}
			\l<\s_k=\left(\frac{2\pi k}{T}\right)^2.
		\end{equation}
		(ii) Suppose  that $a(t)$ satisfies \eqref{ass:aloc} and  \eqref{ass:aglob}. Then, for every compact subinterval $K\subset\mathbb{R}$ and for any $\l\in K$ such that $(\l,u)$ is a solution of with winding number $k\geq0$ around the origin, there exists $M=M(k,K)>0$ such that
		\begin{equation}
			\label{eq:boundu}
			\max_{t\in[0,T]}|u(t)|+\max_{t\in[0,T]}|u'(t)|<M.
		\end{equation}
	\end{proposition}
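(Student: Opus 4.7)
For part (i) with $k\geq 1$, I would treat $u$ as a solution of the linear equation $-u''=p(t)u$ with $p(t):=\l+a(t)u^2(t)\in L^\infty([0,T])$ and invoke Sturm's comparison. Arguing by contradiction, if $\l\geq\s_k$ then $p(t)\geq\s_k$ a.e.\ and, since $u$ has only $2k$ isolated zeros in $[0,T)$ while $a\gneq 0$, the inequality $p(t)>\s_k$ holds on a set of positive measure. Denote the zeros $t_1<\cdots<t_{2k}$ in $[0,T)$ and $t_{2k+1}:=t_1+T$. Applying the Sturm comparison on each interval $[t_j,t_j+T/(2k)]$ with the comparison solution $w_j(t):=\sin\!\bigl(\tfrac{2\pi k}{T}(t-t_j)\bigr)$ of $w''+\s_k w=0$ yields $t_{j+1}-t_j\leq T/(2k)$. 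Summing over $j=1,\dots,2k$ gives $T=\sum(t_{j+1}-t_j)\leq T$, so equality must hold in every sub-interval; but this is incompatible with the strict Sturm comparison, which forces a strict inequality in any sub-interval where $p>\s_k$ on a positive-measure subset, a configuration that must occur by the positive-measure hypothesis above. The case $k=0$ follows from a direct angular argument: in polar coordinates $(u,u')=r(\cos\theta,\sin\theta)$, $\dot\theta=-\sin^2\theta-(\l+a(t)u^2)\cos^2\theta$, which for $\l\geq 0$ is non-positive and, by $a\gneq 0$ on the support of $u$, not identically zero; hence $\theta(T)<\theta(0)$, contradicting winding number $0$.

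For part (ii), I would argue by contradiction via a phase-plane blow-up. Since $u''=-\l u-a(t)u^3$ and $u$, being periodic and $C^1$, has a critical point, direct integration gives $\|u'\|_{L^\infty}\leq T(|\l|\,\|u\|_{L^\infty}+\|a\|_{L^\infty}\|u\|_{L^\infty}^3)$; hence it suffices to rule out sequences $\{(\l_n,u_n)\}$ of solutions with $\l_n\in K$, winding number $k$, and $R_n:=\|u_n\|_{L^\infty}\to+\infty$. Pick $\tau_n\in[0,T]$ with $|u_n(\tau_n)|=R_n$, so that $u_n'(\tau_n)=0$. By \eqref{ass:aglob}, $[0,T]$ splits into finitely many trivial intervals (where $a\equiv 0$) and good intervals (where $\inf a>0$); on a trivial interval, $u_n$ solves the linear autonomous equation $-u''=\l_n u$, and its maximum there is controlled by the $C^1$-values at the endpoints, which belong to adjacent good intervals. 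Hence, up to a subsequence, $\tau_n$ lies in a fixed good interval $(t_i,t_{i+1})$ and converges to some interior point $\bar\tau$ with $a(\bar\tau)>0$.

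Introduce the rescaling $v_n(s):=u_n(\tau_n+s/R_n)/R_n$, which satisfies $|v_n(0)|=1$, $v_n'(0)=0$ and
\[
-v_n''(s)=\frac{\l_n}{R_n^2}\,v_n(s)+a\!\left(\tau_n+\frac{s}{R_n}\right)v_n(s)^3.
\]
On every bounded $s$-interval, $\l_n/R_n^2\to 0$ and $a(\tau_n+s/R_n)\to a(\bar\tau)>0$ uniformly, so by continuous dependence $v_n$ converges to a nontrivial solution $\bar v$ of the autonomous limit $-\bar v''=a(\bar\tau)\bar v^3$ with $|\bar v(0)|=1$, $\bar v'(0)=0$. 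As analyzed in Section \ref{section:2}, $\bar v$ is periodic with finite period; thus, for any integer $M>k$, for $n$ large $v_n$ performs at least $M$ full rotations around the origin on a fixed $s$-interval $[0,S_M]$. Since the phase-plane map $(v,v')\mapsto(R_n v,R_n^2 v')$ is an orientation-preserving linear isomorphism fixing the origin, the winding of $u_n$ on the $t$-sub-interval $[\tau_n,\tau_n+S_M/R_n]$ (contained in $(t_i,t_{i+1})$ for $n$ large) coincides with that of $v_n$ on $[0,S_M]$, so $\mc{W}([0,T];(u_n(0),u_n'(0)))\geq M>k$, contradicting the prescribed winding number $k$. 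The main delicate step is this last angular transfer across the anisotropic rescaling, which can be made rigorous using the $\dot\theta$ expression indicated after \eqref{eq:rot}.
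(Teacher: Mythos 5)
Your proof of part (i) is correct but follows a route different from the paper's. For $k\geq 1$ you use Sturm comparison on the intervals between consecutive zeros, exploiting the strict version after the telescoping sum forces equality in each sub-interval; the paper instead rescales the first-order system to $u'=\sqrt{\l}\,v$, $v'=-\sqrt{\l}\,u-a(t)u^3/\sqrt{\l}$, which yields the single pointwise estimate $\theta'(t)\lneq-\sqrt{\l}$ a.e.\ and hence a winding number strictly exceeding $k$. For $k=0$ you again argue angularly, whereas the paper simply integrates the equation over $(0,T)$ and observes that $\l\int_0^T u$ and $\int_0^T a u^3$ have the same sign, forcing $\l<0$. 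Both of your arguments for (i) are sound and of comparable length; the paper's versions are marginally more economical.

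Part (ii) is where the substantive difference lies. The paper invokes Hartman's Corollary 2.1 on each good interval and propagates the bound across the trivial intervals by energy conservation, whereas you propose a blow-up. The blow-up idea is workable, and the angular-transfer step you flag at the end is in fact unproblematic: the map $(x,y)\mapsto(R_n x, R_n^2 y)$ is a linear isomorphism of $\R^2$ with positive determinant, hence orientation-preserving and fixing the origin, so it preserves winding, and the contribution of the remainder of $[0,T]$ to the winding of $u_n$ is non-negative since the flow is clockwise. The genuine gap is earlier: you assert that, up to a subsequence, the maximum point $\tau_n$ (with $|u_n(\tau_n)|=R_n$ and $u_n'(\tau_n)=0$) lies in a fixed good interval $(t_i,t_{i+1})$ and converges to an interior $\bar\tau$ with $a(\bar\tau)>0$. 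This does not follow from your observation about trivial intervals. If $\tau_n$ lies in a trivial interval, your energy argument only shows that the $C^1$-data of $u_n$ at the endpoints of that trivial interval (which also bound good intervals) are of order $R_n$; it does not relocate $\tau_n$, and at such an endpoint you lose both $u_n'(\cdot)=0$ and $|u_n(\cdot)|=R_n$, while \eqref{ass:aglob} only gives continuity of $a$ on the \emph{open} good intervals, so $a(\bar\tau)$ need not even be well defined there. Repairing this requires showing that a large $C^1$-datum at the boundary of a good interval forces a comparably large $C^1$-datum at an interior point $t_i+\delta$, uniformly in $\l\in K$, which in the superlinear regime is not a routine Gronwall estimate and is essentially the content of the Hartman lemma the paper cites. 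Once that reduction is established, the rest of your rescaling argument goes through.
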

	
	\begin{proof} 
		(i) We begin with the case $k=0$ and assume that $u$ is a solution of \eqref{eq:1.1} with fixed sign. Then, integrating the differential equation in $(0,T)$ and using the periodic boundary conditions leads to
		\begin{align*}
			0=\int_{0}^{T}u''\, {\rm{d}}t & = \l \int_{0}^{T} u \, {\rm{d}}t + \int_{0}^{T}a(t) u^{3} \, {\rm{d}}t.
		\end{align*}
		Hence, $\l<0$ because, thanks to the fixed sign of $u$ and $a(t)$ in \eqref{ass:aloc}, $ \int_{0}^{T} u \, {\rm{d}}t$ and $\int_{0}^{T}a(t) u^{3} \, {\rm{d}}t$
		have the same sign. This proves \eqref{eq:boundlambda} for $k=0$.
		
		In order to prove \eqref{eq:boundlambda} for $k\geq 1$, we assume by contradiction that there exists $\l\geq\s_k$ and a solution $u$ of \eqref{eq:1.1} with winding number $k$ around the origin. Consider the system 
		\[
		\left\{
		\begin{array}{ll}
			u'=\sqrt{\l}v,\\[3pt]
			v'=-\sqrt{\l}u-\frac{1}{\sqrt{\l}}a(t)u^3,
		\end{array}
		\right.
		\]
		which is equivalent to \eqref{E37}. In this case, the derivative of the angular component is given by 
		\[
		\t'(t;z_0)=-\frac{\sqrt{\l}u^2(t;z_0)+\frac{1}{\sqrt{\l}}a(t)u^4(t;z_0)+\sqrt{\l}v^2(t;z_0)}{u^2(t;z_0)+v^2(t;z_0)},
		\]	
		and hence, by the definite sign of $a(t)$ in \eqref{ass:aloc}, 
		\begin{equation}
			\label{eq:angder}
			\t'(t;z_0)\lneq-\frac{\sqrt{\l}u^2(t;z_0)+\sqrt{\l}v^2(t;z_0)}{u^2(t;z_0)+v^2(t;z_0)}=-\sqrt{\l}\qquad\text{for a.e. $t\in[0,T]$}. 
		\end{equation}
		Thus, by \eqref{eq:rot} and \eqref{eq:angder}, since we are assuming $\l\geq \s_k$
		\[
		\mc{W}([0,T];z_0)>\frac{T}{2\pi}\sqrt{\l}\geq k.
		\]
		This gives a contradiction with the fact that $u$ has winding number $k$, and shows that \eqref{eq:boundlambda} holds. 
		\par
		(ii) Regarding \eqref{eq:boundu}, we fix $k\geq 0$ and $K\subset\mathbb{R}$. By \eqref{ass:aloc} and \eqref{ass:aglob}, there exists $j\in\{0,\dots,N-1\}$ such that 
		\[
		\inf_{t\in(t_j,t_{j+1})}a(t)=:c>0.
		\]
		Then, we can argue in the interval $(t_j,t_{j+1})$ as in Corollary 2.1 of Hartman \cite{Ha}. Indeed, the function 
		\[
		F(t,u)=\l u+a(t)u^3,\qquad  t\in(t_j,t_{j+1}),
		\]
		satisfies all the hypothesis of \cite[Corollary 2.1]{Ha} uniformly in $\l\in K$. This implies the existence of a constant $ M_j(k,K)= M_j>0$ such that the solutions of the equation in \eqref{eq:1.1} with at most $2k$ zeros on the interval $[t_j,t_{j+1}]$ satisfy 
		\begin{equation}
			\label{eq:boundsub}
			\max_{t\in[t_j,t_{j+1}]}|u(t)|+\max_{t\in[t_j,t_{j+1}]}|u'(t)|<M_j.
		\end{equation}
		Consider now, if it exists, an adjacent interval to $[t_j,t_{j+1}]$, e.g., $(t_{j+1},t_{j+2})$. If the weight $a(t)$ is positive there, the same argument can be applied to obtain a bound like \eqref{eq:boundsub} in such an interval, possibly with a different constant $M_{j+1}$.
		
		\par
		Instead, if  $a(t)=0$ for all $t\in(t_{j+1},t_{j+2})$, then any solution of \eqref{eq:1.1}  solves
		\[
		-u''=\l u,\quad t\in(t_{j+1},t_{j+2}),
		\]
		with $|u(t_{j+1})|<M_j$ and $|u'(t_{j+1})|<M_j$. The energy conservation gives
		\[
		u'(t)^2+\l u(t)^2=u'(t_{j+1})^2+\l u(t_{j+1})^2<(1+C)M_j^2 \quad \text{for all $t\in (t_{j+1},t_{j+2})$,}
		\]
		where $C:=\max\{|\l|\colon \l\in K\}$. If $\l\geq 0$, we directly obtain the desired bound $M_{j+1}$ in $[t_{j+1},t_{j+2}]$. On the other hand, if $\l<0$, the maximum of $|u|$ in $[t_{j+1},t_{j+2}]$ is achieved at the boundary of this interval, and the bound follows as in the proof of \eqref{eq:boundsub}.
		
		Finally, by repeating this argument in all the intervals of the partition of $[0,T]$ given by \eqref{ass:aglob} and setting $M:=\max_{j\in\{0,\dots,N-1\}}M_j$, \eqref{eq:boundu} holds. 
	\end{proof}

	\section{Local bifurcation analysis}
	\label{section:3}
	\noindent In this section we perform a local analysis for the general problem \eqref{eq:1.1} under assumption \eqref{ass:aloc}. Such an analysis will be based on the identification of bifurcation points from the trivial solution $(\l,0)$ and will rely on the general functional analytic framework that we present here. Then, in the next subsections, we will consider several specific cases, depending on the bifurcation points and/or the types of weights $a(t)$.
	
	We set $\T:=\R/(T\Z)$ and, given $s\in \N\cup \{0\}$, we consider the Sobolev spaces (see \cite{Ro}),
	\begin{align*}
		H^{s}(\mathbb{T}) &: =\left\{ u\in H^{s}([0,T]) \colon D^{\alpha}u(0)=D^{\alpha}u(T), \, \, \a\in\Z\cap[0,s)\right\} \\
		&\phantom{:} = \left\{u(t)=\sum_{k\in\Z}c_{k} e^{i 2\pi k t / T} \colon \bar{c}_{k}=c_{-k}, \; \sum_{k\in\Z} k^{2s} \left|c_{k}\right|^{2} < +\infty\right\},
	\end{align*}
	where we use the usual notation $H^0(\T)=L^2(\T)$. In particular, given $u\in L^2(\T)$, we can uniquely identify $u$ with the double-sided sequence of coefficients of its Fourier series expansion:
	$$u\longleftrightarrow\{\widehat{u}(k)\}_{k\in\Z}, \qquad \text {with} \qquad u(t)=\sum_{k\in\Z}\widehat{u}(k) e^{i 2\pi k t / T}.$$
	In $H^{s}(\mathbb{T})$, we will work with the following inner product and the corresponding induced norm
	$$\left(u,v\right)_{H^{s}}:= \sum_{0\leq \alpha \leq s} \int_0^T D^{\a}u\, D^{\a}v\, {\rm{d}}t, \qquad
	\left\|u\right\|_{H^{s}}:=\sqrt{\left(u,u\right)_{H^{s}}}=\left(\sum_{0\leq \alpha \leq s} \left\|D^{\alpha} u\right\|^{2}_{L^{2}}\right)^{\frac{1}{2}}.
	$$
	Moreover, when working with the Fourier series expansion, it will be convenient considering the following equivalent norm:
	$$
	\left\|u\right\|_{H^{s}_{\mathbf{f}}}=\left(\sum_{k\in\Z}\left(1+|k|^{2s}\right)\left|\widehat{u}(k)\right|^{2}\right)^{\frac{1}{2}}.$$
	The solutions of the periodic boundary value problem \eqref{eq:1.1} can be rewritten as the zeros of the nonlinear operator
	\begin{equation}
		\label{EqDe}
	\mf{F}\colon\R\times H^{2}(\mathbb{T})\longrightarrow L^{2}(\mathbb{T}), \qquad \mf{F}(\l,u):=u''+\l u + a(t) u^{3}.
    \end{equation}
	We denote by $\mc{T}$ the set of trivial solutions of $\mf{F}$, that is,
	\begin{equation}
		\label{eq:trivial}
		\mc{T}:=\{(\l,0)\in\R\times  H^{2}(\mathbb{T}) \colon \l\in\R\}.
	\end{equation}
	The following lemma collects some regularity properties of the operator $\mf{F}$.
	\begin{lemma}
		\label{L1}
		The operator $\mf{F}$ is of class $\mc{C}^{\infty}$, i.e., $\mf{F}\in\mc{C}^{\infty}(\R\times H^{2}(\mathbb{T}), L^{2}(\mathbb{T}))$. Moreover, its derivatives are given by
		\begin{align*}
		\partial_{u}\mf{F}(\l,u)&\colon H^{2}(\mathbb{T}) \longrightarrow L^{2}(\mathbb{T}), & & \partial_{u}\mf{F}(\l,u)\left[v\right]=v''+\l v+3a(t)u^{2}v, \\
		\partial_{\l}\mf{F}(\l,u)&\colon \R\longrightarrow L^{2}(\mathbb{T}), & & \partial_{\l}\mf{F}(\l,u)\left[\mu\right]=\mu u, \nonumber \\
		\partial_{uu}\mf{F}(\l,u)&\colon H^{2}(\mathbb{T})\times H^{2}(\mathbb{T})\longrightarrow L^{2}(\mathbb{T}), & & \partial_{uu}\mf{F}(\l,u)\left[v_{1},v_{2}\right]=6 a(t) u v_{1} v_{2}, \nonumber \\
		\partial_{\l u}\mf{F}(\l,u)&\colon \R \times H^{2}(\mathbb{T})\longrightarrow L^{2}(\mathbb{T}), & & \partial_{\l u}\mf{F}(\l,u)\left[\mu, v\right]=\mu v. \nonumber\\
		\partial_{\l\l}\mf{F}(\l,u)&\colon \R\times \R \longrightarrow L^{2}(\mathbb{T}), & & \partial_{\l\l}\mf{F}(\l,u)\left[\mu_{1},\mu_{2}\right]=0, \nonumber \\
		\partial_{uuu}\mf{F}(\l,u)&\colon H^{2}(\mathbb{T})\times H^{2}(\mathbb{T})\times H^{2}(\mathbb{T})\longrightarrow L^{2}(\mathbb{T}), & & \partial_{uuu}\mf{F}(\l,u)\left[v_{1},v_{2},v_{3}\right]=6 a(t) v_{1} v_{2} v_{3}, \nonumber \\
		\partial_{\l uu}\mf{F}(\l,u)&\colon \R\times H^{2}(\mathbb{T})\times H^{2}(\mathbb{T})\longrightarrow L^{2}(\mathbb{T}), & & \partial_{\l uu}\mf{F}(\l,u)\left[\mu,v_{1},v_{2}\right]=0, \nonumber \\
		\partial_{\l \l u}\mf{F}(\l,u)&\colon \R\times \R\times H^{2}(\mathbb{T})\longrightarrow L^{2}(\mathbb{T}), & & \partial_{\l \l u}\mf{F}(\l,u)\left[\mu_1,\mu_2,v\right]=0, \nonumber \\
		\partial_{\l \l\l}\mf{F}(\l,u)&\colon \R\times \R\times \R\longrightarrow L^{2}(\mathbb{T}), & & \partial_{\l \l\l}\mf{F}(\l,u)\left[\mu_1,\mu_{1},\mu_{3}\right]=0. \nonumber	
		\end{align*}
	\end{lemma}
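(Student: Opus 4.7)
The plan is to recognize $\mf{F}$ as a polynomial map of degree $1$ in $\l$ and degree $3$ in $u$ between the Banach spaces $\R\times H^{2}(\mathbb{T})$ and $L^{2}(\mathbb{T})$, and to reduce the $\mc{C}^{\infty}$ regularity together with the explicit derivative formulas to the boundedness of the underlying multilinear forms. Concretely, I would write
\[
\mf{F}(\l,u)=Lu+B(\l,u)+T(u,u,u),
\]
where $L\colon H^{2}(\mathbb{T})\to L^{2}(\mathbb{T})$, $Lu:=u''$, is bounded linear; $B\colon\R\times H^{2}(\mathbb{T})\to L^{2}(\mathbb{T})$, $B(\l,u):=\l u$, is bounded bilinear; and $T\colon H^{2}(\mathbb{T})^{3}\to L^{2}(\mathbb{T})$, $T(u_{1},u_{2},u_{3}):=a(t)\,u_{1}u_{2}u_{3}$, is symmetric trilinear. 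Boundedness of $L$ and $B$ is immediate from $\|u''\|_{L^{2}}\leq\|u\|_{H^{2}}$ and $\|\l u\|_{L^{2}}\leq|\l|\,\|u\|_{H^{2}}$.

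The decisive step is the continuity of $T$, for which I would invoke the one-dimensional Sobolev embedding $H^{2}(\mathbb{T})\hookrightarrow L^{\infty}(\mathbb{T})$ with embedding constant $C_{S}$. Combined with $a\in L^{\infty}(\mathbb{T})$ and H\"older's inequality, this yields
\[
\|a\,u_{1}u_{2}u_{3}\|_{L^{2}}\leq\|a\|_{L^{\infty}}\|u_{1}\|_{L^{\infty}}\|u_{2}\|_{L^{\infty}}\|u_{3}\|_{L^{2}}\leq C_{S}^{2}\,\|a\|_{L^{\infty}}\|u_{1}\|_{H^{2}}\|u_{2}\|_{H^{2}}\|u_{3}\|_{H^{2}},
\]
so $T$ is a bounded symmetric trilinear form. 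Thus $\mf{F}$ is a sum of a bounded linear, a bounded bilinear, and a bounded cubic homogeneous polynomial between Banach spaces, and therefore of class $\mc{C}^{\infty}$.

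It then remains to read off the nine partial derivatives in the statement. For this I would use the classical symmetrization rule for polynomial maps: if $P(x)=M(x,\ldots,x)$ for a continuous symmetric $n$-linear form $M$, then $D^{k}P(x)[h_{1},\ldots,h_{k}]=\frac{n!}{(n-k)!}\,M(x,\ldots,x,h_{1},\ldots,h_{k})$ for $k\leq n$ and $D^{k}P\equiv 0$ for $k>n$. Applying this to the pieces $L$, $B$ and $T$, and keeping track of $\l$- versus $u$-directions, yields at once: the three $u$-derivatives of $T(u,u,u)$ produce the factors $3a(t)u^{2}v$, $6a(t)uv_{1}v_{2}$ and $6a(t)v_{1}v_{2}v_{3}$; the bilinear $B$ accounts for $\partial_{\l}\mf{F}$ and $\partial_{\l u}\mf{F}$; and every derivative of order at least two in $\l$, together with $\partial_{\l uu}\mf{F}$, vanishes because $\mf{F}$ is affine in $\l$ and contains no $\l$-dependent nonlinear interaction.

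There is no substantive obstacle in this argument; the only point deserving attention is the one-dimensional Sobolev embedding $H^{2}(\mathbb{T})\hookrightarrow L^{\infty}(\mathbb{T})$, which is what ensures that the pointwise product $a(t)\,u_{1}u_{2}u_{3}$ of an $L^{\infty}$ weight with three $H^{2}$ functions lies in $L^{2}(\mathbb{T})$. In higher space dimensions this step would force a more delicate choice of spaces, but in the present one-dimensional periodic setting it is entirely standard.
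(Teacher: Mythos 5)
Your proof is correct, but it takes a genuinely different and more conceptual route than the paper's. The paper verifies the claim by hand: it establishes continuity of $\mf{F}$ directly via an $\e$-$\d$ estimate on $\|\mf{F}(\l_n,u_n)-\mf{F}(\l_0,u_0)\|_{L^2}$, then verifies the formula for $\partial_u\mf{F}$ by expanding $\mf{F}(\l,u+v)-\mf{F}(\l,u)-v''-\l v-3a u^2 v = a(t)v^2(v+3u)$ and bounding this remainder, and finally declares that the remaining identities and the $\mc{C}^\infty$ regularity ``follow similar patterns.'' You instead decompose $\mf{F}$ into the linear map $u\mapsto u''$, the bilinear map $(\l,u)\mapsto\l u$, and the trilinear map $(u_1,u_2,u_3)\mapsto a\,u_1u_2u_3$, check boundedness of each (the crucial one being the trilinear form, via $H^2(\T)\hookrightarrow L^\infty(\T)$ and H\"older, which is exactly the analytic input the paper also relies on), and invoke the standard fact that a continuous polynomial map between Banach spaces is $\mc{C}^\infty$, with all derivatives read off from the symmetrization formula $D^kP(x)[h_1,\ldots,h_k]=\tfrac{n!}{(n-k)!}M(x,\ldots,x,h_1,\ldots,h_k)$. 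Your approach buys uniformity and completeness: it avoids having to rerun essentially the same estimate for each of the nine derivatives, it makes the vanishing of the higher $\l$-derivatives structurally obvious (since $\mf{F}$ is a cubic polynomial, affine in $\l$), and it actually delivers the real-analyticity of $\mf{F}$ for free, a stronger fact the paper later invokes in Section~4.2 without re-proving. The paper's hands-on computation, by contrast, is self-contained and does not require the reader to know the general theory of polynomial maps on Banach spaces, and it exhibits concretely the estimate (labelled~\eqref{eq:cotacubo} in the text) that is reused later in the properness proof of Lemma~\ref{LF4}.
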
 
		
	\begin{proof}
		Let us start by proving the continuity of $\mf{F}$. Let $\{(\l_{n},u_{n})\}_{n\in\N}\subset\R\times H^{2}(\mathbb{T})$ be a sequence satisfying
		$$\lim_{n\to+\infty}\l_n = \l_0 \quad \text{ and } \quad
		\lim_{n\to+\infty}u_{n}=u_{0} \quad \text{ in  $H^{2}(\mathbb{T})$}.$$
		Let $N\in\N$ such that $\left\|u_{n}-u_{0}\right\|_{H^{2}}<1$ for each $n\geq N$. In particular, this implies that
		$$\left\|u_{n}\right\|_{H^{2}}<1+\left\|u_{0}\right\|_{H^{2}}, \quad n\geq N.$$
		We rewrite the difference of the corresponding operator as
		\begin{align*}
			&\mf{F}(\l_{n},u_{n})-\mf{F}(\l_{0},u_{0})=(u_{n}-u_{0})''+(\l_{n}-\l_{0})u_{n}+\l_{0}(u_{n}-u_{0})+a(t)(u_{n}^{3}-u_{0}^{3}).
		\end{align*}
		Fix $n\geq N$. By taking the $L^{2}$-norm, we obtain
		\begin{align}
			\label{E8}
			\nonumber
			& \left\|\mf{F}(\l_{n},u_{n})-\mf{F}(\l_{0},u_{0})\right\|_{L^{2}} \\ \nonumber
			& \leq  \left\|(u_{n}-u_{0})''\right\|_{L^{2}}+\left|\l_{n}-\l_{0}\right|  \left\|u_{n}\right\|_{L^{2}}+\left|\l_{0}\right|  \left\|u_{n}-u_{0}\right\|_{L^{2}}+\left\|a\right\|_{L^{\infty}} \left\|u_{n}^{3}-u_{0}^{3}\right\|_{L^{2}} \\ \nonumber
			& \leq  \left\|u_{n}-u_{0}\right\|_{H^{2}}+\left|\l_{n}-\l_{0}\right| \left\|u_{n}\right\|_{H^{2}}+\left|\l_{0}\right| \left\|u_{n}-u_{0}\right\|_{H^{2}}+\left\|a\right\|_{L^{\infty}}\left\|u_{n}^{3}-u_{0}^{3}\right\|_{L^{2}} \\
			&\leq \left\|u_{n}-u_{0}\right\|_{H^{2}}+\left|\l_{n}-\l_{0}\right| \left(1+\left\|u_{0}\right\|_{H^{2}}\right)+\left|\l_{0}\right| \left\|u_{n}-u_{0}\right\|_{H^{2}}+\left\|a\right\|_{L^{\infty}}\left\|u_{n}^{3}-u_{0}^{3}\right\|_{L^{2}}.
		\end{align}
		In order to bound the nonlinear term, we proceed as follows
		\begin{align}
			\left\|u_{n}^{3}-u_{0}^{3}\right\|_{L^{2}}^{2} & =\int_{0}^{T}\left|u_{n}^{3}-u_{0}^{3}\right|^{2} \, {\rm{d}}t= \int_{0}^{T}\left|u_{n}-u_{0}\right|^{2} \left|u_{n}^{2}+ u_{n} u_{0} + u_{0}^{2}\right|^{2} \, {\rm{d}}t  \\
			& \leq 	\left\|u_{n}^{2}+u_{n}u_{0}+u_{0}^{2}\right\|_{L^{\infty}}^{2} \int_{0}^{T} \left|u_{n}-u_{0}\right|^{2} \, {\rm{d}}t  \\
			& \leq \left(\left\|u_{n}\right\|_{L^{\infty}}^{2}+\left\|u_{n}\right\|_{L^{\infty}} \left\|u_{0}\right\|_{L^{\infty}}+\left\|u_{0}\right\|_{L^{\infty}}^{2}\right)^{2}  \left\|u_{n}-u_{0}\right\|^{2}_{L^{2}}. \label{eq:cotacubo}
		\end{align}
		The Sobolev embedding $H^{2}(\mathbb{T})\hookrightarrow \mc{C}^{0}(\mathbb{T})$ gives
		\begin{align*}
			&\left\|u_{n}\right\|_{L^{\infty}}\leq C \left\|u_{n}\right\|_{H^{2}} < C \left(1+\left\|u_{0}\right\|_{H^{2}}\right), \quad n\geq N,\\
			& \left\|u_{0}\right\|_{L^{\infty}}\leq C \left\|u_{0}\right\|_{H^{2}},
		\end{align*}
		for some constant $C>0$. Hence, we obtain
		\begin{align}
			\label{E9}
			\left\|u_{n}^{3}-u_{0}^{3}\right\|_{L^{2}}^{2} & \leq \left(\left\|u_{n}\right\|_{L^{\infty}}^{2}+\left\|u_{n}\right\|_{L^{\infty}} \left\|u_{0}\right\|_{L^{\infty}}+\left\|u_{0}\right\|_{L^{\infty}}^{2}\right)^{2} \left\|u_{n}-u_{0}\right\|^{2}_{L^{2}} \nonumber \\
			&\leq \left(C^2\left(1+\left\|u_{0}\right\|_{H^{2}}\right)^{2}+C\left(1+\left\|u_{0}\right\|_{H^{2}}\right)\left\|u_{0}\right\|_{L^{\infty}}+\left\|u_{0}\right\|^{2}_{L^{\infty}}\right)^{2} \left\|u_{n}-u_{0}\right\|_{H^{2}}^{2}. 
		\end{align}
		Thanks to \eqref{E8} and \eqref{E9}, we obtain
		$$\lim_{n\to+\infty}\|\mf{F}(\l_{n},u_{n})-\mf{F}(\l_{0},u_{0})\|_{L^{2}}=0,$$
		which proves the continuity of $\mf{F}$. Let us prove now that
		\begin{equation}
			\label{E7.1}
			\partial_{u}\mf{F}(\l,u)\left[v\right]=v''+\l v+3a(t)u^{2}v, \quad v\in H^{2}(\mathbb{T}),
		\end{equation}
		for each $(\l,u)\in\R\times H^{2}(\mathbb{T})$. We start by rewriting
		\begin{align*}
			\mf{F}(\l,u+v)-\mf{F}(\l,u)-v''-\l v-3 a(t)u^{2} v=a(t)\left[(u+v)^{3}-u^{2}(u+3 v)\right]=a(t)v^{2}(v+3u).
		\end{align*}
		By taking the $L^{2}$-norm and using the Sobolev embedding $H^{2}(\mathbb{T})\hookrightarrow \mc{C}^{0}(\mathbb{T})$, we obtain 
		\begin{align*}
			\left\|a(t)v^{2}(v+3u)\right\|_{L^{2}}^{2} & \leq  T \left\|a\right\|_{L^{\infty}}^{2} \left(\left\|v\right\|_{L^{\infty}}+3 \left\|u\right\|_{L^{\infty}}\right)^2\left\|v\right\|_{L^{\infty}}^{4}  \\
			& \leq C^6 T \left\|a\right\|_{L^{\infty}}^{2} \left(\left\|v\right\|_{H^{2}}+3\left\|u\right\|_{H^{2}}\right)^2 \left\|v\right\|^{4}_{H^{2}} .
		\end{align*}
		Therefore we deduce that
		\begin{align*}
			\lim_{\left\|v\right\|_{H^{2}}\to 0}\frac{\left\|\mf{F}(\l,u+v)-\mf{F}(\l,u)-v''-\l v-3 a(t)u^{2} v\right\|_{L^{2}}}{\left\|v\right\|_{H^{2}}}=0,
		\end{align*}
		which proves \eqref{E7.1}. The proofs of the remaining identities and the $\mc{C}^{\infty}$ regularity follow similar patterns.
	\end{proof}
		
	The next result is fundamental in order to use topological degree arguments like those necessary to obtain global bifurcation results (see Appendix \ref{app:B.3}).
		
	\begin{lemma}
		\label{LF4}
		The operator $\mf{F}$ is proper on closed and bounded subsets of $\R\times H^{2}(\mathbb{T})$.
	\end{lemma}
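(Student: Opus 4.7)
The goal is to show that whenever $A\subset \R\times H^{2}(\T)$ is closed and bounded and $K\subset L^{2}(\T)$ is compact, the preimage $\mf{F}^{-1}(K)\cap A$ is compact in $\R\times H^{2}(\T)$. I would proceed sequentially: pick an arbitrary sequence $\{(\l_{n},u_{n})\}\subset \mf{F}^{-1}(K)\cap A$ and extract a subsequence converging in $\R\times H^{2}(\T)$ to a point still in $\mf{F}^{-1}(K)\cap A$.

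The plan is to exploit the structure $\mf{F}(\l,u)=u''+\l u+a(t)u^{3}$, where the map $u\mapsto u''$ is the ``invertible'' Fredholm part that recovers $H^{2}$-regularity, and the remaining lower-order terms will be handled by compactness. First, boundedness of $A$ gives $\{\l_{n}\}$ bounded in $\R$ and $\{u_{n}\}$ bounded in $H^{2}(\T)$, so up to a subsequence $\l_{n}\to \l_{0}$ in $\R$, and by the compact Sobolev embedding $H^{2}(\T)\hookrightarrow H^{1}(\T)\hookrightarrow \mc{C}^{0}(\T)$ (combined with Banach--Alaoglu in the reflexive space $H^{2}(\T)$), we may further extract so that $u_{n}\rightharpoonup u_{0}$ in $H^{2}(\T)$ and $u_{n}\to u_{0}$ strongly in $H^{1}(\T)$ and in $\mc{C}^{0}(\T)$. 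Compactness of $K$ in $L^{2}(\T)$ permits a further extraction so that $\mf{F}(\l_{n},u_{n})\to f$ in $L^{2}(\T)$.

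Next, using uniform $\mc{C}^{0}$-bounds together with $a\in L^{\infty}$, the estimate of the cubic term in the style of \eqref{eq:cotacubo} yields $a(t)u_{n}^{3}\to a(t)u_{0}^{3}$ in $L^{2}(\T)$, and obviously $\l_{n}u_{n}\to \l_{0}u_{0}$ in $L^{2}(\T)$. Writing
\begin{equation*}
u_{n}''=\mf{F}(\l_{n},u_{n})-\l_{n}u_{n}-a(t)u_{n}^{3},
\end{equation*}
the right-hand side converges strongly in $L^{2}(\T)$ to $f-\l_{0}u_{0}-a(t)u_{0}^{3}$. Since $u_{n}\rightharpoonup u_{0}$ in $H^{2}(\T)$ implies $u_{n}''\rightharpoonup u_{0}''$ in $L^{2}(\T)$, the limit must coincide with $u_{0}''$, and strong convergence $u_{n}''\to u_{0}''$ in $L^{2}(\T)$ follows. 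Combined with the strong convergence $u_{n}\to u_{0}$ in $H^{1}(\T)$, this gives $u_{n}\to u_{0}$ in $H^{2}(\T)$.

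Finally, continuity of $\mf{F}$ (Lemma \ref{L1}) ensures $\mf{F}(\l_{0},u_{0})=f\in K$, and closedness of $A$ yields $(\l_{0},u_{0})\in A$, hence $(\l_{0},u_{0})\in \mf{F}^{-1}(K)\cap A$, proving properness. The only delicate point, which is however standard in this one-dimensional setting, is the need for the compact embedding into a space of continuous functions in order to pass to the limit in the cubic nonlinearity; everything else is a routine combination of weak/strong compactness with the Fredholm structure carried by $u\mapsto u''$.
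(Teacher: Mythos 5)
Your proposal is correct, but it follows a genuinely different (and somewhat more self-contained) route than the paper. The paper first invokes Berger's criterion \cite[Th. 2.7.1]{B}, reducing properness on a closed bounded set $K$ to two checks — that $\mf{F}(K)$ is closed in $L^{2}(\T)$ and that every fiber $\mf{F}^{-1}(f)\cap K$ is compact — and then treats each by combining the compact embedding $H^{2}(\T)\hookrightarrow\mc{C}^{1}(\T)$, elliptic regularity to conclude $u_0\in H^{2}(\T)$, and $L^{2}$-elliptic estimates to upgrade the $\mc{C}^{1}$-limit to an $H^{2}$-limit. You instead argue directly from the definition: extract $\l_n\to\l_0$, $u_n\rightharpoonup u_0$ weakly in $H^{2}(\T)$ and strongly in $H^1(\T)\cap\mc{C}^{0}(\T)$, and $\mf{F}(\l_n,u_n)\to f$ in $L^{2}(\T)$ by compactness of $K$, and then read off from $u_n''=\mf{F}(\l_n,u_n)-\l_n u_n - a(t)u_n^3$ that $u_n''$ converges strongly in $L^{2}(\T)$; matching this against the weak limit $u_0''$ gives strong $H^{2}$-convergence. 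This is a clean substitute for the paper's appeal to elliptic regularity/estimates: the ``invertibility'' of $u\mapsto u''$ is used explicitly through the equation rather than through a cited a priori bound, and elliptic regularity of $u_0$ is a consequence rather than an input (since $u_0$ is already the $H^2$-weak limit). The one small point worth spelling out is that $f\in K$ because compact subsets of $L^{2}(\T)$ are closed — you use this implicitly when concluding $(\l_0,u_0)\in\mf{F}^{-1}(K)$. Overall, both proofs rely on the same ingredients (compactness of the parameter, compact Sobolev embedding for the nonlinear and lower-order terms, and the second-derivative structure to recover $H^2$), but yours avoids the detour through Berger's characterization and the external elliptic-estimates reference.
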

		
	\begin{proof}
		It suffices to prove that the restriction of $\mf{F}$ to the closed subset ${K}:=[\lambda_{-},\lambda_{+}]\times \bar {B}_{R}$ is proper, where $\lambda_{-}<\lambda_{+}$, and $B_{R}$ indicates the open ball of $H^{2}(\mathbb{T})$ of radius $R>0$ centered at $0$. According to \cite[Th. 2.7.1]{B}, we must check that $\mf{F}(K)$ is closed in $L^{2}(\mathbb{T})$ and that, for every $f\in L^{2}(\mathbb{T})$, the set $\mf{F}^{-1}(f)\cap {K}$ is compact in $\mathbb{R}\times H^{2}(\mathbb{T})$.
		\par
		To show that $\mf{F}(K)$ is closed in $L^{2}(\mathbb{T})$, let $\left\{f_{n}\right\}_{n\in\mathbb{N}}$ be a sequence in $\mf{F}(K)\subset L^{2}(\mathbb{T})$ such that
		\begin{equation}
			\label{7.2}
			\lim_{n\to+\infty} f_{n}=f \quad \text{in } L^{2}(\mathbb{T}).
		\end{equation}
		Then, there exists a sequence $\left\{(\l_{n},u_{n})\right\}_{n\in\mathbb{N}}$ in $K$ such that
		\begin{equation}
			\label{7.3}
			f_{n}=\mf{F}(\l_{n},u_{n}) \quad \text{for all $n\in\mathbb{N}$}.
		\end{equation}
		Due to the compactness of the embedding $H^{2}(\mathbb{T})\hookrightarrow \mc{C}^{1}(\mathbb{T})$, we can extract a subsequence $\left\{(\l_{n_{k}},u_{n_{k}})\right\}_{k\in\mathbb{N}}$ such that, for some $(\l_{0},u_{0})\in [\l_-,\l_+]\times \mc{C}^{1}(\mathbb{T})$, $u_0\in\bar B_R$,
		\begin{equation}
			\label{7.4}
			\lim_{k\to+\infty} \l_{n_{k}} = \l_{0} \quad \text{and} \quad
			\lim_{k\to+\infty} u_{n_{k}} =u_{0} \quad \text{in $\mc{C}^{1}(\mathbb{T})$}.
		\end{equation}
		As a direct consequence of \eqref{7.2}, \eqref{7.3} and \eqref{7.4}, it becomes apparent that $u_{0}$ must be a weak solution of the nonlinear elliptic problem
		\begin{equation}
			\label{7.5}
			\left\{
			\begin{array}{l}
				u_{0}''+\l_{0} u_{0} + a(t) u_{0}^{3}=f, \quad \quad t\in (0,T),  \\
				u_{0}(0)=u_{0}(T), \quad u_{0}'(0)=u_{0}'(T).
			\end{array}
			\right.
		\end{equation}
		By elliptic regularity, $u_{0}\in H^{2}(\mathbb{T})$ and $f=\mf{F}(\l_{0},u_{0})$. Therefore, $f\in \mf{F}(K)$, and $\mf{F}(K)$ is closed.
		\par
		Now, fix $f\in L^{2}(\mathbb{T})$. To show that $\mf{F}^{-1}(f)\cap K$ is compact in $[\l_-,\l_+]\times H^{2}(\mathbb{T})$, let $\{(\l_{n},u_{n})\}_{n\in\mathbb{N}}$ be a sequence in $\mf{F}^{-1}(f)\cap K$. Then,
		\begin{equation}
			\label{7.6}
			\mf{F}(\l_{n},u_{n})=f \quad \text{for all $n\in\mathbb{N}$}.
		\end{equation}
		Due again to the compactness of the embedding $H^{2}(\mathbb{T}) \hookrightarrow \mc{C}^{1}(\mathbb{T})$, we can extract a subsequence $\left\{(\l_{n_{k}},u_{n_{k}})\right\}_{k\in\mathbb{N}}$ such that, for some $(\l_{0},u_{0})\in [\l_-,\l_+]\times \mc{C}^{1}(\mathbb{T})$,  \eqref{7.4} holds.	As above,  $u_{0}\in \mc{C}^{1}(\mathbb{T})$ is a weak solution of \eqref{7.5} and, by elliptic regularity, $u_{0}\in H^{2}(\mathbb{T})$ and $\mf{F}(\l_{0},u_{0})=f$. In particular, for every $k\in\mathbb{N}$,
		\begin{equation*}
			-\left(u_{n_{k}}-u_{0}\right)''=\l_{n_k} \left(u_{n_{k}}-u_{0}\right)+\left(\l_{n_k}-\l_0\right)u_0+ a(t)\left(u^{3}_{n_{k}}-u^{3}_{0}\right) \qquad \text{in $(0,T)$}.
		\end{equation*}
		By $L^2$-elliptic estimates, there exists a positive constant $C>0$ such that
		\begin{equation*}
			\left\|u_{n_{k}}-u_{0}\right\|_{H^{2}}\leq C \left( \left\|u_{n_{k}}-u_{0}\right\|_{L^{2}}+ \left\|u^{3}_{n_{k}}-u^{3}_{0}\right\|_{L^{2}}+\left|\l_{n_k}-\l_0\right|\right)
		\end{equation*}
		for sufficiently large $k\in\N$. Therefore, since \eqref{7.4} implies that $u_{n_k}\to u_0$ in $L^2(\T)$ as $k\to+\infty$, the previous relation and \eqref{eq:cotacubo} imply that $u_{n_k}\to u_0$ in $H^2(\T)$, which concludes the proof.
	\end{proof}
		
	\begin{lemma}
		\label{L3}
		The linearization of $\mf{F}$ with respect to $u$ at $(\l,u)\in \R\times H^{2}(\mathbb{T})$,
		$$\partial_{u}\mf{F}(\l,u)\colon H^{2}(\mathbb{T}) \longrightarrow L^{2}(\mathbb{T}), \quad \partial_{u}\mf{F}(\l,u)\left[v\right]=v''+\l v+3a(t)u^{2}v,$$
		belongs to the class $\Phi_{0}(H^{2}(\mathbb{T}), L^{2}(\mathbb{T}))$ of Fredholm operators of index zero.
	\end{lemma}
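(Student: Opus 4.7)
The plan is to write $\partial_u\mathfrak{F}(\lambda,u)$ as the sum of an isomorphism and a compact perturbation, and then invoke the standard stability of the Fredholm index under compact perturbations. Concretely, I will decompose
\[
\partial_u\mathfrak{F}(\lambda,u)=L_0+K,\qquad L_0(v):=v''-v,\qquad K(v):=(\lambda+1)v+3a(t)u^2v,
\]
and show separately that $L_0\in\mathrm{Iso}(H^2(\mathbb{T}),L^2(\mathbb{T}))$ and that $K$ is compact from $H^2(\mathbb{T})$ to $L^2(\mathbb{T})$.

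For the first step I would use the Fourier characterization of $H^s(\mathbb{T})$ given in the excerpt. For $v=\sum_{k\in\mathbb{Z}}\widehat{v}(k)e^{i2\pi k t/T}$ one has $\widehat{L_0 v}(k)=-\bigl(1+(2\pi k/T)^2\bigr)\widehat{v}(k)$, and since the multiplier $m(k):=-(1+(2\pi k/T)^2)$ satisfies $|m(k)|\asymp 1+|k|^2$ uniformly in $k$, the operator $L_0$ is bijective and bicontinuous when $H^2(\mathbb{T})$ is endowed with the equivalent norm $\|\cdot\|_{H^2_{\mathbf{f}}}$. In particular $L_0\in\Phi_0(H^2(\mathbb{T}),L^2(\mathbb{T}))$.

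For the second step, I would observe that the multiplier $m_u(t):=\lambda+1+3a(t)u^2(t)$ belongs to $L^{\infty}(\mathbb{T})$: indeed $a\in L^{\infty}(\mathbb{T})$ by \eqref{ass:aloc} and $u^2\in L^{\infty}(\mathbb{T})$ thanks to the Sobolev embedding $H^2(\mathbb{T})\hookrightarrow \mathcal{C}^0(\mathbb{T})$ used throughout Lemma \ref{L1}. Therefore multiplication by $m_u$ is a bounded operator $L^2(\mathbb{T})\to L^2(\mathbb{T})$. Factoring $K$ as the composition
\[
H^2(\mathbb{T})\xhookrightarrow{\ \iota\ } L^2(\mathbb{T})\xrightarrow{\ m_u\cdot\ } L^2(\mathbb{T}),
\]
where $\iota$ is the Rellich--Kondrachov compact embedding (which is immediate from the Fourier description, since $(1+|k|^2)^{-1}\to 0$ as $|k|\to\infty$), yields that $K$ is compact as the composition of a compact operator with a bounded one.

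Finally, since $L_0\in\Phi_0$ and $K$ is compact, the classical perturbation result (Kato, or e.g.\ the version stated in Appendix \ref{A1}) gives $\partial_u\mathfrak{F}(\lambda,u)=L_0+K\in\Phi_0(H^2(\mathbb{T}),L^2(\mathbb{T}))$, which is the desired conclusion. No step here is genuinely hard; the only point that deserves a careful verification is the $L^{\infty}$ bound of the coefficient $3a(t)u^2(t)$, which is where the hypothesis \eqref{ass:aloc} and the Sobolev embedding in dimension one play their role.
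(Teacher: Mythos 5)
Your proof is correct, and it reaches the same structural conclusion as the paper's (a Fredholm-index-zero operator via the decomposition ``isomorphism plus compact''), but the decomposition itself and the tools used to verify it are genuinely different. The paper takes the isomorphism to be $\partial_u\mathfrak{F}(\lambda,u)+\mu J$ for a $\lambda$-dependent shift $\mu>-\lambda$, and establishes invertibility via the Lax--Milgram theorem applied to the bilinear form
$\mathfrak{a}(v_1,v_2)=\int_0^T v_1'v_2'\,{\rm d}t+\int_0^T(\lambda+\mu+3a(t)u^2)v_1v_2\,{\rm d}t$
together with elliptic regularity; the compact perturbation is then the tiny operator $-\mu J$. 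You instead fix once and for all the constant-coefficient isomorphism $L_0(v)=v''-v$, verify its bijectivity directly from the Fourier-multiplier description of $H^2(\mathbb{T})$ and $L^2(\mathbb{T})$, and absorb the entire variable coefficient $(\lambda+1)+3a(t)u^2$ into the compact part $K$, whose compactness you obtain by factoring through the compact embedding $H^2(\mathbb{T})\hookrightarrow L^2(\mathbb{T})$. Your route is more elementary and self-contained in this one-dimensional periodic setting (no Lax--Milgram, no coercivity argument, no elliptic regularity, and no auxiliary parameter $\mu$); the paper's route is the one that transfers verbatim to higher-dimensional elliptic problems, where a clean Fourier diagonalization of the principal part is not available. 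Both arguments hinge on the same two inputs the paper emphasizes elsewhere: the Sobolev embedding $H^2(\mathbb{T})\hookrightarrow L^\infty(\mathbb{T})$ to control the coefficient $3a(t)u^2$, and the Rellich--Kondrachov compactness of the inclusion $H^2(\mathbb{T})\hookrightarrow L^2(\mathbb{T})$.
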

		
	\begin{proof}
		Observe first that, as $u\in H^{2}(\mathbb{T})$, necessarily $u\in L^{\infty}(\mathbb{T})$ by the Sobolev embedding $H^{2}(\mathbb{T})\hookrightarrow L^{\infty}(\mathbb{T})$. Take $\mu\in\R$ such that $\mu>-\l$.
		Then, the bilinear form
		$$\mf{a}\colon H^{1}(\mathbb{T})\times H^{1}(\mathbb{T}) \longrightarrow \R, \quad \mf{a}(v_{1},v_{2}):=\int_{0}^{T}v_{1}' v_{2}' \, {\rm{d}}t + \int_{0}^{T}\left(\l+\mu+3 a(t) u^{2}\right)v_{1}v_{2} \, {\rm{d}}t,$$
		is continuous and coercive. By the Lax--Milgram theorem and elliptic regularity theory, we deduce that 
		$$\partial_{u}\mf{F}(\l,u)+\mu J \in GL(H^{2}(\mathbb{T}), L^{2}(\mathbb{T})),$$
		where $J\colon H^{2}(\mathbb{T}) \hookrightarrow L^{2}(\mathbb{T})$ is the inclusion. By the Rellich--Kondrachov theorem, the operator $J$ is compact and, since
		$$\partial_{u}\mf{F}(\l,u)=(\partial_{u}\mf{F}(\l,u)+\mu J)-\mu J,$$
		the operator $\partial_{u}\mf{F}(\l,u)$ can be expressed as the sum of an isomorphism and a compact operator. Therefore, by \cite[Chap. XV, Th. 4.1]{GGS}, $\partial_{u}\mf{F}(\l,u)$ is Fredholm of index zero.
	\end{proof}
		
	\noindent In particular, the linearization $\mf{L}(\l):=\p_u\mf{F}(\l,0)$ is given by
	$$\mf{L}(\l)\colon H^{2}(\mathbb{T}) \longrightarrow L^{2}(\mathbb{T}), \quad \mf{L}(\l)\left[v\right]=v''+\l v.$$
	As a direct consequence of Lemmas \ref{L1} and \ref{L3}, we deduce that $\mf{L}(\l)$ is a Fredholm operator of index zero for each $\l\in\R$, and that the family
	$$\mf{L}\colon\R\longrightarrow\Phi_{0}(H^{2}(\mathbb{T}), L^{2}(\mathbb{T}))$$
	is continuous. To analyze the spectral properties of the family $\mf{L}$, we will use the setting of nonlinear spectral theory collected in Appendix \ref{A1} for the convenience of the reader.
	Recall that the \textit{generalized spectrum} of $\mf{L}$, formed by the so-called \textit{generalized eigenvalues}, is defined as
	$$\Sigma(\mf{L}):=\left\{\l\in \R \colon \mf{L}(\l)\notin GL(H^{2}(\mathbb{T}), L^{2}(\mathbb{T}))\right\}.$$
	The next lemma describes the set $\Sigma(\mf{L})$; we recall the values $\s_k=\left(\frac{2\pi k}{T}\right)^2$, $k\in\N\cup \{0\}$, already introduced in Section \ref{section:2}.

	\begin{lemma}
		\label{L3.2}
		The generalized spectrum of the family $\mf{L}(\l)$ is given by
		\begin{equation}
			\label{E1.1}
			\Sigma(\mf{L})=\left\{\s_k\right\}_{k\in\N\cup\{0\}}=\left\{\left(\frac{2\pi k}{T}\right)^2 \colon k\in\N\cup \{0\}\right\}.
		\end{equation}
		Moreover:
		\begin{itemize}
			\item[(i)] for $\l\notin \Sigma(\mf{L})$, the inverse $\mf{L}^{-1}(\l)\colon L^{2}(\mathbb{T})\to H^{2}(\mathbb{T})$, is given by
			$$\mf{L}^{-1}(\l)\left[v\right]=\sum_{k\in \Z}\frac{\widehat{v}(k)}{\l-\left(\frac{2\pi k}{T}\right)^2}e^{i 2\pi k t/T}, \quad v\in L^{2}(\mathbb{T});$$
			\item[(ii)] the nullspace of $\mf{L}(\l)$ is given by
			\[
			N[\mf{L}(\l)]=
			\begin{cases}
				\Span\{1\}, & \text{for $\l=\s_0=0$,}\\
					\Span\left\{\cos\left(\frac{2\pi k }{T}t\right),\sin\left(\frac{2\pi k }{T}t\right)\right\}, & \text{for $\l=\s_k$, $k\in\N$.}
			\end{cases}
			\] 
		\end{itemize}
	\end{lemma}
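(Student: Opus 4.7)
The plan is to compute $\mf{L}(\l)$ entirely via the Fourier series characterization of $H^{2}(\mathbb{T})$ already recorded at the start of Section \ref{section:3}, and to read off the spectrum, inverse, and nullspace directly.

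First I would observe that, using the identification $u \longleftrightarrow \{\widehat{u}(k)\}_{k\in\Z}$ and the fact that for $u\in H^{2}(\mathbb{T})$ the second derivative satisfies $\widehat{u''}(k) = -\left(\tfrac{2\pi k}{T}\right)^{2}\widehat{u}(k)$, the operator $\mf{L}(\l)$ acts diagonally on Fourier coefficients:
\begin{equation*}
\widehat{\mf{L}(\l)[v]}(k) \;=\; \left(\l-\left(\tfrac{2\pi k}{T}\right)^{2}\right)\widehat{v}(k),\qquad k\in\Z.
\end{equation*}
Thus, given $f\in L^{2}(\mathbb{T})$, the equation $\mf{L}(\l)[v]=f$ is equivalent, coefficient by coefficient, to $\bigl(\l-\s_{|k|}\bigr)\widehat{v}(k)=\widehat{f}(k)$.

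Next I would split into the two cases. If $\l\notin\{\s_{k}\}_{k\in\N\cup\{0\}}$, then $\l-\s_{|k|}\neq 0$ for all $k$, and since $|\l-\s_{|k|}|\sim\bigl(\tfrac{2\pi|k|}{T}\bigr)^{2}$ as $|k|\to\infty$, the unique formal solution
\begin{equation*}
v(t)=\sum_{k\in\Z}\frac{\widehat{f}(k)}{\l-\left(\frac{2\pi k}{T}\right)^{2}}\,e^{i 2\pi k t / T}
\end{equation*}
satisfies $\sum_{k}(1+|k|^{4})|\widehat{v}(k)|^{2}<\infty$, i.e.\ $v\in H^{2}(\mathbb{T})$, with $\|v\|_{H^{2}_{\mathbf{f}}}\lesssim\|f\|_{L^{2}}$. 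This gives bijectivity of $\mf{L}(\l)$, and since $\mf{L}(\l)$ is Fredholm of index zero (Lemma~\ref{L3}), it is an isomorphism; formula (i) is precisely the displayed expression for $\mf{L}^{-1}(\l)$. Conversely, if $\l=\s_{k_{0}}$ for some $k_{0}\in\N\cup\{0\}$, then the Fourier equation for $k=\pm k_{0}$ becomes $0\cdot\widehat{v}(\pm k_{0})=\widehat{f}(\pm k_{0})$, which is not solvable for general $f$; hence $\mf{L}(\s_{k_{0}})\notin GL(H^{2}(\mathbb{T}), L^{2}(\mathbb{T}))$. This establishes \eqref{E1.1}.

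Finally, for the nullspace at $\l=\s_{k_{0}}$, $v\in N[\mf{L}(\s_{k_{0}})]$ iff $\widehat{v}(k)=0$ whenever $\s_{|k|}\neq\s_{k_{0}}$, i.e.\ whenever $|k|\neq k_{0}$. Imposing the reality condition $\overline{\widehat{v}(k)}=\widehat{v}(-k)$, for $k_{0}=0$ this forces $v\equiv\widehat{v}(0)\in\R$, so $N[\mf{L}(0)]=\Span\{1\}$; for $k_{0}\in\N$ we obtain $v(t)=\widehat{v}(k_{0})\,e^{i 2\pi k_{0} t/T}+\overline{\widehat{v}(k_{0})}\,e^{-i 2\pi k_{0} t/T}$, which spans the two-real-dimensional space generated by $\cos(2\pi k_{0}t/T)$ and $\sin(2\pi k_{0}t/T)$.

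The only nontrivial point is verifying that the formally defined $v$ in case $\l\notin\Sigma(\mf{L})$ really lies in $H^{2}(\mathbb{T})$, but this follows immediately from the equivalence of $\|\cdot\|_{H^{2}}$ and $\|\cdot\|_{H^{2}_{\mathbf{f}}}$ together with the asymptotics $|\l-\s_{|k|}|\gtrsim |k|^{2}$; everything else is a direct Fourier computation.
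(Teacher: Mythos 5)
Your proof is correct and follows essentially the same route as the paper: both arguments diagonalize $\mf{L}(\l)$ on the Fourier side, obtain the explicit coefficient-wise formula for $\mf{L}^{-1}(\l)$ and verify it maps $L^{2}(\mathbb{T})$ boundedly into $H^{2}(\mathbb{T})$ using the equivalent norm $\|\cdot\|_{H^{2}_{\mathbf{f}}}$ and the growth $|\l-\s_{|k|}|\gtrsim k^{2}$, and then read off the nullspace at $\s_{k_{0}}$ from the reality condition $\overline{\widehat{v}(k)}=\widehat{v}(-k)$. The only cosmetic difference is that the paper establishes $\s_{k}\in\Sigma(\mf{L})$ by exhibiting a nontrivial kernel, whereas you additionally note the failure of surjectivity at $\s_{k_{0}}$; both routes are immediate from the diagonal form.
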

	
	\begin{proof}
		Let $u\in N[\mf{L}(\l)]\subset H^{2}(\mathbb{T})$. Then $\mf{L}(\l)[u]=0$ or, equivalently, $u''+\l u=0$. By plugging the Fourier series expansion of $u$ in this differential equation, we obtain
		\begin{equation}
			\label{E2}
			\sum_{k\in \Z}\left(-\left(\tfrac{2\pi k}{T}\right)^{2}+\l\right)\widehat{u}(k) e^{i 2\pi k t / T}=0.
		\end{equation}
		Hence, if $\l\neq \left(\tfrac{2\pi k}{T}\right)^{2}$ for all $k\in\N\cup\{0\}$, necessarily $\widehat{u}(k)=0$ for all $k\in\Z$ and this implies that $u=0$. Thus, $N[\mf{L}(\l)]=\{0\}$ for $\l\neq \left(\tfrac{2\pi k}{T}\right)^{2}$.
		\par For $\l\neq \left(\tfrac{2\pi k}{T}\right)^{2}$ and $v\in L^{2}(\mathbb{T})$, consider the equation $\mf{L}(\l)[u]=v$, which can be rewritten as 
		\begin{equation}
			\label{E4}
			u''+\l u=v, \quad u\in H^{2}(\mathbb{T}).
			\end{equation}
		By considering the Fourier series expansions of $u$ and $v$, the previous relation becomes
		$$\sum_{k\in \Z}\left(-\left(\tfrac{2\pi k}{T}\right)^{2}+\l\right)\widehat{u}(k) e^{i 2\pi k t / T}=\sum_{k\in \Z} \widehat{v}(k) e^{i 2\pi k t / T}.$$
		This implies that 
		$$\left(-\left(\tfrac{2\pi k}{T}\right)^{2}+\l\right)\widehat{u}(k)=\widehat{v}(k), \quad \text{for all $k\in\Z$},$$
		and, consequently,
		\begin{equation}
			\label{E5}
			u(t)=\sum_{k\in \Z}\widehat{u}(k) e^{i 2\pi k t / T}= \sum_{k\in \Z}\frac{\widehat{v}(k)}{\l-\left(\tfrac{2\pi k}{T}\right)^{2}}e^{i 2\pi k t/T}.
		\end{equation}
		Moreover, $u\in H^{2}(\mathbb{T})$, because
		\begin{equation}
			\label{E7}
			\|u\|_{H^{2}_{\mathbf{f}}}^{2}=\sum_{k\in \Z}\frac{1+k^{4}}{\left(\l-\left(\tfrac{2\pi k}{T}\right)^{2}\right)^2}\left|\widehat{v}(k)\right|^{2}\leq \left\|\left\{x_{k}(\l)\right\}_{k\in\Z}\right\|_{\ell^{\infty}} \sum_{k\in \Z}\left|\widehat{v}(k)\right|^{2} \leq \left\|\left\{x_{k}(\l)\right\}_{k\in\Z}\right\|_{\ell^{\infty}} \left\|v\right\|_{L^{2}_{\mathbf{f}}}^{2},
		\end{equation}
		where the sequence $\left\{x_{k}(\l)\right\}_{k\in\Z}\in \ell^{\infty}(\Z)$ is defined as 
		$$x_{k}(\l):=\frac{1+k^{4}}{\left(\l-\left(\tfrac{2\pi k}{T}\right)^{2}\right)^2}, \quad k\in\Z.$$
		Consequently, $u$ defined in \eqref{E5} satisfies $\mf{L}(\l)[u]=v$ and, hence, $\mf{L}(\l)$ is onto. Moreover, \eqref{E7} shows that the operator $\mf{L}^{-1}(\l)\colon L^{2}(\mathbb{T})\to H^{2}(\mathbb{T})$ is bounded.
		By the open mapping theorem, $\mf{L}(\l)\in GL(H^{2}(\mathbb{T}), L^{2}(\mathbb{T}))$. This proofs (i) and shows that
		$$\Sigma(\mf{L})\subset \left\{\left(\tfrac{2\pi k}{T}\right)^{2} \colon k\in\N\cup \{0\}\right\}.$$
		Let $\l=\s_0=0$ and $u\in N[\mf{L}(0)]$. Then, \eqref{E2} implies that 
		$$\widehat{u}(k)=0, \quad \text{for all $k\in\Z\setminus\{0\}$}.$$
		Hence, $u(t)=\widehat{u}(0)$, i.e., $u$ is constant. Thus, $0\in\Sigma(\mf{L})$, and $N[\mf{L}(0)]=\Span\{1\}\neq 0$. If $\l=\left(\tfrac{2\pi k_0}{T}\right)^{2}$ for some $k_{0}\in\N$ and $u\in N[\mf{L}(\l)]$, then equation \eqref{E2} implies that 
		$$\widehat{u}(k)=0, \quad \text{for all $k\in\Z\setminus\{k_0,-k_0\}$}.$$
		Hence, $u(t)=\widehat{u}(k_{0})e^{i 2\pi k_{0} t/ T}+\widehat{u}(-k_{0})e^{-i 2\pi k_{0} t/ T}$ and, therefore,
		$$N\left[\mf{L}\left(\left(\tfrac{2\pi k_0}{T}\right)^{2}\right)\right]=\Span\left\{e^{i 2\pi k_{0} t/ T},e^{-i 2\pi k_{0} t/ T}\right\}=\Span\left\{\cos\left(\frac{2\pi k_0 }{T}t\right),\sin\left(\frac{2\pi k_0 }{T}t\right)\right\}.$$
		Then $\left(\tfrac{2\pi k}{T}\right)^{2}\in \Sigma(\mf{L})$ for each $k\in\N$, and \eqref{E1.1} and (ii) are proved.
	\end{proof}
		
	\begin{lemma}
		\label{le:3.5}
		For each $\l\in\R$, $\mf{L}(\l)\colon H^{2}(\mathbb{T})\subset L^{2}(\mathbb{T}) \to L^{2}(\mathbb{T})$ is a self-adjoint operator.
	\end{lemma}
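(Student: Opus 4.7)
The plan is to verify self-adjointness by establishing both that $\mf{L}(\l)$ is symmetric on $H^{2}(\mathbb{T})$ and that $D(\mf{L}(\l)^{*})$ coincides with $H^{2}(\mathbb{T})$, working directly via integration by parts and elliptic regularity. Density of $H^{2}(\mathbb{T})$ in $L^{2}(\mathbb{T})$ (so that the adjoint is well defined) and Fredholmness of $\mf{L}(\l)$ in $\Phi_{0}(H^{2}(\mathbb{T}),L^{2}(\mathbb{T}))$ (so that $\mf{L}(\l)$ is closed) have already been secured in Lemmas \ref{L1} and \ref{L3}.

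First I would prove symmetry. For $u,v\in H^{2}(\mathbb{T})$, since $\l\in\R$ the zeroth-order contributions cancel and two integrations by parts yield
$$
(\mf{L}(\l) u, v)_{L^{2}} - (u, \mf{L}(\l) v)_{L^{2}} = \int_{0}^{T}(u'' v - u v'')\, dt = \bigl[u' v - u v'\bigr]_{0}^{T}.
$$
The periodic boundary conditions $u(0)=u(T)$, $u'(0)=u'(T)$ (and likewise for $v$) annihilate this boundary term. Symmetry immediately gives $H^{2}(\mathbb{T})\subseteq D(\mf{L}(\l)^{*})$ with $\mf{L}(\l)^{*}$ restricting to $\mf{L}(\l)$ on $H^{2}(\mathbb{T})$.

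For the reverse inclusion, take $w\in D(\mf{L}(\l)^{*})$, so that there exists $g\in L^{2}(\mathbb{T})$ with $(\mf{L}(\l) u, w)_{L^{2}} = (u, g)_{L^{2}}$ for every $u\in H^{2}(\mathbb{T})$. Testing first on $u\in\mc{C}_{c}^{\infty}((0,T))\subset H^{2}(\mathbb{T})$ (extended periodically by zero) forces $w''+\l w = g$ in the distributional sense on $(0,T)$; since $g-\l w\in L^{2}((0,T))$, this upgrades to $w\in H^{2}([0,T])$ by elliptic regularity, so that the traces $w(0),w(T),w'(0),w'(T)$ are well defined via the embedding $H^{2}\hookrightarrow\mc{C}^{1}$. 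Testing next with a general $u\in H^{2}(\mathbb{T})$ and integrating by parts twice using $w''=g-\l w$, only the boundary contribution
$$
u'(0)\bigl(w(T)-w(0)\bigr) - u(0)\bigl(w'(T)-w'(0)\bigr) = 0
$$
survives, and must hold for every such $u$. Since the trace pair $(u(0),u'(0))$ can be prescribed independently within $H^{2}(\mathbb{T})$ (for instance by taking suitable smooth $T$-periodic bump-like functions realizing $(1,0)$ and $(0,1)$), both parentheses must vanish, giving $w(0)=w(T)$ and $w'(0)=w'(T)$, i.e.\ $w\in H^{2}(\mathbb{T})$.

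The main obstacle I foresee is the boundary-regularity step, i.e.\ promoting the interior $H^{2}_{\mathrm{loc}}$ solution of $w''=g-\l w$ to an element of $H^{2}([0,T])$: one must argue (via a cut-off/reflection or bootstrap exploiting that the right-hand side already lies in $L^{2}((0,T))$) that $w$ extends up to the endpoints in $H^{2}$, so that pointwise traces make sense. Once this is in hand, the boundary-term manipulation is routine, and combining it with the symmetry step yields $D(\mf{L}(\l)^{*})=H^{2}(\mathbb{T})$ and $\mf{L}(\l)^{*}=\mf{L}(\l)$, establishing self-adjointness.
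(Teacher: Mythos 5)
Your argument is correct and follows essentially the same route as the paper: integration by parts (using the periodic boundary conditions) to get symmetry, then equality of domains to upgrade to self-adjointness. The paper simply asserts that "one can easily show that the domain of $\mf{L}(\l)$ coincides with the one of the adjoint," whereas you spell out that step via the standard distributional/regularity argument — and your worry about promoting $w$ from $H^2_{\mathrm{loc}}(0,T)$ to $H^2([0,T])$ is in fact harmless in one dimension, since $w''=g-\l w\in L^2(0,T)$ together with $w\in L^2(0,T)$ already gives $w\in H^2(0,T)$ up to the endpoints by integrating twice.
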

	
	\begin{proof}
		Let $u, v\in H^{2}(\mathbb{T})$. Then, by using the periodic boundary conditions when we integrate by parts,
		\begin{align*}
			\left(\mf{L}(\l)[u],v\right)_{L^{2}} & =\int_{0}^{T}\mf{L}(\l)[u]v \, {\rm{d}}t=\int_{0}^{T} \left(u''+\l u\right)v \, {\rm{d}}t \\
			& =\int_{0}^{T}u''v \, {\rm{d}}t + \l \int_{0}^{T}uv \, {\rm{d}}t  = \int_{0}^{T}v'' u \, {\rm{d}}t + \l \int_{0}^{T}uv \, {\rm{d}}t \\
			&= \int_{0}^{T}\left(v''+\l v\right)u \, {\rm{d}}t=\int_{0}^{T} \mf{L}(\l)[v] u \, {\rm{d}}t=\left(u,\mf{L}(\l)[v]\right)_{L^{2}}.
		\end{align*}
		Consequently, $\mf{L}(\l)$ is a symmetric operator. Moreover, one can easily show that the domain of $\mf{L}(\l)$ coincides with the one of the adjoint operator $\mf{L}^{\ast}(\l)$. Hence, $\mf{L}(\l)$ is self-adjoint.
	\end{proof}
		
	In the following result, we compute the algebraic multiplicity of each generalized eigenvalue in $\Sigma(\mf{L})$. 
	
	\begin{proposition}
		\label{P3.5}
		For each $k\in\N\cup\{0\}$, the generalized eigenvalue $\s_k$ is $1$-transversal (cf., Definition \ref{de2.3}), and its generalized algebraic multiplicity is
		$$\chi[\mf{L},\s_{k}]=
		\begin{cases*}
			1 & \text{ if $k=0$,}\\
			2 & \text{ if $k\in\N$.}
		\end{cases*}
		$$
	\end{proposition}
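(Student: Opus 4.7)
The plan is to verify $1$-transversality directly from Definition \ref{de2.3} and then invoke the standard fact (collected in Appendix \ref{A1}) that a $1$-transversal generalized eigenvalue has algebraic multiplicity equal to the dimension of its kernel. Throughout, I will exploit the very simple structure of the linearized family: since $\mf{L}(\l)[v]=v''+\l v$, the derivative with respect to the parameter is
\begin{equation*}
\mf{L}'(\l)=J\colon H^{2}(\mathbb{T})\hookrightarrow L^{2}(\mathbb{T}),
\end{equation*}
i.e. the natural inclusion. In particular, $\mf{L}'(\s_k)$ restricted to $N[\mf{L}(\s_k)]$ is simply the inclusion of the kernel into $L^{2}(\mathbb{T})$, so it is injective.

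First, I would establish the algebraic decomposition
\begin{equation*}
L^{2}(\mathbb{T})=N[\mf{L}(\s_k)]\oplus R[\mf{L}(\s_k)].
\end{equation*}
This is obtained by combining Lemma \ref{L3} (which gives that $\mf{L}(\s_k)$ is Fredholm of index zero, so $\dim N[\mf{L}(\s_k)]=\codim R[\mf{L}(\s_k)]$) with Lemma \ref{le:3.5} (self-adjointness), which ensures that $R[\mf{L}(\s_k)]=N[\mf{L}(\s_k)]^{\perp}$ in $L^{2}(\mathbb{T})$. Alternatively, the same fact follows directly from the Fourier representation in Lemma \ref{L3.2}, since $R[\mf{L}(\s_k)]$ consists exactly of those $v\in L^{2}(\mathbb{T})$ with $\widehat{v}(\pm k)=0$, whose $L^{2}$-complement is precisely the span of the eigenfunctions identified in Lemma \ref{L3.2}(ii).

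Next, because $\mf{L}'(\s_k)$ acts as the identity on the kernel, I conclude
\begin{equation*}
\mf{L}'(\s_k)\bigl(N[\mf{L}(\s_k)]\bigr)\oplus R[\mf{L}(\s_k)] = N[\mf{L}(\s_k)]\oplus R[\mf{L}(\s_k)] = L^{2}(\mathbb{T}),
\end{equation*}
which is exactly the $1$-transversality condition of Definition \ref{de2.3}. With $1$-transversality in hand, the general result in Appendix \ref{A1} (see also \cite{LG01}) states that the generalized algebraic multiplicity coincides with the geometric one, i.e. $\chi[\mf{L},\s_k]=\dim N[\mf{L}(\s_k)]$. Reading the dimensions off Lemma \ref{L3.2}(ii) yields $\chi[\mf{L},\s_0]=1$ and $\chi[\mf{L},\s_k]=2$ for $k\in\N$.

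I do not anticipate any genuine obstacle: the only mildly delicate point is making sure the self-adjoint/Fredholm decomposition $L^{2}=N\oplus R$ is written down carefully so that $1$-transversality reduces to the trivial observation that the parameter-derivative of $\mf{L}$ is the inclusion. Once that is in place, everything is a direct application of the abstract machinery recalled in Appendix \ref{A1}.
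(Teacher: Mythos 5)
Your proof is correct and follows essentially the same route as the paper: compute $\mf{L}_1=J$, use self-adjointness (Lemma \ref{le:3.5}) plus the Fredholm property (Lemma \ref{L3}) to identify $R[\mf{L}(\s_k)]=N[\mf{L}(\s_k)]^{\perp}$ in $L^{2}(\mathbb{T})$, read off the direct sum as $1$-transversality, and then apply \eqref{CTCR} together with Lemma \ref{L3.2}(ii). The only cosmetic difference is that the paper verifies transversality by checking directly that each eigenfunction is carried outside $R[\mf{L}(\s_k)]$, whereas you phrase the same observation abstractly as $\mf{L}_1(\s_k)(N)\oplus R=N\oplus R=L^2$; both are immediate consequences of $\mf{L}_1$ being the inclusion.
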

			
	\begin{proof}
		An elementary computation gives
		$$\mf{L}_{1}(\l):=\frac{{\rm{d}}\mf{L}}{{\rm{d}}\l}(\l)= J, \quad \l\in\R,$$
		where $J\colon H^{2}(\mathbb{T})\hookrightarrow L^{2}(\mathbb{T})$ is the inclusion. Therefore,
		$$\mf{L}_{1}(\s_{0})[1]=1,$$
		and, for each $k\in\N$,
		$$ \mf{L}_{1}(\s_{k})\left[\cos\left(\frac{2\pi k }{T}t\right)\right]=\cos\left(\frac{2\pi k }{T}t\right), \quad \mf{L}_{1}(\s_{k})\left[\sin\left(\frac{2\pi k }{T}t\right)\right]=\sin\left(\frac{2\pi k }{T}t\right).$$
		As $\mf{L}(\l)$ is a Fredholm operator for all $\l\in\R$ (see Lemma \ref{L3}), $R[\mf{L}(\l)]$ is closed. Therefore,
		$$R[\mf{L}(\l)]=\overline{R[\mf{L}(\l)]}=N[\mf{L}^{\ast}(\l)]^{\perp}=N[\mf{L}(\l)]^{\perp},$$
		as $\mf{L}(\l)$ is self-adjoint, thanks to Lemma \ref{le:3.5}. Consequently,
		\begin{equation}
			\label{eq:rango}
			R[\mf{L}(\s_{0})]=\left\{u\in L^{2}(\mathbb{T}) \colon \int_{0}^{T} u(t) \, {\rm{d}}t=0 \right\},
		\end{equation}
		while, for $k\in\N$,
		$$R[\mf{L}(\s_{k})]=\left\{u\in L^{2}(\mathbb{T}) \colon \int_{0}^{T}\cos\left(\frac{2\pi k }{T}t\right) \, u(t) \, {\rm{d}}t=\int_{0}^{T} \sin\left(\frac{2\pi k }{T}t\right) \, u(t) \, {\rm{d}}t=0 \right\}.$$
		Hence, 
		$$\mf{L}_{1}(\s_{0})\left[1\right]\notin R[\mf{L}(\s_0)],$$
		and, for $k\in\N$, 
		$$\mf{L}_{1}(\s_{k})\left[\cos\left(\frac{2\pi k }{T}t\right)\right]\notin R[\mf{L}(\s_{k})], \quad \text{and} \quad \mf{L}_{1}(\s_{k})\left[\sin\left(\frac{2\pi k }{T}t\right)\right]\notin R[\mf{L}(\s_{k})].$$
		Therefore, the transversality condition 
		$$\mf{L}_{1}(\s_{k})\left(N[\mf{L}(\s_{k})]\right)\oplus R[\mf{L}(\s_{k})]= L^{2}(\mathbb{T}),$$
		holds for each $k\in\N\cup\{0\}$. This implies that the eigenvalues $\s_{k}$ are $1$-transversal, and \eqref{CTCR} and Lemma \ref{L3.2}(ii) give that
		$$\chi[\mf{L},\s_{k}]=\dim N[\mf{L}(\s_{k})]=
		\begin{cases*}
			1 & \text{ if $k=0$,}\\
			2 & \text{ if $k\in\N$,}
		\end{cases*}
		$$
		as we wanted.
	\end{proof}
			
	\subsection{Local bifurcation at $\l=\s_0$}
	\label{sec:3.1}
			
	This subsection is devoted to the local bifurcation analysis of equation \eqref{eq:1.1} near the trivial solution $u=0$ at the eigenvalue $\s_0=0$. The main ingredient will be a direct application of the Crandall--Rabinowitz local bifurcation Theorem \ref{Cr-Rb}. This tool can be applied in this case because $\chi[\mf{L},\s_0]=1$. However, this is not the case for $\s_k$, $k\geq 1$, as $\chi[\mf{L},\s_k]=2$, and different approaches has to be followed in those situations (see Sections \ref{sec:3.2} and \ref{sec:3.3} below). Recalling the set $\mc{T}$ of \textit{trivial solutions} of $\mf{F}$, defined in \eqref{eq:trivial}, the set of \textit{non-trivial solutions} of $\mf{F}$ is defined as
	\begin{equation*}
		\mf{S}:=\left( \mf{F}^{-1}(0)\setminus \mc{T}\right)\cup \left\{(\l,0)\colon \l\in \Sigma(\mf{L})\right\}\subset \R\times H^{2}(\mathbb{T}).
	\end{equation*}
	Since $\mf{F}$ is continuous (see Lemma \ref{L1}), $\mf{S}$ is a closed subset of $\R\times H^{2}(\mathbb{T})$. The following is the main result of this subsection and characterizes the structure of $\mf{S}$ in a neighborhood of $(\s_0,0)$. Recall that $N[\mf{L}(\s_0)]=\Span\{1\}$, which explains the appearance of the function $1$ in the theorem.
	\begin{theorem}
		\label{LCRW}
		The point $(\s_0,0)\in \R \times H^{2}(\mathbb{T})$ is a bifurcation point of 
		$$\mf{F}\colon\R\times H^{2}(\mathbb{T})\longrightarrow L^{2}(\mathbb{T}), \quad \mf{F}(\l,u):=u''+\l u + a(t) u^{3},$$
		from the trivial branch $\mc{T}$ to a connected component $\mathscr{C}_{0}\subset\mf{S}$ of non-trivial solutions of \eqref{eq:1.1}. Moreover, setting
		$$Y:=\left\{u\in H^{2}(\mathbb{T}) \colon \left(u,1\right)_{H^{2}}=0 \right\},$$
		the following statements hold.
		\begin{enumerate}
			\item[(i)] {\rm\textbf{Existence.}} There exist $\e>0$ and two $\mc{C}^{\infty}$-maps
			\begin{equation}
				\L\colon (-\e,\e) \longrightarrow \R, \quad \L(0)=\s_0=0, \qquad \qquad \Gamma\colon (-\e,\e)\longrightarrow Y, \quad  \Gamma(0)=0,
				\label{Cu}
			\end{equation}
			such that, for each $s\in(-\e,\e)$,
			\begin{equation}
				\mathfrak{F}(\L(s),u(s))=0, \quad \text{with  $u(s):= s\left(1+\Gamma(s)\right)$}.
				\label{Cu1}
			\end{equation}
			We set
			\begin{align*}
				\mathscr{C}_{0,{\mathrm{loc}}}^+&:=\left\{\left(\L(s),u(s)\right)\colon s\in(0,\e)\right\}\subset\R\times H^2(\T),\\
				\mathscr{C}_{0,{\mathrm{loc}}}^-&:=\left\{\left(\L(s),u(s)\right)\colon s\in(-\e,0)\right\}\subset\R\times H^2(\T).
			\end{align*}
		
			\item[(ii)] {\rm\textbf{Uniqueness.}} There exists $\r >0$ such that, if $\mathfrak{F}(\l,u)=0$ and	$(\l,u)\in B_\r(0,0)\subset \R\times H^{2}(\mathbb{T})$, then either $u = 0$ or $(\l,u)=(\L(s),u(s))$ for some $s\in(-\e,\e)$. In other words,
			$$\mf{S}\cap B_{\rho}(0,0)=\mathscr{C}_{0,{\mathrm{loc}}}^+\uplus\mathscr{C}_{0,{\mathrm{loc}}}^-\uplus\{(\s_k,0)\}.$$
			Therefore exactly two branches of solutions of \eqref{eq:1.1} emanate from $(\s_0,0)$: one of positive and the other one of negative solutions. Moreover,
			$$\mathscr{C}_{0,{\mathrm{loc}}}^+=-\mathscr{C}_{0,{\mathrm{loc}}}^-,$$
			where, given a subset $\mathscr{C}\subset \R\times H^{2}(\mathbb{T})$, we are denoting $-\mathscr{C}:=\left\{(\l,-u) \colon (\l,u)\in\mathscr{C}\right\}$.
		\end{enumerate}
	\end{theorem}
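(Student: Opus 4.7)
The plan is to apply the classical Crandall--Rabinowitz local bifurcation theorem (Theorem~\ref{Cr-Rb} in Appendix~\ref{A1}) directly to $\mf{F}$ at the point $(\s_0,0)$. All the hypotheses of that theorem have, in fact, already been verified in the preceding lemmas: $\mf{F}\in\mc{C}^{\infty}(\R\times H^{2}(\mathbb{T}),L^{2}(\mathbb{T}))$ by Lemma~\ref{L1}, $\mf{L}(\l)=\p_{u}\mf{F}(\l,0)$ is Fredholm of index zero by Lemma~\ref{L3}, the kernel $N[\mf{L}(\s_0)]=\Span\{1\}$ is one-dimensional by Lemma~\ref{L3.2}(ii), and the transversality condition $\mf{L}_{1}(\s_{0})[1]=1\notin R[\mf{L}(\s_{0})]$ was established in the proof of Proposition~\ref{P3.5} using the characterization \eqref{eq:rango} of the range. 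Fixing the topological complement $Y$ of $\Span\{1\}$ in $H^{2}(\mathbb{T})$ as in the statement, Theorem~\ref{Cr-Rb} then yields $\e>0$ and $\mc{C}^{\infty}$-maps $\L$, $\Gamma$ with $\L(0)=0$, $\Gamma(0)=0$, satisfying \eqref{Cu}--\eqref{Cu1}; the $\mc{C}^{\infty}$ regularity is inherited from that of $\mf{F}$ via the implicit function theorem used inside the Lyapunov--Schmidt reduction. This proves (i) and the existence statement in (ii).

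For the uniqueness part, the Crandall--Rabinowitz theorem also supplies a radius $\r>0$ such that every zero of $\mf{F}$ in $B_{\r}(\s_0,0)$ is either trivial or of the form $(\L(s),u(s))$ for some $s\in(-\e,\e)$; this gives the decomposition $\mf{S}\cap B_{\rho}(\s_0,0)=\mathscr{C}_{0,{\mathrm{loc}}}^+\uplus\mathscr{C}_{0,{\mathrm{loc}}}^-\uplus\{(\s_0,0)\}$ after possibly shrinking $\e$ and $\r$. To identify the sign of the solutions on each half-branch, I would observe that for $s$ sufficiently small one has $u(s)=s(1+\Gamma(s))$ with $\|\Gamma(s)\|_{H^{2}}\to 0$ as $s\to 0$; the Sobolev embedding $H^{2}(\mathbb{T})\hookrightarrow \mc{C}(\mathbb{T})$ then yields $\|\Gamma(s)\|_{L^{\infty}}<1/2$ for $|s|$ small, hence $1+\Gamma(s)>1/2>0$ pointwise. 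Consequently $u(s)>0$ for $s\in(0,\e)$ and $u(s)<0$ for $s\in(-\e,0)$, after possibly further reducing $\e$.

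To conclude with the identity $\mathscr{C}_{0,{\mathrm{loc}}}^{+}=-\mathscr{C}_{0,{\mathrm{loc}}}^{-}$, I would exploit the odd symmetry of the nonlinearity: since $a(t)(-u)^{3}=-a(t)u^{3}$, one directly checks that $\mf{F}(\l,-u)=-\mf{F}(\l,u)$, so the zero set of $\mf{F}$ is invariant under $u\mapsto -u$. In particular $-\mathscr{C}_{0,{\mathrm{loc}}}^{+}$ is a set of non-trivial solutions of \eqref{eq:1.1} contained in $B_{\rho}(\s_0,0)$ (after adjusting $\rho$), consisting of strictly negative functions on account of the previous step; by the uniqueness part already established, it must coincide with $\mathscr{C}_{0,{\mathrm{loc}}}^{-}$. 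Finally, the component $\mathscr{C}_0\subset\mf{S}$ is obtained as the connected component of $\mf{S}$ containing $(\s_0,0)$, which is non-trivial precisely because of the local curve just constructed.

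I expect the only subtle point to be a careful bookkeeping of the choice of $\e$ and $\r$ across the three arguments above (existence, sign identification, symmetry), so that a single pair of constants works for all claims simultaneously; all the other steps are either direct applications of results already proved in the preceding lemmas or elementary consequences of the cubic structure of the nonlinearity.
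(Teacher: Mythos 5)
Your proposal follows essentially the same route as the paper: a direct application of the Crandall--Rabinowitz theorem at $\s_0$, using Lemmas~\ref{L1}, \ref{L3}, \ref{L3.2}(ii) and Proposition~\ref{P3.5} to check the hypotheses, the Sobolev embedding to identify the sign of $u(s)=s(1+\Gamma(s))$ for small $s$, and the local uniqueness to deduce the symmetry $\mathscr{C}_{0,{\mathrm{loc}}}^+=-\mathscr{C}_{0,{\mathrm{loc}}}^-$ from the oddness $\mf{F}(\l,-u)=-\mf{F}(\l,u)$. The only cosmetic difference is the ordering: the paper first uses uniqueness to obtain the reparametrization identity $(\L(-s),u(-s))=(\L(s),-u(s))$ and then reads off the sign, whereas you determine the sign first and then invoke uniqueness to match the two half-branches; the content is identical.
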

			
	\begin{proof}
		Our aim is to apply the Crandall--Rabinowitz bifurcation Theorem \ref{Cr-Rb} to the operator $\mf{F}$ at $\s_{0}=0$. To do so, we observe first of all that $\mf{F}$ satisfies the assumptions (F1)--(F3) indicated in Appendix \ref{app:B.1}, thanks to the definition of $\mf{F}$, Lemma \ref{L3}, and Lemma \ref{L3.2}(ii). Moreover, by Proposition \ref{P3.5}, $\s_0$ is a $1$-transversal eigenvalue of $\mf{L}(\l):=\partial_{u}\mf{F}(\l,0)$, and $\chi[\mf{L}, \s_0]=1$. Consequently, applying Theorem \ref{Cr-Rb}, we deduce the existence and uniqueness of the curves \eqref{Cu} satisfying \eqref{Cu1}.
				
		We now observe that, if $(\l,u)$ is a solution of \eqref{eq:1.1}, then $(\l,-u)$ is also a solution of \eqref{eq:1.1}. Thus, the uniqueness of the curve that we have just obtained forces $(\L(-s),u(-s))=(\L(s),-u(s))$ for each $s\in (-\varepsilon,\varepsilon)$. In particular, as $u(s)=s\left(1+\Gamma(s)\right)$ and $\G(0)=0$, we deduce that, up to reducing $\e$ if necessary, for $s\in(0,\varepsilon)$, $u(s)$ is positive, i.e., $\mathscr{C}_{0,{\mathrm{loc}}}^+$ consists of positive solutions, while $\mathscr{C}_{0,{\mathrm{loc}}}^-$ consists of negative solutions. 
	\end{proof}
				
	Observe that, as a consequence of Proposition \ref{pr:2.4}(i), $\L(s)<\s_0=0$ for all $s\in(-\e,\e)\setminus\{0\}$, i.e. the bifurcation obtained in the previous theorem is subcritical. 
				
	\subsection{Local bifurcation of even and odd solutions for $\l=\s_k$, $k\geq 1$}
		\label{sec:3.2}
				
	In general, the local bifurcation analysis near $(\s_k,0)$ for $k\geq 1$ is much more involved than the case $(\s_0,0)$ because, as seen in Proposition \ref{P3.5}, $\chi[\mf{L},\s_{k}]=2$; thus the Crandall--Rabinowitz Theorem \ref{Cr-Rb} cannot be applied directly. This general case will be treated in Section \ref{sec:3.3}. Nevertheless, if we further assume that the weight $a\in L^{\infty}(\mathbb{T})$ is even, i.e., 
	\begin{equation}
		\label{even}
		a(T-t)=a(t) \quad \text{ for a.e. $t\in[0,T]$},
	\end{equation}
	and we restrict ourselves to the analysis of even or odd solutions of \eqref{eq:1.1}, these symmetries reduce the algebraic multiplicity $\chi[\mf{L},\s_{k}]$ to one, and we can still apply the Crandall--Rabinowitz theorem also near $(\s_{k},0)$, $k\geq 1$, as we detail below.
				
	Observe that weights satisfying \eqref{even} are referred to as \emph{even}, because if we extend them $T$-periodically in $\R$, we obtain even functions. In the rest of this subsection, we will assume condition \eqref{even} in addition to \eqref{ass:aloc}, without further reference.

	\vspace{5pt}
	\par \noindent \textbf{Even solutions.} We start the analysis with the case of even solutions. Throughout this section, for each $s\in\N\cup\{0\}$, we denote
	$$H^{s}_{\textbf{\texttt{ev}}}(\mathbb{T}):=\left\{u\in H^{s}(\mathbb{T}) \colon u(T-t)=u(t) \text{ for a.e. $t\in [0,T]$}\right\}.$$
	Note that $H^{s}_{\textbf{\texttt{ev}}}(\mathbb{T})$ is a closed subspace of $H^{s}(\mathbb{T})$.
	The key observation is that even solutions of \eqref{eq:1.1} can be rewritten as the zeros of the nonlinear operator
	$$\mf{F}_{\textbf{\texttt{ev}}}\colon\R\times H^{2}_{\textbf{\texttt{ev}}}(\mathbb{T})\longrightarrow L^{2}_{\textbf{\texttt{ev}}}(\mathbb{T}), \quad \mf{F}_{\textbf{\texttt{ev}}}(\l,u):=u''+\l u + a(t) u^{3},$$
	which is well-defined because, if $u\in H^{2}_{\textbf{\texttt{ev}}}(\mathbb{T})$, the Sobolev embedding $H^{2}(\mathbb{T})\hookrightarrow \mc{C}^{1}(\mathbb{T})$ and \eqref{even} guarantee that $u''$ and $a(t)u^{3}$ belong to $L^{2}_{\textbf{\texttt{ev}}}(\mathbb{T})$.
	
	A similar analysis to that of the beginning of this section allows us to prove that $\mf{F}_{\mathbf{\mathtt{ev}}}\in \mc{C}^{\infty}(\R\times H^{2}_{\mathbf{\mathtt{ev}}}(\mathbb{T}), L^{2}_{\mathbf{\mathtt{ev}}}(\mathbb{T}))$, that $\mf{F}_{\mathbf{\mathtt{ev}}}$ is proper on closed and bounded subsets of $\R\times H^{2}_{\mathbf{\mathtt{ev}}}(\mathbb{T})$, that the linearization of $\mf{F}_{\mathbf{\mathtt{ev}}}$ with respect to $u$ at $(\l,u)\in \R\times H^{2}_{\mathbf{\mathtt{ev}}}(\mathbb{T})$,
	$$\partial_{u}\mf{F}_{\mathbf{\mathtt{ev}}}(\l,u)\colon H^{2}_{\mathbf{\mathtt{ev}}}(\mathbb{T}) \longrightarrow L^{2}_{\mathbf{\mathtt{ev}}}(\mathbb{T}), \quad \partial_{u}\mf{F}_{\mathbf{\mathtt{ev}}}(\l,u)\left[v\right]=v''+\l v+3a(t)u^{2}v,$$
	is a Fredholm operator of index zero, and that the generalized spectrum of $\mf{L}_{\mathbf{\mathtt{ev}}}(\l):=\p_u\mf{F}(\l,0)$ 
	is given by
	\begin{equation}
		\label{E1.11}
		\Sigma(\mf{L}_{\mathbf{\mathtt{ev}}})=\left\{\s_k\right\}_{k\in\N\cup\{0\}}=\left\{\left(\frac{2\pi k}{T}\right)^{2} \colon k\in\N\cup \{0\}\right\}.
	\end{equation}
				
	The only substantial difference of this case lies in the computation of the generalized algebraic multiplicity when $k\in\N$. Indeed, by reasoning as in the proof of Lemma \ref{L3.2}(ii), and observing that the function $t\mapsto \sin\left(\frac{2\pi k }{T}t\right)$ is not even, we obtain, for all $k\in\N$, that 
	$$N[\mf{L}_{\mathbf{\mathtt{ev}}}\left(\s_{k}\right)]=\Span\left\{\varphi_k(t)\right\}, \qquad  R[\mf{L}_{\mathbf{\mathtt{ev}}}(\s_{k})]=\left\{u\in L^{2}_{\mathbf{\mathtt{ev}}}(\mathbb{T}) \colon \int_{0}^{T}\varphi_k(t)\, u(t) \, {\rm{d}}t = 0 \right\},$$
	where we have denoted
	$$\varphi_k(t):=\sqrt{\frac{2}{T}}\cos\left(\frac{2\pi k }{T}t\right), \qquad k\in\N. $$
	Then, by adapting the proof of Proposition \ref{P3.5}, we can show that $\s_k$ is $1$-transversal (cf., Definition \ref{de2.3}), and its generalized algebraic multiplicity is
	\begin{equation}
		\label{eq:algmult}
		\chi[\mf{L}_{\mathbf{\mathtt{ev}}},\s_{k}]=1, \quad k\in \N.
	\end{equation}
	If we define the set of  \textit{non-trivial even solutions} of $\mf{F}_{\mathbf{\mathtt{ev}}}$ as
	\begin{equation*}
		\mf{S}_{\mathbf{\mathtt{ev}}}:=\left( \mf{F}_{\mathbf{\mathtt{ev}}}^{-1}(0)\setminus \mc{T}\right)\cup \{(\l,0)\colon \l\in \Sigma(\mf{L}_{\mathbf{\mathtt{ev}}})\}\subset \R\times H^{2}_{\mathbf{\mathtt{ev}}}(\mathbb{T}),
	\end{equation*}
	which is a closed set in $\R\times H^{2}_{\mathbf{\mathtt{ev}}}(\mathbb{T})$ and also in $\R\times H^{2}(\mathbb{T})$ satisfying $\mf{S}_{\mathbf{\mathtt{ev}}}\subset \mf{S}$, we can apply the Crandall--Rabinowitz local bifurcation Theorem \ref{Cr-Rb} to the operator $\mf{F}_{\mathbf{\mathtt{ev}}}$ at $\s_{k}$, $k\in\N$, obtaining the following result about the local structure of even solution at the bifurcation points.
	
	\begin{theorem}
		\label{LCRWw}
		For each $k\in\N$, the point $(\s_{k},0)\in \R \times H^{2}_{\mathbf{\mathtt{ev}}}(\mathbb{T})$ is a bifurcation point of 
		$$\mf{F}_{\mathbf{\mathtt{ev}}}\colon\R\times H^{2}_{\mathbf{\mathtt{ev}}}(\mathbb{T})\longrightarrow L^{2}_{\mathbf{\mathtt{ev}}}(\mathbb{T}), \quad \mf{F}_{\mathbf{\mathtt{ev}}}(\l,u):=u''+\l u + a(t) u^{3},$$
		from the trivial branch $\mc{T}$ to a connected component $\mathscr{C}_{\mathbf{\mathtt{ev}}, k}\subset \mf{S}_{\mathbf{\mathtt{ev}}}$ of non-trivial even solutions of \eqref{eq:1.1}. Moreover, setting 
		$$Y_{k}:=\left\{u\in H^{2}_{\mathbf{\mathtt{ev}}}(\mathbb{T}) \colon \left(u,\varphi_{k}\right)_{H^{2}}=0 \right\},$$
		the following statements hold.
		\begin{enumerate}
			\item[(i)] {\rm\textbf{Existence.}} There exist $\e>0$ and two $\mc{C}^{\infty}$-maps
			\begin{equation}
				\L_k\colon (-\e,\e) \longrightarrow \R, \quad \L_k(0)=\s_k, \qquad \qquad \Gamma_k\colon (-\e,\e)\longrightarrow Y_{k}, \quad  \Gamma_k(0)=0,
			\end{equation}
			such that for each $s\in(-\e,\e)$,
			\begin{equation}
				\mathfrak{F}_{\mathbf{\mathtt{ev}}}(\L_k(s),u_k(s))=0, \quad \text{with $u_k(s):= s\left(\varphi_{k}+\Gamma_k(s)\right)$}.
				\label{Cu1even}
			\end{equation}
			We set
			\begin{align*}
			\mathscr{C}_{\mathbf{\mathtt{ev}}, k,\mathrm{loc}}^+&:=\left\{\left(\L_k(s),u_k(s)\right)\colon s\in(0,\e)\right\}\subset\R\times H^{2}_{\mathbf{\mathtt{ev}}}(\mathbb{T}),\\
			\mathscr{C}_{\mathbf{\mathtt{ev}}, k,\mathrm{loc}}^-&:=\left\{\left(\L_k(s),u_k(s)\right)\colon s\in(-\e,0)\right\}\subset\R\times H^{2}_{\mathbf{\mathtt{ev}}}(\mathbb{T}).
			\end{align*}

			\item[(ii)] {\rm\textbf{Uniqueness.}} There exists $\r >0$ such that, if $\mathfrak{F}_{\mathbf{\mathtt{ev}}}(\l,u)=0$ and
			$(\l,u)\in B_\r(\s_{k},0)\subset \R\times H^{2}_{\mathbf{\mathtt{ev}}}(\mathbb{T})$, then either $u = 0$ or 	$(\l,u)=(\L_k(s),u_k(s))$ for some $s\in(-\e,\e)$. In other words,
			$$\mf{S}_{\mathbf{\mathtt{ev}}}\cap B_{\rho}(\s_k,0)=\mathscr{C}_{\mathbf{\mathtt{ev}}, k,\mathrm{loc}}^+\uplus\mathscr{C}_{\mathbf{\mathtt{ev}}, k,\mathrm{loc}}^-\uplus\{(\s_k,0)\}.$$
			Therefore exactly two branches of even solutions of \eqref{eq:1.1} emanate from $(\s_k,0)$, both of them with $2k$ zeros in $(0,T)$. Moreover,
			$$\mathscr{C}_{\mathbf{\mathtt{ev}}, k,\mathrm{loc}}^+=-\mathscr{C}_{\mathbf{\mathtt{ev}}, k,\mathrm{loc}}^-.$$
		\end{enumerate}
	\end{theorem}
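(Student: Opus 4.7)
The plan is to apply the Crandall--Rabinowitz local bifurcation theorem (Theorem \ref{Cr-Rb}) directly to the operator $\mf{F}_{\mathbf{\mathtt{ev}}}$ at $(\sigma_k, 0)$. The hypotheses have essentially all been verified in the discussion preceding the statement. Namely, $\mf{F}_{\mathbf{\mathtt{ev}}} \in \mc{C}^{\infty}(\R\times H^{2}_{\mathbf{\mathtt{ev}}}(\mathbb{T}), L^{2}_{\mathbf{\mathtt{ev}}}(\mathbb{T}))$ by the same argument as in Lemma \ref{L1}; the linearization $\p_u\mf{F}_{\mathbf{\mathtt{ev}}}(\lambda,0)=\mf{L}_{\mathbf{\mathtt{ev}}}(\lambda)$ is Fredholm of index zero by the argument of Lemma \ref{L3} restricted to the even subspaces; the kernel $N[\mf{L}_{\mathbf{\mathtt{ev}}}(\sigma_k)]$ is one-dimensional and spanned by $\varphi_k$; and $\sigma_k$ is $1$-transversal with $\chi[\mf{L}_{\mathbf{\mathtt{ev}}},\sigma_k]=1$ by \eqref{eq:algmult}. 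Feeding this into Theorem \ref{Cr-Rb} produces $\varepsilon>0$ and the $\mc{C}^{\infty}$ maps $\Lambda_k\colon(-\varepsilon,\varepsilon)\to\R$ and $\Gamma_k\colon(-\varepsilon,\varepsilon)\to Y_k$ satisfying \eqref{Cu1even}, together with the local uniqueness assertion in a ball $B_\rho(\sigma_k,0)\subset\R\times H^{2}_{\mathbf{\mathtt{ev}}}(\mathbb{T})$, hence the decomposition $\mf{S}_{\mathbf{\mathtt{ev}}}\cap B_\rho(\sigma_k,0)=\mathscr{C}_{\mathbf{\mathtt{ev}},k,\mathrm{loc}}^{+}\uplus\mathscr{C}_{\mathbf{\mathtt{ev}},k,\mathrm{loc}}^{-}\uplus\{(\sigma_k,0)\}$.

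The identity $\mathscr{C}_{\mathbf{\mathtt{ev}},k,\mathrm{loc}}^{+}=-\mathscr{C}_{\mathbf{\mathtt{ev}},k,\mathrm{loc}}^{-}$ then follows from the oddness of the nonlinearity $u\mapsto a(t)u^{3}$, exactly as in the proof of Theorem \ref{LCRW}: if $(\lambda,u)$ solves $\mf{F}_{\mathbf{\mathtt{ev}}}=0$, so does $(\lambda,-u)$, and both belong to $H^{2}_{\mathbf{\mathtt{ev}}}(\mathbb{T})$. By the local uniqueness, the curve $s\mapsto(\Lambda_k(-s),-u_k(s))$ must agree with $s\mapsto(\Lambda_k(s),u_k(s))$, forcing $\Lambda_k(-s)=\Lambda_k(s)$ and $u_k(-s)=-u_k(s)$ for $s\in(-\varepsilon,\varepsilon)$.

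For the zero-counting, I would use that $u_k(s)=s(\varphi_k+\Gamma_k(s))$ with $\Gamma_k(s)\to 0$ in $H^{2}(\mathbb{T})$, and therefore in $\mc{C}^{1}([0,T])$ by the Sobolev embedding $H^{2}(\mathbb{T})\hookrightarrow\mc{C}^{1}(\mathbb{T})$. Consequently $u_k(s)/s\to\varphi_k$ in $\mc{C}^{1}$ as $s\to 0$. The function $\varphi_k(t)=\sqrt{2/T}\cos(2\pi k t/T)$ has exactly $2k$ zeros in $(0,T)$, all of them simple (i.e.\ $\varphi_k'$ does not vanish there). By a standard structural stability argument, $\mc{C}^{1}$-perturbations of $\varphi_k$ that are small enough preserve the number of zeros: each simple zero persists via the implicit function theorem applied to the equation $w(t)=0$, and no new zeros can appear because $|\varphi_k|$ is bounded below away from zero outside small neighborhoods of its roots. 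Hence, up to further reducing $\varepsilon$, $u_k(s)$ has exactly $2k$ zeros in $(0,T)$ for every $s\in(-\varepsilon,\varepsilon)\setminus\{0\}$.

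The main technical point is the last step, but it is a standard transversality/continuity argument rather than a genuine obstacle; everything else is a clean application of Crandall--Rabinowitz made possible by the reduction of the kernel to dimension one in the even subspace, which is the key observation of the subsection.
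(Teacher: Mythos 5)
Your proposal is correct and follows exactly the same route as the paper: the Crandall--Rabinowitz theorem is applied to $\mf{F}_{\mathbf{\mathtt{ev}}}$ once the one-dimensionality of $N[\mf{L}_{\mathbf{\mathtt{ev}}}(\s_k)]$, the $1$-transversality with $\chi[\mf{L}_{\mathbf{\mathtt{ev}}},\s_k]=1$, and the regularity and Fredholm properties have been verified in the even subspaces, the symmetry $\mathscr{C}_{\mathbf{\mathtt{ev}},k,\mathrm{loc}}^{+}=-\mathscr{C}_{\mathbf{\mathtt{ev}},k,\mathrm{loc}}^{-}$ is deduced from the oddness of the nonlinearity via local uniqueness as in Theorem~\ref{LCRW}, and the zero count is propagated from the $2k$ simple zeros of $\varphi_k$ by $\mc{C}^1$-continuity using the Sobolev embedding $H^2(\T)\hookrightarrow\mc{C}^1(\T)$. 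This matches the paper's argument (stated immediately after Theorem~\ref{LCRWw}) essentially verbatim.
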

	The property on the number of zeros of the solutions on $\mathscr{C}_{\mathbf{\mathtt{ev}}, k,\mathrm{loc}}^\pm$ follows from \eqref{Cu1even}, because the eigenfunction $\varphi_k$ has exactly $2k$ simple zeros in $(0,T)$. Indeed, since $\G_k(0)=0$, $H^2(\T)\hookrightarrow \mc{C}^{1}(\T)$, and $s\mapsto\left\|u_k(s)\right\|_{\mc{C}^1}$ is continuous, this ensures that also $u_k(s)$ has $2k$ zeros in $(0,T)$, for $s\sim 0$.
	Observe that, from Proposition \ref{pr:2.4}(i) one gets that the obtained curves bifurcate subcritically, i.e., $\L_k(s)<\s_k$ for all $s\in(-\e,\e)\setminus\{0\}$.
	\begin{remark}
		\label{re:3.9}
		If one applies this analysis also to the case $k=0$, one can obtain a counterpart of Theorem \ref{LCRW}, similar to Theorem \ref{LCRWw}, in the case of even weights. By doing so, the main improvement is that one guarantees that the branches of positive and negative solutions on $\mathscr{C}_0$ found in Theorem \ref{LCRW} consist of even solutions of \eqref{eq:1.1}.
	\end{remark}
				
	\par \noindent \textbf{Odd solutions.} An analogous analysis can be done for odd solutions of \eqref{eq:1.1} by considering, for $s\in\N\cup\{0\}$, the space
	$$H^{s}_{\textbf{\texttt{odd}}}(\mathbb{T}):=\left\{u\in H^{s}(\mathbb{T}) \colon u(T-t)=-u(t) \text{ for a.e. $t\in [0,T]$}\right\}.$$
	Odd solutions of the boundary value problem \eqref{eq:1.1} can be rewritten as the zeros of the operator
	$$\mf{F}_{\textbf{\texttt{odd}}}\colon\R\times H^{2}_{\textbf{\texttt{odd}}}(\mathbb{T})\longrightarrow L^{2}_{\textbf{\texttt{odd}}}(\mathbb{T}), \quad \mf{F}_{\textbf{\texttt{odd}}}(\l,u):=u''+\l u + a(t) u^{3},$$
	which is well-defined because, if $u\in H^{2}_{\textbf{\texttt{odd}}}(\mathbb{T})$ and $a$ satisfies \eqref{even}, then $u''$ and $a(t)u^{3}$ belong to $L^{2}_{\textbf{\texttt{odd}}}(\mathbb{T})$. A similar analysis to the even case can be done for this operator, with the difference that, now, 
	$$\Sigma(\mf{L}_{\mathbf{\mathtt{odd}}})=\left\{\s_k\right\}_{k\in\N}=\left\{\left(\frac{2\pi k}{T}\right)^{2}\colon k\in\N\right\},$$
	i.e., $\s_{0}=0$ is no longer an eigenvalue of the linearization, because the corresponding eigenfunction $1\in H^{2}(\mathbb{T})$ is not odd. Moreover, for all $k\in\N$, 
	$$N[\mf{L}_{\mathbf{\mathtt{odd}}}\left(\s_{k}\right)]=\Span\left\{\phi_k(t)\right\}, \qquad  R[\mf{L}_{\mathbf{\mathtt{odd}}}(\s_{k})]=\left\{u\in L^{2}_{\mathbf{\mathtt{odd}}}(\mathbb{T}) \colon \int_{0}^{T}\phi_k(t)\, u(t) \, {\rm{d}}t = 0 \right\},$$
	where we have denoted
	$$\phi_k(t):=\sqrt{\frac{2}{T}}\sin\left(\frac{2\pi k }{T}t\right), \qquad k\in\N, $$
	and $\chi[\mf{L}_{\mathbf{\mathtt{odd}}},\s_{k}]=1$. The set of  \textit{non-trivial odd solutions} of $\mf{F}_{\mathbf{\mathtt{odd}}}$ is defined as
	\begin{equation*}
		\mf{S}_{\mathbf{\mathtt{odd}}}=\left( \mf{F}_{\mathbf{\mathtt{odd}}}^{-1}(0)\setminus \mc{T}\right)\cup \{(\l,0)\colon\l\in \Sigma(\mf{L}_{\mathbf{\mathtt{odd}}})\}\subset \R\times H^{2}_{\mathbf{\mathtt{odd}}}(\mathbb{T}),
	\end{equation*}
	and the following result holds.
				
	\begin{theorem}
		\label{LCRWwo}
		For each $k\in\N$, the point $(\s_{k},0)\in \R \times H^{2}_{\mathbf{\mathtt{odd}}}(\mathbb{T})$ is a bifurcation point of 
		$$\mf{F}_{\mathbf{\mathtt{odd}}}\colon\R\times H^{2}_{\mathbf{\mathtt{odd}}}(\mathbb{T})\longrightarrow L^{2}_{\mathbf{\mathtt{odd}}}(\mathbb{T}), \quad \mf{F}_{\mathbf{\mathtt{odd}}}(\l,u):=u''+\l u + a(t) u^{3},$$
		from the trivial branch $\mc{T}$ to a connected component $\mathscr{C}_{\mathbf{\mathtt{odd}}, k}\subset \mf{S}_{\mathbf{\mathtt{odd}}}$ of non-trivial odd solutions of \eqref{eq:1.1}. Moreover, setting
		$$Y_{k}:=\left\{u\in H^{2}_{\mathbf{\mathtt{odd}}}(\mathbb{T}) \colon \left(u,\phi_{k}\right)_{H^{2}}=0 \right\},$$
		the following statements hold.
		\begin{enumerate}
			\item[(i)] {\rm\textbf{Existence.}} There exist $\e>0$ and two $\mc{C}^{\infty}$-maps
			\begin{equation}
				\L_k\colon (-\e,\e) \longrightarrow \R, \quad \L_k(0)=\s_k, \qquad \qquad \Gamma_k\colon (-\e,\e)\longrightarrow Y_{k}, \quad  \Gamma_k(0)=0,
			\end{equation}
			such that for each $s\in(-\e,\e)$,
			\begin{equation}
				\mathfrak{F}_{\mathbf{\mathtt{odd}}}(\L_k(s),u_k(s))=0, \quad \text{with $u_k(s):= s\left(\phi_{k}+\Gamma_k(s)\right)$}.
			\end{equation}
			We set
			\begin{align*}
			\mathscr{C}_{\mathbf{\mathtt{odd}}, k,\mathrm{loc}}^+&:=\left\{\left(\L_k(s),u_k(s)\right)\colon s\in(0,\e)\right\}\subset\R\times H^{2}_{\mathbf{\mathtt{odd}}}(\mathbb{T}),\\
			\mathscr{C}_{\mathbf{\mathtt{odd}}, k,\mathrm{loc}}^-&:=\left\{\left(\L_k(s),u_k(s)\right)\colon s\in(-\e,0)\right\}\subset\R\times H^{2}_{\mathbf{\mathtt{odd}}}(\mathbb{T}).
			\end{align*}
			\item[(ii)] {\rm\textbf{Uniqueness.}} There exists $\r >0$ such that, if $\mathfrak{F}_{\mathbf{\mathtt{odd}}}(\l,u)=0$ and
			$(\l,u)\in B_\r(\s_{k},0)\subset \R\times H^{2}_{\mathbf{\mathtt{odd}}}(\mathbb{T})$, then either $u = 0$ or 	$(\l,u)=(\L_k(s),u_k(s))$ for some $s\in(-\e,\e)$. In other words,
			$$\mf{S}_{\mathbf{\mathtt{odd}}}\cap B_{\rho}(\s_k,0)=\mathscr{C}_{\mathbf{\mathtt{odd}}, k,\mathrm{loc}}^+\uplus\mathscr{C}_{\mathbf{\mathtt{odd}}, k,\mathrm{loc}}^-\uplus\{(\s_k,0)\}.$$
			Therefore exactly two branches of odd solutions of \eqref{eq:1.1} emanate from $(\s_k,0)$, both of them with $2k$ zeros in $[0,T)$. Moreover,
			$$\mathscr{C}_{\mathbf{\mathtt{odd}}, k,\mathrm{loc}}^+=-\mathscr{C}_{\mathbf{\mathtt{odd}}, k,\mathrm{loc}}^-.$$
		\end{enumerate}
	\end{theorem}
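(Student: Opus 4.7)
The proof follows the same structural blueprint as Theorem \ref{LCRWw} (the even case), so the plan is to verify in turn the analytic and spectral ingredients required to invoke the Crandall--Rabinowitz bifurcation Theorem \ref{Cr-Rb} applied to $\mf{F}_{\mathbf{\mathtt{odd}}}$, and then to extract the symmetry and nodal information from the construction.

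First I would check that $\mf{F}_{\mathbf{\mathtt{odd}}}$ is well-defined and admits the regularity required by (F1)--(F3) of Appendix \ref{app:B.1}. The key observation is that if $u\in H^{2}_{\mathbf{\mathtt{odd}}}(\mathbb{T})$, then $u''\in L^{2}_{\mathbf{\mathtt{odd}}}(\mathbb{T})$ (differentiate $u(T-t)=-u(t)$ twice), and, because $a$ satisfies \eqref{even}, $a(t)u^{3}$ is the product of an even and an odd function and therefore lies in $L^{2}_{\mathbf{\mathtt{odd}}}(\mathbb{T})$. The proofs of Lemmas \ref{L1}, \ref{LF4} and \ref{L3} go through verbatim on the closed subspaces $H^{2}_{\mathbf{\mathtt{odd}}}(\mathbb{T})$ and $L^{2}_{\mathbf{\mathtt{odd}}}(\mathbb{T})$, giving smoothness, properness on closed bounded sets, and Fredholmness of index zero for $\partial_{u}\mf{F}_{\mathbf{\mathtt{odd}}}(\l,u)$.

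Next I would analyze the generalized spectrum of $\mf{L}_{\mathbf{\mathtt{odd}}}(\l)=\partial_u\mf{F}_{\mathbf{\mathtt{odd}}}(\l,0)$. By the Fourier argument of Lemma \ref{L3.2} restricted to the odd subspace, the kernel of $\mf{L}_{\mathbf{\mathtt{odd}}}(\l)$ is non-trivial exactly when $\l=\s_k$ with $k\in\N$ (the constant $1$ is not odd, so $\s_0$ disappears), and in that case $N[\mf{L}_{\mathbf{\mathtt{odd}}}(\s_k)]=\Span\{\phi_k\}$. Self-adjointness of $\mf{L}_{\mathbf{\mathtt{odd}}}(\l)$ on $L^{2}_{\mathbf{\mathtt{odd}}}(\mathbb{T})$ follows as in Lemma \ref{le:3.5}, so that $R[\mf{L}_{\mathbf{\mathtt{odd}}}(\s_k)]=N[\mf{L}_{\mathbf{\mathtt{odd}}}(\s_k)]^{\perp}$. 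Then, arguing exactly as in Proposition \ref{P3.5}, the derivative $\mf{L}_{1,\mathbf{\mathtt{odd}}}(\s_k)[\phi_k]=\phi_k$ lies outside $R[\mf{L}_{\mathbf{\mathtt{odd}}}(\s_k)]$, which yields the $1$-transversality of $\s_k$ and hence $\chi[\mf{L}_{\mathbf{\mathtt{odd}}},\s_k]=\dim N[\mf{L}_{\mathbf{\mathtt{odd}}}(\s_k)]=1$ via \eqref{CTCR}.

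With the hypotheses of Theorem \ref{Cr-Rb} in place, the existence and local uniqueness statements (i)--(ii) follow at once, with the topological complement $Y_k$ chosen as in the theorem. To obtain the $\Z_2$-symmetry $\mathscr{C}_{\mathbf{\mathtt{odd}}, k,\mathrm{loc}}^+=-\mathscr{C}_{\mathbf{\mathtt{odd}}, k,\mathrm{loc}}^-$, I would exploit that $\mf{F}_{\mathbf{\mathtt{odd}}}(\l,u)=0$ implies $\mf{F}_{\mathbf{\mathtt{odd}}}(\l,-u)=0$; the local uniqueness from (ii) then forces $(\L_k(-s),u_k(-s))=(\L_k(s),-u_k(s))$ for all $s\in(-\e,\e)$. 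Finally, the assertion about $2k$ zeros in $[0,T)$ is a consequence of the fact that $\phi_k$ has exactly $2k$ simple zeros in $[0,T)$, combined with $\Gamma_k(0)=0$, the Sobolev embedding $H^{2}(\mathbb{T})\hookrightarrow\mc{C}^{1}(\mathbb{T})$, and continuity of $s\mapsto\|u_k(s)\|_{\mc{C}^1}$, which preserves the simple-zero structure for $|s|$ sufficiently small (after shrinking $\e$ if necessary). I expect no genuine obstacle beyond the need to carefully check at the outset that the weight's evenness is exactly the ingredient that keeps $\mf{F}_{\mathbf{\mathtt{odd}}}$ mapping into $L^{2}_{\mathbf{\mathtt{odd}}}(\mathbb{T})$; once that is secured, the whole argument is a straightforward transcription of the even case.
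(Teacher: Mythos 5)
Your argument is correct and follows exactly the path the paper takes: restrict to the closed subspaces $H^{2}_{\mathbf{\mathtt{odd}}}(\mathbb{T})$, $L^{2}_{\mathbf{\mathtt{odd}}}(\mathbb{T})$ (made possible by the evenness of $a$), observe that the kernel reduces to $\Span\{\phi_k\}$ so that $\chi[\mf{L}_{\mathbf{\mathtt{odd}}},\s_k]=1$, and then invoke Crandall--Rabinowitz, with the symmetry $u\mapsto -u$ and the persistence of simple zeros under small $\mc{C}^1$ perturbations yielding the last assertions. The paper leaves these steps implicit as ``an analogous analysis'' to the even case, and your proposal is a faithful explicit rendering of precisely that analysis.
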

				
	As above, Proposition \ref{pr:2.4}(i) implies that $\L_k(s)<\s_k$ for all $s\in(-\e,\e)\setminus\{0\}$, i.e., that the bifurcation is subcritical.
				
	Gathering the results of Theorems \ref{LCRW}, \ref{LCRWw} and \ref{LCRWwo} and the spectral analysis at the beginning of Section \ref{section:3}, we have proved the following result related to \eqref{eq:1.1} with even weights.
	
	\begin{theorem}
		\label{CR} 
		Suppose that $a$ satisfies \eqref{ass:aloc} and \eqref{even}, i.e., it is an even function. Then:
		\begin{itemize}
			\item[(i)] the unique bifurcation points of $$\mf{F}\colon\R\times H^{2}(\mathbb{T})\longrightarrow L^{2}(\mathbb{T}), \quad \mf{F}(\l,u):=u''+\l u + a(t) u^{3},$$
			from the branch of trivial solutions $\mc{T}$ are $(\s_{k},0)\subset\R\times H^{2}(\mathbb{T})$, with $k\in\N\cup\{0\}$;
			\item[(ii)] a connected component of non-trivial solutions $\mathscr{C}_{k}\subset \mf{S}$ emanates at $(\s_{k},0)$ for all $k\in\N\cup\{0\}$;
			\item[(iii)] there exists $\rho>0$ such that
			$$B_{\rho}(\s_{0},0)\cap \left(\mathscr{C}_{0}\setminus\{(\s_{0},0)\}\right)=\mathscr{C}_{0,\mathrm{loc}}^{+}\uplus \mathscr{C}_{0,\mathrm{loc}}^{-}$$ where $\mathscr{C}^{+}_{0,\mathrm{loc}}$ (resp. $\mathscr{C}^{-}_{0,\mathrm{loc}}$) consists of positive (resp. negative) even solutions of \eqref{eq:1.1}. Moreover, $-\mathscr{C}^{+}_{0,\mathrm{loc}}=\mathscr{C}^{-}_{0,\mathrm{loc}}$;
			\item[(iv)] for each $k\in \N$, the component $\mathscr{C}_{k}$ contains two subcomponents $$\mathscr{C}_{\mathbf{\mathtt{ev}},k}\subset \R\times H^{2}_{\mathbf{\mathtt{ev}}}(\mathbb{T}), \quad \mathscr{C}_{\mathbf{\mathtt{odd}},k}\subset \R\times H^{2}_{\mathbf{\mathtt{odd}}}(\mathbb{T}),$$
			of even and odd solutions of \eqref{eq:1.1}, respectively. Moreover, there exists $\rho>0$ such that 
			\begin{align*}
				B_{\rho}(\s_{k},0)\cap \left(\mathscr{C}_{\mathbf{\mathtt{ev}}, k}\setminus\{(\s_{k},0)\}\right)&=\mathscr{C}_{\mathbf{\mathtt{ev}}, k,\mathrm{loc}}^{+}\uplus \mathscr{C}_{\mathbf{\mathtt{ev}}, k,\mathrm{loc}}^{-}, \\ B_{\rho}(\s_{k},0)\cap \left(\mathscr{C}_{\mathbf{\mathtt{odd}},k}\setminus\{(\s_{k},0)\}\right)&=\mathscr{C}_{\mathbf{\mathtt{odd}},k,\mathrm{loc}}^{+}\uplus \mathscr{C}_{\mathbf{\mathtt{odd}},k,\mathrm{loc}}^{-},
			\end{align*}
			where $\mathscr{C}_{\mathbf{\mathtt{ev}},k,\mathrm{loc}}^{\pm}$ (resp. $\mathscr{C}_{\mathbf{\mathtt{odd}},k,\mathrm{loc}}^{\pm}$) consist of even (resp. odd) solutions of \eqref{eq:1.1} with exactly $2k$ zeros in $[0,T)$. Finally,
			$-\mathscr{C}_{\mathbf{\mathtt{ev}},k,\mathrm{loc}}^{+}=\mathscr{C}_{\mathbf{\mathtt{ev}},k,\mathrm{loc}}^{-}$ and $-\mathscr{C}_{\mathbf{\mathtt{odd}},k,\mathrm{loc}}^{+}= \mathscr{C}_{\mathbf{\mathtt{odd}},k,\mathrm{loc}}^{-}$.
		\end{itemize}
	\end{theorem}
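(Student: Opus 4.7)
The plan is to assemble Theorem \ref{CR} from the three local bifurcation results already proved (Theorems \ref{LCRW}, \ref{LCRWw}, \ref{LCRWwo}) together with the spectral analysis summarized in Lemma \ref{L3.2} and Proposition \ref{P3.5}. The operator $\mf{F}$ has been realized both on $\R\times H^{2}(\mathbb{T})$ and on its invariant closed subspaces $\R\times H^{2}_{\mathbf{\mathtt{ev}}}(\mathbb{T})$ and $\R\times H^{2}_{\mathbf{\mathtt{odd}}}(\mathbb{T})$, so my strategy is to first pin down the set of bifurcation points in the ambient space, and then transfer the finer information obtained in the symmetry-restricted settings back to $\R\times H^{2}(\mathbb{T})$.

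For (i), sufficiency is already on the table: Theorem \ref{LCRW} produces a non-trivial local curve at $(\s_0,0)$, while, for each $k\geq 1$, Theorems \ref{LCRWw} and \ref{LCRWwo} provide non-trivial local curves at $(\s_k,0)$ in $H^{2}_{\mathbf{\mathtt{ev}}}(\mathbb{T})$ and $H^{2}_{\mathbf{\mathtt{odd}}}(\mathbb{T})$, respectively, both of which are a fortiori local curves of $\mf{F}$ in $H^{2}(\mathbb{T})$. For necessity, I would invoke the implicit function theorem: if $\l_0\notin\Sigma(\mf{L})$, then $\mf{L}(\l_0)\in GL(H^{2}(\mathbb{T}),L^{2}(\mathbb{T}))$ by Lemma \ref{L3}, and the equation $\mf{F}(\l,u)=0$ near $(\l_0,0)$ admits only the trivial solution; thus no bifurcation is possible off the set $\{\s_k\}_{k\in\N\cup\{0\}}$ identified in Lemma \ref{L3.2}.

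For (ii), I would define $\mathscr{C}_k$ as the connected component of $\mf{S}$ containing $(\s_k,0)$; non-triviality follows because the local curves from the previous step accumulate there. Parts (iii) and (iv) are then largely a translation exercise: set $\mathscr{C}_{\mathbf{\mathtt{ev}},k}$ and $\mathscr{C}_{\mathbf{\mathtt{odd}},k}$ to be the connected components of $\mf{S}_{\mathbf{\mathtt{ev}}}$ and $\mf{S}_{\mathbf{\mathtt{odd}}}$ through $(\s_k,0)$, which exist by Theorems \ref{LCRWw} and \ref{LCRWwo}. Since $\mf{S}_{\mathbf{\mathtt{ev}}},\mf{S}_{\mathbf{\mathtt{odd}}}\subset\mf{S}$ and both share the point $(\s_k,0)$, the maximality of connected components yields the inclusions $\mathscr{C}_{\mathbf{\mathtt{ev}},k},\mathscr{C}_{\mathbf{\mathtt{odd}},k}\subset\mathscr{C}_k$. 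The local decompositions into $\mathscr{C}_{\mathbf{\mathtt{ev}},k,\mathrm{loc}}^{\pm}$ and $\mathscr{C}_{\mathbf{\mathtt{odd}},k,\mathrm{loc}}^{\pm}$, as well as the odd-reflection symmetry $-\mathscr{C}^+=\mathscr{C}^-$, are verbatim restatements of the uniqueness parts of Theorems \ref{LCRWw} and \ref{LCRWwo}. The zero count (exactly $2k$ zeros in $[0,T)$) follows because $\varphi_k$ and $\phi_k$ have $2k$ simple zeros and the maps $s\mapsto u_k(s)$ are $\mc{C}^1(\mathbb{T})$-continuous, so small $\mc{C}^1$-perturbations preserve the number of simple zeros.

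The only genuinely new item is the part of (iii) asserting that the branches $\mathscr{C}_{0,\mathrm{loc}}^{\pm}$ furnished by Theorem \ref{LCRW} consist of even solutions. For this I would lean on Remark \ref{re:3.9}: repeat the Crandall--Rabinowitz argument inside $H^{2}_{\mathbf{\mathtt{ev}}}(\mathbb{T})$, producing an even local curve through $(\s_0,0)$, and then use the local uniqueness part of Theorem \ref{LCRW} in the full space $H^{2}(\mathbb{T})$ to identify this even curve with $\mathscr{C}_{0,\mathrm{loc}}^{\pm}$. An analogous identification is implicit in (iv); the linear independence of the even and odd generators of $N[\mf{L}(\s_k)]$ exhibited in Lemma \ref{L3.2}(ii) guarantees that the two local pictures inside $\mathscr{C}_k$ are genuinely distinct away from $(\s_k,0)$, so no pathological overlap occurs.
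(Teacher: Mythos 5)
Your proposal is correct and follows essentially the same route as the paper. The paper presents Theorem~\ref{CR} precisely as a synthesis (``gathering the results of Theorems \ref{LCRW}, \ref{LCRWw} and \ref{LCRWwo} and the spectral analysis at the beginning of Section \ref{section:3}'') with no further written proof, and your assembly supplies exactly the details that synthesis implicitly relies on: necessity in (i) via the implicit function theorem and Lemmas \ref{L3} and \ref{L3.2}, definition of $\mathscr{C}_k$ as the component of $\mf{S}$ through $(\s_k,0)$, the inclusion $\mathscr{C}_{\mathbf{\mathtt{ev}},k},\mathscr{C}_{\mathbf{\mathtt{odd}},k}\subset\mathscr{C}_k$ from $\mf{S}_{\mathbf{\mathtt{ev}}},\mf{S}_{\mathbf{\mathtt{odd}}}\subset\mf{S}$ together with maximality of components, and Remark~\ref{re:3.9} for the even-ness of the $\mathscr{C}^\pm_{0,\mathrm{loc}}$ branches.
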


	\subsection{Local bifurcation for $\l=\s_k$, $k\geq 1$}
	\label{sec:3.3}
	We now analyze the local bifurcation of solutions of \eqref{eq:1.1} near $(\s_k,0)$, $k\geq 1$, in the general case, i.e., without any further assumption on the weight $a(t)$ apart from \eqref{ass:aloc}. This analysis is much more involved than in the cases analized in Sections \ref{sec:3.1} and \ref{sec:3.2}, because the Crandall--Rabinowitz bifurcation theorem cannot be applied. Here, instead, we will perform the following Lyapunov--Schmidt reduction.
	\par For $k\in \N$, consider the orthonormal projections 
	\begin{align}
		P_{k}&\colon H^{2}(\mathbb{T})\longrightarrow N[\mf{L}(\s_{k})], & P_{k}[u]&:=\left(u,\tilde \varphi_{k}\right)_{H^{2}} \tilde\varphi_{k} + (u,\tilde\phi_{k})_{H^{2}}\, \tilde\phi_{k}, \\
		Q_{k}&\colon L^{2}(\mathbb{T})  \longrightarrow R[\mf{L}(\s_{k})]^{\perp}, & Q_{k}[v]&:=\left(v,\varphi_{k}\right)_{L^{2}} \varphi_{k} + \left(v, \phi_{k}\right)_{L^{2}} \phi_{k}, \label{eq:def_Q_k}
	\end{align}
	where we recall that $(\cdot,\cdot)_{L^2}$ and $(\cdot,\cdot)_{H^2}$ denote the corresponding scalar products in $L^{2}(\mathbb{T})$ and $H^{2}(\mathbb{T})$, respectively,
	\begin{equation*}
		\varphi_{k}(t)=\sqrt{\frac{2}{T}}\cos\left(\frac{2\pi k}{T}t\right), \qquad  \phi_{k}(t)=\sqrt{\frac{2}{T}}\sin\left(\frac{2\pi k}{T}t\right),
	\end{equation*}
	and we have set
	$$\tilde\varphi_{k}(t):=\left(1+\frac{4\pi^2k^2}{T^2}+\frac{16\pi^4k^4}{T^4}\right)^{-\frac{1}{2}}\varphi_{k}(t), \qquad \tilde\phi_{k}(t):=\left(1+\frac{4\pi^2k^2}{T^2}+\frac{16\pi^4k^4}{T^4}\right)^{-\frac{1}{2}}\phi_{k}(t), $$
	so that
	$$\left\|\varphi_k\right\|_{L^2}=\left\|\phi_k\right\|_{L^2}=\left\|\tilde\varphi_k\right\|_{H^2}=\big\|\tilde\phi_k\big\|_{H^2}=1.$$
	Therefore, $\mf{F}(\l,u)=0$ can be equivalently written as
	\begin{equation}
		\label{E10}
		\left\{\begin{array}{l}
			Q_{k}\mf{F}\left(\l,P_{k}[u]+\left(I_{H^{2}}-P_{k}\right)[u]\right)=0, \\[2pt]
			\left(I_{L^{2}}-Q_{k}\right)\mf{F}\big(\l,P_{k}[u]+\left(I_{H^{2}}-P_{k}\right)[u]\big)=0,
		\end{array}\right.
	\end{equation}
	where $I_{L^{2}}\colon L^{2}(\mathbb{T})\to L^{2}(\mathbb{T})$ and $I_{H^{2}}\colon H^{2}(\mathbb{T})\to H^{2}(\mathbb{T})$ are the identity operators. Consider the auxiliary function 
	$$\mathscr{G}_{k}\colon\R\times N[\mf{L}(\s_{k})]\times N[\mf{L}(\s_{k})]^{\perp} \longrightarrow R[\mf{L}(\s_{k})], \qquad \mathscr{G}_{k}(\l,x,y):=\left(I_{L^{2}}-Q_{k}\right)\mf{F}(\l,x+y);$$
	since $\mf{F}$ is of class $\mc{C}^{\infty}$, $\mathscr{G}_{k}$ is also of class $\mc{C}^{\infty}$, and
	\begin{equation*}
		\partial_{y}\mathscr{G}_{k}(\l,x,y)[z]=\left(I_{L^{2}}-Q_{k}\right)\partial_{u}\mf{F}(\l,x+y)[z], \qquad z\in N[\mf{L}(\s_{k})]^{\perp}.
	\end{equation*}
	Note that 
	\begin{align*}
	\mathscr{G}_{k}(\s_{k},0,0)&=\left(I_{L^{2}}-Q_{k}\right)\mf{F}(\s_{k},0)=0, \\ \partial_{y}\mathscr{G}_{k}(\s_{k},0,0)&=\left(I_{L^{2}}-Q_{k}\right)\partial_{u}\mf{F}(\s_{k},0)|_{N[\mf{L}(\s_{k})]^{\perp}}=\left(I_{L^{2}}-Q_{k}\right)\mf{L}(\s_k)|_{N[\mf{L}(\s_{k})]^{\perp}}.
	\end{align*}
	Hence, $\partial_{y}\mathscr{G}_{k}(\s_{k},0,0)$ is an isomorphism, and the implicit function theorem then guarantees the existence of two open neighborhoods
	$$\s_{k}\in I_{k}\subset \R, \quad 0\in \mc{N}_{k}\subset N[\mf{L}(\s_{k})],$$
	and of a function $\psi\colon I_{k}\times \mc{N}_{k}\longrightarrow N[\mf{L}(\s_{k})]^{\perp}$ of class $\mc{C}^{\infty}$ such that $\psi(\s_{k},0)=0$ and 
	\begin{equation}
		\label{E12}
		\mathscr{G}_{k}(\l,x,\psi(\l,x))=\left(I_{L^{2}}-Q_{k}\right)\mf{F}(\l,x+\psi(\l,x))=0, \qquad (\l,x)\in I_{k}\times \mc{N}_{k}.
	\end{equation}
	Moreover, there exists a neighborhood $(\s_{k},0)\in \mc{O}_{k}\subset \R\times H^{2}(\mathbb{T})$ such that, if $(\l,u)=(\l,x+y)\in\mc{O}_{k}$ and $\mathscr{G}_{k}(\l,x,y)=0$, then $y=\psi(\l,x)$. Therefore, system \eqref{E10} can be equivalently rewritten as the finite-dimensional equation
	\begin{equation}
		\label{E11}
		Q_{k}\mf{F}(\l,x+\psi(\l,x))=0, \qquad x\in \mc{N}_{k}\subset N[\mf{L}(\s_{k})], \,\, \l\in I_{k}\subset\R.
	\end{equation}
	Let us consider the linear isomorphisms
	\begin{equation}
		\label{eq:TS}
		\begin{aligned}
			L&\colon\R^{2}\longrightarrow N[\mf{L}(\s_{k})], & L(x_1,x_2)&:=x_1\varphi_{k}+x_2\phi_{k}, \\
			S&\colon R[\mf{L}(\s_{k})]^{\perp}\longrightarrow \R^{2}, & S(z)&:=(\left(z,\varphi_{k}\right)_{L^{2}},\left(z,\phi_{k}\right)_{L^{2}}),
		\end{aligned}
	\end{equation}
	and $\O_{k}:=L^{-1}(\mc{N}_{k})\subset\R^{2}$. With these isomorphisms, equation \eqref{E11} is equivalent to
	\begin{equation*}
		\mf{H}(\l,\left(x_1,x_2\right)):=SQ_{k}\mf{F}(\l,L(x_1,x_2)+\psi(\l,L(x_1,x_2)))=0, \qquad (x_1,x_2)\in\O_{k}, \, \, \l\in I_{k}.
	\end{equation*}
	In the sequel, with a small abuse of notation, we will write $(\l,x_1,x_2)$ instead of $(\l,\left(x_1,x_2\right))$.
	Observe that the function $\mf{H}\colon I_{k}\times \O_{k}\subset \R\times \R^{2}\longrightarrow \R^{2}$ is explicitly given by
	\begin{equation}
	\label{E16}
		\mf{H}(\l,x_1,x_2):=\left[ \begin{array}{l}
			h_{1}(\l,x_1,x_2) \\
			h_{2}(\l,x_1,x_2)
		\end{array}\right]=\left[ \begin{array}{l}
			\left(Q_{k}\mf{F}(\l,L(x_1,x_2)+\psi(\l,L(x_1,x_2))),\varphi_{k}\right)_{L^{2}} \\
			\left(Q_{k}\mf{F}(\l,L(x_1,x_2)+\psi(\l,L(x_1,x_2))),\phi_{k}\right)_{L^{2}}
			\end{array}\right].
	\end{equation}
	Consequently, we have established the following result.
	\begin{theorem}
		\label{Th3.13}
		The maps 
		\begin{align*}
			\Psi_{k} &\colon \mf{H}^{-1}(0) \longrightarrow \mf{F}^{-1}(0)\cap \mc{O}_{k}, & (\l,x_1,x_2)&\mapsto (\l,L(x_1,x_2)+\psi(\l,L(x_1,x_2))), \\
			\Psi_{k}^{-1}&\colon \mf{F}^{-1}(0)\cap \mc{O}_{k} \longrightarrow \mf{H}^{-1}(0), & (\l,u)&\mapsto (\l,L^{-1}P_{k}\left[u\right]),
		\end{align*}
		are inverse one of the other. Therefore, $\Psi_{k}\colon \mf{H}^{-1}(0) \to \mf{F}^{-1}(0)\cap \mc{O}_{k}$ is a homeomorphism. Moreover, for every $(\l,x_{1},x_{2})\in \mf{H}^{-1}(0)$, it holds that $\partial_{(x_{1},x_{2})}\mf{H}(\l,x_{1},x_{2})\in GL(\R^{2})$ if and only if $\partial_{u}\mf{F}(\Psi_{k}(\l,x_{1},x_{2}))\in GL(H^{2}(\mathbb{T}),L^{2}(\mathbb{T}))$.
	\end{theorem}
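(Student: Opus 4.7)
The plan has two parts: first verify that $\Psi_k$ and $\Psi_k^{-1}$ are mutual inverses (hence, given the manifest continuity of all ingredients, homeomorphisms), and then establish the invertibility equivalence via a Schur-complement argument inherent to the Lyapunov--Schmidt reduction. To check $\Psi_k^{-1}\circ\Psi_k=\mathrm{Id}$, take $(\l,x_1,x_2)\in\mf{H}^{-1}(0)$; since $L(x_1,x_2)\in N[\mf{L}(\s_k)]$ and $\psi(\l,L(x_1,x_2))\in N[\mf{L}(\s_k)]^{\perp}$, the projection $P_k$ annihilates the second summand and $L^{-1}P_{k}[L(x_1,x_2)+\psi(\l,L(x_1,x_2))]=(x_1,x_2)$. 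For $\Psi_k\circ\Psi_k^{-1}=\mathrm{Id}$, take $(\l,u)\in\mf{F}^{-1}(0)\cap\mc{O}_k$, decompose $u=P_k[u]+(I_{H^2}-P_k)[u]=:x+y$, and note that $\mf{F}(\l,u)=0$ forces in particular $(I_{L^2}-Q_k)\mf{F}(\l,x+y)=\mathscr{G}_k(\l,x,y)=0$. The local uniqueness in the implicit function theorem that defined $\psi$ on $\mc{O}_k$ then yields $y=\psi(\l,x)$, whence $\Psi_k(\l,L^{-1}x)=(\l,x+\psi(\l,x))=(\l,u)$. Continuity of both maps is immediate from continuity of $L$, $L^{-1}$, $P_k$, and the $\mc{C}^{\infty}$ regularity of $\psi$.

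For the second assertion, fix $(\l,x_1,x_2)\in\mf{H}^{-1}(0)$, set $u_*:=L(x_1,x_2)+\psi(\l,L(x_1,x_2))$ and $A:=\partial_{u}\mf{F}(\l,u_*)$. Using the topological direct sum decompositions $H^{2}(\mathbb{T})=N[\mf{L}(\s_k)]\oplus N[\mf{L}(\s_k)]^{\perp}$ and $L^{2}(\mathbb{T})=R[\mf{L}(\s_k)]^{\perp}\oplus R[\mf{L}(\s_k)]$ (closed range, by Lemma \ref{L3}), represent $A$ by the block operator with entries
\begin{equation*}
A_{11}=Q_{k}A|_{N[\mf{L}(\s_k)]},\quad A_{12}=Q_{k}A|_{N[\mf{L}(\s_k)]^{\perp}},\quad A_{21}=(I_{L^{2}}-Q_{k})A|_{N[\mf{L}(\s_k)]},\quad A_{22}=(I_{L^{2}}-Q_{k})A|_{N[\mf{L}(\s_k)]^{\perp}}.
\end{equation*}
By continuity of $(\l,u)\mapsto\partial_{u}\mf{F}(\l,u)$ in $\Phi_{0}(H^{2}(\mathbb{T}),L^{2}(\mathbb{T}))$ and the fact that $(I_{L^{2}}-Q_{k})\mf{L}(\s_k)|_{N[\mf{L}(\s_k)]^{\perp}}$ is an isomorphism onto $R[\mf{L}(\s_k)]$, after possibly shrinking $\mc{O}_k$ (which is permissible because $\psi$ was obtained by IFT) the block $A_{22}$ remains an isomorphism. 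Hence, by the standard Schur complement criterion, $A\in GL(H^{2}(\mathbb{T}),L^{2}(\mathbb{T}))$ if and only if
\begin{equation*}
A_{11}-A_{12}A_{22}^{-1}A_{21}\in GL(N[\mf{L}(\s_k)],R[\mf{L}(\s_k)]^{\perp}).
\end{equation*}

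It then remains to identify this Schur complement with $\partial_{(x_1,x_2)}\mf{H}(\l,x_1,x_2)$ up to the isomorphisms $L$ and $S$. Differentiating the defining identity $(I_{L^{2}}-Q_{k})\mf{F}(\l,x+\psi(\l,x))=0$ in $x$ at $x=L(x_1,x_2)$ gives $A_{22}\,\partial_{x}\psi(\l,L(x_1,x_2))+A_{21}=0$, i.e. $\partial_{x}\psi(\l,L(x_1,x_2))=-A_{22}^{-1}A_{21}$. Applying the chain rule to \eqref{E16} and using $Q_{k}|_{R[\mf{L}(\s_k)]^{\perp}}=\mathrm{Id}$, we obtain
\begin{equation*}
\partial_{(x_1,x_2)}\mf{H}(\l,x_1,x_2)=S\bigl(A_{11}+A_{12}\,\partial_{x}\psi(\l,L(x_1,x_2))\bigr)L=S\bigl(A_{11}-A_{12}A_{22}^{-1}A_{21}\bigr)L.
\end{equation*}
Since $L\in GL(\R^{2},N[\mf{L}(\s_k)])$ and $S\in GL(R[\mf{L}(\s_k)]^{\perp},\R^{2})$, the invertibility of $\partial_{(x_1,x_2)}\mf{H}$ is equivalent to that of the Schur complement and hence to $A\in GL(H^{2}(\mathbb{T}),L^{2}(\mathbb{T}))$, which is the claim. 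The main subtlety I foresee is a careful bookkeeping of the block decomposition, in particular ensuring that $A_{22}$ remains invertible uniformly on $\mc{O}_k$, which is where one may have to shrink the neighborhoods produced during the construction of $\psi$.
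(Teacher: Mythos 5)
Your proof is correct and supplies in full the argument that the paper simply delegates to a reference. The paper's own proof of Theorem~\ref{Th3.13} is a two-line deferral: the first part is declared to ``follow from the construction carried out in this subsection,'' and the second part is attributed to Proposition~8.3.4(a) of Buffoni and Toland \cite{BT}. Your inverse-map verification is precisely the content of ``follows from the construction'': $P_k$ annihilates the range of $\psi$ because $\psi$ takes values in $N[\mf{L}(\s_k)]^\perp$, and the implicit-function uniqueness on $\mc{O}_k$ forces $(I_{H^2}-P_k)[u]=\psi(\l,P_k[u])$ for any $(\l,u)\in \mf{F}^{-1}(0)\cap\mc{O}_k$. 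Your Schur-complement computation, namely $\partial_{(x_1,x_2)}\mf{H}(\l,x_1,x_2)=S\bigl(A_{11}-A_{12}A_{22}^{-1}A_{21}\bigr)L$ with $A_{22}$ invertible and $S,L$ isomorphisms, is exactly the statement of the cited Buffoni--Toland proposition specialized to this Lyapunov--Schmidt reduction. One minor remark: the invertibility of $A_{22}=(I_{L^2}-Q_k)\partial_u\mf{F}(\l,u_*)|_{N[\mf{L}(\s_k)]^\perp}$ along the graph of $\psi$ is already part of the implicit-function-theorem construction of $\psi$ (since $\partial_y\mathscr{G}_k(\l,x,\psi(\l,x))$ must remain an isomorphism where $\psi$ is defined), so the additional shrinking of $\mc{O}_k$ you mention is not really needed, although it does no harm.
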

	\begin{proof}
		The first part follows from the construction carried out in this subsection. For the second part, we refer the reader to Proposition 8.3.4, statement (a), of \cite{BT}.
	\end{proof}
	In conclusion, to determine the local structure of $\mf{F}^{-1}(0)$ near $(\s_{k},0)$, $k\geq 1$, we must solve the equivalent finite-dimensional system $\mf{H}(\l,x_1,x_2)=0$ defined in \eqref{E16} with $(\l,x_1,x_2)\in I_k\times\O_k$. 
	\par
	The next result gives the Taylor expansion of the function $\mf{H}\colon I_{k}\times \O_{k}\subset \R\times \R^{2}\to\R^{2}$. Its proof is based on the computation of the derivatives of the functions $\psi$, $h_{1}$ and $h_{2}$ that are detailed in Appendix \ref{AC}.
				
	\begin{theorem}
		\label{th:4.3}
		The Taylor expansions of $h_1$ and $h_2$ at $(\l,x_1,x_2)=(\s_{k},0,0)$ are given by
		\begin{equation*}\begin{array}{l}
			h_1(\l,x_1,x_2)  =  \left(\l-\s_k\right)x_1 +
			a_k x_1^3+3b_kx_1^2x_2+3c_kx_1x_2^2+d_kx_2^3 +R_1(\l,x_1,x_2), \\[3pt]
			h_2(\l,x_1,x_2)  =  \left(\l-\s_k\right)x_2
			+
			 b_kx_1^3+3c_kx_1^2x_2+3d_kx_1x_2^2+e_kx_2^3+R_2(\l,x_1,x_2),\end{array}
		\end{equation*}
		where we have set
		\begin{equation}
			\label{eq:coeff}
			\begin{split}
				a_k\!:=\left(a(t)\varphi_k^3,\varphi_k\right)_{L^2}, \;\; c_k\!:=\left(a(t) \varphi_k \phi^2_k,\varphi_k\right)_{L^2}\!=\!\left(a(t) \varphi^2_k \phi_k,\phi_k\right)_{L^2}, \;\;  e_k\!:= \left(a(t)\phi_k^3,\phi_k\right)_{L^2},\\
				b_k\!:=\left(a(t) \varphi^2_k \phi_k,\varphi_k\right)_{L^2}\!=\!\left(a(t)\varphi_k^3,\phi_k\right)_{L^2}, \quad
				d_k\!:=\left(a(t)\phi_k^3,\varphi_k\right)_{L^2}\!=\!\left(a(t) \varphi_k \phi^2_k,\phi_k\right)_{L^2},
			\end{split}
		\end{equation}
		and,  taking any fixed norm $\|\cdot\|$ in $\R^2$, the remainders satisfy
		\[
		R_1(\l,x_1,x_2),R_2(\l,x_1,x_2)\in
		O\Bigg(\sum_{j=0}^{4}\left|\l-\s_k\right|^{j}\|(x_1,x_2)\|^{4-j}\Bigg).
		\]
	\end{theorem}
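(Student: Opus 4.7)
My plan is to apply Taylor's theorem to the $\mc{C}^{\infty}$ function $\mf{H}$ around $(\s_k,0,0)$, identifying the surviving coefficients by a combination of symmetry considerations and direct computation of the relevant partial derivatives. Two structural observations are crucial. First, since $\psi(\l,0)=0$ and $\mf{F}(\l,0)=0$, one has $\mf{H}(\l,0,0)=0$ for all $\l$ near $\s_k$, so all pure $\l$-dependent monomials vanish in the expansion. Second, the odd symmetry $\mf{F}(\l,-u)=-\mf{F}(\l,u)$ combined with uniqueness of $\psi$ forces $\psi(\l,-x)=-\psi(\l,x)$, and therefore $\mf{H}(\l,-x_1,-x_2)=-\mf{H}(\l,x_1,x_2)$. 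Hence every even-degree monomial in $(x_1,x_2)$ vanishes, and in particular all quadratic contributions to the Taylor polynomial are absent.

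For the linear-in-$x$ coefficients, I would differentiate the Lyapunov--Schmidt identity $(I_{L^{2}}-Q_{k})\mf{F}(\l,L(x)+\psi(\l,L(x)))=0$ in $x$ at $x=0$ and use that $L(x)\in N[\mf{L}(\s_k)]$ together with the local invertibility of $\mf{L}(\s_k)+(\l-\s_k)I$ on $R[\mf{L}(\s_k)]$; this yields $\p_x\psi(\l,0)=0$ and therefore $\p_{x_j}h_i(\l,0,0)=(\l-\s_k)\,\d_{ij}$, which produces the displayed $(\l-\s_k)x_j$ terms after expansion in $\l$. For the pure cubic coefficients at $\l=\s_k$, the polynomial nature of $\mf{F}$ in $u$, together with the estimate $\|\psi(\l,x)\|_{H^{2}}=O(\|x\|^{3})$ (obtained by applying the same local inverse to the defining identity for $\psi$ and absorbing the cubic self-interaction via a standard contraction argument), yields
\[
Q_{k}\mf{F}(\l,x+\psi(\l,x))=(\l-\s_k)\,x+Q_{k}\bigl[a(t)\,x^{3}\bigr]+O(\|x\|^{5}),
\]
where I have used that $Q_{k}\psi=0$ and $Q_{k}\mf{L}(\s_k)\psi=0$ since $\psi\in R[\mf{L}(\s_k)]$. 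Taking $L^{2}$-inner products with $\varphi_k$ and $\phi_k$ of the multinomial expansion of $a(t)(x_1\varphi_k+x_2\phi_k)^{3}$ produces exactly the four cubic monomials in each component with coefficients given by the inner products in \eqref{eq:coeff}; the identifications among $b_k$, $c_k$ and $d_k$ follow immediately by commuting factors inside the integrand.

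The remainder estimate then follows from Taylor's integral remainder formula applied to the smooth $\mf{H}$: every contribution not explicitly displayed is either an $O(\|(x_1,x_2)\|^{4})$ piece coming from higher-order $x$-derivatives of $\mf{H}$, or carries at least one factor $(\l-\s_k)$ multiplied by a positive power of $\|(x_1,x_2)\|$ coming from mixed $\l$- and $x$-derivatives; all such terms are dominated by $\sum_{j=0}^{4}|\l-\s_k|^{j}\|(x_1,x_2)\|^{4-j}$. The main technical obstacle will be the careful bookkeeping of the mixed $\l$- and $x$-derivatives of $\psi$ at $(\s_k,0)$, in particular verifying that $\p_{xx}\psi(\s_k,0)$ and $\p_{\l x}\psi(\s_k,0)$ enter only the remainder and not the explicit cubic or linear pieces; these detailed derivative computations are delegated to Appendix~\ref{AC}.
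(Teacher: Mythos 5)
Your proposal is correct, and it follows a genuinely different route from the paper's. The paper derives the expansion by explicitly computing, in Appendix~\ref{AC}, all first, second, and $\l$-involving third order partial derivatives of $\psi$, $h_1$ and $h_2$ at $(\s_k,0,0)$, finding them equal to zero except for $\partial_{\l x_i}h_i=1$ and the pure $x$-cubic derivatives, and then invoking Taylor's theorem. You instead exploit two structural observations that allow you to bypass most of these computations: the odd symmetry $\mf{F}(\l,-u)=-\mf{F}(\l,u)$, which (after checking it is inherited by $\psi$ and hence by $\mf{H}$, using a symmetric choice of neighborhoods in the implicit function theorem) kills all even-degree-in-$(x_1,x_2)$ monomials, including the pure powers of $\l-\s_k$; and the a priori bound $\|\psi(\l,x)\|_{H^2}=O(\|(x_1,x_2)\|^3)$, uniform for $\l$ in a compact interval around $\s_k$, obtained by applying the bounded inverse $\big[(I_{L^2}-Q_k)\mf{L}(\l)|_{N[\mf{L}(\s_k)]^\perp}\big]^{-1}$ to the defining identity and absorbing the cubic self-interaction. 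Combined with the identity $Q_k\mf{L}(\l)|_{N[\mf{L}(\s_k)]}=(\l-\s_k)I$ and the fact that $\mf{L}(\l)\psi\in R[\mf{L}(\s_k)]=\ker Q_k$, this yields directly
\[
Q_k\mf{F}\big(\l,L(x)+\psi(\l,L(x))\big)=(\l-\s_k)L(x)+Q_k\big[a(t)L(x)^3\big]+O(\|x\|^5),
\]
whose $L^2$-projections onto $\varphi_k$ and $\phi_k$ reproduce exactly the displayed linear and cubic monomials with coefficients $a_k,\dots,e_k$. What your approach buys is less bookkeeping, a clean conceptual reason for the vanishing of the quadratic and pure-$\l$ terms, and in fact a remainder $O(\|x\|^5)$ that is slightly sharper than the one stated; the paper's method has the advantage of being systematic and of producing, along the way, the vanishing of the mixed derivatives $\partial_{\l x_jx_\ell}h_i$, $\partial_{\l\l x_j}h_i$ that are reused elsewhere in the argument. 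One small wrinkle to make your sketch airtight: you should fix a symmetric neighborhood $\mc{N}_k$ so that the oddness of $\psi$ can be deduced from the uniqueness clause of the implicit function theorem, and note explicitly (either by the symmetry or, as the paper does, by observing that $y=0$ solves $\mathscr{G}_k(\l,0,y)=0$) that $\psi(\l,0)=0$ for all $\l$ near $\s_k$ before differentiating the Lyapunov--Schmidt identity at $x=0$.
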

				
	Thanks to this analysis, we can prove the following local structure result, establishing that, near the bifurcation point, every curve of solutions of \eqref{eq:1.1} bifurcating from $(\s_{k},0)$, $k\geq 1$, consists of solutions with $2k$ zeros in $[0,T)$. 
				
	\begin{proposition}
		\label{pr:locbif}
		Let $\varepsilon>0$ and suppose that $\gamma\colon(-\varepsilon,\varepsilon)\to \mf{H}^{-1}(0)$, $\gamma(s)=(\l(s),\gamma_{1}(s),\gamma_{2}(s))$, is a continuous curve of solutions of $\mf{H}(\l,x_1,x_2)=0$ such that $\gamma(0)=(\s_{k},0,0)$, $k\in\N$. Then, the corresponding solutions of $\mf{F}(\l,u)=0$ given by the homeomorphism $\Psi_{k}\colon \mf{H}^{-1}(0) \to \mf{F}^{-1}(0)\cap \mc{O}_{k}$ admit the following asymptotic expansion as $s\to 0$ 
		\begin{equation}
			\label{E3.22}
			\begin{aligned}
			\!\!	(\l,u(t)) & \!=\!  \Big(\l(s),\gamma_{1}(s)\varphi_{k}(t)+\gamma_{2}(s)\phi_{k}(t)+O\Big(\sum_{j=0}^{3}|\l(s)-\s_k|^{3-j}\left(\left|\g_1(s)\right|+\left|\g_2(s)\right|\right)^{j}\Big)\Big)\\
			\!\!	& \!=\!  \Bigg(\l(s),A(s) \cos\left(\frac{2\pi k }{T}t-\d(s)\right)+O\Big(\sum_{j=0}^{3}|\l(s)-\s_k|^{3-j}\left(\left|\g_1(s)\right|+\left|\g_2(s)\right|\right)^{j}\Big)\Bigg)
			\end{aligned}
		\end{equation}
		where 
		$$A(s):= \sqrt{\frac{2}{T}\left(\gamma_{1}^{2}(s)+\gamma_{2}^{2}(s)\right)}, \quad \d(s):=\arccos\left(\tfrac{\gamma_{1}(s)}{\sqrt{\gamma_{1}^{2}(s)+\gamma_{2}^{2}(s)}}\right), \quad s\in(-\varepsilon,\varepsilon).$$
		Therefore, the corresponding solutions of $\mf{F}(\l,u)=0$ in a neighborhood of $(\s_k,0)$, given by $\Psi_{k}\circ \gamma\colon(-\varepsilon,\varepsilon)\to \mf{F}^{-1}(0)\cap \mc{O}_{k}$, have exactly $2k$ zeros in $[0,T)$ for sufficiently small $\varepsilon>0$.
	\end{proposition}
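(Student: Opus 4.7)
The plan is to unwind the reduction homeomorphism $\Psi_{k}$ of Theorem \ref{Th3.13}, so that
$$u(s,t)=\g_{1}(s)\varphi_{k}(t)+\g_{2}(s)\phi_{k}(t)+\psi\bigl(\l(s),L(\g_{1}(s),\g_{2}(s))\bigr)(t),$$
and then to control the reduction remainder $\psi(\l,L(x_{1},x_{2}))$ near $(\s_{k},0)$ by computing its Taylor polynomial. The amplitude-phase form in the second line of \eqref{E3.22} is an immediate trigonometric rewriting using the definitions of $\varphi_{k}$, $\phi_{k}$, $A(s)$, and $\d(s)$, so the entire task reduces to estimating $\psi$ in $H^{2}$-norm.

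The key step is to show that the Taylor expansion of $\psi(\l,L(x_{1},x_{2}))$ at $(\s_{k},0,0)$ starts at total order three. Differentiating the defining identity \eqref{E12} at $(\s_{k},0)$ and exploiting Lemma \ref{L1}, specifically that $\partial_{\l}\mf{F}(\s_{k},0)=0$ (because $\partial_{\l}\mf{F}(\s_{k},0)[\mu]=\mu\cdot 0$), $\partial_{uu}\mf{F}(\s_{k},0)=0$ (because this derivative carries the factor $u=0$), and $\partial_{\l\l}\mf{F}\equiv 0$, together with the fact that $\mf{L}(\s_{k})$ restricts to an isomorphism from $N[\mf{L}(\s_{k})]^{\perp}$ onto $R[\mf{L}(\s_{k})]$, one obtains $\partial_{\l}\psi(\s_{k},0)=\partial_{x}\psi(\s_{k},0)=0$ and the vanishing of every second-order partial derivative of $\psi$ at $(\s_{k},0)$; the explicit implicit-function computations are exactly those collected in Appendix \ref{AC}. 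Taylor's theorem then yields
$$\bigl\|\psi(\l,L(x_{1},x_{2}))\bigr\|_{H^{2}}=O\!\left(\sum_{j=0}^{3}|\l-\s_{k}|^{3-j}\|(x_{1},x_{2})\|^{j}\right),$$
which, combined with the unfolding above, produces \eqref{E3.22}.

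For the zero count, set $u_{0}(s,t):=A(s)\cos(\o t-\d(s))$ with $\o:=2\pi k/T$; this leading term has exactly $2k$ simple zeros in $[0,T)$ at which $|u_{0}'|=\o A(s)$. Since $H^{2}(\T)\hookrightarrow\mc{C}^{1}(\T)$, the remainder $R(s,\cdot):=u(s,\cdot)-u_{0}(s,\cdot)$ inherits the above estimate in $\mc{C}^{1}$-norm. A crucial intermediate step is the comparison
$$|\l(s)-\s_{k}|=O\bigl(\|(\g_{1}(s),\g_{2}(s))\|^{2}\bigr)\quad\text{as }s\to 0;$$
multiplying the two components of $\mf{H}(\l(s),\g_{1}(s),\g_{2}(s))=0$ from Theorem \ref{th:4.3} by $\g_{1}(s)$ and $\g_{2}(s)$, respectively, and adding, one isolates $(\l(s)-\s_{k})\|(\g_{1}(s),\g_{2}(s))\|^{2}$ against terms of total order at least four in the same variables, whence the stated comparison. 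This in turn forces $\|R(s,\cdot)\|_{\mc{C}^{1}}=o(A(s))$ as $s\to 0$. A Rouch\'e-type argument then concludes: on fixed-size intervals around each of the $2k$ simple zeros of $u_{0}$ one has $|u'|\geq\o A(s)/2$, so $u$ is strictly monotone and has exactly one zero there by the intermediate value theorem, while outside these intervals $|u_{0}|$ is bounded below by a positive multiple of $A(s)$, which, combined with the control on $R$, prevents additional zeros.

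The main obstacle will be the comparison step: since $\g$ is merely assumed continuous, no a priori rate of $\l(s)-\s_{k}$ relative to $\|(\g_{1}(s),\g_{2}(s))\|$ is available, and it must be extracted from the finite-dimensional bifurcation equation of Theorem \ref{th:4.3} itself in order to convert the $H^{2}$-remainder bound on $\psi$ into a $\mc{C}^{1}$-bound on $R$ that is strictly subordinate to the amplitude $A(s)$.
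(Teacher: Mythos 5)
Your unwinding of $\Psi_k$ and the remainder estimate for $\psi$ via Lemma~\ref{LC1} match the paper's argument exactly. The paper then dismisses the zero-count as following ``by continuity in the $\mc{C}^1$ norm''; you rightly elaborate, and rightly identify the delicate point: the stated remainder contains the pure term $|\l(s)-\s_k|^3$, which is not automatically $o(A(s))$, so some relation between $|\l(s)-\s_k|$ and $\|(\g_1(s),\g_2(s))\|$ is needed before the Rouch\'e-type count can close.

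However, your proposed comparison $|\l(s)-\s_k|=O(\|(\g_1(s),\g_2(s))\|^2)$ does not follow from the inner-product trick as you state it. Writing $r:=|\l(s)-\s_k|$ and $\rho:=\|(\g_1(s),\g_2(s))\|$, the product $\g_1 R_1+\g_2 R_2$ contributes, from the $j=4$ part of the Theorem~\ref{th:4.3} remainder, a term of size $r^4\rho$; after isolating $(\l-\s_k)\rho^2$ and dividing by $\rho^2$ you are left with $r=O(\rho^2+\cdots+r^4/\rho)$, and the last summand is not controlled. Indeed $r\rho^2\lesssim(r+\rho)^4$ is compatible with $r\sim\rho^{1/2}$, where the claimed comparison fails. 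The missing ingredient, which repairs both your argument and the paper's, is the observation that $\psi(\l,0)\equiv 0$ for $\l$ near $\s_k$: this follows from the uniqueness clause of the implicit function theorem, since $\mathscr{G}_k(\l,0,0)=(I_{L^2}-Q_k)\mf{F}(\l,0)=0$. Consequently the Taylor polynomial of $\psi(\l,L(x_1,x_2))$ at $(\s_k,0)$ has no pure $(\l-\s_k)$-monomials, so the $j=0$ term in the $\psi$-remainder is absent and one gets $\|\psi\|_{\mc{C}^1}=O\big(\sum_{j=1}^{3}|\l-\s_k|^{3-j}\rho^{j}\big)=o(\rho)=o(A(s))$ directly, with no comparison required. (Equivalently, the same fact gives $\mf{H}(\l,0,0)\equiv 0$, so $R_i$ has no $r^4$ term, after which your inner-product derivation does close; but the direct route is shorter.) With this repair, the rest of your Rouch\'e-type counting argument is sound.
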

				
	\begin{proof}
		Fix $k\in\N$. The explicit expressions of $\Psi_{k}$ in Theorem \ref{Th3.13} and of $L$ in \eqref{eq:TS} lead to
		\begin{align*}
			(\Psi_{k}\circ\gamma)(s)&=\Psi_{k}(\l(s), \gamma_{1}(s), \gamma_{2}(s))\\
			& =\left(\l(s), L(\gamma_{1}(s), \gamma_{2}(s))+\psi(\l(s), L(\gamma_{1}(s), \gamma_{2}(s)))\right) \\
			&= \left(\l(s),\gamma_{1}(s)\varphi_{k}(t)+\gamma_{2}(s)\phi_{k}(t)+\psi(\l(s), L(\gamma_{1}(s), \gamma_{2}(s)))\right).
		\end{align*}
		Expansion \eqref{E3.22} follows because the derivatives of $\psi$ at $(\s_k,0)$ computed in Lemma \ref{LC1} ensure, as $s\to 0$,
		$$\psi(\l(s), \gamma_{1}(s)\varphi_k(t)+ \gamma_{2}(s)\phi_k(t))=O\Big(\sum_{j=0}^{3}|\l(s)-\s_k|^{3-j}\left(\left|\g_1(s)\right|\left\|\varphi_k\right\|_{\mc{C}^1}+\left|\g_2(s)\right|\left\|\phi_k\right\|_{\mc{C}^1}\right)^{j}\Big).$$
		Finally, the statement about the number of zeros of the solutions for $s\sim 0$ follows by continuity with respect to $s$ in the $\mc{C}^1$ norm, observing that the function $\cos\left(\frac{2\pi k }{T}t-\d(s)\right)$ has exactly $2k$ transversal zeros in $[0,T)$ for all $s\in(-\e,\e)$.
	\end{proof}
				
	\subsection{Study of the local bifurcation in some particular cases}
	\label{sec:3.4}
				
	In this section, we further investigate the sharp local structure of the solutions of \eqref{eq:1.1} at the bifurcation points $(\s_{k},0)$, $k\geq 1$, obtained in Section \ref{sec:3.3}, by assuming some particular relations on the coefficients introduced in \eqref{eq:coeff}. Observe first of all that, since we are assuming \eqref{ass:aloc}, with the notation of \eqref{eq:coeff} we have
	\[
	a_k>0, \qquad c_k>0, \qquad e_k>0.
	\]
	Moreover, throughout this section we suppose 
	\begin{equation}
		\label{ass:H}
		(i)\; a_k=3c_k, \qquad (ii)\; b_k=d_k, \qquad (iii)\; a_k=e_k, \qquad (iv)\; b_k\neq 0. \tag{H}
	\end{equation}
	Observe that $(iv)$ in \eqref{ass:H} is incompatible with the possibility of $a(t)$ being even considered in Section \ref{sec:3.2}. Indeed, if $a(t)$ is even, the integrand in the definition of $b_k$ (see \eqref{eq:coeff}) is odd, implying $b_k=0$. Moreover, from assumptions $(i)$, $(ii)$ and $(iii)$ in \eqref{ass:H}, which imply
	\begin{gather*}
		\int_0^{T}a(t)\cos^4\left(\sqrt{\s_k}t\right)\,{\mathrm{d}}t=\int_0^{T}a(t)3\cos^2\left(\sqrt{\s_k}t\right)\sin^2\left(\sqrt{\s_k}t\right)\,{\mathrm{d}}t= \int_0^{T}a(t)\sin^4\left(\sqrt{\s_k}t\right)\,{\mathrm{d}}t,\\
		\int_0^{T}a(t)\cos^3\left(\sqrt{\s_k}t\right)\sin\left(\sqrt{\s_k}t\right)\,{\mathrm{d}}t=\int_0^{T}a(t)\sin^3\left(\sqrt{\s_k}t\right)\cos\left(\sqrt{\s_k}t\right)\,{\mathrm{d}}t,
	\end{gather*}
	we obtain
	\begin{equation}
		\label{subcrit}
		a_k\pm2b_k>0.
	\end{equation} 
	Indeed, 
	\begin{align*}
	0&<\int_0^{T}a(t)\left(\cos\left(\sqrt{\s_k}t\right)\pm\sin\left(\sqrt{\s_k}t\right)\right)^4\,{\mathrm{d}}t\\
	&=\int_0^{T}a(t)\left(\cos^4\left(\sqrt{\s_k}t\right)\pm4\cos^3\left(\sqrt{\s_k}t\right)\sin\left(\sqrt{\s_k}t\right)\right)\,{\mathrm{d}}t\\
	&\;\;\;+\int_0^{T}a(t)\left(6\cos^2\left(\sqrt{\s_k}t\right)\sin^2\left(\sqrt{\s_k}t\right)\pm4\cos\left(\sqrt{\s_k}t\right)\sin^3\left(\sqrt{\s_k}t\right)+\sin^4\left(\sqrt{\s_k}t\right)\right)\,{\mathrm{d}}t\\
	&=\int_0^{T}a(t)\left(\cos^4\left(\sqrt{\s_k}t\right)\pm4\cos^3\left(\sqrt{\s_k}t\right)\sin\left(\sqrt{\s_k}t\right)\right)\,{\mathrm{d}}t\\
	&\;\;\;+\int_0^{T}a(t)\left(2\cos^4\left(\sqrt{\s_k}t\right)\pm4\cos^3\left(\sqrt{\s_k}t\right)\sin\left(\sqrt{\s_k}t\right)+\cos^4\left(\sqrt{\s_k}t\right)\right)\,{\mathrm{d}}t\\
	&=T^2\left(a_k\pm2b_k\right).
	\end{align*}
	
	Using \eqref{ass:H}, \eqref{E16} and Theorem \ref{th:4.3}, the relation $\mf{H}(\l,x,y)=0$, which, thanks to Theorem \ref{Th3.13}, is equivalent to $\mf{F}(\l,u)=0$ in a neighborhood of $(\s_k,0)$, $k\geq 1$, can be written as
	\begin{equation}
		\label{Si1}
		\left(\l-\s_{k}\right)I_{\R^2}(x,y)+\mathscr{C}_k(x,y)+
		O\Bigg(\sum_{j=0}^{4}\left|\l-\s_k\right|^{j}\|(x,y)\|^{4-j}\Bigg)=0,
	\end{equation}
	where $I_{\R^2}\colon \R^{2}\to\R^{2}$ is the identity operator and $\mathscr{C}_k\colon \R^{2}\to\R^{2}$ is the homogeneous cubic 
	\begin{equation}
		\label{eq:cubsimpl}
		\mathscr{C}_k(x,y):=\left(
		\begin{array}{l}
			a_k x^{3}+3b_kx^{2}y+a_kxy^{2}+b_ky^{3} \\
			b_k x^{3}+a_kx^{2}y+3b_kx y^{2}+a_k y^{3}
		\end{array}
		\right).
	\end{equation}
	Observe that, thanks to Proposition \ref{pr:2.4}(i), we may restrict to the case $\l<\s_k$, condition that we assume in the rest of this section.
				
	The aim of the rest of this section is the study of the structure of the solutions of system \eqref{Si1} near $(\l,x,y)=(\s_{k},0,0)$. A key point will be showing that, under conditions \eqref{ass:H}, the zero set \eqref{Si1} is, near $(\s_{k},0,0)$, qualitatively the same if we truncate such a Taylor expansion at third-order terms. With this goal in mind, we establish the following preliminary result.
	
	\begin{lemma}
		\label{L20}
		If $\mathscr{C}_k(x,y)=(0,0)$, then $(x,y)=(0,0)$.
	\end{lemma}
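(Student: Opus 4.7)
The plan is to exploit the algebraic structure of $\mathscr{C}_k$ by passing to the sum and the difference of its two scalar components. A direct calculation based on the explicit form \eqref{eq:cubsimpl} should yield the factorizations
\begin{align*}
(x+y)\bigl[(a_k+b_k)(x^2+y^2)+2b_k\,xy\bigr]&=0,\\
(x-y)\bigl[(a_k-b_k)(x^2+y^2)+2b_k\,xy\bigr]&=0.
\end{align*}
That these combinations should factor in this way is not accidental: swapping $(x,y)\mapsto(y,x)$ exchanges the two components of $\mathscr{C}_k$, which forces the difference of the components to vanish on the line $x=y$ and hence to be divisible by $x-y$; an analogous sign-flipped swap $(x,y)\mapsto(-y,-x)$ explains the factor $x+y$ in the sum.

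With these identities in hand I would split into three elementary cases. On the line $x=y$, the first of the two equations above collapses to a nonzero multiple of $x^3(a_k+2b_k)$, which vanishes only at the origin by the inequality $a_k+2b_k>0$ established in \eqref{subcrit}; the line $x=-y$ is treated symmetrically using the companion inequality $a_k-2b_k>0$. Off these two lines, the linear factors $x\pm y$ are both nonzero, so both bracketed quadratic forms must vanish simultaneously; subtracting them eliminates the $xy$ contribution and leaves $2b_k(x^2+y^2)=0$, which forces $(x,y)=(0,0)$ because $b_k\neq 0$ by assumption $(iv)$ of \eqref{ass:H}.

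The only nontrivial step is recognizing the two factorizations at the outset; once these are in place, the remainder of the argument is purely algebraic and relies solely on the positivity $a_k\pm 2b_k>0$ recorded in \eqref{subcrit} together with the nondegeneracy built into \eqref{ass:H}$(iv)$, so no further analytic input is required.
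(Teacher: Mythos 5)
Your proof is correct, and it takes a genuinely different route from the paper's. The paper rewrites each component in the form $x\,Q(x,y)=-b_k y^{3}$ and $y\,Q(x,y)=-b_k x^{3}$, where $Q(x,y):=a_kx^2+3b_kxy+a_ky^2$, and then splits on whether $Q$, $x$, or $y$ vanish; in the remaining case it divides the two equations to get $x^4=y^4$ and concludes from $a_k\pm 2b_k>0$. You instead pass to the sum and difference of the components and exhibit the explicit factorizations
\[
C_1\pm C_2=(x\pm y)\bigl[(a_k\pm b_k)(x^2+y^2)+2b_k\,xy\bigr],
\]
which I verified are correct, and then split on $x=y$, $x=-y$, or neither. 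Both arguments use exactly the same inputs ($a_k\pm 2b_k>0$ from \eqref{subcrit} and $b_k\neq 0$ from \eqref{ass:H}$(iv)$), and both ultimately isolate the lines $y=\pm x$. Your version has the modest advantage of avoiding a division step whose validity must be guarded ($x\neq 0$, $y\neq 0$, $Q\neq 0$), and of displaying the rotated coordinates $x\pm y$ that reappear in the explicit solutions \eqref{eq:sol}; the paper's version keeps the common quadratic $Q$ in view, which is somewhat closer to how system \eqref{EE_new} is handled two steps later. Both are elementary and equally short.
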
 
	
	\begin{proof}
		Thanks to \eqref{ass:H}, the condition $\mathscr{C}_k(x,y)=(0,0)$
		can be rewritten as
		\begin{equation}
			\label{E1}
			\left\{
			\begin{array}{l}
				x\left(a_k x^{2}+3b_k x y+a_k y^{2}\right)=-b_k y^{3}, \\
				y\left(a_k x^{2}+3b_k x y+a_k y^{2}\right)=-b_k x^{3}.
			\end{array}
			\right.
		\end{equation}
		If $a_k x^{2}+3b_kxy+a_ky^{2}=0$, then the first equation gives $y=0$, and the second one $x=0$. Similarly, if $x=0$, the first equation gives $y=0$, while, if $y=0$, the second equation gives $x=0$.
						
		Thus, the only remaining situation is when $x\neq 0$, $y\neq 0$ and  $a_k x^{2}+3b_kxy+a_ky^{2}\neq0$. In such a case, dividing the two equations of \eqref{E1}, we obtain $x^{4}=y^{4}$, that is, $y=x$ or $y=-x$. If $y=x$, system \eqref{E1} reduces to $2(a_k+2b_k)x^{3}=0$, which is impossible because of \eqref{subcrit}. Similarly, if $y=-x$, system \eqref{E1} reduces to $2(a_k-2b_k)x^{3}=0$, and we conclude again by \eqref{subcrit}.
	\end{proof}
					
	\begin{lemma}
		\label{L31}
		There exists a neighborhood $\mc{V}$ of $(\l,x,y)=(\s_{k},0,0)$ and a constant $\beta>0$ such that any solution of system \eqref{Si1} in $\mc{V}$ with $\l<\s_k$ must satisfy
		$$\|\mathbf{x}\|\leq \beta \sqrt{\s_{k}-\l},\qquad \text{where } \mathbf{x}:=(x,y).$$
	\end{lemma}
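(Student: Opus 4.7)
The plan is to combine the coercivity of the homogeneous cubic $\mathscr{C}_k$ provided by Lemma \ref{L20} with a careful absorption of all quartic remainder terms in \eqref{Si1} into either the cubic contribution on the left or the linear $\mu \mathbf{x}$ contribution on the right.

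First, I would record a coercivity estimate: by Lemma \ref{L20}, $\mathscr{C}_k(\mathbf{u})\neq 0$ whenever $\|\mathbf{u}\|=1$, so by continuity and compactness of the unit circle in $\R^{2}$,
$$ m := \min_{\|\mathbf{u}\|=1} \|\mathscr{C}_k(\mathbf{u})\|>0,$$
and, since $\mathscr{C}_k$ is positively homogeneous of degree three, $\|\mathscr{C}_k(\mathbf{x})\|\geq m\|\mathbf{x}\|^{3}$ for every $\mathbf{x}\in\R^{2}$. Rewriting \eqref{Si1} as $\mathscr{C}_k(\mathbf{x})=(\s_k-\l)\mathbf{x}-R(\l,\mathbf{x})$, setting $\mu:=\s_k-\l>0$ and $r:=\|\mathbf{x}\|$, and using the remainder bound from Theorem \ref{th:4.3}, one obtains the scalar inequality
$$ m\, r^{3}\;\leq\;\mu\, r + C\sum_{j=0}^{4}\mu^{j} r^{4-j}$$
valid in a sufficiently small neighborhood of $(\s_k,0,0)$, where $C>0$ is the constant implicit in the $O(\cdot)$ of Theorem \ref{th:4.3}.

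To conclude $r\leq\beta\sqrt{\mu}$, I would argue by a simple dichotomy. If $r\leq \sqrt{\mu}$, the claim holds with $\beta\geq 1$. If instead $r>\sqrt{\mu}$, I shrink the neighborhood so that $\mu,r\leq\delta$, with $\delta>0$ to be chosen, and estimate the five remainder terms separately: $r^{4}$ and $\mu r^{3}$ are each at most $\delta\, r^{3}$, hence absorbed into $m r^{3}$ once $\delta$ is small enough that $2C\delta<m/2$; the terms $\mu^{2}r^{2}$ and $\mu^{3}r$ are each at most $\delta^{2}\mu r$ and so are controlled by $\mu r$ times a small factor; finally, the delicate term $\mu^{4}$, which carries no factor of $\mathbf{x}$, is handled using the standing assumption $r>\sqrt{\mu}$ to write $\mu^{4}<\mu^{7/2}r\leq \delta^{5/2}\mu r$. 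Collecting everything and dividing by $r>0$ yields $r^{2}\leq C'\mu$ for a constant $C'$ depending only on $m$, $C$, and $\delta$, so that $r\leq \beta\sqrt{\mu}$ for $\beta$ the maximum of $1$ and $\sqrt{C'}$.

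The main obstacle is precisely the $\mu^{4}$ term in the remainder: it has no $\mathbf{x}$-dependence and therefore cannot be absorbed by shrinking the neighborhood alone, unlike the other remainder contributions. The dichotomy $r\lessgtr\sqrt{\mu}$ is what resolves this cleanly, since in the regime $r>\sqrt{\mu}$ the crude estimate $\mu^{4}<\mu^{7/2}r$ supplies the missing factor of $r$, while in the complementary regime the bound is immediate. The rest of the argument is routine algebraic bookkeeping once the coercivity estimate $\|\mathscr{C}_k(\mathbf{x})\|\geq m\|\mathbf{x}\|^{3}$ is in hand.
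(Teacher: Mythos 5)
Your proof is correct, but it takes a genuinely different route from the paper's. The paper argues by contradiction: it assumes a sequence of solutions $(\l_n,\mathbf{x}_n)\to(\s_k,0)$ with $\sqrt{\s_k-\l_n}/\|\mathbf{x}_n\|\to 0$, divides the equation by $\|\mathbf{x}_n\|^{3}$, passes to a convergent subsequence of unit vectors $\mathbf{x}_n/\|\mathbf{x}_n\|\to\mathbf{v}$, and concludes $\mathscr{C}_k(\mathbf{v})=0$ with $\|\mathbf{v}\|=1$, contradicting Lemma~\ref{L20}. You instead upgrade Lemma~\ref{L20} to a quantitative coercivity bound $\|\mathscr{C}_k(\mathbf{x})\|\ge m\|\mathbf{x}\|^{3}$ via compactness of the unit circle and $3$-homogeneity, and then run a direct absorption argument with an explicit $r\lessgtr\sqrt{\mu}$ dichotomy to handle the troublesome $\mu^{4}$ remainder term, producing an explicit value of $\beta$ in terms of $m$ and the $O(\cdot)$ constant. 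Both hinge on the same core input (Lemma~\ref{L20}), but yours is constructive and gives a trackable constant, whereas the paper's compactness argument is shorter and avoids case analysis at the cost of being nonquantitative; the paper effectively handles the $\mu^4$ term automatically through the normalization $\mu_n/\|\mathbf{x}_n\|^2\to 0$, which is the soft counterpart of your dichotomy.
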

	
	\begin{proof}
		Assume by contradiction that there exists a sequence of solutions $\{(\l_{n},x_{n},y_{n})\}_{n\in\N}$ of \eqref{Si1} such that $\l_n<\s_k$, $(\l_{n},x_{n},y_{n})\to (\s_{k},0,0)$ as $n\to+\infty$, and 
		\begin{equation}
			\label{eq:contr}
		\lim_{n\to+\infty}\frac{\sqrt{\s_{k}-\l_{n}}}{\|\mathbf{x}_{n}\|}=0, \quad \mathbf{x}_{n}=(x_{n},y_{n}).
	\end{equation}
		Divide equation \eqref{Si1} by $\|\mathbf{x}_{n}\|^{3}$ to obtain
		$$\frac{\l_{n}-\s_{k}}{\|\mathbf{x}_{n}\|^2}\frac{\mathbf{x}_{n}}{\|\mathbf{x}_{n}\|}+\mathscr{C}_k\left(\frac{\mathbf{x}_{n}}{\|\mathbf{x}_{n}\|}\right)+O\Bigg(\sum_{j=0}^{4}\left|\l_n-\s_k\right|^{j}\|\mathbf{x}_{n}\|^{1-j}\Bigg)=0.$$
		This is equivalent to 
		\begin{equation}
			\left(\frac{\sqrt{\s_{k}-\l_{n}}}{\|\mathbf{x}_{n}\|}\right)^{2}\frac{\mathbf{x}_{n}}{\|\mathbf{x}_{n}\|}+\mathscr{C}_k\left(\frac{\mathbf{x}_{n}}{\|\mathbf{x}_{n}\|}\right)+O\Bigg(\sum_{j=0}^{4}\left(\frac{\sqrt{\s_{k}-\l_{n}}}{\|\mathbf{x}_{n}\|}\right)^{2j}\|\mathbf{x}_{n}\|^{1+j}\Bigg)=0.
		\end{equation}
		Since $\tfrac{\mathbf{x}_{n}}{\|\mathbf{x}_{n}\|}$ has norm one, we may assume, up to taking a subsequence that we do not relabel, that $\tfrac{\mathbf{x}_{n}}{\|\mathbf{x}_{n}\|}\to \mathbf{v}$ as $n\to+\infty$ with $\|\mathbf{v}\|=1$. Therefore, using \eqref{eq:contr} and letting $n\to+\infty$ in the previous relation, we obtain $\mathscr{C}(\mathbf{v})=0$. By Lemma \ref{L20}, we have $\mathbf{v}=0$, contradicting the fact that $\|\mathbf{v}\|=1$.
	\end{proof}
	\noindent Owing to Lemma \ref{L31}, we can rewrite equation \eqref{Si1} as 
	\begin{equation}
		\label{Si}
		\left(\l-\s_{k}\right)I_{\R^2}(x,y)+\mathscr{C}_k(x,y)+O\left(|\l-\s_{k}|^{2}\right)=0.
	\end{equation}
	From Lemma \ref{L20}, we know that, if $\l=\s_{k}$, the unique solution of equation \eqref{Si} is $(x,y)=(0,0)$. For $\l< \s_{k}$, we perform the invertible change of variables 
	\begin{equation}
		\label{CV}
		(x,y)=\sqrt{\s_{k}-\l}\left(z,w\right)
	\end{equation}
	in equation \eqref{Si}, to obtain
	\begin{align*}
		(\l-\s_{k})\sqrt{\s_{k}-\l}I_{\R^2}(z,w)+\left(\s_{k}-\l\right)^{\frac{3}{2}}\mathscr{C}_k(z,w)+ O\left(|\l-\s_{k}|^{2}\right)=0,
		\end{align*}
	which is equivalent to 
	\begin{equation}
		\label{eq:f}
		f(\l,z,w):=-I_{\R^2}(z,w)+\mathscr{C}_k(z,w)+ O\left(|\l-\s_{k}|^{\frac{1}{2}}\right)=0, \quad (\l,z,w)\in \mc{V}, \quad \l<\s_k.
	\end{equation}
	Since we are interested in solutions of \eqref{eq:f} for $\l< \s_{k}$ with $\l\sim \s_{k}$, our strategy will be applying the implicit function theorem to the solutions of $f(\s_{k},z,w)=0$, which can be rewritten as
	\begin{equation}
		\label{EE_new}
		\left\{
		\begin{array}{l}
			z\left(a_k z^{2}+3b_k zw+a_k w^{2}-1\right)=-b_k w^{3}, \\
			w\left(a_k z^{2}+3b_k zw+a_k w^{2}-1\right)=-b_k z^{3}.
		\end{array}
		\right.
	\end{equation}
	By recalling \eqref{ass:H} and \eqref{subcrit}, and adapting the proof of Lemma \ref{L20}, it is easy to see that the non-trivial solutions of this system satisfy $z=\pm w$ and are given by
	\begin{equation}
		\begin{aligned}
			\label{eq:sol}
			(\bar z_{1,k},\bar w_{1,k})&=\left(\frac{1}{\sqrt{2\left(a_k+2b_k\right)}},\frac{1}{\sqrt{2\left(a_k+2b_k\right)}}\right), & (\bar z_{2,k},\bar w_{2,k})&=(-\bar z_{1,k},-\bar w_{1,k}), \\ (\bar z_{3,k},\bar w_{3,k})&=\left(\frac{1}{\sqrt{2\left(a_k-2b_k\right)}},\frac{-1}{\sqrt{2\left(a_k-2b_k\right)}}\right),
			& (\bar z_{4,k},\bar w_{4,k})&=(-\bar z_{3,k},-\bar w_{3,k}).
		\end{aligned}
	\end{equation}
	Moreover, since we are assuming $b_k\neq 0$, direct computations show that these solutions are regular, in the sense that, denoting $f=(f_1,f_2)$,
	$$\det\frac{\partial(f_{1},f_{2})}{\partial(z,w)}(\s_{k},\bar z_{i,k},\bar w_{i,k})\neq 0 \qquad \text{for all $i\in\{1,2,3,4\}$.}$$
	Thus, the implicit function theorem, together with the change of variable \eqref{CV}, allows us to obtain the following result about the precise structure of the set of non-trivial solutions of \eqref{Si} near $(\s_k,0,0)$, with $\l<\s_k$.
	\begin{theorem}
		\label{th:local_bif}
		For each $i\in\{1,2,3,4\}$, there exist $\e_i>0$, a neighborhood $\mc{U}_{i,k}\subset\R^2$ of $(\bar z_{i,k},\bar w_{i,k})$ given in \eqref{eq:sol}, and a $\mc{C}^1$ function $g_{i,k}:(\s_k-\e_i,\s_k]\longrightarrow\mc{U}_{i,k}$, $g_{i,k}(\l)=(z_{i,k}(\l),w_{i,k}(\l))$, such that
		\begin{gather*}
			g_{i,k}(\s_k)=\left(z_{i,k}(\s_k),w_{i,k}(\s_k)\right)=(\bar z_{i,k},\bar w_{i,k}),\\
			f(\l,z_{i,k}(\l),w_{i,k}(\l))=0, \qquad \l\in(\s_k-\e_i,\s_k].
		\end{gather*}
		As a consequence, system \eqref{Si} admits, in a neighborhood of $(\s_{k},0,0)$, with $\l<\s_k$, exactly four branches of non-trivial solutions:
		\begin{equation*}
			\left(x_{i,k}(\l),y_{i,k}(\l)\right)=\sqrt{\s_k-\l}\left(z_{i,k}(\l),w_{i,k}(\l)\right), \qquad i\in\{1,2,3,4\}.
		\end{equation*}
	\end{theorem}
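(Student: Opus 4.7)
The plan is to apply the implicit function theorem to the map $f$ of \eqref{eq:f} at each of the four points $(\s_k,\bar z_{i,k},\bar w_{i,k})$, $i\in\{1,2,3,4\}$, listed in \eqref{eq:sol}, and then transport the resulting branches back to the $(x,y)$-variables via the rescaling \eqref{CV}. Much of the analytic preparation is already available: Lemma \ref{L31} traps every solution of \eqref{Si} with $\l<\s_k$ inside the rescaled regime, and the argument preceding \eqref{eq:sol} singles out the four non-trivial zeros of the unperturbed system \eqref{EE_new}. The main remaining task is essentially algebraic: verify the non-degeneracy of the Jacobian $\partial_{(z,w)}f(\s_k,\bar z_{i,k},\bar w_{i,k})$ at each of those four points.

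First I would record the regularity of $f$. The polynomial piece $-I_{\R^2}+\mathscr{C}_k$ is smooth in $(z,w)$. For the perturbation, the remainder in \eqref{Si1} of order $O(\sum_{j=0}^{4}|\l-\s_k|^{j}\|(x,y)\|^{4-j})$ becomes, after the substitution \eqref{CV} and the division by $(\s_k-\l)^{3/2}$, a term of order $O((\s_k-\l)^{1/2})$ that is $\mc{C}^\infty$ in $(z,w)$ (with all $(z,w)$-derivatives vanishing as $\l\to\s_k^-$) and continuous in $\l$ up to $\l=\s_k$. Thus $f$ fits the hypotheses of a parameter-dependent IFT in the $(z,w)$-variables.

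The key step is the computation of $J_{i,k}:=\partial_{(z,w)}f(\s_k,\bar z_{i,k},\bar w_{i,k})=-I_{\R^2}+\partial_{(z,w)}\mathscr{C}_k(\bar z_{i,k},\bar w_{i,k})$. From \eqref{eq:cubsimpl}, $\partial_{(z,w)}\mathscr{C}_k$ is an explicit symmetric matrix depending quadratically on $(z,w)$; evaluating at the two symmetric patterns $\bar z=\bar w=\pm[2(a_k+2b_k)]^{-1/2}$ and $\bar z=-\bar w=\pm[2(a_k-2b_k)]^{-1/2}$ and simplifying using \eqref{ass:H}, a short algebraic computation yields
\[
\det J_{1,k}=\det J_{2,k}=-\frac{4b_k}{a_k+2b_k},\qquad \det J_{3,k}=\det J_{4,k}=\frac{4b_k}{a_k-2b_k},
\]
which are non-zero thanks to $b_k\neq 0$ (hypothesis $(iv)$ of \eqref{ass:H}) and to the positivity of $a_k\pm 2b_k$ granted by \eqref{subcrit}. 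The IFT then produces, for each $i$, a neighborhood $\mc{U}_{i,k}$ of $(\bar z_{i,k},\bar w_{i,k})$, a number $\e_i>0$, and a continuous function $g_{i,k}\colon(\s_k-\e_i,\s_k]\to\mc{U}_{i,k}$ with $g_{i,k}(\s_k)=(\bar z_{i,k},\bar w_{i,k})$ and $f(\l,g_{i,k}(\l))=0$; on the open part $\l<\s_k$, $g_{i,k}$ inherits $\mc{C}^1$ regularity from the smoothness of $\mf{F}$ and from the invertibility of $\partial_u\mf{F}$ transported via Theorem \ref{Th3.13}. Undoing \eqref{CV} produces the four branches $(x_{i,k}(\l),y_{i,k}(\l))=\sqrt{\s_k-\l}\,g_{i,k}(\l)$, and the distinctness of the $(\bar z_{i,k},\bar w_{i,k})$ makes them pairwise distinct; that no further branches exist in a small punctured neighborhood of $(\s_k,0,0)$ with $\l<\s_k$ follows by combining Lemma \ref{L31} with the identification of the only roots of $f(\s_k,\cdot,\cdot)$ and the local uniqueness of the IFT at each of them.

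The only genuine obstacle is the regularity of $g_{i,k}$ at the endpoint $\l=\s_k$: because $f$ depends on $\l$ exclusively through $\sqrt{\s_k-\l}$, it is merely H\"older continuous (and not $\mc{C}^1$) in $\l$ at $\s_k$, so the classical parameter IFT does not directly deliver differentiability up to that endpoint. The cleanest remedy is to reparametrize by $t:=\sqrt{\s_k-\l}\geq 0$; then $f$ becomes $\mc{C}^\infty$ jointly in $(t,z,w)$ in a full neighborhood of $(0,\bar z_{i,k},\bar w_{i,k})$, the IFT applied at $t=0$ yields $\mc{C}^1$ branches $t\mapsto(z_{i,k}(t),w_{i,k}(t))$, and composing with $t=\sqrt{\s_k-\l}$ returns the $g_{i,k}$ of the statement, $\mc{C}^1$ on the open part of the interval and continuous up to $\l=\s_k$.
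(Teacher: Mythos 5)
Your proposal follows exactly the strategy the paper uses: apply the implicit function theorem at the four regular zeros of the cubic $f(\s_k,\cdot,\cdot)$ identified in \eqref{eq:sol} and transport back to $(x,y)$ via the rescaling \eqref{CV}. Your explicit determinant computations are correct. Writing $c^2=\tfrac{1}{2(a_k+2b_k)}$ and $d^2=\tfrac{1}{2(a_k-2b_k)}$, one finds
\[
\det J_{1,k}=\det J_{2,k}=\frac{(a_k+b_k)^2-(a_k+3b_k)^2}{(a_k+2b_k)^2}=-\frac{4b_k}{a_k+2b_k},
\qquad
\det J_{3,k}=\det J_{4,k}=\frac{4b_k}{a_k-2b_k},
\]
matching what you wrote; both are non-zero by \eqref{ass:H}(iv) and \eqref{subcrit}. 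This is precisely what the paper calls "direct computations show that these solutions are regular."

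Where you come up short is the endpoint regularity. You correctly identify a gap: naively, $f$ in \eqref{eq:f} only exhibits an $O(|\l-\s_k|^{1/2})$ dependence on $\l$, so the standard parameter IFT applied at $\l=\s_k$ delivers no differentiability there. Your proposed remedy---reparametrize by $t=\sqrt{\s_k-\l}$---produces branches that are $\mc{C}^1$ in $t$, but after composing with $t=\sqrt{\s_k-\l}$ this gives, as you note yourself, only continuity at $\l=\s_k$ (the derivative picks up a factor $1/\sqrt{\s_k-\l}$), not the $\mc{C}^1$ regularity on the half-open interval $(\s_k-\e_i,\s_k]$ that the theorem asserts. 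The ingredient you miss is the oddness of the nonlinearity: since $\mf{F}(\l,-u)=-\mf{F}(\l,u)$, the reduced map satisfies $\mf{H}(\l,-\mathbf{x})=-\mf{H}(\l,\mathbf{x})$ and hence, setting $\tilde f(s,\mathbf{v}):=s^{-3}\mf{H}(\s_k-s^2,s\mathbf{v})$, one has $\tilde f(-s,\mathbf{v})=\tilde f(s,\mathbf{v})$. Thus $\tilde f$ is an even, smooth function of $s$, so it factors through $s^2=\s_k-\l$ and $f$ is in fact $\mc{C}^\infty$ jointly in $(\l,z,w)$ up to and including $\l=\s_k$; the standard IFT then yields the branches $g_{i,k}$ smooth on $(\s_k-\e_i,\s_k]$ as claimed. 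Without invoking this symmetry your argument proves strictly less than the statement.
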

	Observe that it is possible to apply the implicit function theorem also from the solution $(\s_k,0,0)$ of $f=0$. Nevertheless, in such a case, the obtained branch of solutions of \eqref{Si} is the trivial one $\left(x(\l),y(\l)\right)=(0,0)$.
					
	Once the structure of the zero set of $f=(f_1,f_2)$ (equivalently, the set of solutions of system \eqref{Si}) has been determined in a neighborhood of each bifurcation point $(\s_k,0,0)$, with $\l<\s_k$, the homeomorphism $\Psi_{k}$, given in Theorem \ref{Th3.13}, and Proposition \ref{pr:locbif} guarantee that, when the weight $a(t)$ satisfies \eqref{ass:H}, the zero set of $\mf{F}$, i.e., the set of solutions of \eqref{eq:1.1} having exactly $2k$ zeros in $[0,T)$ is topologically equivalent to the one obtained in Theorem \ref{th:local_bif}, which is schematically represented in Figure \ref{F1}.
	\begin{center}
		\begin{figure}[ht!]
			\centering
			\tikzset{every picture/.style={line width=0.75pt}} 
			\begin{tikzpicture}[x=0.75pt,y=0.75pt,yscale=-1,xscale=1]

			\draw  [dash pattern={on 0.84pt off 2.51pt}]  (310.02,148.39) -- (310.23,304.22) ;
			\draw    (166,304.22) -- (455,304.22) ;
			\draw  [dash pattern={on 0.84pt off 2.51pt}]  (455,164) -- (310.23,304.22) ;
			\draw  [dash pattern={on 0.84pt off 2.51pt}]  (179.21,182.58) -- (310.23,304.22) ;
			\draw [color={rgb, 255:red, 74; green, 144; blue, 226 }  ,draw opacity=1 ][line width=1.5]    (250,81) -- (367,81) ;
			\draw  [color={rgb, 255:red, 65; green, 117; blue, 5 }  ,draw opacity=1 ][line width=1.5]  (243.8,116.55) .. controls (330.26,92.49) and (330.06,69.15) .. (243.2,46.56) ;
			\draw  [color={rgb, 255:red, 65; green, 117; blue, 5 }  ,draw opacity=1 ][line width=1.5]  (236.27,135.2) .. controls (332.42,99.27) and (332.58,62.93) .. (236.73,26.2) ;
			\draw  [fill={rgb, 255:red, 0; green, 0; blue, 0 }  ,fill opacity=1 ] (305.5,81) .. controls (305.5,79.34) and (306.84,78) .. (308.5,78) .. controls (310.16,78) and (311.5,79.34) .. (311.5,81) .. controls (311.5,82.66) and (310.16,84) .. (308.5,84) .. controls (306.84,84) and (305.5,82.66) .. (305.5,81) -- cycle ;
			\draw  [fill={rgb, 255:red, 155; green, 155; blue, 155 }  ,fill opacity=0.29 ][dash pattern={on 0.84pt off 2.51pt}] (166,170) -- (310.23,304.22) -- (166,304.22) -- cycle ;
			\draw  [fill={rgb, 255:red, 155; green, 155; blue, 155 }  ,fill opacity=0.29 ][dash pattern={on 0.84pt off 2.51pt}] (455,164) -- (310.23,304.22) -- (455,304.22) -- cycle ;
													
			\draw (458.61,146.42) node [anchor=north west][inner sep=0.75pt]    {$a_k-2b_k=0$};
			\draw (92.36,149.85) node [anchor=north west][inner sep=0.75pt]    {$a_k+2b_k=0$};
			\draw (314.62,141.03) node [anchor=north west][inner sep=0.75pt]    {$a_k$};
			\draw (460.54,296.45) node [anchor=north west][inner sep=0.75pt]    {$b_k$};
			\draw (310.5,84.4) node [anchor=north west][inner sep=0.75pt]    {$\sigma _{k}$};
										
			\end{tikzpicture}
		\caption{Scheme of the solutions set of system \eqref{Si} near $(\s_{k},0,0)$, according to the values of $(a_k,b_k)$ defined in \eqref{eq:coeff}.}\label{F1}
		\end{figure}
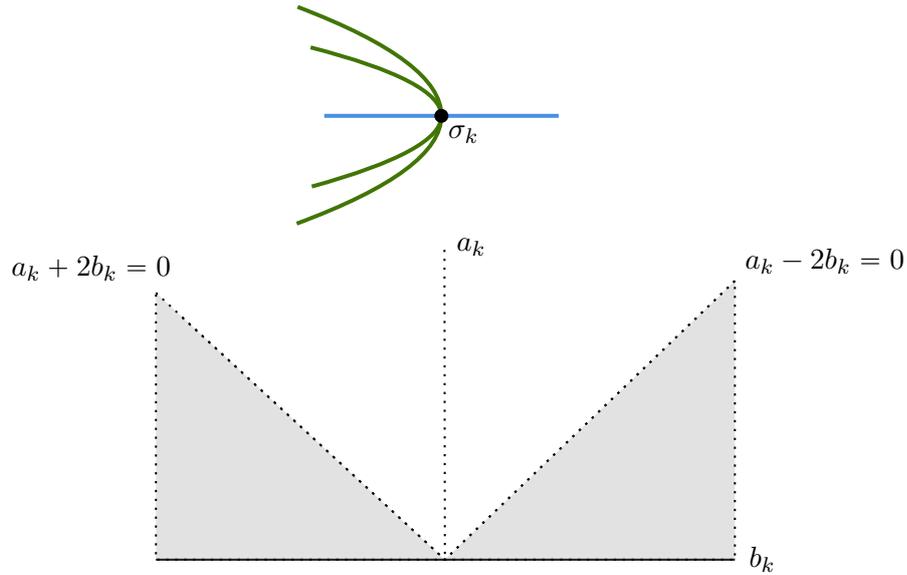
	\end{center}

	\par
	To conclude this section, we present a particular example. Let $T=\pi$, and consider the periodic problem
	\begin{equation}
		\label{E00}
		\left\{
		\begin{array}{l}
		-u''=\l u + \mathbf{1}_{[0,\tfrac{\pi}{4}]}u^{3}, \quad \quad t\in (0,\pi),  \\[7pt]
		u(0)=u(\pi), \quad u'(0)=u'(\pi),
		\end{array}
		\right.
	\end{equation}
	which is \eqref{eq:1.1} with
	$$a(t)=\mathbf{1}_{\left[0,\tfrac{\pi}{4}\right]}(t):=
		\begin{cases}
		1, & \text{ if  $t\in [0,\tfrac{\pi}{4}]$}, \\
		0, & \text{ if  $t\in (\tfrac{\pi}{4},\pi)$}.
		\end{cases}
	$$
	Here, we use the notation $\mathbf{1}_{A}$ to denote the indicator function of the subset $A$. We apply the results obtained in this section to study the bifurcation diagram of this problem near the eigenvalues $\s_{k}=4k^2$, with $k$ odd. Indeed, in this particular case, the coefficients \eqref{eq:coeff} are
	\[
		a_k=\frac{3}{8\pi}, \quad b_k=\frac{1}{2k\pi^2}, \quad c_k=\frac{1}{8\pi}, \quad d_k=\frac{1}{2k\pi^2}, \quad e_k=\frac{3}{8\pi},
	\]
	thus the conditions in \eqref{ass:H} are satisfied,	and the following result holds (see Figure \ref{F2}).
	\begin{theorem}
		\label{th:3.18}
		Assume that $k\in\N$ is odd. Then, exactly four branches of non-trivial periodic solutions of \eqref{E00} with $2k$ zeros in $[0,T)$ emanate locally from $(4k^2,0)\in \R\times H^{2}(\mathbb{T})$ for $\l<4k^2$, while the unique solutions in a neighborhood of $(4k^2,0)\in \R\times H^{2}(\mathbb{T})$ for $\l>4k^2$, $\l\sim 4k^2$, are the trivial ones. Moreover, these non-trivial branches are locally given by
		$$\left(\l,u_{i,k}(\l)\right)=\Psi_{k}(\l,\sqrt{4k^2-\l} \, z_{i,k}(\l),\sqrt{4k^2-\l} \, w_{i,k}(\l)), \quad \l< 4k^2, \, \l\sim 4k^2, \quad i\in\{1,2,3,4\},$$
		where $(z_{i,k}(\l),w_{i,k}(\l))$ are  $\mc{C}^{1}$ functions defined in a left neighborhood of $4k^2$ satisfying
		\begin{align*}
		\left(z_{1,k}(4k^2),w_{1,k}(4k^2)\right)&=\left( \tfrac{2\sqrt{k}\pi}{\sqrt{3k\pi+8}}, \tfrac{2\sqrt{k}\pi}{\sqrt{3k\pi+8}}\right), &
		\left(z_{2,k}(4k^2),w_{2,k}(4k^2)\right)&=\left( \tfrac{-2\sqrt{k}\pi}{\sqrt{3k\pi+8}}, \tfrac{-2\sqrt{k}\pi}{\sqrt{3k\pi+8}}\right),\\
		\left(z_{3,k}(4k^2),w_{3,k}(4k^2)\right)&=\left( \tfrac{2\sqrt{k}\pi}{\sqrt{3k\pi-8}}, \tfrac{-2\sqrt{k}\pi}{\sqrt{3k\pi-8}}\right), &
		\left(z_{4,k}(4k^2),w_{4,k}(4k^2)\right)&=\left( \tfrac{-2\sqrt{k}\pi}{\sqrt{3k\pi-8}}, \tfrac{2\sqrt{k}\pi}{\sqrt{3k\pi-8}}\right),
		\end{align*}
		and $\Psi_{k}\colon \mf{H}^{-1}(0) \longrightarrow \mf{F}^{-1}(0)\cap \mc{O}_{k}$ is the homeomorphism given in Theorem \ref{Th3.13}. 
	\end{theorem}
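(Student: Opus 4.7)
The plan is to apply Theorem \ref{th:local_bif} to this specific weight $a(t) = \mathbf{1}_{[0,\pi/4]}$, after explicitly verifying the hypotheses \eqref{ass:H} by direct computation, and then to convert the abstract formulas in \eqref{eq:sol} into the explicit boundary values stated.

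First, since $T=\pi$, one has $\varphi_k(t) = \sqrt{2/\pi}\cos(2kt)$ and $\phi_k(t) = \sqrt{2/\pi}\sin(2kt)$, and each coefficient in \eqref{eq:coeff} reduces to $\tfrac{4}{\pi^2}\int_0^{\pi/4}$ of a homogeneous trigonometric monomial of degree $4$ in $\cos(2kt), \sin(2kt)$. Using the power-reduction identities $\cos^4 x = \tfrac{3}{8} + \tfrac{1}{2}\cos(2x) + \tfrac{1}{8}\cos(4x)$ and $\cos^2 x\,\sin^2 x = \tfrac{1}{8}(1-\cos(4x))$, the integrals defining $a_k, c_k, e_k$ reduce to antiderivatives whose non-constant parts evaluate to $\sin(mk\pi)$ at the upper limit, all vanishing for every $k\in\N$; this yields $a_k = e_k = 3/(8\pi)$ and $c_k = 1/(8\pi)$. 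For $b_k$, since $\cos^3(2kt)\sin(2kt)$ is the derivative of $-\cos^4(2kt)/(8k)$, one obtains $b_k = \tfrac{1}{2k\pi^2}(1-\cos^4(k\pi/2))$, and similarly $d_k = \tfrac{1}{2k\pi^2}\sin^4(k\pi/2)$. The parity of $k$ enters here in an essential way: for odd $k$ one has $\cos(k\pi/2)=0$ and $|\sin(k\pi/2)|=1$, giving $b_k = d_k = 1/(2k\pi^2)\neq 0$, whereas for even $k$ both coefficients vanish, so condition \eqref{ass:H}(iv) would fail.

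Second, the values obtained satisfy $a_k = 3c_k$, $b_k = d_k$, $a_k = e_k$, and $b_k\neq 0$, so \eqref{ass:H} holds in full. Moreover $a_k \pm 2b_k = (3k\pi\pm 8)/(8k\pi^2) > 0$ for every $k\geq 1$ (since $3\pi > 8$ already for $k=1$), ensuring the roots in \eqref{eq:sol} are real and positive. Theorem \ref{th:local_bif} then yields four $\mc{C}^1$ branches $g_{i,k}(\l) = (z_{i,k}(\l),w_{i,k}(\l))$ in a left neighborhood of $\s_k = 4k^2$ with $g_{i,k}(4k^2) = (\bar z_{i,k},\bar w_{i,k})$ from \eqref{eq:sol}. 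Direct substitution gives
$$\frac{1}{\sqrt{2(a_k+2b_k)}} = \frac{2\sqrt{k}\pi}{\sqrt{3k\pi+8}}, \qquad \frac{1}{\sqrt{2(a_k-2b_k)}} = \frac{2\sqrt{k}\pi}{\sqrt{3k\pi-8}},$$
reproducing exactly the four stated boundary values.

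Third, the homeomorphism $\Psi_k$ of Theorem \ref{Th3.13} transports these four branches of $\mf{H}^{-1}(0)$ to four branches of $\mf{F}^{-1}(0)$ near $(4k^2,0)$, and Proposition \ref{pr:locbif} guarantees that the corresponding solutions of \eqref{E00} have exactly $2k$ zeros in $[0,\pi)$. Finally, the absence of non-trivial solutions for $\l>4k^2$ near the bifurcation point is immediate: any solution in the neighborhood $\mc{O}_k$ has winding number $k$ by Proposition \ref{pr:locbif}, so Proposition \ref{pr:2.4}(i) forces $\l<\s_k = 4k^2$. The main technical point is the careful evaluation of $b_k$ and $d_k$ at $t=\pi/4$, whose dependence on the parity of $k$ is what confines the result to odd $k$; everything else reduces to substitution into already-established abstract results.
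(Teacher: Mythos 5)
Your proof is correct and follows the same route as the paper: compute the coefficients $a_k,\dots,e_k$ from \eqref{eq:coeff} for the given weight, verify \eqref{ass:H}, and then invoke Theorem~\ref{th:local_bif}, the homeomorphism $\Psi_k$, Proposition~\ref{pr:locbif} and Proposition~\ref{pr:2.4}(i). In fact you supply more detail than the paper, which merely states the coefficient values without showing the trigonometric integrals; your explicit formulas $b_k=\tfrac{1}{2k\pi^2}(1-\cos^4(k\pi/2))$ and $d_k=\tfrac{1}{2k\pi^2}\sin^4(k\pi/2)$ make transparent why the construction works precisely for odd $k$ and fails for even $k$, which the paper only remarks upon afterward.
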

										
	\begin{figure}[h!]
		\begin{center}
								
		\tikzset{every picture/.style={line width=0.75pt}}         
												
		\begin{tikzpicture}[x=0.75pt,y=0.75pt,yscale=-0.75,xscale=0.75]
		
		\draw [color={rgb, 255:red, 74; green, 144; blue, 226 }  ,draw opacity=1 ][line width=1.5]    (68,132) -- (413,129.11) ;
		
		\draw  [color={rgb, 255:red, 65; green, 117; blue, 5 }  ,draw opacity=1 ][line width=1.5]  (213.47,189.39) .. controls (353.5,148.58) and (353.18,109.02) .. (212.49,70.71) ;
		
		\draw  [color={rgb, 255:red, 65; green, 117; blue, 5 }  ,draw opacity=1 ][line width=1.5]  (201.27,221) .. controls (357,160.08) and (357.25,98.48) .. (202.01,36.2) ;
		
		\draw  [fill={rgb, 255:red, 0; green, 0; blue, 0 }  ,fill opacity=1 ] (313.39,129.11) .. controls (313.39,126.3) and (315.57,124.02) .. (318.25,124.02) .. controls (320.94,124.02) and (323.11,126.3) .. (323.11,129.11) .. controls (323.11,131.92) and (320.94,134.2) .. (318.25,134.2) .. controls (315.57,134.2) and (313.39,131.92) .. (313.39,129.11) -- cycle ;
		
		\draw [line width=1.5]    (120,21) -- (119,228) ;

		\draw (318.16,140.24) node [anchor=north west][inner sep=0.75pt]    {$4k^2$};
		
		\draw (415,132.51) node [anchor=north west][inner sep=0.75pt]    {$\lambda $};
		
		\draw (129,12.4) node [anchor=north west][inner sep=0.75pt]    {$H^{2}(\mathbb{T})$};
													
		\end{tikzpicture}
	\end{center}
	\caption{Local structure of the solutions of \eqref{E00} in a neighborhood of the bifurcation point $\s_k=4k^2$, with $k$ odd: the trivial solutions are plotted in blue, while the branches of nontrivial solutions given in Theorem \ref{th:3.18} are plotted in green. These nontrivial solutions have $2k$ zeros in $[0,\pi)$.}
		\label{F2}
	\end{figure}
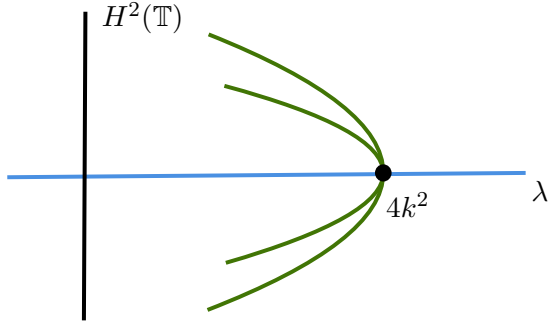
	When $k\in \N$ is even, with the weight of this example we have $b_k=d_k=0$, thus the analysis of this section cannot be applied at the corresponding eigenvalues $\s_k$.

	\section{Global bifurcation analysis}
		\label{section:4}
		The results of Section \ref{section:3} are local, in the sense that they give information about the solutions of \eqref{eq:1.1} only in a neighborhood of the bifurcation points $(\s_k,0)$. In this section, we complete such results by obtaining \emph{global} information about the components bifurcating at those bifurcation points. The techniques we use rely on global bifurcation alternatives which have been summarized in Appendix \ref{app:B.3} for the readers' convenience. In order to apply such tools, it is fundamental to consider the a priori bounds on the solutions of \eqref{eq:1.1} obtained in Proposition \ref{pr:2.4}. Thus, throughout this section, in addition to \eqref{ass:aloc} we also assume that the weight $a$ satisfies \eqref{ass:aglob}.
	
	Those bounds and the structure of equation \eqref{eq:1.1} guarantee that, for each $k\in\N\cup\{0\}$ and any compact $K\subset\R$, there exists $M=M(k,K)>0$ such that every solution $(\l,u)$ with $\l\in K$ and $u$ having $2k$ zeros in $[0,T)$ satisfies
	\[
	\|u\|_{H^2}<M.
	\]									
	\subsection{Global bifurcation at $\l=\s_0$}
	\label{sec:4.1}
	This section is devoted to study the global structure of the connected component $\mathscr{C}_{0}$ of non-trivial solutions emanating from the bifurcation point $(\s_{0},0)=(0,0)\in\R\times H^{2}(\mathbb{T})$. We recall that the local existence and uniqueness of this component have been obtained in Theorem \ref{LCRW}. With the notation used there, we denote by $\mathscr{C}_{0}^{+}\subset \mathscr{C}_{0}$ the connected component of $\mf{F}^{-1}(0)\setminus\mc{T}$ such that, for some $\rho>0$,
	$$\mathscr{C}^{+}_{0}\cap B_{\rho}(0,0) = \mathscr{C}_{0,\mathrm{loc}}^{+}= \left\{\left(\L(s),s\left(1+\Gamma(s)\right)\right) \colon s\in(0,\varepsilon)\right\}.$$
	Similarly, $\mathscr{C}_{0}^{-}\subset \mathscr{C}_{0}$ denotes the connected component of $\mf{F}^{-1}(0)\setminus\mc{T}$ such that, for some $\rho>0$,
	$$\mathscr{C}^{-}_{0}\cap B_{\rho}(0,0) = \mathscr{C}_{0,\mathrm{loc}}^{-}= \left\{\left(\L(s),s\left(1+\Gamma(s)\right)\right) \colon s\in(-\varepsilon,0)\right\}.$$
	Observe that by Theorem \ref{LCRW}, near $(\s_{0},0)$, $\mathscr{C}_{0}^{+}$ consists of positive solutions of \eqref{eq:1.1}, while $\mathscr{C}^{-}_{0}$ consists of negative solutions. Moreover, from the definition of $\mathscr{C}^{\pm}_{0}$, we have $\mathscr{C}^{\pm}_{0}\cap \mc{T}=\emptyset$.
	We start with the following qualitative result.
													
	\begin{proposition}
		\label{pr:4.1}
		Assume \eqref{ass:aloc} and \eqref{ass:aglob}. Then, the connected components $\mathscr{C}_{0}^{+}$ and $\mathscr{C}^{-}_{0}$ defined above are disjoint. 
		Moreover, $\mathscr{C}_{0}^{+}$ consists of positive solutions of \eqref{eq:1.1}, while $\mathscr{C}^{-}_{0}$ of negative solutions. Finally, $\mathscr{C}_{0}^{+}=-\mathscr{C}_{0}^{-}$, where we recall that, given a subset $\mathscr{C}\subset \R\times H^{2}(\mathbb{T})$, we denote $-\mathscr{C}:=\left\{(\l,-u) \colon (\l,u)\in\mathscr{C}\right\}$.
	\end{proposition}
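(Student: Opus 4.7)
The plan is to exploit the fact that both components are connected subsets of $\mf{F}^{-1}(0)\setminus\mc{T}$, together with a standard strong-minimum-principle type argument to show that ``strictly positive'' is a clopen condition along $\mathscr{C}_0^+$ (and analogously for $\mathscr{C}_0^-$). The two remaining assertions then follow cheaply.

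First, I would introduce the subset
\[
A^+:=\{(\l,u)\in \mathscr{C}_0^+\colon u(t)>0\text{ for all }t\in[0,T]\},
\]
and show that $A^+=\mathscr{C}_0^+$ by proving that $A^+$ is nonempty, relatively open, and relatively closed in $\mathscr{C}_0^+$, which is connected. Nonemptyness is Theorem \ref{LCRW}(ii), since $\mathscr{C}_{0,\mathrm{loc}}^+\subset A^+$. Openness is immediate from the Sobolev embedding $H^2(\T)\hookrightarrow \mc{C}^0(\T)$: if $(\l_0,u_0)\in A^+$ then $\min_{[0,T]}u_0>0$, and any $(\l,u)\in\mathscr{C}_0^+$ close to $(\l_0,u_0)$ in $\R\times H^2(\T)$ is close in $\R\times \mc{C}^0(\T)$ and hence still has $u>0$.

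The closedness step is the crucial one. If $(\l_n,u_n)\in A^+$ and $(\l_n,u_n)\to(\l_*,u_*)$ in $\R\times H^2(\T)$, then by the compact embedding $H^2(\T)\hookrightarrow \mc{C}^1(\T)$ one has $u_n\to u_*$ in $\mc{C}^1(\T)$, so $u_*\geq 0$ and $u_*$ solves $-u_*''=\l_* u_*+a(t)u_*^3$ in the $H^2$ sense. If $u_*$ is not strictly positive, there exists $t_0\in[0,T]$ with $u_*(t_0)=0$; since $u_*\geq 0$ attains its minimum there, necessarily $u_*'(t_0)=0$ as well. The Cauchy problem for the Carathéodory ODE
\[
-u''=\l_* u+a(t)u^3,\qquad (u(t_0),u'(t_0))=(0,0),
\]
has unique solution $u\equiv 0$ (the right-hand side is Carathéodory and locally Lipschitz in $u$ because $a\in L^\infty$), so $u_*\equiv 0$, which means $(\l_*,u_*)\in\mc{T}$. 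This contradicts $(\l_*,u_*)\in\mathscr{C}_0^+$ since, by definition, $\mathscr{C}_0^+\cap\mc{T}=\emptyset$. Therefore $u_*>0$, giving $(\l_*,u_*)\in A^+$, and $A^+$ is closed. Connectedness of $\mathscr{C}_0^+$ then yields $A^+=\mathscr{C}_0^+$, so the whole component consists of positive solutions. An identical argument applied to $A^-:=\{(\l,u)\in\mathscr{C}_0^-\colon u<0\text{ on }[0,T]\}$ shows that $\mathscr{C}_0^-$ consists of negative solutions. Disjointness is then immediate.

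Finally, for the symmetry $\mathscr{C}_0^+=-\mathscr{C}_0^-$, I would use the $u\leftrightarrow -u$ invariance of \eqref{eq:1.1}: the map $\Phi\colon(\l,u)\mapsto(\l,-u)$ is a homeomorphism of $\R\times H^2(\T)$ that sends $\mf{F}^{-1}(0)$ to itself and fixes $\mc{T}$. Hence $\Phi(\mathscr{C}_0^+)$ is a connected subset of $\mf{F}^{-1}(0)\setminus\mc{T}$ consisting of negative solutions and, by Theorem \ref{LCRW}(ii), satisfying $\Phi(\mathscr{C}_{0,\mathrm{loc}}^+)=\mathscr{C}_{0,\mathrm{loc}}^-$ near $(0,0)$. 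Maximality of the connected component $\mathscr{C}_0^-$ forces $\Phi(\mathscr{C}_0^+)\subset\mathscr{C}_0^-$, and applying $\Phi$ again gives the reverse inclusion, whence $-\mathscr{C}_0^-=\mathscr{C}_0^+$. The main obstacle is the closedness step above, where one needs the strict-sign condition to survive the passage to the limit; this is precisely where Carathéodory uniqueness for the associated Cauchy problem and the exclusion $\mathscr{C}_0^\pm\cap\mc{T}=\emptyset$ come into play.
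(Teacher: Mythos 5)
Your argument is correct and follows the same core strategy as the paper: exploit the embedding $H^2(\T)\hookrightarrow\mc{C}^1(\T)$, uniqueness for the associated initial value problem, and the exclusion $\mathscr{C}_0^\pm\cap\mc{T}=\emptyset$ to rule out a solution in $\mathscr{C}_0^+$ that touches zero. The difference lies in the bookkeeping: by making the open/closed/connected decomposition explicit through the set $A^+$, you only ever handle sequences that already converge in $\R\times H^2(\T)$, so you never need the a priori bounds of Proposition~\ref{pr:2.4}(ii). The paper instead chooses a sequence $\{(\l_n,u_n)\}\subset\mathscr{C}_0^+$ with $\l_n\to\l_0$ and $u_n(t_n)\to 0$ \emph{without} asserting $H^2$-convergence, and then invokes Proposition~\ref{pr:2.4}(ii) --- hence assumption~\eqref{ass:aglob} --- to extract a convergent subsequence. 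A minor bonus of your formulation is therefore that the positivity, disjointness and symmetry claims hold under \eqref{ass:aloc} alone (Carath\'eodory uniqueness for the Cauchy problem suffices, without needing $a$ piecewise continuous as the paper's ``Cauchy--Lipschitz on each side of the discontinuity points'' phrasing suggests); \eqref{ass:aglob} enters the hypotheses only because the rest of Section~\ref{section:4} requires it. Both proofs finish identically, via the $u\leftrightarrow -u$ invariance of \eqref{eq:1.1}.
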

													
	\begin{proof}
		It is enough to prove that $\mathscr{C}^{+}_{0}$ consists of positive solutions of \eqref{eq:1.1}. The rest follows since, if $u$ is a solution of \eqref{eq:1.1}, also $-u$ is. Locally, this assertion was proven in Theorem \ref{LCRW}. Suppose now that the global claim is false. Then, there exist sequences $\{(\l_{n},u_{n})\}_{n\in\N}\subset \mathscr{C}^{+}_{0}$ and $\{t_{n}\}_{n\in\N}\subset [0,T]$ such that 
		$$\lim_{n\to+\infty}\l_{n}=\l_{0}\in\R, \quad u_{n}(t) > 0 \text{ for all  $t\in [0,T]$}, \quad \text{ and } \quad\lim_{n\to+\infty} u_{n}(t_{n})=0. $$
		As commented above, by Proposition \ref{pr:2.4}(ii) together with the differential equation of \eqref{eq:1.1}, $\{u_n\}$ is bounded in $H^{2}(\mathbb{T})$; thus, up to subsequences that we do not relabel, by compactness there exist $u_0\in \mathfrak{F}^{-1}(0)\subset  H^{2}(\mathbb{T})$ and $t_0\in[0,T]$ such that
		$$\lim_{n\to +\infty} t_n=t_0, \qquad  \lim_{n\to+\infty}u_{n}=u_{0} \text{ in $H^{2}(\mathbb{T})$}.$$
		Moreover, the Sobolev embedding $H^{2}(\mathbb{T}) \hookrightarrow \mc{C}^{1}(\mathbb{T})$ implies that $u_{n}, u_{0}\in\mc{C}^{1}(\mathbb{T})$ for all $n\in\N$,
		\begin{equation}
			\label{ECC}
		\lim_{n\to+\infty} u_{n}=u_{0}, \quad \text{in $\mc{C}^{1}(\mathbb{T})$},
		\end{equation}
		and $u_{0}(t_{0})=0$. If $u_{0}'(t_{0})\neq 0$, there exists $t_{\ast}\in (0,T)$, $t_{\ast}\sim t_{0}$, such that $u_{0}(t_{\ast})<0$.  But this contradicts \eqref{ECC}, because the functions $u_n$ are positive. Hence, $u'_{0}(t_{0})=0$ and $u_0$ is a solution of the Cauchy problem
		\begin{equation}
			\label{ECCk2}
			\left\{
			\begin{array}{l}
			-u_{0}''=\l_{0} u_{0} + a(t)u_{0}^{3}, \quad t\in (0,T),  \\
			u_{0}(t_0)=0, \\
			u_{0}'(t_{0})=0.
			\end{array}
			\right.
		\end{equation}
		The uniqueness ensured by the Cauchy--Lipschitz theorem (possibly applied on each side of the discontinuity points of $a(t)$), forces $u_0=0$, i.e., $(\l_0,u_0)=(\l_0,0)\notin\mathscr{C}^{+}_{0}$, because $\mathscr{C}^{+}_{0}\cap\mc{T}=\emptyset$.
	\end{proof}
	
	\begin{theorem}[\textbf{Global structure of $\mathscr{C}_{0}$}]
		\label{th:4.5}
		Assume \eqref{ass:aloc} and \eqref{ass:aglob}. Then, the connected component $\mathscr{C}_{0}$ is unbounded and 
		\begin{equation}
			\label{Eq4.5}
			\mc{P}_{\l}(\mathscr{C}_{0})= (-\infty,0],
		\end{equation}
		where $\mc{P}_{\l}\colon\R\times H^{2}(\mathbb{T})\to \R$ is the projection on the first component: $\mc{P}_{\l}(\l,u)=\l$. In particular, problem \eqref{eq:1.1} admits at least a positive and a negative solution for each $\l<0$.
	\end{theorem}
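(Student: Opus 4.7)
The plan is to combine the local description of $\mathscr{C}_{0}$ from Theorem \ref{LCRW} and Proposition \ref{pr:4.1} with the unilateral Rabinowitz-type global alternative recalled in Appendix \ref{app:B.3}, and then to rule out all terminating possibilities by means of the a priori bounds of Proposition \ref{pr:2.4}. Since $\mathscr{C}_{0}^{+}=-\mathscr{C}_{0}^{-}$ by Proposition \ref{pr:4.1}, it suffices to argue for $\mathscr{C}_{0}^{+}$.

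First I would set up the global framework. The operator $\mf{F}$ is $\mc{C}^{\infty}$ (Lemma \ref{L1}), proper on closed bounded sets (Lemma \ref{LF4}), and $\mf{L}(\l)=\p_{u}\mf{F}(\l,0)$ is a continuous family of Fredholm operators of index zero (Lemma \ref{L3}), with $\s_{0}$ being a $1$-transversal eigenvalue of odd algebraic multiplicity $\chi[\mf{L},\s_{0}]=1$ (Proposition \ref{P3.5}). These are exactly the hypotheses of the global bifurcation theorem of Appendix \ref{app:B.3}. Applying it to $\mathscr{C}_{0}^{+}$, I obtain the Rabinowitz alternative: either (a) $\mathscr{C}_{0}^{+}$ is unbounded in $\R\times H^{2}(\mathbb{T})$, or (b) the closure of $\mathscr{C}_{0}^{+}$ meets the trivial branch $\mc{T}$ at some point $(\s_{j},0)$ with $j\neq 0$.

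Next I would rule out (b). Suppose for contradiction that there exist $k\in\N$ and a sequence $\{(\l_{n},u_{n})\}\subset\mathscr{C}_{0}^{+}$ with $(\l_{n},u_{n})\to(\s_{k},0)$. By Proposition \ref{pr:4.1}, each $u_{n}$ is strictly positive on $[0,T]$, hence has winding number zero around the origin. On the other hand, by Proposition \ref{pr:locbif} (the local description coming from the Lyapunov--Schmidt reduction of Section \ref{sec:3.3}), there exists a neighbourhood $\mc{O}_{k}$ of $(\s_{k},0)$ in $\R\times H^{2}(\mathbb{T})$ such that every non-trivial solution of $\mf{F}=0$ in $\mc{O}_{k}$ has exactly $2k\geq 2$ zeros in $[0,T)$. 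For $n$ large, $(\l_{n},u_{n})\in\mc{O}_{k}$, giving the required contradiction. So alternative (a) holds and $\mathscr{C}_{0}^{+}$ is unbounded.

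Finally I would identify how the unboundedness manifests. Every $(\l,u)\in\mathscr{C}_{0}^{+}$ with $u\neq 0$ is a positive solution (Proposition \ref{pr:4.1}), i.e.\ of winding number $0$; Proposition \ref{pr:2.4}(i) then forces $\l<\s_{0}=0$, so $\mc{P}_{\l}(\mathscr{C}_{0}^{+})\subseteq(-\infty,0]$. For any compact $K\subset(-\infty,0]$, Proposition \ref{pr:2.4}(ii) applied with $k=0$ and the differential equation in \eqref{eq:1.1} to bound $u''$ in $L^{2}$, yield a uniform bound $\|u\|_{H^{2}}<M(K)$ for every $(\l,u)\in\mathscr{C}_{0}^{+}$ with $\l\in K$. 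Consequently the unboundedness of $\mathscr{C}_{0}^{+}$ must come from the $\l$-projection, namely $\inf\mc{P}_{\l}(\mathscr{C}_{0}^{+})=-\infty$. Since $\mc{P}_{\l}$ is continuous and $\mathscr{C}_{0}^{+}$ is connected, $\mc{P}_{\l}(\mathscr{C}_{0}^{+})$ is a connected subset of $(-\infty,0]$ containing $0$ (as $(\s_{0},0)\in\overline{\mathscr{C}_{0}^{+}}$) and unbounded below, hence equal to $(-\infty,0]$. The same conclusion holds for $\mathscr{C}_{0}^{-}$, and together they give $\mc{P}_{\l}(\mathscr{C}_{0})=(-\infty,0]$ and the claimed existence of a positive and a negative solution for every $\l<0$.

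I expect the main obstacle to be a careful verification of step (b): one must be confident that the local description furnished by Proposition \ref{pr:locbif} really holds on a full neighbourhood of $(\s_{k},0)$ independently of how one approaches it along $\mathscr{C}_{0}^{+}$; this is precisely the content of the homeomorphism $\Psi_{k}$ of Theorem \ref{Th3.13}, so the argument reduces to invoking it. The remaining steps are routine consequences of the a priori bounds and of connectedness.
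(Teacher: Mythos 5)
Your overall strategy mirrors the paper's (global Fredholm alternative plus a priori bounds), but there are two notable deviations, one of which is a genuine imprecision.

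First, you invoke what you call ``the unilateral Rabinowitz-type global alternative recalled in Appendix~\ref{app:B.3}'' and apply it directly to $\mathscr{C}_{0}^{+}$. However, Theorem~\ref{TGB} is not a unilateral result: it is stated for a \emph{component $\mathfrak{C}$ of $\mathfrak{S}$ containing $(\lambda_0,0)$}, and $\mathscr{C}_{0}^{+}$ is a component of $\mathfrak{F}^{-1}(0)\setminus\mathcal{T}$, which by construction does \emph{not} contain the trivial point $(\sigma_0,0)$. So applying Theorem~\ref{TGB} ``to $\mathscr{C}_{0}^{+}$'' is a type mismatch. The paper applies Theorem~\ref{TGB} to the full component $\mathscr{C}_{0}\subset\mathfrak{S}$, and then passes the conclusion to $\mathscr{C}_{0}^{\pm}$ at the end using the oddness of the equation ($-\mathscr{C}_{0}^{+}=\mathscr{C}_{0}^{-}$, so $\mathscr{C}_0$ unbounded forces each of $\mathscr{C}_0^\pm$ unbounded). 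Your proof is repairable by the same symmetry observation, but as written it relies on a unilateral statement that Appendix~\ref{app:B.3} does not contain.

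Second, to rule out alternative (b) you argue via the nodal structure near $(\sigma_k,0)$, $k\geq 1$, invoking Proposition~\ref{pr:locbif} to say every nontrivial solution in $\mathcal{O}_k$ has $2k\geq 2$ zeros, incompatible with the strict positivity of elements of $\mathscr{C}_0^+$. This works (given the homeomorphism $\Psi_k$ of Theorem~\ref{Th3.13} and the vanishing of the low-order derivatives of $\psi$ in Lemma~\ref{LC1}, the conclusion of Proposition~\ref{pr:locbif} holds for \emph{any} solution approaching $(\sigma_k,0)$, not only along a prescribed curve), but it is considerably heavier machinery than needed. The paper instead observes, more economically, that by Proposition~\ref{pr:2.4}(i) every nontrivial point of $\mathscr{C}_0$ has $\lambda<0$, so any $(\sigma_m,0)\in\mathscr{C}_0$ with $\sigma_m\neq\sigma_0$ would need $\sigma_m<0$, while $\Sigma(\mathfrak{L})\subset[0,+\infty)$; contradiction.

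Finally, one small slip: you write that $\mathcal{P}_\lambda(\mathscr{C}_0^+)$ ``contains $0$ (as $(\sigma_0,0)\in\overline{\mathscr{C}_0^+}$).'' In fact $0\notin\mathcal{P}_\lambda(\mathscr{C}_0^+)$ since $(\sigma_0,0)\notin\mathscr{C}_0^+$; only the closure of the projection contains $0$, so $\mathcal{P}_\lambda(\mathscr{C}_0^+)=(-\infty,0)$. The stated identity $\mathcal{P}_\lambda(\mathscr{C}_0)=(-\infty,0]$ is recovered because the bifurcation point $(\sigma_0,0)$ belongs to $\mathscr{C}_0$, not to $\mathscr{C}_0^\pm$.
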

													
	\begin{proof}
		We apply Theorem \ref{TGB} to the nonlinearity 
		\begin{equation}
			\label{DO}
			\mf{F}\colon\R\times H^{2}(\mathbb{T})\longrightarrow L^{2}(\mathbb{T}), \quad \mf{F}(\l,u):=u''+\l u + a(t) u^{3}.
		\end{equation}
		We proceed to verify the assumptions of Theorem \ref{TGB}. First of all, $\mf{F}$ is orientable in the sense of Fitzpatrick, Pejsachowicz and Rabier since its domain $\R\times H^{2}(\mathbb{T})$ is simply connected (see \cite{FPRb}). {\rm{(F2)}} holds by definition (cf., \eqref{DO}). {\rm(F3)} follows from Lemma \ref{L3} and {\rm (F4)} from Lemma \ref{LF4}. Hypothesis {\rm(F5)} follows from \eqref{E1.1}. Moreover, the linearization
		$$\mf{L}\colon\R\longrightarrow \Phi_{0}(H^{2}(\mathbb{T}), L^{2}(\mathbb{T})), \quad \mf{L}(\l)[v]=v''+\l v,$$
		is clearly analytic and $\chi[\mf{L}, \s_{0}]=1$ by Proposition \ref{P3.5}, i.e., it is odd. Therefore, by applying Theorem \ref{TGB}, we obtain that $\mathscr{C}_{0}$ is unbounded or there exists $\s_{m}\in \Sigma(\mf{L})$, $\s_{m}\neq \s_{0}$, such that $(\s_{m},0)\in\mathscr{C}_{0}$. We will now show that the second alternative cannot occur. Proposition \ref{pr:2.4}(i) implies that $\s_{m}\leq 0$, because $\mathscr{C}_{0}\subset (-\infty,0]\times H^{2}(\mathbb{T})$. As $\s_{m}\neq \s_{0}=0$, we deduce that $\s_{m}<0$. This is impossible, because $\Sigma(\mf{L})\subset [0,+\infty)$. Therefore, $\mathscr{C}_{0}$ is unbounded in $\R\times H^{2}(\mathbb{T})$. To prove \eqref{Eq4.5}, assume by contradiction  that there exists $\l_{\ast}<0$, such that 
		\begin{equation}
			\label{PYS}
			\mc{P}_{\l}(\mathscr{C}_{0})=[\l_{\ast},0] \quad \text{ or } \quad  \mc{P}_{\l}(\mathscr{C}_{0})=(\l_{\ast},0].
		\end{equation}
		By Proposition \ref{pr:2.4}(ii), $\mathscr{C}_{0}$ is bounded in $[\l_{\ast},0]\times H^{2}(\mathbb{T})$, hence in $\R\times H^{2}(\mathbb{T})$ by \eqref{PYS}. This contradiction with Theorem \ref{TGB} proves \eqref{Eq4.5}. Finally, the last part of the statement follows from the properties of $\mathscr{C}_{0}^\pm$ established in Proposition \ref{pr:4.1}; in particular because $-\mathscr{C}_{0}^+=\mathscr{C}_{0}^-$.
	\end{proof}
													
	\subsection{Global bifurcation from $\s_k$, $k\in\N$: general case}
	\label{sec:4.2}
	
	In this section, we apply the analytic global bifurcation Theorem \ref{th:glob_analytic1} in order to obtain global information of the local branches of solutions emanating from the bifurcation points $(\s_{k},0)$, for $k\in\N$. To apply the analytic bifurcation theorem, the operator $\mf{F}$ must be analytic. This fact comes from the definition of the operator \eqref{EqDe} (for the definition of analyticity in Banach spaces, we refer the reader to Section 6 of \cite{JJ5}). In the sequel, we denote
	$$\mc{R}(\mf{F}):=\{(\l,u)\in\mathbb{R}\times H^{2}(\mathbb{T}) \colon \partial_{u}\mf{F}(\l,u)\in GL(H^{2}(\mathbb{T}),L^{2}(\mathbb{T}))\}.$$
	Recall that, by Theorem \ref{Th3.13}, $(\l,u)\in \mc{R}(\mf{F})\cap\mf{F}^{-1}(0)$ if and only if $\partial_{(x_1,x_2)}\mf{H}(\Psi_{k}^{-1}(\l,u))\in GL(\mathbb{R}^{2})$.
	\par The main result of this subsection is the following. Essentially, it states that if there exists a local curve of solutions for the finite-dimensional equation $\mathfrak{H}(\lambda, x_1, x_2)=0$, emanating from $(\s_{k},0,0)$ and satisfying some reasonable properties, then the corresponding local solution curve of $\mathfrak{F}(\lambda, u) = 0$ via the homeomorphism $\Psi_k$, can be extended to a global curve of solutions and, moreover, it satisfies a global alternative.
	
	\begin{theorem}
		\label{th:glob_analytic}
		Assume \eqref{ass:aloc} and \eqref{ass:aglob}. Let $\varepsilon>0$, $k\in\N$, and suppose that 
		$$\gamma_k\colon(0,\varepsilon)\longrightarrow \mf{H}^{-1}(0)$$
		is an analytic injective curve of solutions of the equation $\mf{H}(\l,x_1,x_2)=0$ such that
		\begin{itemize}
			\item[{\rm(a)}] $\lim_{s\downarrow 0}\gamma_k(s)=(\s_{k},0,0)$.
			\item[{\rm(b)}] $\gamma_k(s)\notin \{(\l,0,0) \colon \l\in\R\}$ for every $s\in(0,\varepsilon)$.
			\item[{\rm(c)}] $\partial_{(x_1,x_2)}\mf{H}(\gamma_k(s))\in GL(\R^{2})$ for every $s\in (0,\varepsilon)$.
		\end{itemize}
		Then, the corresponding analytic curve of solutions of $\mf{F}(\l,u)=0$ given by
		$$ \rho_k:(0,\varepsilon)\longrightarrow\mf{F}^{-1}(0), \quad {\rho}_k(s):=(\Psi_{k}\circ \gamma_k)(s),$$
		admits a locally injective continuous path $\G_k: \R_{>0}\longrightarrow \mf{F}^{-1}(0)$, for which there exists $0<\delta<\varepsilon$ satisfying $\Gamma_k(s)=\rho_k(s)$ for all $s\in (0,\delta)$. Moreover, $\Gamma_k$ satisfies one of the following alternatives:
		\begin{enumerate}
			\item[{\rm (i)}]   $\mc{P}_{\l}(\G_{k}(\R_{> 0}))=(-\infty,\s_{k})$,\item[{\rm (ii)}]   $\Gamma_k$ is a closed loop, i.e., there exists $S>0$ such that $\Gamma_k(S)=(\s_{k},0)$.
		\end{enumerate}
	\end{theorem}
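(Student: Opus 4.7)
The plan is to apply the abstract analytic global bifurcation theorem collected in Theorem \ref{th:glob_analytic1} of Appendix \ref{app:B} directly to the lifted analytic curve $\rho_k=\Psi_k\circ\gamma_k\colon(0,\varepsilon)\to\mf{F}^{-1}(0)$. The abstract hypotheses are already verified in the paper: $\mf{F}$ is (real) analytic thanks to the polynomial dependence of its nonlinearity together with Lemma \ref{L1}, it is Fredholm of index zero with respect to $u$ by Lemma \ref{L3}, proper on closed bounded subsets by Lemma \ref{LF4}, and orientable since $\R\times H^{2}(\mathbb{T})$ is simply connected. By Theorem \ref{Th3.13}, the condition $\partial_{(x_1,x_2)}\mf{H}(\gamma_k(s))\in GL(\R^{2})$ in (c) is equivalent to $\partial_{u}\mf{F}(\rho_k(s))\in GL(H^{2}(\mathbb{T}),L^{2}(\mathbb{T}))$, so $\rho_k$ takes values in the regular set $\mc{R}(\mf{F})$; conditions (a)--(b) identify it as a local analytic branch of nontrivial solutions emanating from $(\s_k,0)$.

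The abstract theorem then produces a locally injective continuous extension $\Gamma_k\colon\R_{>0}\to\mf{F}^{-1}(0)$, real analytic except at a discrete set of parameter values, that coincides with $\rho_k$ on some interval $(0,\delta)$ and satisfies one of the following standard alternatives: either $\Gamma_k$ closes into a loop returning to $(\s_k,0)$, or it is unbounded in $\R\times H^{2}(\mathbb{T})$, or it meets the set of trivial solutions $\mc{T}$ at some other bifurcation point $(\s_m,0)$ with $m\neq k$. The first case is precisely alternative (ii) in the statement.

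To convert the unboundedness alternative into (i), I combine Proposition \ref{pr:locbif} with Proposition \ref{pr:2.4}. Proposition \ref{pr:locbif} guarantees that the solutions on $\rho_k$ have exactly $2k$ zeros in $[0,T)$ for $s\sim 0^{+}$. This property persists along $\Gamma_k$: if at some $s_{0}>0$ a zero were to coalesce into a double zero of the corresponding solution $u_{s_0}$ at some time $t_{*}$, then $u_{s_0}(t_{*})=u_{s_0}'(t_{*})=0$, and the Cauchy--Lipschitz theorem, applied on each interval $(t_{i},t_{i+1})$ in the partition given by \eqref{ass:aglob} and then linked via continuity of $u_{s_0}$ and $u_{s_0}'$ across the partition points, would force $u_{s_0}\equiv 0$; this contradicts $\Gamma_k(s_0)\in\mf{F}^{-1}(0)\setminus\mc{T}$. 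Hence every solution on $\Gamma_k$ has winding number $k$, and Proposition \ref{pr:2.4}(i) gives $\mc{P}_{\l}(\Gamma_k(\R_{>0}))\subset(-\infty,\s_k)$. This immediately rules out the third alternative, since every other generalized eigenvalue $\s_m$ satisfies $\s_m\geq 0$ and, for $m\neq k$, there is no $\s_m\in(-\infty,\s_k)\cap\Sigma(\mf{L})$ that could be reached while preserving the $2k$-zero structure. The a priori bound in Proposition \ref{pr:2.4}(ii) then implies that $\Gamma_k$ stays bounded on compact subintervals of $(-\infty,\s_k)$, so unboundedness forces $\inf_{s>0}\mc{P}_{\l}(\Gamma_k(s))=-\infty$. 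Since $\lim_{s\downarrow 0}\mc{P}_{\l}(\Gamma_k(s))=\s_k$ and $\mc{P}_{\l}\circ\Gamma_k$ is continuous, its image is a connected subset of $(-\infty,\s_k)$ whose supremum is $\s_k$ and whose infimum is $-\infty$, namely the whole interval $(-\infty,\s_k)$.

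The main obstacle I anticipate is the preservation of the winding number along the entire path $\Gamma_k$; this is essential both to invoke Proposition \ref{pr:2.4}(i) (and hence exclude returns to $\mc{T}$ through points $(\s_m,0)$ with $m\neq k$) and to invoke Proposition \ref{pr:2.4}(ii). The Cauchy--Lipschitz argument sketched above has to be carried out with care at the discontinuity points of $a(t)$, using that solutions in $H^{2}(\mathbb{T})\hookrightarrow\mc{C}^{1}(\mathbb{T})$ glue across these points in a $\mc{C}^{1}$ fashion. Everything else is a routine book-keeping of the abstract alternatives provided by Theorem \ref{th:glob_analytic1}.
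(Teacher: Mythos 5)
Your proposal is correct and follows the same route as the paper: push $\gamma_k$ through the homeomorphism $\Psi_k$ of Theorem \ref{Th3.13} to obtain $\rho_k$, check via (a)--(c) that $\rho_k$ satisfies the hypotheses of the abstract analytic global bifurcation Theorem \ref{th:glob_analytic1}, and then translate the resulting unboundedness alternative into $\mc{P}_{\l}(\G_{k}(\R_{> 0}))=(-\infty,\s_{k})$ using Proposition \ref{pr:2.4}.

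Two remarks. First, you misquote the abstract alternative: Theorem \ref{th:glob_analytic1} produces exactly \emph{two} possibilities — unboundedness, or a closed loop returning to $(\l_0,0)=(\s_k,0)$ — and does not include the option of reaching the trivial branch at a different eigenvalue $(\s_m,0)$, $m\neq k$. That third alternative belongs to the degree-theoretic global alternative of Theorem \ref{TGB}, which is not the tool being applied here. The paragraph you devote to excluding it is therefore unnecessary (though harmless). Second, you make explicit the preservation of the winding number $k$ along $\Gamma_k$ (via the Cauchy--Lipschitz/sign argument) in order to legitimately invoke both parts of Proposition \ref{pr:2.4}; the paper handles this more tersely and isolates the zero-counting statement as the subsequent Proposition \ref{pr:4.5}. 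Spelling it out as you do is a reasonable and arguably cleaner way of justifying the last step, since Proposition \ref{pr:2.4} only applies to solutions with a fixed winding number.
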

	\begin{proof}
		From hypotheses (a)--(c) on the curve $\gamma_{k}$ and  Theorem \ref{Th3.13}, we deduce that $\rho_{k}$ satisfies
		$$
		\rho_{k}(0,\varepsilon)\subset\mf{F}^{-1}(0)\setminus\mc{T}, \quad \lim_{s\downarrow0}\rho_{k}(s)=\Psi_{k}(\s_{k},0,0)=(\s_{k},0), \;\; \text{and} \;\; \rho_{k}(0,\varepsilon)\subset\mc{R}(\mf{F}).
		$$
		Thus, applying Theorem \ref{th:glob_analytic1}, there exist $\d\in(0,\e)$ and a locally injective continuous path $\G_{k}:\R_{> 0}\to\mf{F}^{-1}(0)$ such that $\G_{k}(s)=\rho_{k}(s)$ for all $s\in(0,\d)$ and it satisfies one of the following  alternatives:
		\begin{enumerate}
			\item[{\rm (i)}]   $\lim_{s\to +\infty} \|\Gamma_{k}(s)\|_{\mathbb{R}\times H^{2}(\mathbb{T})}= +\infty$.
			\item[{\rm (ii)}]   $\Gamma_{k}$ is a closed loop, i.e., there exists $S>0$ such that $\Gamma_{k}(S)=(\s_{k},0)$.
		\end{enumerate}
		Note that the former is equivalent to $\mc{P}_{\l}(\G_{k}(\R_{> 0}))=(-\infty,\s_{k})$ as a direct consequence of Proposition \ref{pr:2.4}. The proof is concluded.
	\end{proof}
	
	In addition, we can prove that the solutions lying on the global curves $\G_k$ determined above have exactly $2k$ simple zeros in $[0,T)$. This is the content of the next result.
												
	\begin{proposition}
		\label{pr:4.5}
		For all $k\in\N$, the curves $\G_k(\R_{> 0})\setminus\{(\s_{k},0)\}$ obtained in Theorem \ref{th:glob_analytic} consist of solutions of \eqref{eq:1.1} with exactly $2k$ simple zeros in $[0,T)$.
	\end{proposition}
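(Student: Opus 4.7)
The strategy is to combine the local information of Proposition~\ref{pr:locbif} with a clopen-set argument along the path $\Gamma_k$. Introduce
\[
\mathcal{N}_k := \{(\l,u)\in\mf{F}^{-1}(0)\setminus\mc{T} : u \text{ has exactly } 2k \text{ simple zeros in } [0,T)\},
\]
and set $J := \{s > 0 : \Gamma_k(s) \in \mathcal{N}_k\}$. Proposition~\ref{pr:locbif}, applied to the local parametrization $\rho_k = \Psi_k\circ\gamma_k$, yields a $\delta > 0$ with $(0,\delta) \subset J$. The goal is to propagate this to the whole set $\{s > 0 : \Gamma_k(s) \neq (\s_k,0)\}$ by showing that $J$ is relatively clopen in it.

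First I would establish that $\mathcal{N}_k$ is clopen in $\mf{F}^{-1}(0)\setminus\mc{T}$ with the topology inherited from $\R\times H^2(\T)$. Openness follows from the Sobolev embedding $H^2(\T)\hookrightarrow\mc{C}^1(\T)$ and the implicit function theorem for the scalar equation $u(t)=0$ applied near each simple zero: a $\mc{C}^1$-small perturbation of a function with $2k$ simple zeros has exactly one simple zero near each of them, and no others. For closedness I would invoke Carath\'eodory uniqueness applied to $-u''=\l u + a(t)u^3$ (valid since $a\in L^\infty$), which ensures $(u^\ast(t),(u^\ast)'(t))\neq(0,0)$ for every $t$ whenever $u^\ast$ is a non-trivial solution; hence its zeros are simple and the winding number $\mc{W}([0,T];\cdot)$ from \eqref{eq:rot} is well defined and stable under $\mc{C}^1$-convergence of trajectories avoiding the origin, so the limit of solutions with $2k$ simple zeros retains the same zero count.

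Next I would analyze the boundary of $J$. Let $s^\ast>0$ be a boundary point of $J$ in $\R_{>0}$ with $\Gamma_k(s^\ast) = (\l^\ast,u^\ast) \neq (\s_k,0)$. If $u^\ast\not\equiv 0$, the closedness of $\mathcal{N}_k$ forces $s^\ast\in J$, contradicting that $s^\ast$ is a boundary point. If $u^\ast\equiv 0$ and $\l^\ast\notin\Sigma(\mf{L})$, then $(\l^\ast,0)$ is a regular point of $\mf{F}$ by Lemma~\ref{L3.2}, so the implicit function theorem makes it isolated in $\mf{F}^{-1}(0)$, incompatible with being a limit of non-trivial solutions on $\Gamma_k$. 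Finally, if $u^\ast\equiv 0$ and $\l^\ast=\s_j$ with $j\neq k$, the Lyapunov--Schmidt reduction of Section~\ref{sec:3.3} applied at $(\s_j,0)$ (or Theorem~\ref{LCRW} for $j=0$), together with Proposition~\ref{pr:locbif}, forces every non-trivial solution in a small $H^2$-neighborhood of $(\s_j,0)$ to have exactly $2j\neq 2k$ zeros, contradicting $\Gamma_k(s_n)\in\mathcal{N}_k$ along a sequence $s_n\to s^\ast$. Consequently $J$ is relatively clopen in $\{s>0:\Gamma_k(s)\neq(\s_k,0)\}$.

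The principal obstacle is the third case above: one needs uniqueness of \emph{all} solutions in an $H^2$-neighborhood of $(\s_j,0)$, not merely of a prescribed bifurcating curve. This is exactly the content of Theorem~\ref{Th3.13}, which provides the homeomorphism $\Psi_j$ between $\mf{F}^{-1}(0)\cap\mc{O}_j$ and $\mf{H}^{-1}(0)$ near $(\s_j,0,0)$, so that Proposition~\ref{pr:locbif} does apply to every solution in $\mc{O}_j$ via any local path through $(\s_j,0,0)$. To conclude, in alternative~(i) of Theorem~\ref{th:glob_analytic} the set $\{s>0:\Gamma_k(s)\neq(\s_k,0)\}$ is all of $\R_{>0}$ and the clopen argument gives $J=\R_{>0}$ at once; in alternative~(ii) it splits as $(0,S)\cup(S,+\infty)$, and on the second piece one re-seeds $J$ near $s=S^+$ by applying Proposition~\ref{pr:locbif} to the local parametrization of $\Gamma_k$ at its return to $(\s_k,0)$, then propagates with the same clopen argument.
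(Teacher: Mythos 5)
Your proof is correct and reaches the same conclusion, but via a genuinely different route from the paper's. The paper argues by contradiction along a sequence: it assumes some $(\lambda_n,u_n)\in\Gamma_k$ with $2k$ zeros converge to a point $(\lambda_0,u_0)\in\Gamma_k\setminus\{(\sigma_k,0)\}$ whose zero count is different, then tracks the $2k$ individual zeros $s_{j,n},t_{j,n}$ by compactness, invokes Rolle's theorem when two of them coalesce to produce a degenerate zero, and finishes with Cauchy--Lipschitz uniqueness to force $u_0\equiv 0$ and hence $\lambda_0=\sigma_k$, a contradiction. Your argument replaces the explicit zero-tracking with a clopen decomposition: openness of $\mathcal{N}_k$ via the Sobolev embedding and transversality of simple zeros, closedness via Cauchy--Lipschitz (no degenerate zeros on non-trivial solutions) plus continuity of the winding number, and then a structured case analysis at boundary parameters $s^\ast$ where $\Gamma_k(s^\ast)$ is trivial: the implicit function theorem rules out $\lambda^\ast\notin\Sigma(\mf{L})$, and the Lyapunov--Schmidt reduction (Theorem~\ref{Th3.13} plus the expansion behind Proposition~\ref{pr:locbif}) rules out $\lambda^\ast=\sigma_j$, $j\neq k$.

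The two proofs in fact hinge on the same non-obvious fact: that $(\sigma_k,0)$ is the \emph{only} trivial point from which non-trivial solutions with $2k$ zeros can accumulate. The paper asserts this in one sentence, whereas you justify it carefully via the homeomorphism $\Psi_j$ of Theorem~\ref{Th3.13}, which is arguably the more transparent treatment; you correctly flag the subtlety that Proposition~\ref{pr:locbif} is stated for curves, but that the homeomorphism of Theorem~\ref{Th3.13} applies to \emph{all} solutions in the neighborhood $\mc{O}_j$, so the zero-count conclusion extends. One small stylistic caveat: when you say $(\lambda^\ast,0)$ is ``isolated'' after the implicit function theorem, what is actually isolated is the set of non-trivial solutions (the trivial line always passes through); your surrounding sentence makes the intended meaning clear, but the phrasing should be tightened. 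Your handling of alternative (ii) by re-seeding via Proposition~\ref{pr:locbif} at the return time $S$ is also sound, thanks to local injectivity of $\Gamma_k$.
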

												
	\begin{proof}
		Fix $k\in\N$. Observe that Proposition \ref{pr:locbif} guarantees the validity of the statement in a neighborhood of the bifurcation point $(\s_k,0)$. Suppose instead that the global statement is false. Then, by reasoning as in the proof of Proposition \ref{pr:4.1}, there exists a sequence $\left\{(\l_{n},u_{n})\right\}_{n\in\N}\subset \G_k(\R_{> 0})\setminus\{(\s_k,0)\}$ and $\left(\l_{0},u_{0}\right)\in\G_k(\R_{> 0})\setminus\left\{\left(\s_{k},0\right)\right\}$ such that
		\begin{equation}
			\label{E4.10}
			\lim_{n\to+\infty}\l_{n}=\l_{0}, \quad \lim_{n\to+\infty}u_{n}=u_{0}  \text{ in $H^{2}(\mathbb{T})$},
		\end{equation}
		$u_{n}(t)$ has exactly $2k$ zeros in $[0,T)$ for each $n\in \N$, while the number of zeros of $u_{0}(t)$ is different from $2k$. For all $j\in\{1,\dots,k\}$, consider the sequences $\left\{s_{j,n}\right\}_{n\in\N}, \left\{t_{j,n}\right\}_{n\in\N}\subset [0,T]$ such that, for all $n\in\N$,
		\begin{gather*}
		0\leq s_{1,n}<t_{1,n}<s_{2,n}<t_{2,n}<\dots<s_{k,n}<t_{k,n}\leq T, \\
		u_{n}(s_{j,n})=u_{n}(t_{j,n})=0, \quad u_{n}(t)\neq 0 \text{ for all $t\in \left(s_{j,n},t_{j,n}\right)$}.
		\end{gather*}
		Moreover, if $s_{1,n}=0$, then $t_{k,n}<T$. By compactness, up to subsequences that we do not relabel, for all $j\in\{1,\dots,k\}$ there exists $s_{j,0}, t_{j,0}\in [0,T]$ such that
		$$\lim_{n\to+\infty} s_{j,n}=s_{j,0}, \quad \lim_{n\to+\infty} t_{j,n}=t_{j,0}.$$
		In addition, $0\leq s_{1,0}\leq t_{1,0}\leq s_{2,0}\leq t_{2,0}\leq\dots\leq s_{k,0}\leq t_{k,0}\leq T$.	By \eqref{E4.10} and the Sobolev embedding $H^{2}(\mathbb{T}) \hookrightarrow \mc{C}^{1}(\mathbb{T})$, we deduce that 
		\begin{equation}
			\label{EqLi}
			\lim_{n\to +\infty} \|u_{n}-u_{0}\|_{L^{\infty}}=0.
		\end{equation}
		In particular,
		$$0=\lim_{n\to +\infty} u_{n}(s_{j,n})=u_{0}(s_{j,0}), \quad 0=\lim_{n\to +\infty} u_{n}(t_{j,n})=u_{0}(t_{j,0}).$$
		Assume that there exists $j\in\{1,\dots,k\}$ such that $s_{j,0}=t_{j,0}=:t^*$. Since, by Rolle's theorem, for all $n\in\N$ there exists $p_n\in(s_n,t_n)$ with $u_n'(p_n)=0$, from \eqref{E4.10} we have
		\[
		\lim_{n\to+\infty}p_n=t^*, \qquad \lim_{n\to+\infty} u_n'(p_n)=u_0'(t^*)=0.
		\]
		Then, $u_0\in\mc{C}^1([0,T])$ solves the Cauchy problem
		\begin{equation}
			\label{ECCk2bis}
			\left\{
			\begin{array}{l}
			-u_{0}''=\l_{0} u_{0} + a(t)u_{0}^{3}, \quad t\in (0,T),  \\
			u_{0}(t^*)=0, \\
			u_{0}'(t^*)=0,
			\end{array}
			\right.
		\end{equation}
		and the uniqueness given by the Cauchy--Lipschitz theorem gives $u_0=0$. Then, $\l_0=\s_k$, because $(\s_k,0)$ is the unique point on the trivial branch from which solutions with $2k$ zeros in $[0,T)$ bifurcate. This is impossible because we had supposed $(\l_0,u_0)\neq(\s_k,0)$.	The same contradiction can be obtained if $t_{j,0}=s_{j+1,0}$ for some $j\in\{1,\dots,k\}$ (for notation convenience, we set $s_{k+1,0}:=s_{1,0}+T$).

		Therefore, the only case left to consider is when $s_{1,0}<t_{1,0}<s_{2,0}<t_{2,0}<\dots<s_{k,0}<t_{k,0}<s_{1,0}+T$. Since we are assuming that $u_0$ has a number of zeros different from $2k$, there exist $j\in\{1,\dots,k\}$ and $t^*\in(s_{j,0},t_{j,0}) \cup (t_{j,0},s_{j+1,0})$ with $u_0(t^*)=0$. If also $u_0'(t^*)=0$ we conclude as before. If $u_0'(t^*)\neq 0$, instead, we get a contradiction with \eqref{EqLi}, because $u_0$ changes sign in a neighborhood of $t^*$, while, by construction, $u_n$ have a fixed sign in a neighborhood of $t^*$.
	\end{proof}
													
	\subsection{Global bifurcation from $\s_k$, $k\in\N$: case of even weights}
	\label{sec:4.3} 
	With respect to the previous section, when the weight $a$ is further assumed to be even, we can get more precise information about the global structure of the connected components containing the local branches emanating from the bifurcation points $(\s_{k},0)\in\R\times H^{2}(\mathbb{T})$, $k\geq 1$, whose local existence has been established in Theorem \ref{CR}.
	
	With the notation used there, we denote by $\mathscr{C}_{\mathbf{\mathtt{ev}},k}^{+}\subset \mathscr{C}_{\mathbf{\mathtt{ev}},k}$ the connected component of $\mf{F}_{\mathbf{\mathtt{ev}}}^{-1}(0)\setminus\mc{T}$ such that, for some $\rho>0$,
	$\mathscr{C}^{+}_{\mathbf{\mathtt{ev}},k}\cap B_{\rho}(\s_k,0) = \mathscr{C}_{\mathbf{\mathtt{ev}},k,\mathrm{loc}}^{+}$. Similarly, $\mathscr{C}_{\mathbf{\mathtt{ev}},k}^{-}\subset \mathscr{C}_{\mathbf{\mathtt{ev}},k}$ denotes the connected component of $\mf{F}^{-1}_{\mathbf{\mathtt{ev}}}(0)\setminus\mc{T}$ such that, for some $\rho>0$, $\mathscr{C}^{-}_{\mathbf{\mathtt{ev}},k}\cap B_{\rho}(\s_k,0) = \mathscr{C}_{\mathbf{\mathtt{ev}},k,\mathrm{loc}}^{-}$, and analogous notations will be used for the case of odd solutions. Recall that, close to the bifurcation point $(\s_k,0)$, these components consist of even/odd solutions of \eqref{eq:1.1} with exactly $2k$ simple zeros in $[0,T)$. Moreover, since $u$ is a solution of \eqref{eq:1.1} if, and only if, $-u$ is, we also have that $-\mathscr{C}_{\mathbf{\mathtt{ev}},k}^{+}=\mathscr{C}_{\mathbf{\mathtt{ev}},k}^{-}$ and  $-\mathscr{C}_{\mathbf{\mathtt{odd}},k}^{+}= \mathscr{C}_{\mathbf{\mathtt{odd}},k}^{-}$.
	
	To make the notation uniform, in this section we denote by $\mathscr{C}_{\mathbf{\mathtt{ev}},0}$ the component denoted by $\mathscr{C}_0$ in Section \ref{sec:4.1} and whose global behavior has been established in Theorem \ref{th:4.5}. Indeed, as pointed out in Remark \ref{re:3.9}, except at the bifurcation point $(\s_0,0)=(0,0)$, it consists of even positive or negative solutions of \eqref{eq:1.1}.
													
	By reasoning as in the proof of Proposition \ref{pr:4.5}, it is possible to obtain the following result, showing that the number of zeros of the solutions is maintained along the connected components that we have just introduced.
	\begin{proposition}
		\label{Pr4.6}
		Assume \eqref{ass:aloc} and \eqref{ass:aglob}. Then, for each $k\in\N$, the following statements hold.
		\begin{itemize}
			\item[(a)] $\mathscr{C}_{\mathbf{\mathtt{ev}},k}\setminus\{(\s_{k},0)\}\subset \R\times H^{2}_{\mathbf{\mathtt{ev}}}(\mathbb{T})$ consists of even solutions of \eqref{eq:1.1} with exactly $2k$ zeros in $[0,T)$. Thus, the connected components $\left\{\mathscr{C}_{\mathbf{\mathtt{ev}},n}\right\}_{n=0}^{\infty}\subset\R\times H^{2}_{\mathbf{\mathtt{ev}}}(\mathbb{T})$ are pairwise disjoint, i.e.,
			$$\mathscr{C}_{\mathbf{\mathtt{ev}},n}\cap \mathscr{C}_{\mathbf{\mathtt{ev}},m}=\emptyset, \quad \text{if $n, m\in\N\cup\{0\}$ and $n\neq m$}.$$
			\item[(b)] $\mathscr{C}_{\mathbf{\mathtt{odd}},k}\setminus\{(\s_{k},0)\}\subset \R\times H^{2}_{\mathbf{\mathtt{odd}}}(\mathbb{T})$ consists of odd solutions of \eqref{eq:1.1} with exactly $2k$ zeros in $[0,T)$. Thus, the connected components $\left\{\mathscr{C}_{\mathbf{\mathtt{odd}},k}\right\}_{n=1}^{\infty}\subset\R\times H^{2}_{\mathbf{\mathtt{odd}}}(\mathbb{T})$ are pairwise disjoint, i.e.,
			$$\mathscr{C}_{\mathbf{\mathtt{odd}},n}\cap \mathscr{C}_{\mathbf{\mathtt{odd}},m}=\emptyset, \quad \text{if $n\neq m$}.$$
		\end{itemize}
	\end{proposition}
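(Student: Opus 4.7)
The plan is to adapt the proof of Proposition \ref{pr:4.5} to the symmetry-restricted setting. The crucial observation is that $\mathscr{C}_{\mathbf{\mathtt{ev}},k}$ (resp. $\mathscr{C}_{\mathbf{\mathtt{odd}},k}$) is by construction contained in the closed subspace $\R\times H^{2}_{\mathbf{\mathtt{ev}}}(\mathbb{T})$ (resp. $\R\times H^{2}_{\mathbf{\mathtt{odd}}}(\mathbb{T})$), so the parity assertion is automatic. The real content of the statement is the zero-counting property, from which pairwise disjointness will follow trivially.

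Near the bifurcation point $(\s_k,0)$, Theorems \ref{LCRWw} and \ref{LCRWwo} guarantee that the local branches $\mathscr{C}_{\mathbf{\mathtt{ev}},k,\mathrm{loc}}^{\pm}$ and $\mathscr{C}_{\mathbf{\mathtt{odd}},k,\mathrm{loc}}^{\pm}$ consist of solutions with exactly $2k$ simple zeros in $[0,T)$. To propagate this property to the whole component, I argue by contradiction as in the proof of Proposition \ref{pr:4.5}: assume there exists $(\l_0,u_0)\in\mathscr{C}_{\mathbf{\mathtt{ev}},k}\setminus\{(\s_k,0)\}$ whose number of zeros in $[0,T)$ is different from $2k$ (the odd case being identical). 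By the connectedness of $\mathscr{C}_{\mathbf{\mathtt{ev}},k}$ and the local conclusion above, one can pick a sequence $\{(\l_n,u_n)\}\subset\mathscr{C}_{\mathbf{\mathtt{ev}},k}$ with $u_n$ possessing exactly $2k$ zeros and $(\l_n,u_n)\to(\l_0,u_0)$ in $\R\times H^{2}(\mathbb{T})$.

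I would then run the same dichotomy as in the proof of Proposition \ref{pr:4.5}. The Sobolev embedding $H^{2}(\mathbb{T})\hookrightarrow\mc{C}^{1}(\mathbb{T})$ upgrades the convergence to $\mc{C}^{1}$. Let $\{s_{j,n}\},\{t_{j,n}\}\subset[0,T]$ enumerate consecutive zeros of $u_n$; up to subsequences they converge to limits $0\le s_{1,0}\le t_{1,0}\le\cdots\le s_{k,0}\le t_{k,0}\le T$. Either (a) two adjacent zeros collapse to a point $t^{*}$ with $u_0(t^{*})=u_0'(t^{*})=0$ (using Rolle's theorem to handle the derivative), in which case the Cauchy--Lipschitz theorem, applied piecewise on the intervals of continuity of $a(t)$ granted by \eqref{ass:aglob}, forces $u_0\equiv 0$; this yields $\l_0=\s_k$, since Theorems \ref{LCRWw} and \ref{LCRWwo} together with \eqref{E1.11} identify $\s_k$ as the unique eigenvalue of $\mf{L}_{\mathbf{\mathtt{ev}}}$ (resp. $\mf{L}_{\mathbf{\mathtt{odd}}}$) from which even (resp. odd) solutions with $2k$ zeros bifurcate, contradicting $(\l_0,u_0)\neq(\s_k,0)$; or (b) $u_0$ has an additional zero $t^{*}$. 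A transversal $t^{*}$ (i.e.\ $u_0'(t^{*})\neq 0$) would force $u_0$ to change sign near $t^{*}$, while the $u_n$ have constant sign there, contradicting $\mc{C}^{1}$ convergence; a degenerate extra zero falls under case (a).

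Pairwise disjointness is then immediate from the zero count: any element in $\mathscr{C}_{\mathbf{\mathtt{ev}},n}\cap\mathscr{C}_{\mathbf{\mathtt{ev}},m}$ with $n\neq m$ would need to possess both $2n$ and $2m$ zeros in $[0,T)$, while the only trivial candidates $(\s_n,0),(\s_m,0)$ are distinct. The odd analogue is identical, with the harmless additional remark that $\s_0\notin\Sigma(\mf{L}_{\mathbf{\mathtt{odd}}})$, so the family is indexed from $n=1$. The main subtlety is the bookkeeping in the zero-coalescence argument and ensuring that Cauchy--Lipschitz uniqueness can indeed be chained across the finitely many discontinuities of $a(t)$ allowed by \eqref{ass:aglob}; aside from that point, the argument is essentially a transcription of the proof of Proposition \ref{pr:4.5} performed inside the fixed symmetry subspaces.
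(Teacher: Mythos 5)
Your proposal is correct and follows essentially the same route as the paper, whose own treatment of Proposition~\ref{Pr4.6} is only a one-line pointer to ``reasoning as in the proof of Proposition~\ref{pr:4.5}''. You correctly transcribe that coalescence/connectedness argument into the subspaces $H^{2}_{\mathbf{\mathtt{ev}}}(\mathbb{T})$ and $H^{2}_{\mathbf{\mathtt{odd}}}(\mathbb{T})$, including the piecewise Cauchy--Lipschitz step across the discontinuity points of $a(t)$ from \eqref{ass:aglob}, the identification of $\s_k$ as the unique admissible eigenvalue via the local uniqueness of Theorems~\ref{LCRWw} and~\ref{LCRWwo}, and the immediate deduction of pairwise disjointness from the invariance of the zero count.
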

	Now we are ready to obtain the global behavior of the components of even and odd solutions.
	\begin{theorem}[\textbf{Global structure of $\mathscr{C}_{\mathbf{\mathtt{ev}},k}$ and $\mathscr{C}_{\mathbf{\mathtt{odd}},k}$}]
		\label{th:glog_ev}
		Assume \eqref{ass:aloc} and \eqref{ass:aglob}. Then, for each $k\in \N$, the following statements hold.
		\begin{itemize}
			\item[(a)]
			The connected component $\mathscr{C}_{\mathbf{\mathtt{ev}},k}$ is unbounded and 
			\begin{equation}
				\label{Eq4.51}
				\mc{P}_{\l}(\mathscr{C}_{\mathbf{\mathtt{ev}},k})= (-\infty,\s_{k}].
			\end{equation}
			In particular, \eqref{eq:1.1} admits at least two even solution  with $2k$ zeros in $[0,T)$ for each $\l<\s_{k}$.
			\item[(b)]
			The connected component $\mathscr{C}_{\mathbf{\mathtt{odd}},k}$ is unbounded and 
			\begin{equation}
				\label{Eq4.52}
				\mc{P}_{\l}(\mathscr{C}_{\mathbf{\mathtt{odd}},k})= (-\infty,\s_{k}].
			\end{equation}
			In particular, \eqref{eq:1.1} admits at least two odd solution  with $2k$ zeros in $[0,T)$ for each $\l<\s_{k}$.
		\end{itemize}
	\end{theorem}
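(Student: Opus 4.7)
The plan is to adapt the argument already used for $\mathscr{C}_0$ in Theorem \ref{th:4.5}, now applied to the restricted operators $\mf{F}_{\mathbf{\mathtt{ev}}}$ and $\mf{F}_{\mathbf{\mathtt{odd}}}$ acting on the symmetric subspaces. The crucial advantage of this setting is that, as discussed in Section \ref{sec:3.2}, $\chi[\mf{L}_{\mathbf{\mathtt{ev}}},\s_k]=\chi[\mf{L}_{\mathbf{\mathtt{odd}}},\s_k]=1$, so the generalized algebraic multiplicity is odd and the global bifurcation alternative of Theorem \ref{TGB} applies verbatim at each $(\s_k,0)$ inside these subspaces.

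First I would verify hypotheses (F1)--(F5) of Theorem \ref{TGB} for $\mf{F}_{\mathbf{\mathtt{ev}}}\colon\R\times H^{2}_{\mathbf{\mathtt{ev}}}(\T)\to L^{2}_{\mathbf{\mathtt{ev}}}(\T)$: the smoothness, properness on closed bounded sets, Fredholm index~0 property of $\p_u\mf{F}_{\mathbf{\mathtt{ev}}}$, the continuous (indeed analytic) dependence of $\mf{L}_{\mathbf{\mathtt{ev}}}$ on $\l$, and orientability (the domain is simply connected). All of these follow verbatim from the proofs of Lemmas \ref{L1}, \ref{L3}, \ref{LF4}, simply restricted to the closed invariant subspace $H^{2}_{\mathbf{\mathtt{ev}}}(\T)$. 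Combined with \eqref{E1.11} and \eqref{eq:algmult}, Theorem \ref{TGB} then gives the dichotomy for $\mathscr{C}_{\mathbf{\mathtt{ev}},k}$: either it is unbounded in $\R\times H^{2}_{\mathbf{\mathtt{ev}}}(\T)$, or it meets some other $(\s_m,0)\in\mc{T}$ with $m\in\N\cup\{0\}$, $m\neq k$. By Proposition \ref{Pr4.6}(a), every nontrivial element of $\mathscr{C}_{\mathbf{\mathtt{ev}},k}$ has exactly $2k$ zeros in $[0,T)$, hence the only bifurcation point of $\mc{T}$ it can contain is $(\s_k,0)$ itself. Therefore the second alternative is excluded and $\mathscr{C}_{\mathbf{\mathtt{ev}},k}$ is unbounded.

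Next, to prove \eqref{Eq4.51}, I would apply Proposition \ref{pr:2.4}(i), which forces $\mc{P}_{\l}(\mathscr{C}_{\mathbf{\mathtt{ev}},k})\subseteq(-\infty,\s_k]$. If this projection were bounded below, say $\mc{P}_{\l}(\mathscr{C}_{\mathbf{\mathtt{ev}},k})\subseteq[\l_*,\s_k]$ for some $\l_*>-\infty$, then Proposition \ref{pr:2.4}(ii) applied on the compact interval $K=[\l_*,\s_k]$ would yield a uniform $H^{2}$-bound on all of $\mathscr{C}_{\mathbf{\mathtt{ev}},k}$, contradicting unboundedness. Hence $\mc{P}_{\l}(\mathscr{C}_{\mathbf{\mathtt{ev}},k})=(-\infty,\s_k]$, as claimed. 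The existence of \emph{at least two} even solutions with $2k$ zeros for every $\l<\s_k$ then follows immediately from the odd symmetry of \eqref{eq:1.1}: if $(\l,u)\in\mathscr{C}_{\mathbf{\mathtt{ev}},k}$, then $(\l,-u)$ is also an even solution with $2k$ zeros, and $u\neq -u$ since $u\not\equiv 0$.

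Part (b) follows from an entirely parallel argument for $\mf{F}_{\mathbf{\mathtt{odd}}}$, using Proposition \ref{Pr4.6}(b) to rule out secondary bifurcation points from $\mathscr{C}_{\mathbf{\mathtt{odd}},k}$ and the a priori bound of Proposition \ref{pr:2.4} to rule out $\l$-boundedness. The main (and essentially only) technical point is to be sure that all the preliminary constructions of Section \ref{section:3} restrict cleanly to $H^{s}_{\mathbf{\mathtt{ev}}}$ and $H^{s}_{\mathbf{\mathtt{odd}}}$; this was sketched after the definition of $\mf{F}_{\mathbf{\mathtt{ev}}}$ and $\mf{F}_{\mathbf{\mathtt{odd}}}$, and no new analytical obstacle arises, since the evenness of $a$ (hypothesis \eqref{even}) is exactly what makes the right-hand side of \eqref{eq:1.1} invariant under the involutions $u(t)\mapsto u(T-t)$ and $u(t)\mapsto -u(T-t)$.
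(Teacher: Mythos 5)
Your proposal is correct and takes essentially the same approach as the paper: apply Theorem~\ref{TGB} to $\mf{F}_{\mathbf{\mathtt{ev}}}$ (resp.\ $\mf{F}_{\mathbf{\mathtt{odd}}}$) using the fact that $\chi[\mf{L}_{\mathbf{\mathtt{ev}}},\s_k]=1$ is odd, exclude the second alternative via Proposition~\ref{Pr4.6}, and then combine the a priori bounds of Proposition~\ref{pr:2.4} with the unboundedness of the component to force $\mc{P}_\l(\mathscr{C}_{\mathbf{\mathtt{ev}},k})=(-\infty,\s_k]$, concluding the ``at least two'' claim from the $u\mapsto -u$ symmetry. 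The only cosmetic difference is that the paper also invokes Theorem~\ref{LCRWw}(ii) when ruling out the second alternative, but your appeal to Proposition~\ref{Pr4.6}(a) alone already suffices for that step.
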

													
	\begin{proof}
		We prove only statement (a), as the proof of (b) is analogous. We apply Theorem \ref{TGB} to
		\begin{equation}
			\label{DObis}
			\mf{F}_{\textbf{\texttt{ev}}}\colon\R\times H^{2}_{\textbf{\texttt{ev}}}(\mathbb{T})\longrightarrow L^{2}_{\textbf{\texttt{ev}}}(\mathbb{T}), \quad \mf{F}_{\textbf{\texttt{ev}}}(\l,u):=u''+\l u + a(t) u^{3}.
		\end{equation}
		The validity of the assumptions of Theorem \ref{TGB} can be checked as in Theorem \ref{th:4.5}, the key point being that $\chi[\mf{L}_{\textbf{\texttt{ev}}}, \s_{k}]$ is odd, by \eqref{eq:algmult}. Therefore, Theorem \ref{TGB} guarantees that $\mathscr{C}_{\textbf{\texttt{ev}},k}$ is unbounded or there exists $\s_{m}\in \Sigma(\mf{L}_{\textbf{\texttt{ev}}})$, $\s_{m}\neq \s_{k}$, such that $(\s_{m},0)\in\mathscr{C}_{\textbf{\texttt{ev}},k}$. By Proposition \ref{Pr4.6}(a) and Theorem \ref{LCRWw}(ii), the second alternative cannot occur. Therefore, the connected component $\mathscr{C}_{\textbf{\texttt{ev}},k}$ is unbounded. Suppose now by contradiction that there exists $\l_{\ast}<\s_{k}$, such that 
		\begin{equation}
			\label{PYSbis}
			\mc{P}_{\l}(\mathscr{C}_{\textbf{\texttt{ev}},k})=[\l_{\ast},\s_{k}] \quad 	\text{ or }  \quad \mc{P}_{\l}(\mathscr{C}_{\textbf{\texttt{ev}},k})=(\l_{\ast},\s_{k}].
		\end{equation}
		Note that $\l_\ast$ is necessarily smaller than $\s_k$ because of Propositions \ref{Pr4.6}(a) and  \ref{pr:2.4}(i).	By Proposition \ref{pr:2.4}(ii), $\mathscr{C}_{\textbf{\texttt{ev}},k}$ is bounded in $[\l_{\ast},\s_k]\times H^{2}_{\textbf{\texttt{ev}}}(\mathbb{T})$, hence in $\R\times H^{2}_{\textbf{\texttt{ev}}}(\mathbb{T})$ by \eqref{PYSbis}. This contradiction with Theorem \ref{TGB} proves \eqref{Eq4.51}.
													
		Finally, the last part of the statement follows from the properties of $\mathscr{C}_{\textbf{\texttt{ev}},k}^\pm$; in particular because $-\mathscr{C}_{\textbf{\texttt{ev}},k}^+=\mathscr{C}_{\textbf{\texttt{ev}},k}^-$.
	\end{proof}								
													
	\section{Additional examples and final remarks}
	\label{section:5}
	We conclude this work by presenting additional examples as well as some observations and discussions related to the results obtained in this work. We will also complement our analysis with some numerical experiments to illustrate the behavior of \eqref{eq:1.1} in some specific cases.
	\begin{remark}{\bf (Additional branches).}
		\label{re:5.1}
		By adapting the techniques of Section \ref{sec:3.4}, we can show that a larger number of branches of nontrivial solutions of \eqref{eq:1.1} might bifurcate, in some cases, at the bifurcation points $(\s_k,0)$, $k\in\N$. In particular, consider an even weight, i.e., $a(T-t)=a(t)$ for a.e. $t\in[0,T]$, satisfying
		\begin{equation}
			\label{eq:8p}
			(a_k-3c_k)(a_ke_k-9c_k^2)>0 \quad \text{ and } \quad (e_k-3c_k)(a_ke_k-9c_k^2)>0,
		\end{equation}
		where $a_k,c_k,e_k>0$ are the constants introduced in \eqref{eq:coeff}. Observe that, since the weight is even, necessarily $b_k=d_k=0$ in this case; thus \eqref{ass:H}(iv) is not satisfied, and the results of Section \ref{sec:3.4} cannot be applied directly. Nevertheless, the analysis can be adapted as we explain now. First of all, the results of Lemmas \ref{L20} and \ref{L31} are still valid, since Theorem \ref{th:4.3} in this case gives
		\[
		\mathscr{C}_k(x,y)=\begin{pmatrix}
			x\left(a_kx^2+3c_ky^2\right) \\
			y\left(3c_k x^2+e_ky^2\right)
		\end{pmatrix},
		\]
		and direct computations show that $\mathscr{C}_k(x,y)=0$ if and only if $(x,y)=(0,0)$. Then, one can perform the change of variable \eqref{CV}, obtaining that the existence of solutions of \eqref{eq:1.1} in a neighborhood of $(\s_k,0)$ is equivalent to the existence of solutions of \eqref{eq:f} in a neighborhood of $(\s_k,0,0)$. Since $f(\s_k,z,w)=0$ in this case reads
		\begin{equation}
			\left\{
			\begin{array}{l}
				-z+z\left(a_kz^2+3c_kw^2\right)=0,\\
				-w+w\left(3c_kz^2+e_kw^2\right)=0,
			\end{array}
			\right.
		\end{equation}
		whose solutions, apart from $(0,0)$, are
		\begin{align*}
		(\bar z_{1,k},\bar w_{1,k})&=\left(0,\sqrt{\frac{1}{e_k}}\right), & (\bar z_{2,k},\bar w_{2,k})&=\left(0,-\sqrt{\frac{1}{e_k}}\right), \\ (\bar z_{3,k},\bar w_{3,k})&=\left(\sqrt{\frac{1}{a_k}},0\right), &
		(\bar z_{4,k},\bar w_{4,k})&=\left(-\sqrt{\frac{1}{a_k}},0\right), \\
		(\bar z_{5,k},\bar w_{5,k})&=\left(\sqrt{\frac{e_k-3c_k}{a_ke_k-9c_k^2}},\sqrt{\frac{a_k-3c_k}{a_ke_k-9c_k^2}}\right), & (\bar z_{6,k},\bar w_{6,k})&=\left(-\bar z_{5,k},-\bar w_{5,k}\right), \\
		(\bar z_{7,k},\bar w_{7,k})&=\left(\sqrt{\frac{e_k-3c_k}{a_ke_k-9c_k^2}},-\sqrt{\frac{a_k-3c_k}{a_ke_k-9c_k^2}}\right), & (\bar z_{8,k},\bar w_{8,k})&=\left(-\bar z_{7,k},-\bar w_{7,k}\right).
		\end{align*}
		Observe that $(\bar z_{i,k},\bar w_{i,k})$, $i\in\{5,6,7,8\}$ are real thanks to \eqref{eq:8p}. Direct computations also show that the same assumption in addition guarantees that all these are regular zeros of $f(\s_k,z,w)$. Thus, by applying the implicit function theorem as in Section \ref{sec:3.4}, we can obtain an analogue of Theorem \ref{th:local_bif} ensuring the existence of 8 local branches of nontrivial solutions bifurcating subcritically at $(\s_k,0)$.
		
		As a particular example, consider the problem
        \begin{equation}
        \label{eq:5.2}
		\left\{
		\begin{array}{l}
		-u''=\l u + \left(\mathbf{1}_{[0.3,0.5]}+\mathbf{1}_{[\pi-0.5,\pi-0.3]}\right)u^{3}, \quad \quad t\in (0,\pi),  \\[7pt]
		u(0)=u(\pi), \quad u'(0)=u'(\pi),
		\end{array}
		\right.
	\end{equation}
        for which
		\[
		a_1\sim 0.0403486,\quad c_1\sim 0.0384041,\quad e_1\sim 0.0449571.
		\]
		Thus, \eqref{eq:8p} is satisfied. Figure \ref{Fig:5.1} shows the corresponding global bifurcation diagram of solutions with $2$ nodes in $[0,\pi)$, bifurcating from $(\s_1,0)=(4,0)$. In the diagram we plot the values of $\l$, $u(0)$ and $u'(0)$, in order to univocally identify the solutions of \eqref{eq:1.1}. Such a diagram has been obtained by means of some numerical simulations that we have performed.
		\begin{figure}[h!]
			\centering
			\begin{overpic}[scale=0.35]{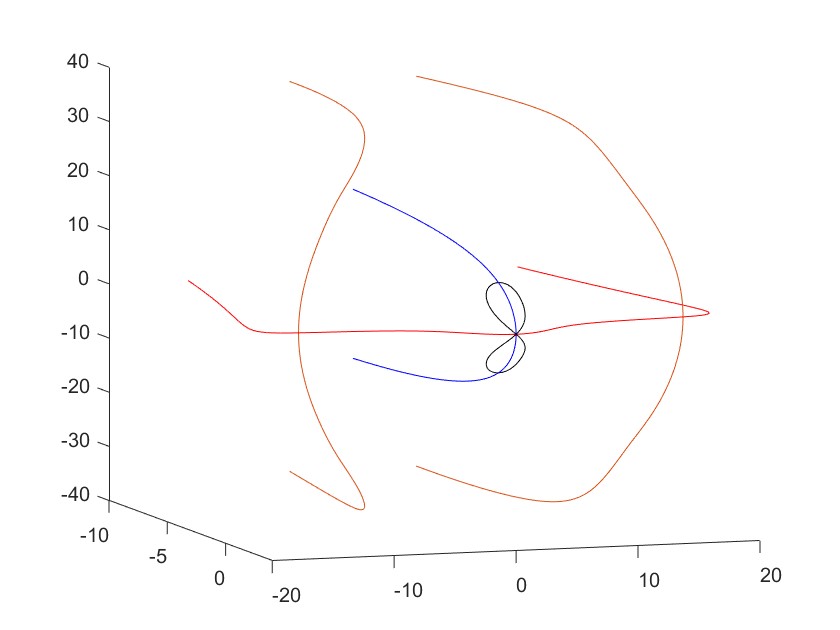}
				\put (20,5) {\small{$\l$}}
				\put (67,2) {\small{$u(0)$}}
				\put (0,68) {\small{$u'(0)$}}
			\end{overpic}
			\caption{Bifurcation diagram of solutions of \eqref{eq:5.2}. Eight branches of nontrivial solutions with 2 nodes in $[0,\pi)$ bifurcate locally from $(\s_1,0)=(4,0)$.}
			\label{Fig:5.1}
		\end{figure}
		
		The blue and red curves in Figure~\ref{Fig:5.1} are the components $\mathscr{C}_{\mathbf{\mathtt{odd}},1}$ and $\mathscr{C}_{\mathbf{\mathtt{ev}},1}$, respectively, obtained in Theorem \ref{th:glog_ev}. The black branches, instead, in a neighborhood of $(\s_1,0)$, consist of solutions that are neither even nor odd, as a consequence of the uniqueness given by Theorem \ref{CR}. Globally, they obey the analytic global alternative given in Theorem \ref{th:glob_analytic}. This example also shows that secondary bifurcations might occur on these components, like the orange components bifurcating from the red one.	
	\end{remark}
												
	\begin{remark}{\bf (Optimality of the global results and possibility of additional components).}\label{re:5.2}
	Consider the problem
    \begin{equation}
        \label{eq:5.3}
		\left\{
		\begin{array}{l}
		-u''=\l u + \left(\mathbf{1}_{[0.3,0.5]}+0.95\cdot\mathbf{1}_{[\pi-0.5,\pi-0.3]}\right)u^{3}, \quad \quad t\in (0,\pi),  \\[7pt]
		u(0)=u(\pi), \quad u'(0)=u'(\pi),
		\end{array}
		\right.
	\end{equation}
    for which
	\[
	a_1\sim 0.0393399, \quad b_1\sim 0.00095947, \quad c_1\sim 0.037444, \quad d_1\sim 0.00101251, \quad e_1\sim 0.0438331.
	\]
	Observe that the weight now is no longer even. The analysis of Section \ref{sec:3.4} can be adapted in the spirit of Remark \ref{re:5.1} to prove that 8 branches of solutions with 2 zeros in $[0,\pi)$ bifurcate locally from $(\s_1,0)=(4,0)$. The results of our numerical simulations, represented in Figure \ref{Fig:5.2}, show in addition that two pairs of such branches form, globally, a loop, while the other ones give rise to components that can be extended for all $\l<\s_1=4$. This confirms that the two alternatives given by Theorem \ref{th:glob_analytic} can occur simultaneously.
		\begin{figure}[h!]
			\centering
			\begin{overpic}[scale=0.35]{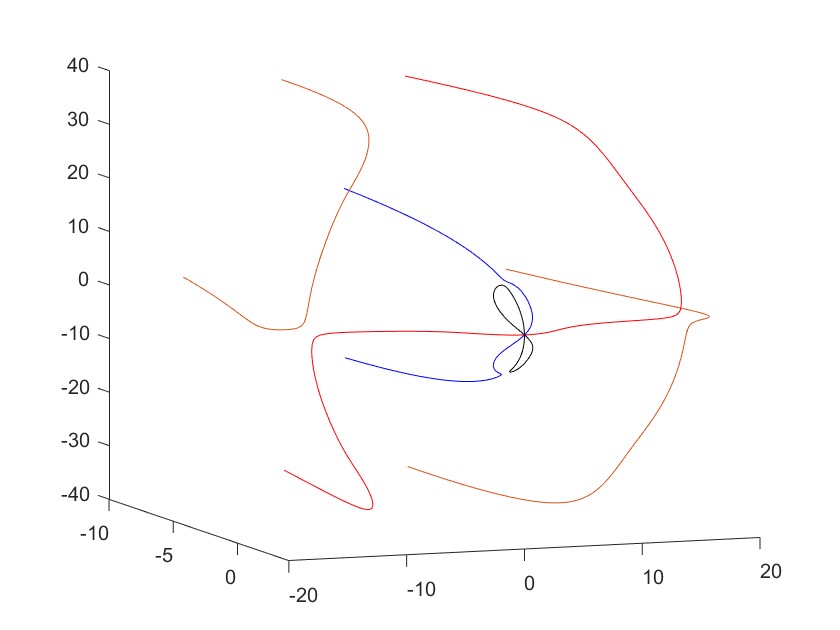}
				\put (20,5) {\small{$\l$}}
				\put (67,2) {\small{$u(0)$}}
				\put (0,68) {\small{$u'(0)$}}
			\end{overpic}
			\caption{Bifurcation diagram of solutions of \eqref{eq:5.3}. Eight branches of nontrivial solutions with 2 nodes in $[0,\pi)$ bifurcate locally from $(\s_1,0)=(4,0)$. Two pairs of such branches, plotted in black, give rise to two loops in the global bifurcation diagram, according to Theorem \ref{th:glob_analytic}.}
			\label{Fig:5.2}
		\end{figure}						
	Observe that this case can be seen as an asymmetric perturbation of the example of Remark \ref{re:5.1}. As shown in Figure \ref{Fig:5.2}, such an asymmetric situation provokes the breaking of the secondary bifurcations that were present in the symmetric case of Figure \ref{Fig:5.1}, giving rise to isolated components in the bifurcation diagram that are represented in orange in Figure \ref{Fig:5.2}. This is a typical phenomenon when asymmetric weights are considered (see \cite{CLGT}, \cite{LGT} and \cite{T} for similar examples).
	\end{remark}
											
	\begin{remark}{\bf (Subharmonic solutions).}
		For simplicity in the exposition, the paper focuses in the analysis of $T$-periodic solutions with winding number $k\geq0$ of equation 
		\begin{equation}
			\label{eq}
			-u''=\l u+a(t)u^3.
		\end{equation}
		By extending $a(t)$ in a $T$-periodic way, the same local and global study can be done analogously to look for  $nT$-periodic solutions of \eqref{eq}, with $n\geq2$, having winding number $k\geq0$. The bifurcation points to the nontrivial $nT$-periodic solutions with winding number $k$ would be 
		\[
			\s_{n,k}:=\left(\frac{2\pi k}{nT}\right)^2.
		\]
		As usual in this context, determining whether or not a $nT$-periodic solution has as minimal subharmonic order $n$, i.e., it is not a $\ell T$-periodic solution for all $\ell\in\{1,\dots,n-1\}$, is a difficult task (see for instance \cite[Introduction]{Ra2}). We can say the following:
		\begin{itemize}
			\item[(a)] If $\gcd(n,k)=1$, a straightforward coprimality argument proves that the minimal subharmonic order is $n$. Hence, by the local uniqueness of Theorem \ref{CR} and Proposition \ref{pr:locbif}, given a sufficiently small $\rho>0$, all the nontrivial solutions in $B_{\rho}(\s_{n,k},0)$ have minimal subharmonic order $n$ and winding number $k$.
			\item[(b)]  By Proposition \ref{pr:4.5}, all the global components bifurcating from $(\s_{n,k},0)$ have winding number $k$ and minimal subharmonic order $n$. Therefore, if $m/j=n/k$ but $\gcd(m,j)\neq1$, then the $mT$-periodic solutions with winding number $j$ are bounded away from any component bifurcating from $(\s_{n,k},0)$.
		\end{itemize} 
	\end{remark}

	\appendix
													
	\section{Preliminaries on nonlinear spectral theory}
	\label{A1}
	In this appendix we collect some fundamental concepts about nonlinear spectral theory that are used in this work to determine the spectrum of the operator $\p_u\mf{F}(\l,u)$. We send the interested reader to \cite{LG01,LGMC} for a detailed explanation of these topics.
													
	We start with some definitions. Consider a \emph{Fredholm operator family}, i.e., a continuous map $\mathfrak{L}\in \mathcal{C}(J,\Phi_{0}(U,V))$ where $J\subset\R$ is a bounded interval and $\Phi_{0}(U,V)$ the space of Fredholm operators of index 0 from the Banach space $U$ to the Banach space $V$. The \emph{generalized spectrum} of $\mathfrak{L}$, $\Sigma(\mathfrak{L})$,  is defined as the set of its \emph{generalized eigenvalues}, i.e.,
	\begin{equation*}
		\Sigma(\mathfrak{L}):=\{\lambda\in J\colon \mathfrak{L}(\lambda)\notin GL(U,V)\}.
	\end{equation*}
	One of the most important concepts in spectral theory is the concept of algebraic multiplicity. The classical definition of algebraic multiplicity is related to eigenvalues of compact operators in Banach spaces. In 1988, J. Esquinas and J. L\'{o}pez-G\'{o}mez \cite{Es,ELG}, inspired by the works by Krasnosel'skii \cite{Kr}, Rabinowitz \cite{Ra1} and Magnus \cite{Ma}, generalized the concept of algebraic multiplicity to every Fredholm operator family $\mf{L}\colon J \to \Phi_{0}(U,V)$, not necessarily of the form $\mathfrak{L}(\lambda)=K-\lambda I_U$, $U=V$, with $K$ compact. For $\mathfrak{L}\in \mathcal{C}^r(J,\Phi_{0}(U,V))$, we set
	$$\mathfrak{L}_{j}:=\frac{1}{j!}\mathfrak{L}^{(j)}(\lambda_{0}), \quad 1\leq j\leq r.$$
	  The following pivotal concept, introduced by Esquinas and L\'{o}pez-G\'{o}mez in \cite{ELG}, generalizes the transversality condition \eqref{ii.4} of Crandall and Rabinowitz \cite{CR}.
	\begin{definition}
		\label{de2.3}
		Let $\mathfrak{L}\in \mathcal{C}^{r}([a,b],\Phi_{0}(U,V))$ and $1\leq \kappa \leq r$. Then, $\lambda_{0}\in \Sigma(\mathfrak{L})$ is said to be a \emph{$\kappa$-transversal eigenvalue} of $\mathfrak{L}$ if
		\begin{equation*}
			\bigoplus_{j=1}^{\kappa}\mathfrak{L}_{j}\left(\bigcap_{i=0}^{j-1}N(\mathfrak{L}_{i})\right)
			\oplus R(\mathfrak{L}_{0})=V \quad \text{ with } \quad \mathfrak{L}_{\kappa}\left(\bigcap_{i=0}^{\kappa-1}N(\mathfrak{L}_{i})\right)\neq \{0\}.
		\end{equation*}
	\end{definition}
	For these eigenvalues, the following \emph{generalized algebraic multiplicity} was introduced in \cite{ELG}:
	\begin{equation}
		\label{ii.3}
		\chi[\mathfrak{L}, \lambda_{0}] :=\sum_{j=1}^{\kappa}j\cdot \dim \mathfrak{L}_{j}\left(\bigcap_{i=0}^{j-1}N[\mathfrak{L}_{i}]\right).
	\end{equation}
	In particular, when $N[\mf{L}_0]=\mathrm{span}\{\phi_0\}$ for some $\phi_0\in U\setminus\{0\}$ such that $\mf{L}_1\phi_0\notin R[\mf{L}_0]$, then
	\begin{equation}
		\label{ii.4}
		\mf{L}_1(N[\mf{L}_0])\oplus R[\mf{L}_0]=V
	\end{equation}
	and hence, $\l_0$ is a 1-transversal eigenvalue of $\mf{L}(\l)$ with $\chi[\mf{L},\l_0]=1$. More generally, if condition \eqref{ii.4} holds, then
	\begin{equation}
		\label{CTCR}
		\chi[\mf{L},\l_0]=\dim N[\mf{L}_0].
	\end{equation}
	One can prove that $\chi$ equals the classical algebraic multiplicity in the classical setting, when $\mathfrak{L}(\lambda)= \lambda I_{U} - K$,
	for some compact operator $K\colon U\to U$. Indeed, for $\lambda_0\in \s(K)$ one has
	\begin{equation*}
		\label{1.1.90}
		\chi [\lambda I_U-K,\lambda_{0}]=\mathrm{dim\,}N[(\l_0 I_{U}-K)^{\nu(\l_0)}],
	\end{equation*}
	where $\nu(\l_0)$ is the \emph{algebraic ascent} of $\l_0$, i.e., the least integer, $\nu\geq 1$, such that
	\[
		N[(\l_0 I_{U}-K)^{\nu}]=N[(\l_0 I_{U}-K)^{\nu+1}].
	\]
	
	\section{Preliminaries on bifurcation theory}\label{app:B}
	In this section we review some abstract results on local and global bifurcation theory that are used in this paper to analyze equation \eqref{eq:1.1}.
													
	\subsection{The Crandall--Rabinowitz theorem}\label{app:B.1}
	Here we recall the celebrated Crandall--Rabinowitz bifurcation theorem. It was stated and proved in \cite{CR}. Let $(U,V)$ be a pair of real Banach spaces such that $U\hookrightarrow V$ and consider a map $\mf{F}\colon\R\times U\to V$ of class $\mc{C}^{r}$, $r\geq 2$, satisfying the following assumptions:
	\begin{enumerate}
		\item[(F1)] $\mathfrak{F}(\lambda,0)=0$ for every $\lambda\in\mathbb{R}$;
		\item[(F2)] $\mf{L}(\l):=\p_{u}\mathfrak{F}(\lambda,0)\in\Phi_{0}(U,V)$ for each $\lambda\in\mathbb{R}$;
		\item[(F3)] $N[\mf{L}(\l_{0})]=\mathrm{span}\{\phi_{0}\}$ for some $\phi_0\in U\setminus\{0\}$ and $\l_{0}\in \R$.
	\end{enumerate}
	The theorem reads as follows.
	\begin{theorem}
		\label{Cr-Rb} 
		Let $r\geq 2$, $\mathfrak{F}\in\mc{C}^{r}(\R\times U,V)$ be a map satisfying conditions {\rm (F1)--(F3)} and $\l_{0}\in\Sigma(\mf{L})$ be a $1$-transversal eigenvalue of $\mf{L}$, that is, 
		$$\mf{L}_{1}(N[\mf{L}_{0}])\oplus R[\mf{L}_{0}]=V,$$
		where $\mf{L}_{1}(\l):=\tfrac{d}{d\l}\mf{L}(\l)$. Let $Y\subset U$ be a subspace such that $N[\mathfrak{L}_0] \oplus Y = U$. Then, there exist $\e>0$ and two maps of class $\mc{C}^{r-1}$,
		$$
			\l \colon (-\e,\e) \longrightarrow \R, \qquad \G\colon (-\e,\e)\longrightarrow Y,
		$$
		such that $\l(0)=\l_0$, $\G(0)=0$, and, for each $s\in(-\e,\e)$,
		\begin{equation}
			\mathfrak{F}(\l(s),u(s))=0, \qquad u(s):= s(\phi_0+\G(s)).
			\label{(2.2.1)}
		\end{equation}
		Moreover, there exists  $\r >0$ such that, if $\mathfrak{F}(\l,u)=0$ and $(\l,u)\in B_\r(\l_0,0)$, then either $u = 0$ or $(\l,u)=(\l(s),u(s))$ for some $s\in(-\e,\e)$. In addition, if $\mathfrak{F}$ is real analytic, so are $\l(s)$ and $u(s)$.
	\end{theorem}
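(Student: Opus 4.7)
The plan is a Lyapunov--Schmidt reduction of $\mf{F}(\l,u)=0$ to a scalar equation, followed by the implicit function theorem; the $1$-transversality condition \eqref{ii.4} will enter precisely as the nonvanishing derivative needed at the final step.

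First, I would set up the decomposition. Since $\mf{L}_0 := \mf{L}(\l_0) \in \Phi_0(U,V)$ has $N[\mf{L}_0] = \Span\{\phi_0\}$, its range $R[\mf{L}_0]$ has codimension one in $V$; the transversality hypothesis then identifies an explicit complement, namely $Z := \Span\{\mf{L}_1 \phi_0\}$, so that $V = R[\mf{L}_0] \oplus Z$. Let $P \colon V \to Z$ be the associated continuous projection and parametrize $u = s\phi_0 + w$ with $w \in Y$, splitting $\mf{F}(\l,u) = 0$ into the range equation $(I-P)\mf{F}(\l, s\phi_0 + w) = 0$ and the scalar kernel equation $P\mf{F}(\l, s\phi_0 + w) = 0$. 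I would then solve the range equation for $w$: its $w$-derivative at $(\l_0, 0, 0)$ is $(I-P)\mf{L}_0|_Y$, an isomorphism from $Y$ onto $R[\mf{L}_0]$ (since $\mf{L}_0|_Y \colon Y \to R[\mf{L}_0]$ is bijective and $I - P$ acts as the identity on $R[\mf{L}_0]$), so the implicit function theorem delivers a $\mc{C}^r$ solution $w = w(\l, s)$ with $w(\l_0, 0) = 0$. Using $\mf{F}(\l, 0) = 0$, uniqueness forces $w(\l, 0) = 0$ identically in $\l$, hence $w(\l, s) = s\,\tilde\G(\l, s)$ with $\tilde\G$ of class $\mc{C}^{r-1}$ via a standard integral formula; differentiating the range equation in $s$ at $s = 0$ and using $\mf{L}_0 \phi_0 = 0$ together with $N[\mf{L}_0] \cap Y = \{0\}$ gives $\tilde\G(\l_0, 0) = 0$.

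The kernel equation then reads $P\mf{F}(\l, s(\phi_0 + \tilde\G(\l, s))) = 0$, and a second integral factorization (again using $\mf{F}(\l,0) = 0$) produces a $\mc{C}^{r-1}$ scalar map $\Phi(\l, s)$ with $s\,\Phi(\l, s) = P\mf{F}(\l, s(\phi_0 + \tilde\G(\l, s)))$. A direct computation, using $\tilde\G(\l_0, 0) = 0$, yields $\Phi(\l_0, 0) = 0$ and
\[
\partial_\l \Phi(\l_0, 0) = P\mf{L}_1 \phi_0 \neq 0,
\]
the contribution of $\mf{L}_0\,\partial_\l \tilde\G(\l_0, 0)$ vanishing because it lies in $R[\mf{L}_0]$, and the nonvanishing being exactly the $1$-transversality hypothesis. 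A final application of the implicit function theorem to $\Phi$ produces the $\mc{C}^{r-1}$ curve $\l = \l(s)$ with $\l(0) = \l_0$; setting $\G(s) := \tilde\G(\l(s), s)$ yields the pair in \eqref{(2.2.1)}, with $\G(0) = 0$.

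Local uniqueness in $B_\rho(\l_0, 0)$ will follow by tracing the IFT uniqueness backwards through the decomposition $U = \Span\{\phi_0\} \oplus Y$: any solution in a sufficiently small ball decomposes as $u = s\phi_0 + w$ with $w \in Y$, and uniqueness at each step of the construction forces $(\l, u) = (\l(s), u(s))$. The real-analytic case will follow verbatim using the analytic implicit function theorem at both steps. I expect the main delicate point to be the factorization of $\mf{F}(\l, s(\phi_0 + \tilde\G))$ as $s\,\Phi(\l, s)$ with a tractable $\l$-derivative at $(\l_0, 0)$, and the verification that this derivative reduces cleanly to $P\mf{L}_1 \phi_0$; this is precisely where the $1$-transversality condition is indispensable.
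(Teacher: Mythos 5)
The paper does not prove Theorem \ref{Cr-Rb}: it is presented in Appendix \ref{app:B.1} as a classical result and attributed to Crandall and Rabinowitz \cite{CR}, so there is no ``paper's own proof'' to compare against. Your Lyapunov--Schmidt argument is the standard proof of that theorem and is essentially correct: the decomposition $V=R[\mf{L}_0]\oplus \Span\{\mf{L}_1\phi_0\}$, the solution of the range equation for $w(\l,s)$ by the implicit function theorem, the integral factorization $w=s\tilde\G$, the reduction to a scalar equation $s\Phi(\l,s)=0$, the identity $\partial_\l\Phi(\l_0,0)=P\mf{L}_1\phi_0\ne 0$, and the final application of the implicit function theorem all go through. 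One small point worth making explicit: when you claim $\Phi$ is $\mc{C}^{r-1}$, note that $s(\phi_0+\tilde\G(\l,s))=s\phi_0+w(\l,s)$ is in fact $\mc{C}^{r}$ (not merely $\mc{C}^{r-1}$, as the factored form with $\tilde\G$ might suggest), so the composite $P\mf{F}(\l,s\phi_0+w(\l,s))$ is $\mc{C}^{r}$ and one integral factorization indeed yields $\Phi\in\mc{C}^{r-1}$; it is cleanest to factor $P\mf{F}(\l,s\phi_0+w(\l,s))$ directly rather than routing through $\tilde\G$, which otherwise appears to cost an extra derivative.
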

			
	\subsection{Global bifurcation theory: eigenvalues with odd multiplicity}
	\label{app:B.3}
	In this subsection we present the global alternative theorem for the bifurcation of nonlinear Fredholm operators of index zero given by L\'{o}pez-G\'{o}mez and Mora-Corral in \cite{LG,LGMC1,LGMC} and sharpened in \cite{JJ,JJ2,JJ3,JJ5} for the case of the Fitzpatrick, Pejsachowicz and Rabier topological degree \cite{FPRa, FPRb, PR}. It is important to notice that it applies to the case of bifurcation points where the generalized algebraic multiplicity is odd. Throughout this section, given a pair of Banach spaces $(U,V)$, we consider a operator $\mathfrak{F}\colon\mathbb{R}\times U\to V$ of class $\mc{C}^{1}$ such that
	\begin{enumerate}
		\item[(F1)] $\mf{F}$ is orientable in the sense of Fitzpatrick, Pejsachowicz and Rabier (see \cite{FPRb, PR});
		\item[(F2)] $\mathfrak{F}(\lambda,0)=0$ for all $\lambda\in\mathbb{R}$;
		\item[(F3)] $\p_{u}\mathfrak{F}(\lambda,u)\in\Phi_{0}(U,V)$ for every $\lambda\in\mathbb{R}$ and $u\in U$;
		\item[(F4)] $\mathfrak{F}$ is proper on bounded and closed subsets of $\mathbb{R}\times U$;
		\item[(F5)] $\Sigma(\mf{L})$ is a discrete subset of $\R$.
	\end{enumerate}
	The \textit{trivial branch} is the subset 
	$$\mc{T}:=\{(\lambda,0)\colon \l\in \R\}\subset \R\times U,$$
	while the \textit{set of non-trivial solutions} is defined as the closed set
	\begin{equation*}
		\mf{S}=\left( \mf{F}^{-1}(0)\setminus \mc{T}\right)\cup \{(\l,0)\colon\l\in \Sigma(\mf{L})\}.
	\end{equation*}
	Moreover, we say that $\mathfrak{C}\subset \mf{S}$ is a \textit{component} of $\mf{S}$ if it is a closed and connected subset of $\mf{S}$ and it is maximal for the inclusion, i.e., if it is a connected component of $ \mf{S}$.

	\begin{theorem}[\textbf{Global alternative}]
		\label{TGB}
		Let $\mf{F}\in\mc{C}^{1}(\R\times U, V)$ satisfy {\rm(F1)--(F5)},  and let $\mf{C}$ be a component of the set of non-trivial solutions $\mf{S}$ such that $(\l_{0},0)\in\mf{C}$ for some $\l_0\in\Sigma(\mf{L})$. If $\mf{L}(\l)$, $\l\in \R$, is analytic, and the generalized algebraic multiplicity $\chi[\mf{L},\l_0]$ is odd, then one of the following non-excluding alternatives occur:
		\begin{itemize}
			\item[\rm(i)] $\mf{C}$ is unbounded in $\R\times U$;
			\item[\rm(ii)] there exists $\l_{1}\in \Sigma(\mf{L})$, $\l_{1}\neq\l_{0}$, such that $(\l_{1},0)\in \mf{C}$.
		\end{itemize}
	\end{theorem}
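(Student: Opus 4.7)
The plan is to adapt the classical Rabinowitz global bifurcation argument to the Fitzpatrick--Pejsachowicz--Rabier (FPR) topological degree for $\mc{C}^{1}$ Fredholm maps of index zero, substituting the generalized algebraic multiplicity $\chi$ for its classical counterpart. First I would set up the degree-theoretic framework: assumption (F1) provides an orientation so the FPR degree $\deg(\mf{F}(\l,\cdot),\Omega,0)$ is well defined on any bounded open $\Omega\subset U$ with $0\notin \mf{F}(\l,\partial\Omega)$; (F3) ensures Fredholm structure; and (F4) yields the properness needed for the degree to be stable under homotopy. The crucial analytic ingredient is the parity theorem linking $\sigma(\mf{L},\l_0)$ to $\chi[\mf{L},\l_0]$ via the Esquinas--L\'opez-G\'omez correspondence; under (F5) and analyticity of $\mf{L}$, $\l_{0}$ is an isolated element of $\Sigma(\mf{L})$, and the local FPR index of $\mf{L}(\l)$ at $0$ changes sign as $\l$ crosses $\l_{0}$ precisely because $\chi[\mf{L},\l_0]$ is odd.

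Next I would argue by contradiction. Suppose both alternatives fail: then $\mf{C}$ is bounded in $\R\times U$ and meets the trivial branch $\mc{T}$ only at $(\l_0,0)$. By (F4), $\overline{\mf{C}}$ is compact. Using (F5) I would pick $\d>0$ so small that $[\l_0-\d,\l_0+\d]\cap \Sigma(\mf{L})=\{\l_0\}$, and also so small that no other element of $\Sigma(\mf{L})$ lies in a certain neighborhood of $\mc{P}_\l(\mf{C})$. At this point the key technical step is a Whyburn/Kuratowski-type separation lemma: since $\mf{C}$ is a compact connected component of the closed set $\mf{S}$ that is disjoint from the closed set $(\mf{S}\setminus\mf{C})\cup\{(\l,0)\colon \l\in\Sigma(\mf{L})\setminus\{\l_0\}\}$, one can construct a bounded open neighborhood $\Omega\subset\R\times U$ of $\mf{C}$ whose boundary $\partial\Omega$ contains no zero of $\mf{F}$ and whose intersection with $\mc{T}$ is an open arc around $(\l_0,0)$ contained in $(\l_0-\d,\l_0+\d)\times\{0\}$. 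The local compactness required for this separation is supplied by the properness assumption (F4); this is the step I expect to be the main obstacle, as carrying it out cleanly in a non-locally-compact Banach setting is delicate.

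Finally I would apply homotopy invariance and excision for the FPR degree. Writing $\Omega_\l:=\{u\in U\colon (\l,u)\in\Omega\}$, the fact that $\mf{F}(\l,\cdot)$ has no zero on $\partial\Omega_\l$ for $\l$ in the open projection $\mc{P}_\l(\Omega)$ yields that $\l\mapsto\deg(\mf{F}(\l,\cdot),\Omega_\l,0)$ is constant on each connected component of $\mc{P}_\l(\Omega)\setminus\{\l_0\}$. Shrinking $\Omega_\l$ by excision to a small ball around $0$ for $\l\in(\l_0-\d,\l_0)\cup(\l_0,\l_0+\d)$, where the trivial solution is the unique zero by the construction above and by (F5), the degree reduces to the local FPR index of the linearization $\mf{L}(\l)$ at $0$. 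The parity theorem then forces the values at $\l_0-\d'$ and $\l_0+\d'$ to differ in sign for small $\d'>0$, contradicting the constancy of the degree on the connected pieces of $\mc{P}_\l(\Omega)$ across the separation neighborhood. This contradiction proves that one of the alternatives (i)--(ii) must hold; the odd-multiplicity hypothesis enters only through the sign change of the local index, and the analyticity of $\mf{L}$ is used only to guarantee isolation of $\l_0$ in $\Sigma(\mf{L})$ together with the applicability of the parity formula.
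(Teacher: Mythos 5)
The paper does not prove Theorem~\ref{TGB}: Appendix~\ref{app:B.3} states it as a known result from L\'opez-G\'omez and Mora-Corral and from L\'opez-G\'omez and Sampedro (references \cite{LG,LGMC1,LGMC,JJ,JJ2,JJ3,JJ5}), so there is no in-paper proof to compare against. That said, your sketch faithfully reproduces the standard argument in those references: FPR degree on an isolating neighborhood obtained from a Whyburn-type separation lemma (local compactness of $\mf{S}$ supplied by properness (F4)), constancy of the degree away from $\l_0$, and a sign jump of the local linearized index as $\l$ crosses $\l_0$ driven by the parity of $\chi[\mf{L},\l_0]$. You correctly flag the separation lemma as the technically delicate step. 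One small inaccuracy: you attribute the need for analyticity of $\mf{L}$ solely to isolation of $\l_0$, but isolation is already guaranteed by (F5). Analyticity is actually used to ensure that $\l_0$ is $\kappa$-transversal for some finite $\kappa$ (so that $\chi[\mf{L},\l_0]$ is well defined and finite) and, via the Esquinas--L\'opez-G\'omez theory, to make the parity formula $\sign\det$-type relation between $\chi$ and the jump of the FPR index applicable. Other than that, your argument is correct in outline.
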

										
\subsection{Analytic global bifurcation theory}

In this final subsection, we state another global bifurcation theorem, for the case of analytic nonlinearities. It goes back to Dancer \cite{Da73,Da732,Da} (see also Buffoni--Tolland \cite{BT}). These results where related to the special case of $1$-transversal eigenvalues, where the theorem of Crandall and Rabinowitz applies. Recently, in \cite{JJ5}, the global alternative was generalized to the setting we present here.
\par Throughout this section, given a pair $(U,V)$ of real Banach spaces, we consider an analytic operator $\mathfrak{F}\in\mathcal{C}^{\omega}(\mathbb{R}\times U,V)$ satisfying the following properties:
\begin{enumerate}
	\item[(F1)] $\mathfrak{F}(\lambda,0)=0$ for all $\lambda\in\mathbb{R}$;
	\item[(F2)] $\partial_{u}\mathfrak{F}(\lambda,u)\in\Phi_{0}(U,V)$ for all $(\lambda,u)\in\mathbb{R}\times U$;
	\item[(F3)] $\mf{F}$ is proper on closed and bounded subsets of $\R\times U$.
\end{enumerate}
Given an analytic nonlinearity $\mf{F}\in\mc{C}^{\o}(\mathbb{R}\times U,V)$ satisfying conditions {\rm (F1)--(F3)}, it is said that $(\lambda,u)\in\mathfrak{F}^{-1}(0)$ is a \emph{regular point} of $\mf{F}$ if $\partial_{u}\mathfrak{F}(\lambda,u)\in GL(U,V)$. The set of regular points of $\mf{F}$ will be denoted by $\mc{R}(\mf{F})$.
\begin{theorem}
			\label{th:glob_analytic1}
			Let $\mathfrak{F}\in\mathcal{C}^{\omega}(\mathbb{R}\times U,V)$ be an analytic map satisfying {\rm (F1)--(F3)}. Suppose that there exist $\varepsilon>0$, $\l_{0}\in\R$ and an analytic injective curve $\gamma:(0,\varepsilon)\to \R\times U$ such that  $\gamma((0,\varepsilon))\subset \mf{F}^{-1}(0)\backslash \mc{T}$, $\lim_{s\da 0}\gamma(s)=(\l_{0},0)$, and $\gamma((0,\varepsilon))\subset \mc{R}(\mf{F})$. Then, there exists a locally injective continuous path $\G: \R_{> 0}\to \R\times U$, with $\Gamma(\R_{> 0})\subset \mf{F}^{-1}(0)$,  for which there exists $0<\delta<\varepsilon$ satisfying $\Gamma(s)=\gamma(s)$ for all $s\in (0,\delta)$. Moreover, $\Gamma$ satisfies one of the following non-excluding alternatives:
			\begin{enumerate}
				\item[{\rm (a)}]   $\lim_{s\to +\infty} \|\Gamma(s)\|_{\mathbb{R}\times U}= +\infty$;
				\item[{\rm (b)}]   $\Gamma$ is a closed loop, i.e., there exists $S>0$ such that $\Gamma(S)=(\l_0,0)$.
			\end{enumerate}
\end{theorem}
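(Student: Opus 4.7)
The plan is to extend the given analytic germ $\gamma$ as far as possible along the solution set $\mf{F}^{-1}(0)$ by successively invoking the analytic implicit function theorem at regular points and a Lyapunov--Schmidt reduction at singular points, thereby producing a locally injective continuous path $\G$, and then using the properness hypothesis (F3) to show that the only way this process terminates in finite parameter is by closing into a loop at $(\l_0,0)$.

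First, I would formalize the notion of analytic continuation of $\gamma$ along $\mf{F}^{-1}(0)\cap\mc{R}(\mf{F})$. At every point of $\gamma((0,\e))$ the analytic implicit function theorem applies to $\mf{F}$ (using (F2) together with invertibility of $\partial_u\mf{F}$ on $\mc{R}(\mf{F})$) and produces an analytic local solution curve that matches $\gamma$. Concatenating these overlapping pieces, define the maximal parametric extension $\tilde{\gamma}\colon(0,s^{\ast})\to\R\times U$ through regular points, with $s^{\ast}\in(0,+\infty]$. If $s^{\ast}=+\infty$ and $\|\tilde\gamma(s)\|\to+\infty$, alternative (a) already holds (after reparametrizing by arc length); otherwise the image of $\tilde\gamma$ stays in a bounded set as $s\to s^{\ast-}$ (for finite $s^{\ast}$) or along some subsequence, and then (F3) guarantees at least one limit point $(\l_{\ast},u_{\ast})\in\mf{F}^{-1}(0)$. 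By maximality of $s^{\ast}$ and the IFT, $(\l_{\ast},u_{\ast})\notin\mc{R}(\mf{F})$, i.e. it is a singular point.

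Second, at each limit singular point $(\l_{\ast},u_{\ast})$ I would perform a Lyapunov--Schmidt reduction exactly in the style of Theorem \ref{Th3.13}. By (F2), $\partial_u\mf{F}(\l_{\ast},u_{\ast})\in\Phi_{0}(U,V)$; projecting onto a complement of the kernel and onto the range reduces $\mf{F}=0$ locally to a finite-dimensional analytic equation $\mf{H}(\l,x)=0$ with $(\l,x)\in\R\times N[\partial_u\mf{F}(\l_{\ast},u_{\ast})]$. The incoming analytic arc corresponds, through the homeomorphism of the reduction, to an analytic arc of the real-analytic variety $\mf{H}^{-1}(0)$ ending at the origin. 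By the structure theory of real-analytic varieties (Weierstrass preparation plus Puiseux-type parametrizations, applied componentwise to the irreducible components), $\mf{H}^{-1}(0)$ decomposes near the origin into finitely many analytic arcs, each admitting an injective analytic parametrization $s\mapsto (\l_i(s),x_i(s))$ passing through $(\l_{\ast},0)$. One then selects an outgoing arc different from the incoming one and transports it back via $\Psi_{k}$ to an analytic continuation of $\tilde\gamma$ beyond $s^{\ast}$. Iterating this construction through all successive singular points produces the desired locally injective continuous path $\G\colon\R_{>0}\to\mf{F}^{-1}(0)$.

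Finally, for the dichotomy, I would argue as follows. If $\G(\R_{>0})$ is unbounded, alternative (a) holds. If instead $\G(\R_{>0})$ is bounded, then (F3) shows that the set of singular points the path visits is contained in a compact subset of $\mf{F}^{-1}(0)$, and a standard argument using the Fredholm index together with properness shows that singular points are locally isolated on any bounded piece, hence only finitely many can be visited before the parameter reaches any fixed bound. A standard topological/combinatorial argument (as in Dancer \cite{Da73,Da732,Da} and Buffoni--Toland \cite{BT}) on the at most countable graph of analytic arcs and singular nodes then forces $\G$ to return to $(\l_0,0)$ at some finite $S>0$, giving alternative (b). The main obstacle is the singular-point step: one must verify, at an infinite-dimensional crossing of branches, that the real-analytic variety cut out by the Lyapunov--Schmidt equation really does decompose into finitely many real-analytic arcs and that the choice of outgoing branch at each node can be made so that the concatenated path $\G$ remains continuous and locally injective. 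This requires careful bookkeeping of parametrizations and reliance on the fact that the reduction is analytic, not merely smooth.
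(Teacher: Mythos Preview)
The paper does not actually prove this theorem. It is stated in Appendix~B as a background result, attributed to Dancer \cite{Da73,Da732,Da} and Buffoni--Toland \cite{BT}, with the precise version here taken from \cite[Th.~7.2]{JJ5}; the authors only remark that the proof in \cite{JJ5}, originally written under a one-dimensional-kernel hypothesis on $\mf{L}_0$, ``can be directly adapted'' to the present statement. So there is no in-paper argument to compare against. Your sketch is precisely the Dancer/Buffoni--Toland strategy on which \cite{JJ5} is built: analytic continuation through regular points via the implicit function theorem, Lyapunov--Schmidt reduction at each singular limit point to a finite-dimensional real-analytic equation, local arc structure of that variety, and a properness-driven dichotomy. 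In that sense your approach coincides with what the paper cites.

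One point deserves tightening. At a singular point the reduced variety $\mf{H}^{-1}(0)$ need not be globally one-dimensional, so ``decomposes into finitely many analytic arcs'' and ``select an outgoing arc different from the incoming one'' is too casual. What the Buffoni--Toland machinery (\cite[Ch.~7--9]{BT}) actually supplies is a \emph{distinguished arc}: a canonical continuation of the incoming regular branch through the singular point, obtained via Weierstrass preparation and a specific parity/ordering rule, which guarantees that the concatenated path is locally injective and that the choices at successive singular points are mutually consistent. The bounded-case argument that forces the path to be periodic (and hence to return to $(\l_0,0)$) hinges on this canonical selection together with the fact that, by properness and analyticity, only finitely many singular points lie on any bounded portion of the route; an arbitrary branch choice at each node would not yield the closed-loop conclusion. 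Your final paragraph acknowledges this as ``the main obstacle,'' which is accurate.
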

\noindent
In \cite[Th. 7.2]{JJ5}, Theorem \ref{th:glob_analytic1} was stated and proved under the additional hypothesis 
\begin{enumerate}
	\item[] $\l_{0}\in\Sigma(\mf{L})$ is an isolated eigenvalue such that $N[\mf{L}_{0}]=\mathrm{span}\{\phi_{0}\}$ for some $\phi_0\in U\backslash\{0\}$.
\end{enumerate}
However, the proof of \cite[Th. 7.2]{JJ5} can be directly adapted to the case of Theorem \ref{th:glob_analytic1}, also for the case of non-simple eigenvalues, independently of their multiplicity.

Observe that the global alternatives provided by Theorems \ref{TGB} and \ref{th:glob_analytic1} are independent. Indeed, if the connected component of $\mf{F}^{-1}(0)$ bifurcating from $(\l_{0},0)$, 
$\mf{C}$,  is bounded, then, according to Theorem \ref{th:glob_analytic1}, $\mf{F}^{-1}(0)$ contains a closed loop. But this does not necessarily entail the existence of some $(\l_1,0)\in\mf{C}$ with $\l_1\neq \l_{0}$, as guaranteed by Theorem \ref{TGB} when, in addition, $\chi$ is odd. Conversely, when $\mf{C}$ is bounded and $\chi$ is odd, then, owing to Theorem \ref{TGB}, $(\l_1,0)\in \mf{C}$ for some $\l_1\neq \l_{0}$, though this does not entail that any local analytic curve bifurcating from $(\l_{0},0)$ can be continued to a global closed loop.
													
	\section{Computation of the derivatives of $\psi$, $h_1$ and $h_2$}
	\label{AC}
													
	In this appendix, we start by detailing the computation of the derivatives at $(\s_{k},0)$ of the function $\psi\colon I_{k}\times \mc{N}_{k}\to N[\mf{L}(\s_{k})]^{\perp}$ satisfying \eqref{E12}. To obtain them, we will make use of the following derivatives of $\mf{F}$ evaluated at $(\s_k,0)$, which are given in Lemma \ref{L1}:
	\begin{align*}
		\partial_{\l}\mf{F}(\s_{k},0)\left[\mu\right]&=0, & \partial_{\l\l}\mf{F}(\s_{k},0)\left[\mu_{1},\mu_{2}\right]&=0, & \partial_{\l u}\mf{F}(\s_{k},0)\left[\mu,v\right]&=\mu v, \\
		\partial_{u}\mf{F}(\s_{k},0)\left[v\right]&=\mf{L}(\s_{k})\left[v\right], & \partial_{uu}\mf{F}(\s_{k},0)\left[v_{1},v_{2}\right]&=0,  & \partial_{uuu}\mf{F}(\s_{k},0)\left[v_{1},v_{2},v_{3}\right]&=6a(t)v_{1}v_{2}v_{3}.
	\end{align*}
													
	\begin{lemma}
		\label{LC1}
		The partial derivatives of $\psi\colon I_{k}\times \mc{N}_{k}\to N[\mf{L}(\s_{k})]^{\perp}$ at $(\s_{k},0)$ are given by
		\begin{gather}
			\label{E15}
			\partial_{x}\psi(\s_{k},0)[v_{1}]=0, \quad \partial_{xx}\psi(\s_{k},0)[v_{1},v_{2}]=0, \quad \text{for each $v_{1},v_{2}\in N[\mf{L}(\s_{k})]$},\\
			\label{E20}
			\partial_{\l}\psi(\s_{k},0)[\mu_{1}]=0, \quad \partial_{\l\l}\psi(\s_{k},0)[\mu_{1},\mu_{2}]=0, \quad \text{for each $\mu_{1}, \mu_{2} \in \R$},\\
			\label{E21}
			\partial_{\l x}\psi(\s_{k},0)[\mu,v]=0, \quad \text{for each $(\mu,v)\in\R\times N[\mf{L}(\s_{k})]$}.
		\end{gather}
	\end{lemma}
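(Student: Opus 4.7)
The plan is to obtain all five identities by implicit differentiation of the defining relation \eqref{E12},
\[
(I_{L^{2}}-Q_{k})\,\mf{F}(\l, x+\psi(\l,x))\equiv 0 \qquad \text{on }\; I_{k}\times\mc{N}_{k},
\]
exploiting that
\[
\partial_{y}\mathscr{G}_{k}(\s_{k},0,0)=(I_{L^{2}}-Q_{k})\,\mf{L}(\s_{k})\big|_{N[\mf{L}(\s_{k})]^{\perp}}\;\in\; GL\bigl(N[\mf{L}(\s_{k})]^{\perp},\,R[\mf{L}(\s_{k})]\bigr),
\]
as was observed just before Theorem \ref{Th3.13}. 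After each differentiation, the corresponding derivative of $\psi$ will be the unique preimage, under this isomorphism, of an explicit term that I will identify with a suitable derivative of $(I_{L^{2}}-Q_{k})\mf{F}$ evaluated at $(\s_{k},0)$.

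First I would dispatch the two first-order identities. Differentiating once in $x$ along $v_{1}\in N[\mf{L}(\s_{k})]$ and evaluating at $(\s_{k},0)$ gives
\[
\partial_{y}\mathscr{G}_{k}(\s_{k},0,0)\bigl[\partial_{x}\psi(\s_{k},0)[v_{1}]\bigr]=-(I_{L^{2}}-Q_{k})\,\mf{L}(\s_{k})[v_{1}]=0,
\]
because $v_{1}\in N[\mf{L}(\s_{k})]$; invertibility of $\partial_{y}\mathscr{G}_{k}(\s_{k},0,0)$ then yields $\partial_{x}\psi(\s_{k},0)[v_{1}]=0$. Differentiating instead in $\l$ produces the analogous equation with $-(I_{L^{2}}-Q_{k})\partial_{\l}\mf{F}(\s_{k},0)[\mu_{1}]$ on the right-hand side, and this vanishes by Lemma \ref{L1} since $\partial_{\l}\mf{F}(\l,0)[\mu]=\mu\cdot 0=0$.

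Next I would handle the three second-order identities. Differentiating a second time, every cross term contains either $\partial_{x}\psi(\s_{k},0)$ or $\partial_{\l}\psi(\s_{k},0)$ as a factor and hence vanishes by the first step, so each second-order identity reduces to
\[
\partial_{y}\mathscr{G}_{k}(\s_{k},0,0)\bigl[\partial_{\bullet\bullet}\psi(\s_{k},0)\bigr]=-(I_{L^{2}}-Q_{k})\,\partial_{\bullet\bullet}\mf{F}(\s_{k},0),
\]
where $\partial_{\bullet\bullet}$ is $\partial_{xx}$, $\partial_{\l\l}$ or $\partial_{\l x}$ evaluated on the appropriate directions. Lemma \ref{L1} gives $\partial_{uu}\mf{F}(\s_{k},0)=0$ and $\partial_{\l\l}\mf{F}\equiv 0$, settling the first two cases. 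For the mixed derivative one has $\partial_{\l u}\mf{F}(\s_{k},0)[\mu,v]=\mu v$, which does not vanish; however $\mu v\in N[\mf{L}(\s_{k})]=\Span\{\varphi_{k},\phi_{k}\}$ (using Lemma \ref{le:3.5} to identify $R[\mf{L}(\s_{k})]^{\perp}$ with $N[\mf{L}(\s_{k})]$), and $Q_{k}$ is by \eqref{eq:def_Q_k} the orthogonal projection onto $\Span\{\varphi_{k},\phi_{k}\}$, so $(I_{L^{2}}-Q_{k})[\mu v]=0$. Inverting in each case gives $\partial_{\l x}\psi(\s_{k},0)=0$ and completes the proof.

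Beyond an application of the chain rule, the argument presents no real obstacle; the only point that requires attention is the consistent bookkeeping of the cross terms that arise in the second-order differentiation, to confirm that each of them is automatically killed by the first-order vanishing established in the previous step.
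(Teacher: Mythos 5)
Your proof is correct and follows essentially the same route as the paper's: you differentiate the implicit relation \eqref{E12}, evaluate at $(\s_k,0)$, and exploit that the resulting operator acting on the derivative of $\psi$ annihilates it only when that derivative vanishes. The paper phrases this last step via the chain $\mf{L}(\s_k)[\partial_\bullet\psi(\s_k,0)]=0 \Rightarrow \partial_\bullet\psi(\s_k,0)\in N[\mf{L}(\s_k)]\cap N[\mf{L}(\s_k)]^{\perp}=\{0\}$, while you invoke directly the invertibility of $\partial_y\mathscr{G}_k(\s_k,0,0)$ on $N[\mf{L}(\s_k)]^\perp$; these are the same fact, so the argument is substantively identical.
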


	\begin{proof}
		We differentiate the implicit expression \eqref{E12} with respect to the variable $x$ to obtain
		\begin{align}
			\label{E13}
			\left(I_{L^{2}}-Q_{k}\right)\partial_{u}\mf{F}(\l,x+\psi(\l,x))\left[I_{N_{k}}+\partial_{x}\psi(\l,x)\right]=0, \quad (\l,x)\in I_{k}\times \mc{N}_{k},
		\end{align}
		where we have denoted $N_{k}:= N[\mf{L}(\s_{k})]$. Evaluating \eqref{E13} at $(\s_{k},0)$ gives
		\begin{equation}
			\label{E14}
			\left(I_{L^{2}}-Q_{k}\right)\mf{L}(\s_{k})\left[I_{N_{k}}+\partial_{x}\psi(\s_{k},0)\right]=0,
		\end{equation}
		since $\psi(\s_{k},0)=0$. As $\left(I_{L^{2}}-Q_{k}\right)\mf{L}(\s_{k})=\mf{L}(\s_{k})$ and $\mf{L}(\s_{k})[N_{k}]=0$, \eqref{E14} reduces to
		$$\mf{L}(\s_{k})[\partial_{x}\psi(\s_{k},0)]=0.$$
		Hence, $\partial_{x}\psi(\s_{k},0)[v]\in N[\mf{L}(\s_{k})]$ for each $v\in N[\mf{L}(\s_{k})]$. Consequently,
		$$\partial_{x}\psi(\s_{k},0)[v]\in N[\mf{L}(\s_{k})]\cap N[\mf{L}(\s_{k})]^{\perp}=\{0\}.$$
		Therefore $\partial_{x}\psi(\s_{k},0)[v]=0$ for all $v\in N[\mf{L}(\s_{k})]$. This proves the first identity in \eqref{E15}. 
		To prove the second identity, we differentiate equation \eqref{E13} with respect to $x$ again, obtaining
		\begin{equation}
			\label{EO}
			\begin{aligned}
				& \left(I_{L^{2}}-Q_{k}\right)\partial_{uu}\mf{F}(\l,x+\psi(\l,x))\left[I_{N_{k}}+\partial_{x}\psi(\l,x),I_{N_{k}}+\partial_{x}\psi(\l,x)\right] \\
				& +\left(I_{L^{2}}-Q_{k}\right)\partial_{u}\mf{F}(\l,x+\psi(\l,x))\left[\partial_{xx}\psi(\l,x)\right]=0, \quad (\l,x)\in I_{k}\times \mc{N}_{k}.
			\end{aligned}
		\end{equation}
		By evaluating this expression at $(\s_{k},0)$, using that $\psi(\s_{k},0)=0$ and $\partial_{x}\psi(\s_{k},0)=0$, we deduce
		\begin{equation*}
			\left(I_{L^{2}}-Q_{k}\right)\partial_{uu}\mf{F}(\s_{k},0)\left[I_{N_{k}},I_{N_{k}}\right] 
			+\left(I_{L^{2}}-Q_{k}\right)\mf{L}(\s_{k})\left[\partial_{xx}\psi(\s_{k},0)\right]=0.
		\end{equation*}
		As $\partial_{uu}\mf{F}(\s_{k},0)=0$ and $\left(I_{L^{2}}-Q_{k}\right)\mf{L}(\s_{k})=\mf{L}(\s_{k})$, we get
		\begin{equation*}
			\mf{L}(\s_{k})[\partial_{xx}\psi(\s_{k},0)]=0.
		\end{equation*}
		Therefore, we conclude that
		\begin{equation*}
			\partial_{xx}\psi(\s_{k},0)[v_{1},v_{2}]\in N[\mf{L}(\s_{k})]\cap N[\mf{L}(\s_{k})]^{\perp}=\{0\}, 
		\end{equation*}
		for each $v_{1},v_{2}\in N[\mf{L}(\s_{k})]$, which shows the second identity of \eqref{E15}. Now, we differentiate the implicit expression \eqref{E12} with respect to the variable $\l$ to obtain, for all $(\l,x)\in I_{k}\times \mc{N}_{k}$,
		\begin{equation*}
			\left(I_{L^{2}}-Q_{k}\right)\partial_{\l}\mf{F}(\l,x+\psi(\l,x))+\left(I_{L^{2}}-Q_{k}\right)\partial_{u}\mf{F}(\l,x+\psi(\l,x))\left[\partial_{\l}\psi(\l,x)\right]=0.
		\end{equation*}
		By evaluating this expression at $(\s_{k},0)$, we obtain
		\begin{align*}
			(I_{L^{2}}-Q_{k})\partial_{\l}\mf{F}(\s_{k},0)+(I_{L^{2}}-Q_{k})\mf{L}(\s_{k})[\partial_{\l}\psi(\s_{k},0)]=0.
		\end{align*}
		As $\partial_{\l}\mf{F}(\s_{k},0)=0$ and $(I_{L^{2}}-Q_{k})\mf{L}(\s_{k})=\mf{L}(\s_{k})$, this expression reduces to
		\begin{align*}
			\mf{L}(\s_{k})[\partial_{\l}\psi(\s_{k},0)]=0,
		\end{align*}
		and, once again, 
		\begin{equation*}
			\partial_{\l}\psi(\s_{k},0)\left[\mu\right]\in N[\mf{L}(\s_{k})]\cap N[\mf{L}(\s_{k})]^{\perp}=\{0\}, 
		\end{equation*}
		for every $\mu\in\R$. We differentiate again with respect to $\l$ to obtain
		\begin{multline*}
			\left(I_{L^{2}}-Q_{k}\right)\partial_{\l\l}\mf{F}(\l,x+\psi(\l,x))+2 \left(I_{L^{2}}-Q_{k}\right)\partial_{\l u}\mf{F}(\l,x+\psi(\l,x))\left[\partial_{\l}\psi(\l,x)\right] \\
			+\left(I_{L^{2}}-Q_{k}\right)\partial_{uu}\mf{F}(\l,x+\psi(\l,x))\left[\partial_{\l}\psi(\l,x),\partial_{\l}\psi(\l,x)\right]\\
			+\left(I_{L^{2}}-Q_{k}\right)\partial_{u}\mf{F}(\l,x+\psi(\l,x))\left[\partial_{\l\l}\psi(\l,x)\right]=0,
		\end{multline*}
		for each $(\l,x)\in I_{k}\times \mc{N}_{k}$. Evaluating this expression at $(\s_{k},0)$ gives
		\begin{align*}
			\left(I_{L^{2}}-Q_{k}\right)\mf{L}(\s_{k})\left[\partial_{\l\l}\psi(\s_{k},0)\right]=0.
		\end{align*}
		Therefore, reasoning as above, 
		\begin{equation*}
			\partial_{\l\l}\psi(\s_{k},0)\left[\mu_1,\mu_2\right]\in N[\mf{L}(\s_{k})]\cap N[\mf{L}(\s_{k})]^{\perp}=\{0\}, 
		\end{equation*}
		for all $\mu_{1},\mu_{2}\in\R$. Finally, to prove \eqref{E21}, we differentiate \eqref{E13} with respect to $\l$ to obtain 
		\begin{align*}
			& \left(I_{L^{2}}-Q_{k}\right)\partial_{\l u}\mf{F}(\l,x+\psi(\l,x))\left[I_{N_{k}}+\partial_{x}\psi(\l,x)\right]  \\
			&+ \left(I_{L^{2}}-Q_{k}\right)\partial_{uu}\mf{F}(\l,x+\psi(\l,x))\left[I_{N_{k}}+\partial_{x}\psi(\l,x),\partial_{\l}\psi(\l,x)\right] \\
			& + \left(I_{L^{2}}-Q_{k}\right)\partial_{u}\mf{F}(\l,x+\psi(\l,x))\left[\partial_{\l x}\psi(\l,x)\right]=0
		\end{align*}
		for all $(\l,x)\in I_{k}\times \mc{N}_{k}$. 	Evaluating this expression at $(\s_{k},0)$ leads to
		\begin{align*}
			(I_{L^{2}}-Q_{k})|_{N_{k}} +\left(I_{L^{2}}-Q_{k}\right)\mf{L}(\s_{k})\left[\partial_{\l x}\psi(\s_{k},0)\right]=0.
		\end{align*}
		As $(I_{L^{2}}-Q_{k})(\varphi_{k})=(I_{L^{2}}-Q_{k})(\phi_{k})=0$ and $(I_{L^{2}}-Q_{k})\mf{L}(\s_{k})=\mf{L}(\s_{k})$, we have
		\begin{align*}
			\mf{L}(\s_{k})\left[\partial_{\l x}\psi(\s_{k},0)\right]=0.
		\end{align*}
		Therefore, once again, 
		\begin{equation*}
			\partial_{\l x}\psi(\s_{k},0)\left[\mu,v\right]\in N[\mf{L}(\s_{k})]\cap N[\mf{L}(\s_{k})]^{\perp}=\{0\}, 
		\end{equation*}
		for all $(\mu,v)\in\R\times N[\mf{L}(\s_{k})]$. This concludes the proof.
	\end{proof}
													
	Next, we detail the computation of the partial derivatives of the functions $h_{1}, h_{2}\colon I_{k}\times \O_{k}\subset \R\times \R^{2}\to \R$, introduced in \eqref{E16}, at $(\s_k,0,0)$.
	\begin{proposition}
		For all $i,j\in\{1,2\}$, the following identities hold:
		\begin{align}
			\partial_{\l}h_{i}(\s_{k},0,0)&=0, & \partial_{x_j}h_{i}(\s_{k},0,0)&=0, & \partial_{\l x_2}h_{1}(\s_{k},0,0)&=0, & \partial_{x_1 x_2}h_{i}(\s_{k},0,0)&=0,\\
			\partial_{\l\l}h_{i}(\s_{k},0,0)&=0, & \partial_{x_jx_j}h_{i}(\s_{k},0,0)&=0, & \partial_{\l x_1}h_{2}(\s_{k},0,0)&=0, & \partial_{\l x_i}h_{i}(\s_{k},0,0)&=1.
		\end{align}
	\end{proposition}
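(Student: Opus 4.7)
The plan is to reduce everything to direct chain-rule expansions combined with the identities already established in this appendix. First I would note that, since $Q_k$ is the $L^2$-orthogonal projection onto $\Span\{\varphi_k,\phi_k\}$ and these two functions are orthonormal in $L^2(\mathbb{T})$, the definitions \eqref{E16} simplify to
$$h_i(\l,x_1,x_2)=(\mf{F}(\l,L(x_1,x_2)+\psi(\l,L(x_1,x_2))),e_i)_{L^2},\qquad i\in\{1,2\},$$
where I set $e_1:=\varphi_k$, $e_2:=\phi_k$. Every partial derivative in the statement then becomes a finite sum of $L^2$-pairings of iterated Fr\'echet derivatives of $\mf{F}\circ(\mathrm{id},L+\psi\circ(\mathrm{id},L))$ against $e_i$, evaluated at $(\l,x_1,x_2)=(\s_k,0,0)$, at which point the inner argument $L+\psi\circ(\mathrm{id},L)$ equals $0$.

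The key observation that kills the bulk of the terms is the vanishing
$$(\mf{L}(\s_k)w,e_i)_{L^2}=(w,\mf{L}(\s_k)e_i)_{L^2}=0,\qquad \text{for every } w\in H^2(\mathbb{T}),$$
by Lemma \ref{le:3.5} (self-adjointness) and $e_i\in N[\mf{L}(\s_k)]$. Since $\p_u\mf{F}(\s_k,0)=\mf{L}(\s_k)$, every chain-rule contribution ending with $\p_u\mf{F}(\s_k,0)$ applied to a derivative of $\psi$ is automatically zero. Combined with the identities $\p_\l\mf{F}(\s_k,0)=0$, $\p_{uu}\mf{F}(\s_k,0)=0$, $\p_{\l\l}\mf{F}(\s_k,0)=0$, $\p_{\l uu}\mf{F}=0$, $\p_{\l\l u}\mf{F}=0$ from Lemma \ref{L1}, and with the values $\psi(\s_k,0)=0$, $\p_x\psi(\s_k,0)=0$, $\p_\l\psi(\s_k,0)=0$, $\p_{xx}\psi(\s_k,0)=0$, $\p_{\l\l}\psi(\s_k,0)=0$, $\p_{\l x}\psi(\s_k,0)=0$ from Lemma \ref{LC1}, a term-by-term expansion shows that $\p_\l h_i$, $\p_{x_j}h_i$, $\p_{\l\l}h_i$, $\p_{x_1x_2}h_i$, and $\p_{x_jx_j}h_i$ all vanish at $(\s_k,0,0)$.

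The only non-vanishing case is $\p_{\l x_j}h_i(\s_k,0,0)$. Here the chain rule produces three contributions; two vanish by the mechanisms above (one carries the factor $\p_{uu}\mf{F}(\s_k,0)=0$; the other ends with $\p_u\mf{F}(\s_k,0)$ applied to $\p_{\l x}\psi(\s_k,0)=0$), and the surviving term is
$$(\p_{\l u}\mf{F}(\s_k,0)[1,L_j],e_i)_{L^2}=(L_j,e_i)_{L^2},$$
using $\p_{\l u}\mf{F}[\mu,v]=\mu v$ and $\p_{x_j}L=L_j$ with $L_1=\varphi_k=e_1$, $L_2=\phi_k=e_2$. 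The $L^2$-orthonormality of $\{\varphi_k,\phi_k\}$ then gives $(L_j,e_i)_{L^2}=\d_{ij}$, matching the claimed values $\p_{\l x_1}h_1=\p_{\l x_2}h_2=1$ and $\p_{\l x_2}h_1=\p_{\l x_1}h_2=0$. I do not anticipate any serious obstacle: the whole proof is organized bookkeeping of the chain rule, and the only care needed is to verify that every branch appearing in the second-order mixed-derivative expansions falls into one of the three vanishing mechanisms identified above.
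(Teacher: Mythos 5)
Your proof is correct and follows essentially the same route as the paper's: a systematic chain-rule expansion of $h_i$, using the vanishing of the low-order derivatives of $\mf{F}$ at $(\s_k,0)$ from Lemma \ref{L1} and of $\psi$ at $(\s_k,0)$ from Lemma \ref{LC1}, with the single surviving term for $\partial_{\l x_j}h_i$ reducing to $(\partial_{x_j}L,e_i)_{L^2}=\delta_{ij}$. The only (harmless) deviations are cosmetic: you drop $Q_k$ at the outset by pairing directly with $e_i$, and you offer the self-adjointness identity $(\mf{L}(\s_k)w,e_i)_{L^2}=0$ as an extra way to kill terms of the form $\mf{L}(\s_k)[\partial_{\cdots}\psi(\s_k,0)[\cdots]]$, which the paper instead annihilates simply because those $\psi$-derivatives are already zero by Lemma \ref{LC1}.
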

	
	\begin{proof}
		We prove the results for $h_{1}$, since the computations for $h_{2}$ are similar. Recall that
		\begin{align*}
			h_{1}(\l,x_1,x_2)&=\left(Q_{k}\mf{F}(\l,L(x_1,x_2)+\psi(\l,L(x_1,x_2))),\varphi_{k}\right)_{L^{2}} \\
			&=\left(Q_{k}\mf{F}(\l,x_1\varphi_{k}+x_2\phi_{k}+\psi(\l,x_1\varphi_{k}+x_2\phi_{k})),\varphi_{k}\right)_{L^{2}}.
		\end{align*}
		To simplify the notation, we set $z:=L(x_1,x_2)$. Therefore, differentiating this expression gives
		\begin{align*}
			\partial_{\l}h_{1}(\l,x_1,x_2)&=\left(Q_{k}\partial_{\l}\mf{F}(\l,z+\psi(\l,z)),\varphi_{k}\right)_{L^{2}}+\left(Q_{k}\partial_{u}\mf{F}(\l,z+\psi(\l,z))\left[\partial_{\l}\psi(\l,z)\right],\varphi_{k}\right)_{L^{2}}, \\
			\partial_{x_1}h_{1}(\l,x_1,x_2)&=\left(Q_{k}\partial_{u}\mf{F}(\l,z+\psi(\l,z))\left[\varphi_{k}+\partial_{x}\psi(\l,z)\left[\varphi_{k}\right]\right],\varphi_{k}\right)_{L^{2}}, \\
			\partial_{x_2}h_{1}(\l,x_1,x_2)&=\left(Q_{k}\partial_{u}\mf{F}(\l,z+\psi(\l,z))\left[\phi_{k}+\partial_{x}\psi(\l,z)\left[\phi_{k}\right]\right],\varphi_{k}\right)_{L^{2}}.
		\end{align*}
		By evaluating these derivatives at $(\s_{k},0,0)$ and using \eqref{E15} and \eqref{E20}, we obtain
		\begin{align*}
			\partial_{\l}h_{1}(\s_{k},0,0)&=\left(0,\varphi_{k}\right)_{L^{2}}=0, \\
			\partial_{x_1}h_{1}(\s_{k},0,0)&=\left(Q_{k}\mf{L}(\s_{k})[\varphi_{k}],\varphi_{k}\right)_{L^{2}}=0, \\
			\partial_{x_2}h_{1}(\s_{k},0,0)&=\left(Q_{k}\mf{L}(\s_{k})[\phi_{k}],\varphi_{k}\right)_{L^{2}}=0,
		\end{align*}
		since $\varphi_{k}$ and $\phi_{k}$ belong to $N[\mf{L}(\s_{k})]$. By differentiating again, we get
		\begin{align}
		\partial_{\l\l}h_{1}(\l,x_1,x_2) &= \left(Q_{k}\partial_{\l\l}\mf{F}(\l,z+\psi(\l,z)),\varphi_{k}\right)_{L^{2}}+2\left(Q_{k}\partial_{\l u}\mf{F}(\l,z+\psi(\l,z))[\partial_{\l}\psi(\l,z)],\varphi_{k}\right)_{L^{2}} \nonumber \\
		&\;\;\;\; +\left(Q_{k}\partial_{uu}\mf{F}(\l,z+\psi(\l,z))[\partial_{\l}\psi(\l,z),\partial_{\l}\psi(\l,z)],\varphi_{k}\right)_{L^{2}} \nonumber \\
		&\;\;\;\; +\left(Q_{k}\partial_{u}\mf{F}(\l,z+\psi(\l,z))[\partial_{\l\l}\psi(\l,z)],\varphi_{k}\right)_{L^{2}}, \nonumber\\[1pt]
		\partial_{x_1x_1}h_{1}(\l,x_1,x_2)&=  \left(Q_{k}\partial_{uu}\mf{F}(\l,z+\psi(\l,z))[\varphi_{k}+\partial_{x}\psi(\l,z)[\varphi_{k}],\varphi_{k}+\partial_{x}\psi(\l,z)[\varphi_{k}]],\varphi_{k}\right)_{L^{2}} \label{eq:h1_x1x1}\\
		&\;\;\;\; + \left(Q_{k}\partial_{u}\mf{F}(\l,z+\psi(\l,z))[\partial_{xx}\psi(\l,z)[\varphi_{k},\varphi_{k}]],\varphi_{k}\right)_{L^{2}},\\[1pt]
		\partial_{x_2x_2}h_{1}(\l,x_1,x_2)&= \left(Q_{k}\partial_{uu}\mf{F}(\l,z+\psi(\l,z))[\phi_{k}+\partial_{x}\psi(\l,z)[\phi_{k}],\phi_{k}+\partial_{x}\psi(\l,z)[\phi_{k}]],\varphi_{k}\right)_{L^{2}} \\
		&\;\;\;\; + \left(Q_{k}\partial_{u}\mf{F}(\l,z+\psi(\l,z))[\partial_{xx}\psi(\l,z)[\phi_{k},\phi_{k}]],\varphi_{k}\right)_{L^{2}}.
		\end{align}
		By evaluating these derivatives at $(\s_{k},0,0)$ and using \eqref{E15} and \eqref{E20} again, we obtain the desired conclusion. Finally, the mixed derivatives are given by
		\begin{align*}
			\partial_{\l x_1}h_{1}(\l,x_1,x_2)  & =  \left(Q_{k}\partial_{\l u}\mf{F}(\l,z+\psi(\l,z))[\varphi_{k}+\partial_{x}\psi(\l,z)[\varphi_{k}]],\varphi_{k}\right)_{L^{2}} \\
			&\;\;\;\; + \left(Q_{k}\partial_{uu}\mf{F}(\l,z+\psi(\l,z))[\varphi_{k}+\partial_{x}\psi(\l,z)[\varphi_{k}],\partial_{\l}\psi(\l,z)],\varphi_{k}\right)_{L^{2}} \\
			&\;\;\;\; + \left(Q_{k}\partial_{u}\mf{F}(\l,z+\psi(\l,z))[\partial_{\l x}\psi(\l,z)[\varphi_{k}]],\varphi_{k}\right)_{L^{2}},\\[1pt]
			\partial_{\l x_2}h_{1}(\l,x_1,x_2) & = \left(Q_{k}\partial_{\l u}\mf{F}(\l,z+\psi(\l,z))[\phi_{k}+\partial_{x}\psi(\l,z)[\phi_{k}]],\varphi_{k}\right)_{L^{2}} \\
			&\;\;\;\; + \left(Q_{k}\partial_{uu}\mf{F}(\l,z+\psi(\l,z))[\phi_{k}+\partial_{x}\psi(\l,z)[\phi_{k}],\partial_{\l}\psi(\l,z)],\varphi_{k}\right)_{L^{2}} \\
			&\;\;\;\; + \left(Q_{k}\partial_{u}\mf{F}(\l,z+\psi(\l,z))[\partial_{\l x}\psi(\l,z)[\phi_{k}]],\varphi_{k}\right)_{L^{2}},\\[1pt]
			\partial_{x_1 x_2}h_{1}(\l,x_1,x_2) &=  \left(Q_{k}\partial_{uu}\mf{F}(\l,z+\psi(\l,z))[\varphi_{k}+\partial_{x}\psi(\l,z)[\varphi_{k}],\phi_{k}+\partial_{x}\psi(\l,z)[\phi_{k}]],\varphi_{k}\right)_{L^{2}}, \\
			&\;\;\;\; + \left(Q_{k}\partial_{u}\mf{F}(\l,z+\psi(\l,z))[\partial_{xx}\psi(\l,z)[\phi_{k},\varphi_{k}]],\varphi_{k}\right)_{L^{2}}.
		\end{align*}
		By evaluating these derivatives at $(\s_{k},0,0)$ and using \eqref{E15}, \eqref{E20} and \eqref{E21}, we obtain 
		\begin{align*}
			\partial_{\l x_1}h_{1}(\s_{k},0,0)  &=\left(Q_{k}\varphi_{k},\varphi_{k}\right)_{L^{2}}=\left(\varphi_{k},\varphi_{k}\right)_{L^{2}}=1, \\
			\partial_{\l x_2}h_{1}(\s_{k},0,0)&=\left(Q_{k}\phi_{k},\varphi_{k}\right)_{L^{2}}=\left(\phi_{k},\varphi_{k}\right)_{L^{2}}=0, \\
			\partial_{x_1 x_2}h_{1}(\s_{k},0,0)&=0,
		\end{align*}
		and the proof is complete.
	\end{proof}
													
	\begin{proposition}
		The third order partial derivatives of $h_{1}, h_{2}\colon I_{k}\times \O_k\to\R$ at $(\s_k,0,0)$ are given by 
		\begin{equation}
			\label{E27}
			\begin{aligned}
				\partial_{x_1x_1x_1}h_1(\s_k,0,0)&=\left(6 a(t)\varphi_k^3,\varphi_k\right)_{L^2}, & \partial_{x_2x_2x_2}h_1(\s_k,0,0)&=\left(6a(t)\phi_k^3,\varphi_k\right)_{L^2},  \\
				\partial_{x_1x_1x_1}h_2(\s_k,0,0)&=\left(6a(t)\varphi_k^3,\phi_k\right)_{L^2}, &
				\partial_{x_2x_2x_2}h_{2}(\s_k,0,0)&=\left(6a(t)\phi_k^3,\phi_k\right)_{L^2}.
			\end{aligned}
		\end{equation}
		Moreover, the mixed derivatives are given by
		\begin{equation}
			\begin{aligned}
				\label{E29}
				\partial_{x_1x_1x_2} h_1 (\s_k,0,0)&=\left(6a(t) \varphi^2_k \phi_k,\varphi_k\right)_{L^2},  &\partial_{x_1x_2x_2} h_1 (\s_k,0,0)&=\left(6a(t) \varphi_k \phi^2_k,\varphi_k\right)_{L^2}, \\
				\partial_{x_1x_1x_2} h_2 (\s_k,0,0)&=\left(6a(t) \varphi^2_k \phi_k,\phi_k\right)_{L^2}, &\partial_{x_1x_2x_2} h_2 (\s_k,0,0)&=\left(6a(t) \varphi_k \phi^2_k,\phi_k\right)_{L^2}.
															\end{aligned}
		\end{equation}
		Finally, for all $i,j,\ell\in\{1,2\}$,
		\begin{equation}
			\label{E30}
			\partial_{\l x_jx_\ell}h_i(\s_k,0,0)=0, \qquad \partial_{\l \l x_j}h_i(\s_k,0,0)=0, \qquad \partial_{\l \l\l}h_i(\s_k,0,0)=0.
		\end{equation}
	\end{proposition}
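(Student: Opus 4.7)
The plan is to differentiate the explicit formulas for the second-order partial derivatives of $h_1$ and $h_2$ obtained in the previous proposition (for example, the formula for $\partial_{x_1x_1}h_1$ displayed in \eqref{eq:h1_x1x1}), and then evaluate at the bifurcation point $(\s_k,0,0)$ using two structural simplifications. First, by Lemma \ref{LC1}, the quantities $\psi$, $\partial_{\l}\psi$, $\partial_{x}\psi$, $\partial_{\l\l}\psi$, $\partial_{xx}\psi$, $\partial_{\l x}\psi$ all vanish at $(\s_k,0)$. Second, by Lemma \ref{L1}, at $(\s_k,0)$ one has $\partial_{uu}\mf{F}(\s_k,0)=0$, while $\partial_{\l uu}\mf{F}\equiv\partial_{\l\l u}\mf{F}\equiv\partial_{\l\l\l}\mf{F}\equiv 0$ identically, and $\partial_{uuu}\mf{F}(\l,u)[v_1,v_2,v_3]=6a(t)v_1v_2v_3$. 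I will also repeatedly use the elementary fact $Q_k\mf{L}(\s_k)=0$, which holds because $R[\mf{L}(\s_k)]=N[Q_k]$ by orthogonality.

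For the pure and mixed third-order derivatives in $x_1,x_2$ (the formulas \eqref{E27} and \eqref{E29}), I would differentiate the right-hand side of \eqref{eq:h1_x1x1} (and its three analogs) once more with respect to $x_1$ or $x_2$ by the product and chain rules. Each resulting summand falls into one of three types at $(\s_k,0,0)$: (a) a term containing a factor of $\partial_{uu}\mf{F}(\s_k,0)$, $\partial_{x}\psi(\s_k,0)$, or $\partial_{xx}\psi(\s_k,0)$, all of which vanish; (b) a term containing a third derivative of $\psi$, which necessarily appears multiplied by $\partial_{u}\mf{F}(\s_k,0)=\mf{L}(\s_k)$ and is therefore annihilated by $Q_k$; or (c) the single genuinely nonzero contribution coming from $\partial_{uuu}\mf{F}(\s_k,0)$ applied to three of the functions $\varphi_k,\phi_k$. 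Since $\{\varphi_k,\phi_k\}$ is $L^2$-orthonormal, $(Q_k w,\varphi_k)_{L^2}=(w,\varphi_k)_{L^2}$ and $(Q_k w,\phi_k)_{L^2}=(w,\phi_k)_{L^2}$, so this surviving contribution reduces exactly to the integrals asserted in \eqref{E27} and \eqref{E29}.

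For the $\l$-involving third derivatives listed in \eqref{E30}, the same trichotomy is even more favorable, since in addition to the cancellations from Lemma \ref{LC1} and $Q_k\mf{L}(\s_k)=0$, the vanishing of $\partial_{\l uu}\mf{F}$, $\partial_{\l\l u}\mf{F}$ and $\partial_{\l\l\l}\mf{F}$ rules out any nonzero cubic contribution analogous to the one in case (c) above. Enumeration then shows that every summand produced by the successive differentiations vanishes at $(\s_k,0,0)$, which yields all three identities in \eqref{E30}.

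The only real obstacle is bookkeeping: each successive differentiation of expressions such as $(Q_k\partial_{u}\mf{F}(\l,z+\psi(\l,z))[A(\l,z)],\varphi_k)_{L^2}$ produces a large number of summands through repeated applications of the product and chain rules, and the argument consists of verifying that every one of them falls into one of the cancellation patterns described above except for the explicit cubic terms. No new analytical ingredient beyond Lemmas \ref{L1} and \ref{LC1}, the orthonormality of $\{\varphi_k,\phi_k\}$, and the identity $Q_k\mf{L}(\s_k)=0$ is required.
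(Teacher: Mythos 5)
Your proposal is correct and follows essentially the same route as the paper: differentiate the explicit second-order formulas such as \eqref{eq:h1_x1x1} once more, evaluate at $(\s_k,0,0)$, and discard every summand using the vanishing of the low-order derivatives of $\psi$ from Lemma \ref{LC1}, the vanishing of $\partial_{uu}\mf{F}(\s_k,0)$ and of all $\l$-involving third derivatives of $\mf{F}$ from Lemma \ref{L1}, and the identity $Q_k\mf{L}(\s_k)=0$, leaving only the $\partial_{uuu}\mf{F}$ contribution, which the $L^2$-orthonormality of $\{\varphi_k,\phi_k\}$ reduces to the stated inner products. The trichotomy you describe is exactly the cancellation pattern used in the paper's proof.
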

													
	\begin{proof}
		We will proof the results for $h_{1}$, as the computations for $h_{2}$ are analogous. Differentiating \eqref{eq:h1_x1x1} with respect to $x_1$  gives
		\begin{align*}
			& \partial_{x_1x_1x_1} h_1 (\l,x_1,x_2) \\
			&= \left(Q_{k}\partial_{uuu}\mf{F}(\l,z+\psi(\l,z))[\varphi_{k}+\partial_{x}\psi(\l,z)[\varphi_{k}],\varphi_{k}+\partial_{x}\psi(\l,z)[\varphi_{k}],\varphi_{k}+\partial_{x}\psi(\l,z)[\varphi_{k}]],\varphi_{k}\right)_{L^{2}} \\
			& \quad +3 \left(Q_{k}\partial_{uu}\mf{F}(\l,z+\psi(\l,z))[\partial_{xx}\psi(\l,z)[\varphi_{k},\varphi_k],\varphi_{k}+\partial_{x}\psi(\l,z)[\varphi_{k}]],\varphi_{k}\right)_{L^{2}} \\
			& \quad + \left(Q_{k}\partial_{u}\mf{F}(\l,z+\psi(\l,z))[\partial_{xxx}\psi(\l,z)[\varphi_k,\varphi_{k},\varphi_{k}]],\varphi_{k}\right)_{L^{2}}.
		\end{align*}
		By evaluating this derivatives at $(\s_{k},0,0)$ and using Lemma \ref{L1}, \eqref{E15} and \eqref{E20}, we obtain
		\begin{align*}
			\partial_{x_1x_1x_1} h_1(\s_k,0,0) & = \left(Q_k \partial_{uuu}\mf{F}(\s_k,0)[\varphi_k,\varphi_k,\varphi_k],\varphi_k\right)_{L^2}\\
			&\;\;\;\; + \left(Q_k \mf{L}(\s_k)[\partial_{xxx}\psi(\s_k,0)[\varphi_k,\varphi_{k},\varphi_{k}]],\varphi_{k}\right)_{L^{2}} \\
			&=\left(Q_k(6a(t)\varphi_k^3),\varphi_k\right)_{L^2},
		\end{align*}
		where we have used that $Q_k\mf{L}(\s_{k})=0$. Finally, with the definition of $Q_k$ (see \eqref{eq:def_Q_k}), we deduce
		\begin{align*}
			\partial_{x_1x_1x_1} h_1(\s_k,0,0) & =\left(Q_k(6a(t)\varphi_k^3),\varphi_k\right)_{L^2} \\
			& = \left(6a(t)\varphi_k^3, \varphi_k\right)_{L^2} \left(\varphi_k,\varphi_k\right)_{L^2} + \left(6a(t)\varphi_k^3, \phi_k\right)_{L^2} \left(\phi_k,\varphi_k\right)_{L^2}=\left(6a(t)\varphi_k^3, \varphi_k\right)_{L^2}.
		\end{align*}
		This proves the first identity of \eqref{E27}, and the remaining ones can be obtained similarly.
														
		Passing to \eqref{E29}, we differentiate \eqref{eq:h1_x1x1} with respect to $x_2$:
		\begin{align*}
			& \partial_{x_1x_1x_2} h_{1}(\l,x_1,x_2) \\ & =\left(Q_{k}\partial_{uuu}\mf{F}(\l,z+\psi(\l,z))[\phi_{k}+\partial_{x}\psi(\l,z)[\phi_{k}],\varphi_{k}+\partial_{x}\psi(\l,z)[\varphi_{k}],\varphi_{k}+\partial_{x}\psi(\l,z)[\varphi_{k}]],\varphi_{k}\right)_{L^{2}} \\
			& \;\;\;\; + 2 \left(Q_{k}\partial_{uu}\mf{F}(\l,z+\psi(\l,z))[\partial_{xx}\psi(\l,z)[\phi_k,\varphi_{k}],\varphi_{k}+\partial_{x}\psi(\l,z)[\varphi_{k}]],\varphi_{k}\right)_{L^{2}} \\
			& \;\;\;\; + \left(Q_{k}\partial_{uu}\mf{F}(\l,z+\psi(\l,z))[\phi_{k}+\partial_{x}\psi(\l,z)[\phi_{k}],\partial_{xx}\psi(\l,z)[\varphi_{k},\varphi_{k}]],\varphi_{k}\right)_{L^{2}} \\
			& \;\;\;\; + \left(Q_{k}\partial_{u}\mf{F}(\l,z+\psi(\l,z))[\partial_{xxx}\psi(\l,z)[\phi_k,\varphi_{k},\varphi_{k}]],\varphi_{k}\right)_{L^{2}}.
		\end{align*}
		By reasoning as above, we get
		\begin{align*}
			\partial_{x_1x_1x_2} h_{1}(\s_k,0,0) = \left(Q_k(6a(t)\varphi_k^2 \phi_k),\varphi_k\right)_{L^2}=\left(6a(t)\varphi_k^2 \phi_k,\varphi_k\right)_{L^2},
		\end{align*}
		which proves the first identity of \eqref{E29}. The rest of the identities in \eqref{E29} can be obtained analogously. Finally, we differentiate \eqref{eq:h1_x1x1} with respect to $\l$:
		\begin{align*}
			& \partial_{\l x_1x_1}h_{1}(\l,x_1,x_2)\\
			& =  \left(Q_{k}\partial_{\l uu}\mf{F}(\l,z+\psi(\l,z))[\varphi_{k}+\partial_{x}\psi(\l,z)[\varphi_{k}],\varphi_{k}+\partial_{x}\psi(\l,z)[\varphi_{k}]],\varphi_{k}\right)_{L^{2}} \\
			& \;\;\;\; +\left(Q_{k}\partial_{uuu}\mf{F}(\l,z+\psi(\l,z))[\partial_{\l}\psi(\l,z),\varphi_{k}+\partial_{x}\psi(\l,z)[\varphi_{k}],\varphi_{k}+\partial_{x}\psi(\l,z)[\varphi_{k}]],\varphi_{k}\right)_{L^{2}} \\
			& \;\;\;\;  + 2 \left(Q_{k}\partial_{uu}\mf{F}(\l,z+\psi(\l,z))[\partial_{\l x}\psi(\l,z)[\varphi_{k}],\varphi_{k}+\partial_{x}\psi(\l,z)[\varphi_{k}]],\varphi_{k}\right)_{L^{2}} \\
			& \;\;\;\; + \left(Q_{k}\partial_{\l u}\mf{F}(\l,z+\psi(\l,z))[\partial_{xx}\psi(\l,z)[\varphi_{k},\varphi_{k}]],\varphi_{k}\right)_{L^{2}} \\
			& \;\;\;\; + \left(Q_{k}\partial_{u}\mf{F}(\l,z+\psi(\l,z))[\partial_{\l xx}\psi(\l,z)[\varphi_{k},\varphi_{k}]],\varphi_{k}\right)_{L^{2}}.
		\end{align*}
		By evaluating these derivatives at $(\s_{k},0,0)$, from Lemma \ref{L1}, \eqref{E15} and \eqref{E20}, we conclude
		\begin{align*}
			\partial_{\l x_1x_1}h_{1}(\s_k,0,0)=0.
		\end{align*}
		This proves one of the identities in \eqref{E30}, and the remaining ones can be obtained similarly, by using the third order derivatives of $\mf{F}$ that involve $\l$, which are given in Lemma \ref{L1}.
	\end{proof}
													
	\section*{Acknowledgments} This work has been supported by the Research Project PID2021-123343NB-I00 of the Ministry of Science and Innovation of Spain and by the Institute of Interdisciplinary Mathematics of Complutense University of Madrid. A.~T. has also been supported by the Ram\'{o}n y Cajal program RYC2022-038091-I, funded by MCIN/AEI/10.13039/501100011033 and by the FSE+.

\end{document}